\newcommand{\non}{\nonumber}
\def\eqdefa{\buildrel\hbox{\footnotesize def}\over =}
\newcommand{\Rmnum}[1]{\expandafter\@slowromancap\romannumeral #1@}
\newtheorem{athm}{\bf \t}[section]
\newenvironment{thm} [1] {\def\t{#1}\begin{athm} \bf \rm} {\end{athm}}
\newcommand{\bthm}{\begin{thm}}
\newcommand{\ethm}{\end{thm}}
\newtheorem{theorem}{Theorem}[section]
\newtheorem{lemma}{Lemma}[section]
\newtheorem{remark}{Remark}[section]
\newtheorem{corollary}{Corollary}[section]
\newtheorem{proposition}{Proposition}[section]
\newcommand{\beq}{\begin{equation}}
\newcommand{\eeq}{\end{equation}}
\newcommand{\ben}{\begin{eqnarray}}
\newcommand{\een}{\end{eqnarray}}
\newcommand{\beno}{\begin{eqnarray*}}
\newcommand{\eeno}{\end{eqnarray*}}
\newcommand{\bali}{\begin{aligned}}
\newcommand{\eali}{\end{aligned}}
\numberwithin{equation}{section}
\newcommand{\al}{\alpha}
\newcommand{\be}{\beta}
\newcommand{\ve}{\varepsilon}
\newcommand{\f}{\frac}
\newcommand{\na}{\nabla}
\newcommand{\ud}{\mathrm{d}}
\newcommand{\vv}{\mathbf{v}}
\newcommand{\xx}{\mathbf{x}}
\newcommand{\nn}{\mathbf{n}}
\newcommand{\hh}{\mathbf{h}}
\newcommand{\mm}{\mathbf{m}}
\newcommand{\pp}{\mathbf{p}}
\newcommand{\A}{\mathbf{A}}
\newcommand{\B}{\mathbf{B}}
\newcommand{\GG}{\mathbf{G}}
\newcommand{\HH}{\mathbf{H}}
\newcommand{\NN}{\mathbf{N}}
\newcommand{\QQ}{\mathbf{Q}}
\newcommand{\DD}{\mathbf{D}}
\newcommand{\FF}{\mathbf{F}}
\newcommand{\JJ}{\mathbf{J}}
\newcommand{\II}{\mathbf{I}}
\newcommand{\CF}{\mathcal{F}}
\newcommand{\CU}{\mathcal{U}}
\newcommand{\CP}{\mathcal{P}}
\newcommand{\CH}{\mathcal{H}}
\newcommand{\CL}{\mathcal{L}}
\newcommand{\CQ}{\mathcal{Q}}
\newcommand{\CM}{\mathcal{M}}
\newcommand{\CJ}{\mathcal{J}}
\newcommand{\Ff}{\mathfrak{F}}
\newcommand{\Ef}{\mathfrak{E}}
\newcommand{\BS}{{\mathbb{S}^2}}
\newcommand{\BR}{{\mathbb{R}^3}}
\newcommand{\BOm}{\mathbf{\Omega}}
\newcommand{\pa}{\partial}
\newcommand{\QI}{\mathbb{Q}_{\nn}^{\mathrm{in}}}
\newcommand{\QO}{\mathbb{Q}_{\nn}^{\mathrm{out}}}
\newcommand{\QP}{\mathbb{Q}_{phy}}
\newcommand{\QPd}{\mathbb{Q}_{phy,\delta}}
\begin{document}

\title[Well-posedness and small Deborah limit of a $Q$-tensor model]
{Local well-posedness and small Deborah limit of a molecule-based $Q$-tensor system}

\author{Sirui Li}
\address{School of  Mathematical Sciences and LMAM, Peking University, Beijing 100871, China}
\email{srli@pku.edu.cn}

\author{Wei Wang}
\address{Beijing International Center for Mathematical Research, Peking University, Beijing 100871, China}
\email{wangw07@pku.edu.cn}

\author{Pingwen Zhang}
\address{School of  Mathematical Sciences and LMAM, Peking University, Beijing 100871, China}
\email{pzhang@pku.edu.cn}

%\begin{CJK*}{GBK}{song}
\maketitle
\begin{abstract}
In this paper, we consider a hydrodynamic $Q$-tensor system for nematic liquid crystal flow,
which is derived from Doi-Onsager molecular theory by the Bingham closure.
We first prove the existence and uniqueness of local strong solution. Furthermore, by taking Deborah number goes to zero and using the Hilbert expansion method, we present a rigorous derivation
from the molecule-based $Q$-tensor theory to the Ericksen-Leslie theory.
\end{abstract}

\tableofcontents

\section{Introduction}
Liquid crystals are a state of matter whose properties are
intermediate between those of the conventional isotropic liquid and the crystalline solid.
The nematic, composed by rod-like molecules, is the simplest liquid crystal phase,
exhibiting long-range orientational order but no positional order. We refer \cite{DG} for a comprehensive elaboration
of the physics of liquid crystals.
There are three main theories to model the nematic liquid crystals: the Doi-Onsager theory,
the Landau-de Gennes theory and the Ericksen-Leslie
theory. The first is microscopic theory derived from viewpoints of statistical mechanics, and the later two are macroscopic theories
based on continuum mechanics. 

{\bf Notations and conventions.}
The Einstein convention will be assumed throughout the paper.
We introduce the following notations for the space of symmetric traceless tensors
\begin{align}
\mathbb{Q}&~\eqdefa\big\{Q\in \mathbb{R}^{3\times3}:~Q_{ij}=Q_{ji},  Q_{ii}=0\big\},\\
\QP&~\eqdefa\big\{Q\in\mathbb{Q}:~\text{the eigenvalues of }Q \in (-\frac13, \frac23) \big\}.
\end{align}
The space $\mathbb{Q}$ is endowed with the inner product
$ \langle Q_1,Q_2\rangle\eqdefa Q_1:Q_2=Q_{1ij}Q_{2ij}.$
The set $\mathbb{Q}$ is a five-dimensional linear subspace of $\mathbb{R}^{3\times3}.$
We define the matrix norm on $\mathbb{Q}$ as
$|Q|\eqdefa\sqrt{tr Q^2}=\sqrt{Q_{ij}Q_{ij}}$.
In terms of this norm, the Sobolev space is defined as
$$H^k(\mathbb{R}^3)\eqdefa
\Big\{f:
~ \int_{\mathbb{R}^3}\sum_{|\alpha'|\leq k}|\partial^{\alpha'}f(\xx)|^2
\ud\xx<\infty\Big\}$$
with $k$ being a non-negative integer and $\alpha'$ being a multi-index.
For two tensors $A,B\in\mathbb{Q}$ we denote $(A\cdot B)_{ij}=A_{ik}B_{kj}$
and $A:B=A_{ij}B_{ij}$. We denote $(M:Q)_{ij}=M_{ijkl}Q_{kl}$ where $M$ is the fourth-order tensor and $Q\in\mathbb{Q}$.
In addition, $\nn_1\otimes\nn_2\otimes\cdots\otimes\nn_k$ denotes the tensor product of $k$ vectors $\nn_1$, $\nn_2$,$\cdots$, $\nn_k$,
and we usually omit the symbol $\otimes$ for simplicity.
We use $f_{,i}$ to denote $\partial_if$ for simplicity and $\II$ to denote the $3\times3$ order identity tensor.

\subsection{The Ericksen-Leslie theory}
The hydrodynamic theory of liquid crystals, established by  Ericksen \cite{E-61} and Leslie \cite{Les} in the 1960's,
is a system coupling the time evolution equation of the
fluid velocity $\vv=\vv(t,\xx)$ with the director equation describing the motion of the director
field $\nn=\nn(t,\xx)\in \BS$. The general Ericksen-Leslie system takes the form
\begin{align}
&\vv_t+\vv\cdot\nabla\vv=-\nabla{p}+\nabla\cdot\sigma,\label{eq:EL-v}\\
&\na\cdot\vv=0,\\
&\nn\times\big(\hh-\gamma_1\NN-\gamma_2\DD\cdot\nn\big)=0,\label{eq:EL-n}
\end{align}
where $\vv$ is the velocity of the fluid and $p$ is the pressure.
The stress $\sigma$ is modeled by the phenomenological constitutive relation
\beno
\sigma=\sigma^L+\sigma^E,
\eeno
where $\sigma^L$ is the viscous (Leslie) stress
\begin{eqnarray}\label{eq:Leslie stress}
\sigma^L=\alpha_1(\nn\nn:\DD)\nn\nn+\alpha_2\nn\NN+\alpha_3\NN\nn+\alpha_4\DD
+\alpha_5\nn\nn\cdot\DD+\alpha_6\DD\cdot\nn\nn \end{eqnarray}
with $\kappa=(\nabla\vv)^T,~\DD=\frac{1}{2}(\kappa+\kappa^T)$ and
\beno
\NN=\nn_t+\vv\cdot\nabla\nn-\BOm\cdot\nn,\qquad\BOm=\frac12(\kappa^T-\kappa).
\eeno
The six constants $\al_1, \cdots, \al_6$ are saied to be the Leslie coefficients.
Moreover, $\sigma^E$ is the elastic (Ericksen) stress given by
\begin{eqnarray}\label{eq:Ericksen}
\sigma_{ij}^E=-\frac{\partial{E_F}}{\partial n_{k,j}}n_{k,i},
\end{eqnarray}
where $E_F=E_F(\nn,\nabla\nn)$ is the Oseen-Frank energy with the form
\begin{align}\label{energy-OF}
\!\!E_F=\f {k_1} 2(\na\cdot\nn)^2+\f {k_2} 2(\nn{\cdot}(\na\times\nn))^2
+\f {k_3} 2|\nn{\times}(\na\times \nn)|^2
+\frac{k_2+k_4}2\big(\textrm{tr}(\na\nn)^2-(\na\cdot\nn)^2\big).
\end{align}
Here $k_1, k_2, k_3, k_4$ are the elastic constant.
The molecular field $\hh$ is given by
\beno
&&\hh=-\frac{\delta{E_F}}{\delta{\nn}}=
\nabla\cdot\frac{\partial{E_F}}{\partial(\nabla\nn)}-\frac{\partial{E_F}}{\partial\nn}.
\eeno
The Leslie coefficients and material dependent coefficients $\gamma_1, \gamma_2$ satisfy the following relations
\begin{eqnarray}
&\alpha_2+\alpha_3=\alpha_6-\alpha_5,\label{Leslie relation}\\
&\gamma_1=\alpha_3-\alpha_2,\quad \gamma_2=\alpha_6-\alpha_5,\label{Leslie-coeff}
\end{eqnarray}
where (\ref{Leslie relation}) is called Parodi's relation derived from the Onsager reciprocal relation \cite{Parodi}. These two relations will
ensure that the system (\ref{eq:EL-v})--(\ref{eq:EL-n}) has a basic energy law:
\begin{align}
-\frac{\ud}{\ud{t}}\Big(\int_{\BR}\frac{1}{2}|\vv|^2\ud\xx+E_F\Big)
=&\int_{\BR}\Big((\alpha_1+\frac{\gamma_2^2}{\gamma_1})(\DD:\nn\nn)^2
+\alpha_4|\DD|^2\qquad\nonumber\\
&\quad+\big(\alpha_5+\alpha_6-\frac{\gamma_2^2}{\gamma_1}\big)|\DD\cdot\nn|^2
+\frac{1}{\gamma_1}|\nn\times\hh|^2\Big)\ud\xx.\quad\label{EL_energy_law}
\end{align}

For the well-posedness results of the Ericksen-Leslie system, we refer to \cite{LL, WZZ2, WXL} and the references therein. In particular,
under a natural physical condition on the Leslie coefficients, \cite{WZZ2}
proved the well-posedness of the system, and the global existence of weak solution in two-dimensional case
was shown in \cite{HLW, WW}.

\subsection{The $Q$-tensor theory}
The most general continuum theory for the nematic liquid crystals is the celebrated Landau-de Gennes theory which can describe
uniaxial and biaxial liquid phases.
In this phenomenological theory, the detailed nature of molecular interactions and molecular structures is ignored, and
the state of the nematic liquid crystals is described by a macroscopic tensor value order parameter $Q(\xx)$,
which is a symmetric and traceless $3\times 3$ matrix, i.e. $Q\in\mathbb{Q}$. Physically, it can be interpreted as the second-order traceless moment of
the orientational distribution function $f$, that is,
\begin{align}\label{relation:Q-f}
Q(\xx,\mm)=\int_\BS(\mm\mm-\frac{1}{3}\II)f(\xx,\mm)\ud\mm.
\end{align}
Under this interpretation, the so-called physical constraint is that the eigenvalues of $Q$ should satisfy
\begin{align}\label{phy-constraint}
\lambda_i(Q)\in(-\frac13,\frac23), ~\text{for}~ 1\le i\le 3,
\end{align}
namely, $Q\in\QP$.

The nematic liquid crystal is called isotropic at $\xx$ when $Q(\xx)=0$. When $Q(\xx)$ has two equal non-zero eigenvalues,
it is called uniaxial and $Q(\xx)$ can be written as
\beno
Q(\xx)=s\big(\nn\nn-\f13\II\big),\quad s\in\mathbb{R},~\nn(\xx)\in \BS.
\eeno
When $Q(\xx)$ has three distinct eigenvalues, it is called biaxial and $Q(\xx)$ can be written as
\beno
Q(\xx)=s\big(\nn\nn-\f13\II\big)+r(\nn'\nn'-\frac13\II),\quad \nn,\,\nn'\in \BS,\quad \nn\cdot\nn'=0,\quad s,~r\in\mathbb{R}.
\eeno

The classic Landau-de Gennes energy functional, being a nonlinear functional of $Q$ and its spatial derivatives,
takes the following general form
\begin{align}
\mathcal{F}_{LG}(Q,\nabla Q)=&\int_{\BR}\Big\{ \underbrace{-\frac{a}2tr(Q^2)
-\frac{b}{3}tr(Q^3)+\frac{c}{4}(tr(Q^2)^2}_{\text{bulk energy}}\nonumber\\
&+\underbrace{\frac{1}{2}\Big(L_1|\nabla Q|^2+L_2Q_{ij,j}Q_{ik,k}
+L_3Q_{ij,k}Q_{ik,j}+L_4Q_{ij}Q_{kl,i}Q_{kl,j}\Big)}_{\text{elastic energy}} \Big\}\ud\xx,\label{eq:Landau-energy}
\end{align}
where $a, b, c$ are material-dependent and temperature-dependent non-negative constants and $L_i(i=1,2,3,4)$
are material dependent elastic constants. We refer to \cite{DG, MN} for more details.
The energy (\ref{eq:Landau-energy}) can not ensure $Q$ to satisfy the
natural physical constraint (\ref{phy-constraint}).
For this reason, based on the mean-field Maier-Saupe energy, Ball-Majumdar \cite{BM} proposed an energy functional,
which will diverge if $Q\not\in\QP.$ There are many works to study the equilibrium solutions of the classic
Landau-de Gennes model, for example,  one may see \cite{BM, MZ} and the references therein.

So far, there are two types of dynamic $Q$-tensor theories to describe the flow of nematic liquid crystal.
The first type models are obtained by variational methods under physical considerations, such as Beris-Edwards model \cite{BE}
and Qian-Sheng's  model \cite{QS}. Let $\mathcal{F}(Q,\nabla Q)$ be the total free energy, and define
$$\mu_{Q}=\frac{\delta{\mathcal{F}(Q,\nabla Q)}}{\delta Q}.$$
The dynamical $Q$-tensor model of this types can be written in
the following general form:
\begin{align}
\frac{\pa{{Q}}}{\pa{t}}+\vv\cdot\nabla{Q}&=D^{rot}(\mu_Q)+F(Q,\DD)
+\BOm\cdot Q-Q\cdot\BOm,\label{eq:Q-general-intro1}\\
\frac{\pa{\vv}}{\pa{t}}+\vv\cdot\nabla\vv&=-\nabla{p}+\nabla\cdot\big(\sigma^{dis}+\sigma^{s}+\sigma^a+\sigma^{d}\big),
\label{eq:Q-general-intro2}\\
\nabla\cdot\vv&=0,\label{eq:Q-general-intro3}
\end{align}
where $\vv$ is the fluid velocity, $D^{rot}(\mu_Q)$ is the rotational diffusion term,
$F(Q,\DD)$ and $\BOm\cdot Q-Q\cdot\BOm$ are induced by the deformation part and
and rotation part of the velocity gradient respectively.
In addition, $\sigma^d$ is the distortion stress,
$\sigma^a$ is the anti-symmetric part of orientational-induced stress,
$\sigma^{s}=\gamma F(Q, \mu_Q)$ which conjugates to $F(Q, \DD)$ ($\gamma$ is a constant), is the symmetric stress induced by the orientation of molecules,  and $\sigma^{dis}$ is an additional dissipation stress.

In  Beris-Edwards's model and Qian-Sheng's model, module some constants, $\sigma^a$ and $\sigma^d$ are the same, i.e.,
\begin{align}
\sigma_{ij}^d=\frac{\partial \CF}{\partial (Q_{kl,j})}Q_{kl,i},\quad
\sigma^a=Q\cdot\mu_Q-\mu_Q\cdot Q.
\end{align}
In Beris-Edwards's model, the other terms are given by
\begin{align*}
&D^{rot}_{BE}=-\Gamma \mu_Q,\quad \sigma_{BE}^{dis}=\eta\DD,\quad \sigma_{BE}^s=F_{BE}(Q,\mu_Q),\\
&F_{BE}(Q,A)=\xi\Big((Q+\frac13\II)\cdot A+A\cdot(Q+\frac13\II)-2(Q+\frac13\II)(A:Q)\Big).
\end{align*}
In Qian-Sheng's model, they are given by
\begin{align*}
&D^{rot}_{QS}=-\Gamma \mu_Q,\quad
\sigma_{QS}^s=-\frac12\frac{\mu_2^2}{\mu_1}\mu_{Q},\quad
F_{QS}(Q,\DD)=-\frac12\frac{\mu_2}{\mu_1}\DD,\\
&\sigma^{dis}_{QS}=\beta'_1 Q(Q:A)+\beta'_2 \DD+ \beta'_3(Q\cdot\DD+\DD\cdot Q).
\end{align*}
When taking $\mathcal{F}(Q,\nabla Q)=\mathcal{F}_{LG}(Q,\nabla Q)$, for the well-posedness results of the Beris-Edwards's model on whole space and bounded domain, we refer to \cite{PZ1,PZ2,HD} and \cite{ADL1, ADL2}.

The second type is derived from the molecular kinetic theory by closure approximations.
In such models, the evolution of $Q$ is derived from the evolution of probability density function $f$ by relation (\ref{relation:Q-f}). However, one have to approximate the higher order moment such as
\begin{align}
\int_\BS\mm\mm\mm\mm f(\xx,\mm)\ud\mm
\end{align}
by using $Q$. This process is called closure approximation. There are various
kinds of closure approximation and then they lead to different models in $Q$-tensor form, which are summarized in \cite{Feng, FLS}. However, these models do not obey energy dissipation law. In \cite{WZZ3}, based on Doi's kinetic theory, the authors proposed a $Q$-tensor model with energy dissipation law by using the Bingham closure. In this paper, we are mainly concerned this model.
Before introducing it, we first give a brief description of the Bingham closure.

For a given configuration distribution function $f(\mm)$ satisfying
$$\int_{S^2}f(\mm)\ud\mm=1,~\int_{S^2}(\mm\mm-\frac13\II)f(\mm)\ud\mm=Q,$$
the Bingham closure is to use the quasi-equilibrium distribution (also called the Bingham distribution)
$$f_Q=\frac{1}{Z_Q}\exp(B_Q:\mm\mm),\qquad Z_Q=\int_{S^2}\exp(B_Q:\mm\mm)\ud\mm,$$
to approximate $f$. Here, $B_Q\in\mathbb{Q}$ depends on $Q$ and is determined by the following relation
$$\int_{S^2}(\mm\mm-\frac13\II)f_Q\ud\mm=Q.$$
By Proposition \ref{bingham_exi-unqi}, $B_Q$ can be uniquely determined for $Q\in\QP$. Then, the fourth-order moment and the sixth-order moment of $f$ are approximated by
$$M^{(4)}_Q=\int_{S^2}\mm\mm\mm\mm f_Q\ud\mm,\quad\text{and}\quad M^{(6)}_Q=\int_{S^2}\mm\mm\mm\mm\mm\mm f_Q\ud\mm.$$

Now we introduce the dynamic $Q$-tensor model presented in \cite{WZZ3}.
For given free energy functional $\mathcal{F}(Q,\nabla Q)$, define
\begin{align*}
 \mu_Q=\frac{\delta \CF(Q,\nabla Q)}{\delta Q}.
\end{align*}
We introduce the following two operators
\begin{align*}
 \mathcal{M}_Q(A)=&\frac13 A+Q\cdot A-A:M^{(4)}_Q,\\
 \mathcal{N}_Q(A)_{\alpha\beta}=&\partial_i\Big\{[\gamma_{\perp}(M^{(4)}_{\alpha\beta kl}\delta_{ij}
 -\frac13\delta_{\alpha\beta}Q_{kl}\delta_{ij})+(\gamma_{\parallel}-\gamma_{\perp})(
 M^{(6)}_{\alpha\beta klij}-\frac13 \delta_{\alpha\beta}M^{(4)}_{klij})]\partial A_{kl}\Big\}.
\end{align*}
Based on the Doi-Onsager's molecular theory, making use of the aforementioned  Bingham closure approximation, the new $Q$-tensor model is given as following \cite{WZZ3}:
\begin{align}
\frac{\pa{{Q}}}{\pa{t}}+\vv\cdot\nabla{Q}&=\frac{\ve}{De}\mathcal{N}_Q(\mu_Q)
  -\frac{2}{De}\Big(\CM_Q(\mu_Q)+\CM_Q^T(\mu_Q)\Big)+\CM_Q(\nabla\vv)+\CM_Q^T(\nabla\vv),\label{eq:Q-D}\\\nonumber
\frac{\pa{\vv}}{\pa{t}}+\vv\cdot\nabla\vv&=-\nabla{p}
+\frac{\gamma}{Re}\Delta\vv+\frac{1-\gamma}{2Re}\nabla\cdot(\DD:M_Q^{(4)})\\
&\quad\quad\quad+\frac{1-\gamma}{DeRe}\Big(2\nabla\cdot\CM_Q(\mu_Q)+\mu_Q:\nabla Q\Big),\label{eq:v-D}\\
\nabla\cdot \vv&=~0, \label{eq:v-0}
\end{align}
where $De$ and $Re$ are called Deborah number and Reynolds number respectively,
and $\gamma\in(0,1)$ is a constant.
The small parameter $\sqrt{\ve}$ characterizes the typical interaction distance, which is usually at the scale of molecule length.
The term $\mathcal{N}_Q(\mu_Q)$ represents the translational diffusion. An important feature of this model is that  (\ref{eq:Q-D})-(\ref{eq:v-0}) obeys the following basic energy dissipative law (see \cite{WZZ3})
\begin{align}\label{newQenergdissip}
 &\frac{d}{dt}\Big(\frac12\int_{\mathbb{R}^3}|\vv|^2\ud\xx+\frac{1-\gamma}{ReDe}\mathcal{F}(Q,\nabla Q)\Big)=
 -\int_{\mathbb{R}^3}\Big(\frac{\gamma}{Re}|\nabla \vv|^2+\frac{1-\gamma}{2Re}\DD:M^{(4)}_Q:\DD\nonumber\\
 &\qquad\qquad-\frac{\ve(1-\gamma)}{ReDe^2}\mu_Q:\mathcal{N}(\mu_Q)
 +\frac{4(1-\gamma)}{ReDe^2}\mu_Q:\mathcal{M}_{Q}(\mu_Q)\Big)\ud\xx.
\end{align}

In \cite{WZZ3}, the energy functional is also derived from Onsager's molecular theory.
\begin{align*}
 \mathcal{F}(Q,\nabla Q)&=\mathcal{F}_b(Q)+\mathcal{F}_e(Q,\nabla Q),
\end{align*}
where the bulk energy $\mathcal{F}_b(Q)$ and the elastic distortion energy $\mathcal{F}_e$ are respectively given by
\begin{align*}
\mathcal{F}_b(Q)=&~L_0\int \Big(-\ln{Z}_Q+Q:B_Q-
\frac{\alpha}{2}|Q|^2\Big)\ud\xx,\\
\mathcal{F}_e(Q,\nabla Q)=&~\frac\ve2\int\bigg\{
L_1|\nabla Q|^2+L_2\Big(\partial_i(Q_{ik})\partial_j(Q_{jk})+\partial_i(Q_{jk})\partial_j(Q_{ik})\Big)+L_3|\nabla Q^{(4)}|^2\nonumber\\
&+L_4\Big(\partial_i(Q^{(4)}_{iklm})\partial_j
(Q^{(4)}_{jklm})+\partial_i(Q^{(4)}_{jklm})\partial_j(Q^{(4)}_{iklm})\Big)+L_5\partial_i(Q^{(4)}_{ijkl})
\partial_j(Q_{kl})\bigg\}\ud\xx,\nonumber
\end{align*}
where $Q^{(4)}=Q^{(4)}(Q)$ is the fourth order symmetric traceless moment of the Bingham distribution $f_Q$. Namely,
\begin{align}
Q_{ijkl}^{(4)}:=&\int_{\BS}\Big\{{m}_{i}{m}_{j}{m}_{k}{m}_{l}
-\frac{1}{7}\Big(m_{i}m_{j}\delta_{kl}+
m_{k}m_{l}\delta_{ij}+m_{i}m_{k}\delta_{jl}
+m_{j}m_{l}\delta_{ik}\nonumber\\
&\qquad+m_{i}m_{l}\delta_{jk}+
m_{j}m_{k}\delta_{il}\Big)+\frac{1}{35}\Big(\delta_{ij}\delta_{kl}
+\delta_{ik}\delta_{jl}+\delta_{il}\delta_{jk}\Big)\Big\}f_Q\ud\mm.
\end{align}
The difference between $Q^{(4)}$ and $M_Q^{(4)}$ is that $Q^{(4)}$ is traceless, i.e. $Q_{ijkk}^{(4)}=0$, while $M_Q^{(4)}$ is not.
The bulk energy $\mathcal{F}_b$ is equivalent to the penalized energy derived by Ball-Majumdar in \cite{BM}.
Thus, the order parameter tensor $Q$ should satisfy the physical constraint (\ref{phy-constraint}).

The parameters appearing in the system (\ref{eq:Q-D})-(\ref{eq:v-0})
have clear physical significance but not are phenomenological. In \cite{WZZ3}, the coefficients
$L_i(i=0,1,\cdots,5)$ are also explicitly calculated in terms of physical molecular parameters.
The parameter $\ve$ appears in the elastic energy $\CF_e$ due to the fact that the ratios
between the coefficients of $\CF_e$ and the ones in $\CF_b$  are at the order of square of molecule length.
Another important feature of the molecule-based $Q$-tensor system
(\ref{eq:Q-D})-(\ref{eq:v-0}) is that the translational and rotational diffusions are still maintained.

\subsection{Motivations and main results}
The connection between different level of liquid crystal theories is a problem of both physical and mathematical importance.
Based on a formal asymptotical expansion, Kuzzu-Doi \cite{KD} and E-Zhang \cite{EZ} derived the Ericksen-Leslie
equation from the Doi-Onsager equations by taking small Deborah number limit for spacial homogeneous case and inhomogeneous case respectively.
Wang-Zhang-Zhang rigorously justified this limit in \cite{WZZ1} before the first singularity time of
the Ericksen-Leslie system. In \cite{WZZ4}, they also presented a rigorous derivation from Beris-Edwards model to Ericksen-Leslie model.
In \cite{WZZ3}, it is proposed a systematic study on the modeling for liquid crystals in both static and dynamic cases.
They derived a $Q$-tensor model from Onsager's molecular theory and Doi's kinetic theory, which is introduced in the previous subsection,
and also derived Oseen-Frank model and Ericksen-Leslie model.

The main aim of this paper is to prove the local well-posedness for strong solution of the molecule-based $Q$-tensor model,
and also to show that the strong solution will converges to the solution of Ericksen-Leslie system under the limit of Deborah number $De\to0$.

In this paper, to avoid some tedious technical difficulties, we will only consider the case
when the translational diffusion $\mathcal{N}_Q(\mu_Q)=0$ and the coefficients $L_0=1,L_3=L_4=L_5=0$. Then
\begin{align}
\mu_{Q}&~=\frac{\delta\CF_b(Q)}{\delta{Q}}+\frac{\delta\CF_e(\nabla Q)}{\delta{Q}},\\
\frac{\delta\CF_b(Q)}{\delta{Q}}&~=B_Q-\alpha Q,\\
\Big(\frac{\delta\CF_e(Q)}{\delta{Q}}\Big)_{ij}&~= -\ve\Big(L_1\Delta Q_{ij}+ L_2(Q_{ik,jk}+Q_{jk,ik})\Big)=:\ve\mathcal{L}(Q).
\end{align}
Then, the corresponding molecule-based $Q$-tensor system becomes :
\begin{align}
 \frac{\partial Q}{\partial t}+\vv\cdot\nabla Q=&-\frac{2}{De}\Big(\mathcal{M}_Q(B_Q-\alpha Q+\ve\CL(Q))
 +\mathcal{M}^{T}_Q(B_Q-\alpha Q+\ve\CL(Q))\Big)\nonumber\\
  &+\mathcal{M}_Q(\nabla\vv)+\mathcal{M}^T_Q(\nabla\vv),\label{eqnewQ1}\\
\frac{\partial \vv}{\partial t}+\vv\cdot\nabla \vv=&-\nabla p +\frac{\gamma}{Re}\Delta\vv+\frac{1-\gamma}{2Re}
       \nabla\cdot(\DD:M^{(4)}_Q)\nonumber\\
       &+\frac{1-\gamma}{DeRe}\nabla\cdot\Big(2\mathcal{M}_Q\big(B_Q-\alpha Q+\ve\CL(Q)\big)+\ve\sigma^d(Q,Q)\Big),\label{eqnewQ2}\\
 \nabla\cdot \vv=&~0, \label{eqnewQ3}
\end{align}
where $\sigma^d$ is defined by
\beno
\sigma^d_{ji}(Q,\widetilde Q)\eqdefa-\frac{\partial \mathcal{F}_e}{\partial Q_{kl,j}}\widetilde Q_{kl,i}
=-\big(L_1Q_{kl,j}\widetilde Q_{kl,i}+L_2Q_{km,m}\widetilde Q_{kj,i}+L_2Q_{kj,l}\widetilde Q_{kl,i}\big).
\eeno
It not hard to see that $\nabla\cdot\sigma^d(Q,Q)$ differs from $\mu_Q:\nabla Q$ with only pressure terms.

When $\alpha>\alpha^*$, the bulk energy function $\CF_b$ has stable uniaxial
critical points $Q=S_2(\nn\nn-\frac13\II)$ for any $\nn\in\BS$, which correspond
to nematic phase. Here, $S_2=S_2(\alpha)$ is a increasing function of $\alpha$ for $\alpha>\alpha^*$,
see the precise definition in (\ref{eqn:S}). Throughout this paper, we always assume $\alpha>\alpha^*$
and $L_1>0, L_1+2L_2>0$. Thus, it is known from Lemma 2.2 in \cite{WZZ4} that
\begin{align*}
\int\CL(Q):Q \ud\xx\ge c_0\int |\nabla Q|^2\ud\xx,
\end{align*}
for some constant $c_0>0$.

We first state the following the local well-posedness result.
\begin{theorem}\label{thm:main1}
Let $s\geq 2$ be an integer. $\nn^{\ast}\in\BS$ is a constant vector and $Q^{\ast}=S_2(\nn^{\ast}\nn^{\ast}-\frac13\II).$
If the initial data satisfies
\begin{align}
\vv_I(\xx)\in H^{s}(\mathbb{R}^3),\quad  Q_I(\xx)-Q^{\ast}\in H^{s+1}(\mathbb{R}^3),
\end{align}
with
\begin{align}
Q_I(\xx)\in\QPd:=\Big\{Q\in\mathbb{Q}: \text{all the eigenvalues of $Q$ belong to }[-\frac13+\delta, \frac23-\delta]\Big\},
\end{align}
for all $\xx\in\BR$, then there exists $T>0$ and a unique solution $(\vv,Q)$ of the $Q$-tensor system (\ref{eqnewQ1})-(\ref{eqnewQ3}) on $[0,T]$,
such that $\vv(0,\xx)=\vv_I(\xx),Q(0,\xx)=Q_I(\xx)$, and
\begin{align}
 \vv(t,\xx)\in&~ C([0,T];H^{s}(\mathbb{R}^3))\cap L^2(0,T;H^{s+1}(\mathbb{R}^3)),\\
 Q(t,\xx)-Q^{\ast}\in&~ C([0,T];H^{s+1}(\mathbb{R}^3)),
\end{align}
and $Q(t,\xx)\in \mathbb{Q}_{phy,\delta/2}$.
\end{theorem}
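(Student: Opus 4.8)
The plan is to treat the system \eqref{eqnewQ1}--\eqref{eqnewQ3} (with $\ve>0$ fixed) as a coupled quasilinear parabolic system and to run a standard energy/compactness argument. The $Q$-equation is parabolic because the top-order term $-\frac{2\ve}{De}\bigl(\CM_Q(\CL(Q))+\CM_Q^T(\CL(Q))\bigr)$ has principal part $-\frac{2\ve}{De}\,P(Q)\,\CL$, where $\CL$ is elliptic thanks to $L_1>0$ and $L_1+2L_2>0$, and $P(Q):A\mapsto\CM_Q(A)+\CM_Q^T(A)$ is, on the space $\mathbb{Q}$ of symmetric traceless matrices, a symmetric linear map depending smoothly on $Q\in\QP$ and positive definite there, uniformly on compact subsets (a structural property of the Bingham closure; one computes for instance $P(0)=\tfrac25\,\Id$). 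The $\vv$-equation is of Navier--Stokes type with the additional dissipation $\frac{1-\gamma}{2Re}\DD:M^{(4)}_Q$, which is nonnegative since $\DD:M^{(4)}_Q:\DD=\int_{\BS}(\mm\cdot\DD\cdot\mm)^2 f_Q\,\ud\mm\ge 0$, together with the incompressibility constraint. Since $B_Q$, $M^{(4)}_Q$ and their $Q$-derivatives are defined and smooth only on $\QP$, I would first replace them by globally defined smooth functions that coincide with the true quantities on a neighbourhood of $\QPd$ inside $\QP$; denote the resulting (globally parabolic) system by $(\mathrm{S}_\star)$, to be solved first, and show a posteriori that its short-time solution takes values where the modification is inactive.

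Next I would construct solutions of $(\mathrm{S}_\star)$ by an iteration (or Faedo--Galerkin) scheme, freezing at the previous step the $Q$-dependent coefficients, the transport terms and the forcing, so that each step is a linear parabolic problem for $Q^{n+1}-Q^\ast$ coupled with a linear Stokes problem for $\vv^{n+1}$, and establish uniform a priori bounds on $E(t):=\|\vv(t)\|_{H^s}^2+\|Q(t)-Q^\ast\|_{H^{s+1}}^2$. Applying $\pa^\alpha$ with $|\alpha|\le s$ to the $\vv$-equation and $|\alpha|\le s+1$ to the equation for $\bar Q:=Q-Q^\ast$, pairing with $\pa^\alpha\vv$ resp.\ $\pa^\alpha\bar Q$ and integrating by parts, the leading terms produce the dissipation $\frac{\gamma}{Re}\|\vv\|_{H^{s+1}}^2+\frac{\ve c_0}{De}\|\bar Q\|_{H^{s+2}}^2$ (for the $\bar Q$-estimate one combines the coercivity $\int\CL(Q):Q\,\ud\xx\ge c_0\int|\nabla Q|^2\,\ud\xx$ with the positive definiteness of $P(Q)$), while the remaining terms are controlled by Moser/Kato--Ponce commutator and product estimates, using that for $s\ge 2$ the spaces $H^s,H^{s+1}$ are algebras, $H^{s+1}\hookrightarrow W^{1,\infty}$ and $H^s\hookrightarrow L^\infty$, and that the modified coefficient maps are smooth with bounded derivatives. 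The point to watch is that the borderline error terms at $s=2$ are only just out of reach of naive embedding; they are handled by pairing the roughest factor against the parabolic gain $\vv\in L^2_tH^{s+1}$, $\bar Q\in L^2_tH^{s+2}$, estimating the other factor in $L^2$ (with interpolation where needed), and then absorbing it into the dissipation. The pressure is removed by the Leray projection. This yields a differential inequality $\frac{d}{dt}E\le\Phi(E)$ with $\Phi$ continuous, hence $E(t)\le 2E(0)$ on $[0,T]$ with $T=T(E(0),\ve,De,Re,\gamma,\delta)>0$; an Aubin--Lions compactness argument then gives a solution of $(\mathrm{S}_\star)$ with the stated regularity, together with $\bar Q\in L^2(0,T;H^{s+2})$.

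Finally I would remove the modification and prove uniqueness. Since $\bar Q\in C([0,T];H^{s+1})\hookrightarrow C([0,T];C^0(\BR))$ and $Q(0,\cdot)=Q_I$ takes values in $\QPd$, by continuity of $t\mapsto\bar Q(t)$ in $H^{s+1}$ and Sobolev embedding one can shrink $T$ so that $\sup_{[0,T]}\|Q(t,\cdot)-Q_I\|_{L^\infty(\BR)}<\delta/2$; Weyl's eigenvalue inequality then forces every eigenvalue of $Q(t,\xx)$ into $[-\tfrac13+\tfrac\delta2,\tfrac23-\tfrac\delta2]$, i.e.\ $Q(t,\xx)\in\mathbb{Q}_{phy,\delta/2}$ for all $(t,\xx)$, which is where the modification is inactive, so $(\vv,Q)$ solves the original system \eqref{eqnewQ1}--\eqref{eqnewQ3}. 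For uniqueness, given two such solutions I would estimate the difference $(\delta\vv,\delta\bar Q)$ in the lower norm $L^2\times H^1$, exploiting that one solution carries the full $H^s\times H^{s+1}$ regularity: each quasilinear difference $\CM_{Q_1}(\cdots)-\CM_{Q_2}(\cdots)$ splits into a leading piece $\CM_{Q_1}(\ve\CL(\delta Q))$, which again supplies dissipation, and a remainder pointwise bounded by $|\delta Q|$ times quantities controlled by the regular solution; a Gronwall argument then gives $\delta\vv\equiv 0$ and $\delta\bar Q\equiv 0$.

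The main difficulty is the coupling at the relatively low regularity $s=2$: closing the top-order energy estimate for the $\vv$-equation, whose second-order term $\nabla\cdot(\DD:M^{(4)}_Q)$ has only $C^1\cap H^{s+1}$ coefficients, and for the $Q$-equation, whose third-order coupling term $\nabla\cdot\CM_Q(\ve\CL(Q))$ enters the momentum equation, requires careful bookkeeping in which every marginal term is absorbed into the parabolic dissipation rather than estimated by Sobolev embedding alone; a secondary point is verifying that the combined elliptic operator $P(Q)\,\CL$ is uniformly coercive (equivalently, that $P(Q)$ stays uniformly positive definite) as $Q$ ranges over the relevant compact subset $\mathbb{Q}_{phy,\delta/2}$ of $\QP$, which is precisely why the physical constraint must be propagated alongside the estimates.
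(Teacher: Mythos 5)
Your overall strategy---iterating a linearized system, a closed a priori energy estimate, propagating $Q\in\mathbb{Q}_{phy,\delta/2}$ via continuity in $L^\infty$, and uniqueness through a low-norm difference---has the same shape as the paper's proof. However, the a priori estimate as you set it up does not close. Pairing the $\bar Q$-equation against $\partial^\alpha\bar Q$ for $|\alpha|\le s+1$ fails in two ways, both at top order. First, the principal parabolic term $-\ve\CM_Q(\CL(Q))$ paired with $\partial^\alpha\bar Q$ is not sign-definite: you argue from ``positive definiteness of $P(Q)$ plus coercivity of $\CL$'', but a composition of two self-adjoint positive-definite operators on $\mathbb{Q}$ need not be accretive in the Euclidean pairing---writing $\sigma_\CL$ for the symbol of $\CL$, the symmetrized form $\CM_Q\sigma_\CL+\sigma_\CL\CM_Q$ can have negative eigenvalues. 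This is a genuine obstruction as soon as $L_2\ne 0$, since then $\sigma_\CL$ is not scalar and does not commute with $\CM_Q$. Second, the top-order cross stress terms do not cancel with your pairing: after one integration by parts the $Q$-estimate produces $-\langle\CM_Q(\nabla^{s+1}\vv),\nabla^{s+2}\bar Q\rangle$ while the $\vv$-estimate produces $-\ve\langle\CM_Q(\nabla^s\CL(Q)),\nabla^{s+1}\vv\rangle$; these involve different $(s+2)$-derivative combinations of $Q$, so there is no cancellation, and the unmatched term cannot be absorbed by Young's inequality because the $Q$-dissipation carries an $\ve$-weight while the cross term does not.

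The paper's estimate closes because it pairs the $Q$-equation against $\nabla^s\CL(Q)$ rather than $\nabla^{s+1}\bar Q$, i.e.\ it propagates the weighted energy $E_s=\int\big(|Q-Q^\ast|^2+\frac{(1-\gamma)\ve}{2DeRe}(L_1|\nabla^{s+1}Q|^2+2L_2|\nabla^sQ_{ij,j}|^2)+\frac12|\nabla^s\vv|^2+\cdots\big)\,\ud\xx$. With this pairing the dissipation is immediate from Lemma~\ref{CM-lemma}(iii) in the form $\langle\CM_Q(\nabla^s\CL(Q)),\nabla^s\CL(Q)\rangle\ge \nu\|\nabla^s\CL(Q)\|^2_{L^2}$, and the two top-order cross terms become $\pm\frac{2(1-\gamma)\ve}{DeRe}\langle\CM_Q(\nabla^{s+1}\vv),\nabla^s\CL(Q)\rangle$ and cancel exactly by the self-adjointness of $\CM_Q$ (Lemma~\ref{CM-lemma}(ii)). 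This choice is nothing but the linearization of the basic energy dissipation law (\ref{newQenergdissip}); once you reorganize the energy functional around it, the rest of your plan (cutoff of the Bingham map, iteration, Aubin--Lions or a Cauchy argument, propagation of the physical constraint, low-norm uniqueness) goes through.
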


Next, we consider the small Deborah number limit $De\to 0$. To obtain the full Ericksen-Leslie system,
we have to take $De=O(\ve)$ as in \cite{WZZ3}. For simplicity, we choose $De=\ve$.
Then the system can be written as:
\begin{align}
 \frac{\partial Q^{\ve}}{\partial t}+\vv^{\ve}\cdot\nabla Q^{\ve}
 =&-\frac{2}{\varepsilon}\Big(\mathcal{M}_{Q^{\ve}}(B_{Q^{\ve}}-\alpha Q^{\ve}+\ve\CL(Q^{\ve}))
 +\mathcal{M}^{T}_{Q^{\ve}}(B_{Q^{\ve}}-\alpha Q^{\ve}+\ve\CL(Q^{\ve}))\Big)\nonumber\\
  &+\mathcal{M}_{Q^{\ve}}(\nabla\vv^{\ve})+\mathcal{M}^T_{Q^{\ve}}(\nabla\vv^{\ve}),\label{eqepsiQ1}\\
\frac{\partial \vv^{\ve}}{\partial t}+\vv^{\ve}\cdot\nabla \vv^{\ve}
=&-\nabla p^\ve +\frac{\gamma}{Re}\Delta\vv^{\ve}+\frac{1-\gamma}{2Re}
       \nabla\cdot(\DD:M^{(4)}_{Q^{\ve}})\nonumber\\
       &+\frac{1-\gamma}{\varepsilon Re}\nabla\cdot\Big(2\mathcal{M}_{Q^{\ve}}\big(B_{Q^{\ve}}-\alpha Q^{\ve}+\ve\CL(Q^{\ve})\big)+\ve\sigma^d(Q^{\ve},Q^{\ve})\Big),\label{eqepsiQ2}\\
 \nabla\cdot \vv^{\ve}=&~0. \label{eqepsiQ3}
\end{align}
We define the coefficient in Ericksen-Leslie theory as:
\begin{align}\label{leslie1-intro}
&\alpha_1=-\frac{S_4}{2},\qquad
\alpha_2=-\frac{S_2}{2}(1+\frac{1}{\zeta}),\qquad
\alpha_3=-\frac{S_2}{2}(1-\frac{1}{\zeta}),\nonumber\\
&\alpha_4=\frac{4}{15}-\frac{5}{21}S_2-\frac{1}{35}S_4,\quad
\alpha_5=\frac{1}{7}S_4+\frac{6}{7}S_2,\quad
\alpha_6=\frac{1}{7}S_4-\frac{1}{7}S_2,
\end{align}
and
\begin{align}\label{leslie2-intro}
\gamma_1=\frac{1}{\frac1{3S_2}+\frac2{3S_2^2}-\frac2{S_2^2\alpha}},\quad\gamma_2=-S_2,
\quad\zeta\eqdefa-\frac{\gamma_2}{\gamma_1}=\frac13+\frac2{3S_2}-\frac{2}{S_2\alpha},
\end{align}
and the elastic constants in Oseen-Frank energy are given by
\begin{align}\label{OF-LD-relation-intro}
k_1=k_3=2(L_1+L_2)S_2^2,\quad k_2=2L_1S_2^2,\quad k_4=L_2S_2^2.
\end{align}
Here $S_4=S_4(\alpha)$ is also a constant related to $\alpha$, see the definition in (\ref{eqn:S}).

For a given direction field $\nn(t,\xx)$, we define
\begin{align}
\CP^{out}(\QQ)=&~\QQ-(\nn\nn\cdot\QQ+\QQ\cdot\nn\nn)-2(\QQ:\nn\nn)\nn\nn, \label{def:Pout-intro}\\
\mathcal{H}_{\nn}(Q)=&~\psi_1(\nn\nn-\frac13\II)(\nn\nn:Q)
+\psi_2\big(-Q+\nn\nn\cdot Q+Q\cdot\nn\nn-\frac23\II(\nn\nn:Q)\big),\label{def:Hn-intro}
\end{align}
where the $\psi_1$ and $\psi_2$ are constants depending on $\alpha$.
$\CH_\nn(Q)$ is the linearized operator of $B_Q-\alpha Q$ around the local critical point $S_2(\nn\nn-\frac13\II)$.
The detailed motivation of the above definitions will be explained in Section 4.

The second main result of this paper is stated as follows.
\begin{theorem}\label{thm:main2}
Let $(\nn(t,\xx), \vv(t,\xx))$ be a solution of the Ericksen-Leslie system (\ref{eq:EL-v})--(\ref{eq:EL-n}) on $[0,T]$
with the coefficients given by (\ref{leslie1-intro})-(\ref{OF-LD-relation-intro}), which satisfies
\beno
\vv\in C([0,T];H^{k}), \quad \nabla\nn\in C([0,T];H^{k})\quad \textrm{for}\quad k\ge 20.
\eeno
Let $Q_0(t,x)=S_2\big(\nn(t,\xx)\nn(t,\xx)-\II\big)$ and
the functions $\big(Q_1,Q_2,Q_3, \vv_1,\vv_2\big)$ are determined by Proposition \ref{prop:Hilbert}.
Assume that the initial data $(Q^{\ve}_I, \vv^\ve_I)$ takes the form
\begin{align*}
Q_I^\ve(\xx)=\sum^3_{k=0}\ve^kQ_{3}(0,\xx)+\ve^3Q_{I,R}^\ve(\xx),~~~
\vv_I^\ve(\xx)=\sum^3_{k=0}\ve^k\vv_{k}(0,\xx)+\ve^3\vv_{I,R}^\ve(\xx),
\end{align*}
where  $(Q_{I,R}^\ve, \vv_{I,R}^\ve)$ satisfies
\begin{align}
\|\vv_{I,R}^\ve\|_{H^2}+\|Q_{I,R}^\ve\|_{H^3}+\ve^{-1}\|\CP^{out}(Q^\ve_{I,R})\|_{L^2}\le E_0.\non
\end{align}
Then there exists $\ve_0>0$ and $E_1>0$ such that
for all $\ve<\ve_0$, the system (\ref{eqepsiQ1})--(\ref{eqepsiQ3}) has a unique solution
$(Q^\ve(t,\xx), \vv^\ve(t,\xx))$ on $[0,T]$ which has the expansion
\begin{align*}
Q^\ve(t,\xx)=\sum^3_{k=0}\ve^kQ_k(t,\xx)+\ve^3Q_R(t,\xx),~~~
\vv^\ve(t,\xx)=\sum^3_{k=0}\ve^k\vv_k(t,\xx)+\ve^3\vv_R(t,\xx),
\end{align*}
where $(Q_R,\vv_R)$ satisfies
\begin{align*}
\Ef(Q_R(t),\vv_R(t))\le E_1.
\end{align*}
Here $\Ef(Q,\vv)$ is defined by
\begin{align*}\nonumber
\Ef(Q,\vv)\eqdefa&~\frac12\int\Big(|\vv|^2+\CJ^{-1}_{\nn}(Q):Q+\frac{1-\gamma}{\ve Re}\CH^{\ve}_{\nn}(Q):Q\Big)
 +\ve^2\Big(|\nabla \vv|^2\\
 &+\frac{1-\gamma}{\ve Re}\CH^{\ve}_{\nn}(\nabla Q):\nabla Q\Big)
 +\ve^4\Big(|\Delta \vv|^2+\frac{1-\gamma}{\ve Re}\CH^{\ve}_{\nn}(\Delta Q):\Delta Q\Big)d\xx,\label{eq:energy functional}
\end{align*}
and $\CH^\ve_\nn(Q)=\CH_{\nn}(Q)+\ve\CL(Q)$.
\end{theorem}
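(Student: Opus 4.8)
The plan is to carry out the Hilbert expansion justification in the now-classical three-step scheme: (i) construct the formal expansion terms $Q_k, \vv_k$ ($k=0,1,2,3$) by plugging the ansatz $Q^\ve = \sum_{k=0}^3 \ve^k Q_k + \ve^3 Q_R$, $\vv^\ve = \sum_{k=0}^3 \ve^k \vv_k + \ve^3 \vv_R$ into \eqref{eqepsiQ1}--\eqref{eqepsiQ3} and collecting powers of $\ve$; (ii) derive the remainder system for $(Q_R,\vv_R)$ and show it is consistent (i.e. the source terms are $O(1)$ in suitable norms, uniformly in $\ve$); and (iii) close a uniform-in-$\ve$ energy estimate for the remainder using the functional $\Ef$, then conclude existence on the fixed interval $[0,T]$ by a continuation argument. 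Much of the scaffolding is already available: the leading order $Q_0 = S_2(\nn\nn - \tfrac13\II)$ is forced by the requirement that $\CM_{Q_0}(B_{Q_0}-\alpha Q_0) + \CM_{Q_0}^T(\cdots)=0$ (the $\ve^{-1}$ balance), which is exactly the statement that $Q_0$ sits at a uniaxial critical point of $\CF_b$; the next orders are governed by the linearized operator $\CH_\nn$ introduced in \eqref{def:Hn-intro}, whose spectral structure (it is negative definite on the "out" directions $\CP^{out}$ and degenerate along the $\nn$-rotation direction) is what makes the limit land on the Ericksen-Leslie system with coefficients \eqref{leslie1-intro}--\eqref{OF-LD-relation-intro}. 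I would invoke Proposition \ref{prop:Hilbert} for the solvability and regularity of $(Q_1,Q_2,Q_3,\vv_1,\vv_2)$, so Step (i) is essentially bookkeeping.

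For Step (ii), I would substitute the truncated ansatz into the system and isolate the equations satisfied by $(Q_R, \vv_R)$. The crucial point is the structure of the singular term: the $O(\ve^{-1})$ operator acting on $Q_R$ is, after linearization about $Q_0$, precisely $-\tfrac{2}{\ve}(\CM_{Q_0}\CH_\nn(Q_R) + \CM_{Q_0}^T\CH_\nn(Q_R))$ plus the $\ve$-correction $\CL(Q_R)$ absorbed into $\CH^\ve_\nn$; this is why the energy functional $\Ef$ weights $Q_R$ by $\CH^\ve_\nn(Q_R):Q_R$ (positive up to the null direction) and by $\CJ_\nn^{-1}$ on the null direction. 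I would split $Q_R = Q_R^{in} + Q_R^{out}$ according to the $\nn$-adapted decomposition, noting $\CP^{out}(Q_R^{in})=0$; the "out" part is controlled directly by the coercivity of $\CH^\ve_\nn$ with a gain of $\ve^{-1}$ (hence the term $\ve^{-1}\|\CP^{out}(Q^\ve_{I,R})\|_{L^2}$ in the hypothesis), while the "in" part (the director fluctuation) solves a transported equation coupled to $\vv_R$ and is estimated at $O(1)$.

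For Step (iii), the heart of the matter is the uniform energy estimate. Differentiating $\Ef(Q_R,\vv_R)$ in time and using \eqref{eqepsiQ1}--\eqref{eqepsiQ3}, I would show
\begin{align*}
\frac{d}{dt}\Ef(Q_R,\vv_R) \le C\,\Ef(Q_R,\vv_R) + C\,\Ef(Q_R,\vv_R)^{3/2} + C,
\end{align*}
where the constant $C$ depends on $T$, on $\|\vv\|_{C([0,T];H^k)}$, $\|\nabla\nn\|_{C([0,T];H^k)}$ and on $E_0$, but not on $\ve$. The key cancellations are: the singular $O(\ve^{-1})$ self-interaction terms cancel by the symmetry $\CM_Q/\CM_Q^T$ built into the dissipation law \eqref{newQenergdissip}; the cross terms between $\vv_R$ and $\CM_{Q^\ve}(\CH^\ve_\nn(Q_R))$ in the two equations cancel against each other after integration by parts (this is the $Q$-tensor analogue of the $\nn\times\hh$ cancellation in \eqref{EL_energy_law}); and the "bad" $O(\ve^{-1})$ terms that do not cancel are in fact of the form $\ve^{-1}\CP^{out}$-quantities multiplied by small factors and are absorbed by the $\ve^{-1}$-gain in the coercivity. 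The source terms coming from the truncation error (the $O(\ve)$ leftover in the expansion) are bounded using the regularity $k\ge 20$ of $(\nn,\vv)$ — this is where the large Sobolev index is consumed, since one needs $H^3$-type control on $Q_R$ and the expansion terms $Q_k$ involve increasingly many derivatives of $\nn$. Finally, a standard Grönwall/bootstrap argument shows $\Ef(Q_R(t),\vv_R(t))$ stays bounded by some $E_1$ on $[0,T]$ provided $\ve<\ve_0$; combined with Theorem \ref{thm:main1} for local existence and the a priori bound for continuation (also checking that $Q^\ve$ stays in $\QPd$ so $B_{Q^\ve}$ remains well-defined, via the smallness of $\ve^3 Q_R$ in $L^\infty$), this yields the solution on the full interval with the claimed expansion.

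The main obstacle I expect is controlling the singular terms in the energy estimate: one must verify that every $O(\ve^{-1})$ contribution either cancels by the variational/dissipative structure or is of "out" type and thus dominated by the $\ve^{-1}$-coercivity of $\CH^\ve_\nn$, with no residual $O(\ve^{-1})$ term surviving — this requires careful expansion of $\CM_{Q^\ve}$, $M^{(4)}_{Q^\ve}$ and $B_{Q^\ve}$ about $Q_0$ to sufficient order and exploiting that the zeroth- and first-order obstructions vanish by construction of $Q_0, Q_1$. A secondary difficulty is that the "in" (director) component of $Q_R$ has no dissipation gain, so its estimate must be closed purely by transport structure and the coupling to $\vv_R$, paralleling the Ericksen-Leslie energy law \eqref{EL_energy_law}; keeping track of the higher-derivative versions ($\nabla Q_R$, $\Delta Q_R$ weighted by $\ve^2, \ve^4$ in $\Ef$) through this argument is technically heavy but routine once the base case is understood.
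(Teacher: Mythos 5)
Your proposal follows the same three-step scheme the paper uses in Section 5: build $(Q_k,\vv_k)$ via Proposition \ref{prop:Hilbert} and the Taylor expansions of $B_{Q^\ve}$, $Z_{Q^\ve}$, $M^{(4)}_{Q^\ve}$ about $Q_0$; derive the remainder system (\ref{eq:QR})--(\ref{eq:imcomR}) with controlled sources; and close a uniform energy estimate for $\Ef(Q_R,\vv_R)$ using the coercivity of $\CH^\ve_\nn$ on $\QO$, the identity $\langle\CM_{Q_0}(\CH^\ve_\nn(B)),A\rangle=\langle\CJ_\nn(A),\CH^\ve_\nn(B)\rangle$ to cancel the singular cross terms, and a continuation argument. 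The decomposition of $Q_R$ into $\QI\oplus\QO$, the role of $\ve^{-1}\|\CP^{out}(Q^\ve_{I,R})\|_{L^2}$, and the consumption of $k\ge 20$ derivatives of $(\nn,\vv)$ are all correctly identified.

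The one place the argument would not close as written is the displayed energy inequality $\frac{d}{dt}\Ef\le C\Ef+C\Ef^{3/2}+C$ with $\ve$-independent $C$. Such a bound only controls $\Ef$ on a time scale determined by $C$ and $\Ef(0)$, so it cannot reach an arbitrary prescribed $T$; the stated ``provided $\ve<\ve_0$'' would then play no role. The paper's Proposition \ref{prop:energy} gives the sharper form $\frac{d}{dt}\Ef+\Ff\le C(\ve\Ef)(1+\Ef)+\ve f(\Ef)+C(\ve\Ef)\ve\Ff$, where every superlinear and $\Ff$-weighted contribution carries an explicit $\ve$-prefactor. This is not cosmetic: it is exactly what the high order of the expansion (to $\ve^3 Q_R$) buys, via the \emph{good-term} class $\mathfrak{R}$ satisfying $\|\mathfrak{R}\|_{L^2}+\ve\|\nabla\mathfrak{R}\|_{L^2}+\ve^2\|\Delta\mathfrak{R}\|_{L^2}\le C(\ve E)(1+E+\ve F)+\ve f(E)$. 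For $\ve<\ve_0$ the nonlinear and $\ve\Ff$ terms become negligible, $\frac{d}{dt}\Ef\lesssim 1+\Ef$, and Gronwall gives $\Ef(t)\le e^t(2+C_1E_0)-2=:E_1$ on all of $[0,T]$. You should state your energy inequality in this $\ve$-weighted form; otherwise the bootstrap does not reach $[0,T]$ and the theorem as stated does not follow.
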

\begin{remark}
It can be observed from \cite{EZ} that the Leslie coefficients of Ericksen-Leslie system derived from the Doi-Onsager system
have same forms as (\ref{leslie1-intro})-(\ref{leslie2-intro}) except for $\gamma_1$.
The only difference is due to the Bingham closure approximation.
\end{remark}

The remaining sections of this paper are organized as follows. In Section 2,
the important properties of the Bingham closure and the critical point are presented.
Section 3 is devoted to the proof for the existence of the local strong solution of the molecule-based $Q$-tensor system.
In Section 4, we present some important linearized operators which will be used in deriving the Ericksen-Leslie system
from the molecule-based $Q$-tensor system. In Section 5, by using the Hilbert expansion method, we present a rigorous derivation
from the molecule-based $Q$-tensor theory to the Ericksen-Leslie theory.

\section{The Bingham closure and the critical points}
This section is mainly concerned to the important properties of the Bingham closure and the critical points.

\subsection{The Bingham closure and Bingham map}
The Bingham closure plays an important role in the system (\ref{eq:Q-general-intro1})-(\ref{eq:Q-general-intro3}).
For this, one should find $B_Q\in\mathbb{Q}$ such that
\begin{equation}\label{relationQB00}
\int_{\BS}(\mm\mm-\frac13\II)\frac{\exp(B_Q:\mm\mm)}{\int_{\BS}\exp(B_Q:\mm'\mm')d\mm'}d\mm=Q,
\end{equation}
for a given $Q\in\QP$.
%Apparently, for the existence of $B_Q$, the eigenvalues of $Q$ must belong in the interval
%$(-\frac13,\frac23)$, namely, $Q\in\QP$.
The following proposition tells us that $B_Q$ can be uniquely defined for any $Q\in\QP$.
 We call this map from $Q\in\QP$ to $B_Q\in\mathbb{Q}$ Bingham map.
\begin{proposition}[Existence and uniqueness of $B_Q$]\label{bingham_exi-unqi}
For a given $Q\in \QP$, there exists a unique $B_Q\in \mathbb{Q}$
such that (\ref{relationQB00}) holds.
\end{proposition}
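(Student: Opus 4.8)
The plan is to exhibit $B_Q$ as the unique critical point of a strictly convex functional on $\mathbb{Q}$, so that existence and uniqueness follow from standard variational arguments. Define, for $B\in\mathbb{Q}$,
\[
\Phi_Q(B)\eqdefa \ln\!\Big(\int_{\BS}\exp(B:\mm\mm)\,\ud\mm\Big)-B:\Big(Q+\tfrac13\II\Big).
\]
Since $B$ is traceless, $B:\II=0$, so the last term is just $-B:Q$; I keep the $\frac13\II$ shift because it makes the gradient computation land exactly on $(\ref{relationQB00})$. A direct computation gives
\[
\nabla_B\Phi_Q(B)=\frac{\int_{\BS}\mm\mm\,\exp(B:\mm\mm)\,\ud\mm}{\int_{\BS}\exp(B:\mm\mm)\,\ud\mm}-\Big(Q+\tfrac13\II\Big),
\]
where the gradient is taken in $\mathbb{Q}$, i.e. one projects onto symmetric traceless tensors; the projection of $\II$ is $0$, so $\nabla_B\Phi_Q(B)=0$ is exactly $(\ref{relationQB00})$. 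Thus it suffices to show $\Phi_Q$ has a unique critical point.

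Next I would establish strict convexity. The Hessian of $B\mapsto \ln\int_{\BS}\exp(B:\mm\mm)\,\ud\mm$ is the covariance matrix of the random tensor $\mm\mm$ under the probability measure $f_B(\mm)\,\ud\mm$ with $f_B=\exp(B:\mm\mm)/Z_B$; that is, for $A\in\mathbb{Q}$,
\[
A:\mathrm{Hess}\,\Phi_Q(B):A=\mathrm{Var}_{f_B}\big(A:\mm\mm\big)\ge 0,
\]
with equality iff $A:\mm\mm$ is $f_B$-a.s. constant on $\BS$. Since $f_B>0$ everywhere on $\BS$ and $\mm\mapsto A:\mm\mm$ is a nonconstant spherical harmonic of degree $2$ whenever $A\neq0$ (here one uses that $A$ is symmetric traceless), this forces $A=0$. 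Hence $\Phi_Q$ is strictly convex on the five-dimensional space $\mathbb{Q}$, so it has at most one critical point, and any critical point is the unique global minimizer.

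For existence I would show $\Phi_Q$ is coercive, i.e. $\Phi_Q(B)\to+\infty$ as $|B|\to\infty$; then the minimizer exists by the direct method. Write $B=t\,\omega$ with $|\omega|=1$, $t\to\infty$. The dominant behavior of $\ln\int_{\BS}\exp(t\,\omega:\mm\mm)\,\ud\mm$ is $t\,\lambda_{\max}(\omega)$ up to a $\log t$ correction from Laplace asymptotics around the maximizing direction(s), so
\[
\Phi_Q(t\omega)= t\Big(\lambda_{\max}(\omega)-\omega:\big(Q+\tfrac13\II\big)\Big)-O(\log t).
\]
The bracket is strictly positive for every unit $\omega$: it equals $\max_{|\mm|=1}\omega:(\mm\mm) - \omega:(Q+\frac13\II)$, and by $(\ref{relationQB00})$-type reasoning $Q+\frac13\II=\int_{\BS}\mm\mm f\,\ud\mm$ for a probability density $f$, whence $\omega:(Q+\frac13\II)$ is a strict convex average of the values $\omega:\mm\mm<\lambda_{\max}(\omega)$ — the average is strict precisely because the physical constraint $(\ref{phy-constraint})$ forbids $Q$ from being a single delta mass, i.e. $Q+\frac13\II$ lies in the \emph{open} convex hull generated by the $\mm\mm$'s. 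Compactness of the unit sphere $\{|\omega|=1\}$ in $\mathbb{Q}$ then gives a uniform lower bound $\lambda_{\max}(\omega)-\omega:(Q+\frac13\II)\ge c>0$, so $\Phi_Q(B)\ge c|B|-O(\log|B|)\to\infty$. This coercivity step, and in particular the quantitative use of the strict inclusion $Q\in\QP$ to separate $Q+\frac13\II$ from the boundary of the moment body, is the technical heart of the argument; the convexity and gradient computations are routine once the functional $\Phi_Q$ is written down.
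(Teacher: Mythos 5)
Your strategy is the same as the paper's at its core: both reduce the proposition to minimizing $\Phi_Q(B)=\omega(B)-B:Q$ over $B\in\mathbb{Q}$, where $\omega(B)=\ln\int_\BS e^{B:\mm\mm}\ud\mm$; the paper simply phrases this as computing the convex conjugate $\omega^*(Q)=\sup_B\big(B:Q-\omega(B)\big)$ and showing its domain is exactly $\QP$. Your strict-convexity step via the variance of the degree-two spherical harmonic $\mm\mapsto A:\mm\mm$ is correct and cleaner than the paper's Cauchy--Schwarz estimate (which only gives plain convexity at that point; the paper proves strict positivity of $\nabla_B Q$ later in Proposition~\ref{prop:positive}). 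The routes diverge only in how existence is secured: you establish coercivity and invoke the direct method, whereas the paper shows the sup is finite via a measure-theoretic bound and then (implicitly) uses the standard fact that a convex conjugate is attained at interior points of its effective domain. Your coercivity route is arguably more self-contained, modulo the gap below; the uniform-in-$\omega$ Laplace lower bound you invoke is harmless once one observes that integrating over a cap of radius $t^{-1/2}$ around a maximizing direction gives $\ln\int_\BS e^{t\omega:\mm\mm}\ud\mm\ge t\lambda_{\max}(\omega)-\ln t-C$ with constants independent of the unit vector $\omega$.

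The genuine gap sits exactly where you flag the technical heart. To get $\lambda_{\max}(\omega)-\omega:(Q+\tfrac13\II)>0$ for every unit $\omega\in\mathbb{Q}$, you assert that $Q+\tfrac13\II$ lies in the \emph{open} convex hull of $\{\mm\mm:\mm\in\BS\}$, justified by ``$(\ref{relationQB00})$-type reasoning'' and the physical constraint $(\ref{phy-constraint})$. But the statement that $\QP$ coincides with the interior of the moment body is essentially equivalent to the existence half of the very proposition you are proving, so as written this is circular. The non-circular route is the paper's eigenvalue-pairing Claim (a von Neumann--type trace inequality): with $\omega_1\le\omega_2\le\omega_3$ the eigenvalues of $\omega$ (so $\omega_3=\lambda_{\max}(\omega)$) and $q_1\le q_2\le q_3\in(-\tfrac13,\tfrac23)$ those of $Q$, one has $\omega:(Q+\tfrac13\II)=\omega:Q\le\sum_i\omega_iq_i=\sum_i\omega_i\big(q_i+\tfrac13\big)$, a convex combination of the $\omega_i$ with strictly positive weights $q_i+\tfrac13$ summing to $1$. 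Since $\mathrm{tr}\,\omega=0$ and $\omega\ne0$ force $\omega_1<\omega_3$, this combination is strictly below $\omega_3$; compactness of the unit sphere in $\mathbb{Q}$ and continuity in $\omega$ then upgrade this to the uniform constant $c>0$ your coercivity estimate needs. With that patch inserted, your proof is complete and equivalent to the one in the paper.
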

\begin{proof}
A sketched proof is given in \cite{BM}. Here we give a detailed proof for completeness.

Define $\omega: \mathbb{Q}\to \mathbb{R}$ as:
\begin{align}
\omega(B)=\ln \int_\BS \mathrm{e}^{\mm\mm:B}\ud\mm.
\end{align}
Obviously, $\omega(B)$ depends only on its eigenvalues. From the fact that
\begin{align}
\int_\BS \mathrm{e}^{\mm\mm:B_1}\ud\mm\int_\BS \mathrm{e}^{\mm\mm:B_2}\ud\mm\ge \Big(\int_\BS \mathrm{e}^{\mm\mm:(B_1+B_2)/2}\ud\mm\Big)^2,
\end{align}
we know $\omega(B)$ is convex. Then we can define its {\it convex conjugate} by Legendre transformation:
$\omega^*(Q): X\to\mathbb{R}$ as
\begin{align}
\omega^*(Q)=\sup_{B\in\mathbb{Q}}\big(B:Q-\omega(B)\big)
\end{align}
with domain $X$ defined by
\begin{align}
X=\big\{Q: \sup_{B\in\mathbb{Q}}\big(B:Q-\omega(B)\big) <+\infty\big\}.
\end{align}
We will prove that $X=\QP$. For this, we need an elementary inequality:

{\it Claim:} Let $b_1\le b_2\le b_3$ and $q_1\le q_2\le q_3$ are the eigenvalues of $B$ and $Q$ respectively, then
$B:Q\le b_1q_1+b_2q_2+b_3q_3$.

To prove it, we can assume $B$ is diagonal without loss of generality. Suppose
$Q=q_1\nn_1\otimes\nn_1+q_2\nn_2\otimes\nn_2+q_3\nn_3\otimes\nn_3$ with $\nn_i\cdot\nn_j=\delta^i_j.$ Then
$B:Q=\sum_{i,j=1,2,3}b_iq_j n_{ji}^2$, where $\nn_i=(n_{i1}, n_{i2}, n_{i3})^T$. A direct computation shows that
\begin{align*}
b_1q_1+b_2q_2+b_3q_3-B:Q=&~(q_1-q_2)(b_2-b_3)n_{13}^2+(q_1-q_2)(b_1-b_2)(1-n_{11}^2)\\
&~+(q_2-q_3)(b_1-b_2)n_{31}^2+(q_2-q_3)(b_2-b_3)(1-n_{33}^2)\\
\ge &~0,
\end{align*}
which yields our claim.

For $Q\in\QP$ with eigenvalues $-\frac13< q_1\le q_2\le q_3<2/3$, and $B\in\mathbb{Q}$ with eigenvalues $\{b_1,b_2,b_3\}$, we can assume that $b_1\le b_2\le 0\le b_3$ or $b_1\le 0\le b_2\le b_3$.
Consider
$$A=\Big\{\mm: m_1^2-\frac13-q_1 <0, ~m_2^2-\frac13-q_2<0, ~m_3^2-\frac13-q_3>0 \Big\},$$
or for the later case
$$A=\Big\{\mm: m_1^2-\frac13-q_1 <0, ~m_2^2-\frac13-q_2>0, ~m_3^2-\frac13-q_3>0 \Big\}.$$
We know that the measure of $A$ is positive in each case.
Therefore,
\begin{align}
\exp&(\omega(B)-B:Q)\ge \exp(\omega(B)-q_1b_1-q_2b_2-q_3b_3)\nonumber\\
=&~\int_{\BS}\exp(b_1m_1^2+b_2m_2^2+b_3m_3^2-q_1b_1-q_2b_2-q_3b_3)\ud\mm\nonumber\\
=&~\int_{\BS}\exp\big(b_1(m_1^2-\frac13-q_1)+b_2(m_2^2-\frac13-q_2)+b_3(m_3^2-\frac13-q_3)\big)\ud\mm\nonumber\\
\ge & ~\int_A 1\ud\mm = \text{meas}(A).\nonumber
\end{align}
This implies that $B:Q-\omega(B)\le -\ln(\text{meas}(A))$ is bounded. Hence $Q\in X$, i.e. $\QP \subseteq X$.

On the other hand, if $q_1\le -\frac13$, then we take $b_1=2b\to -\infty, b_2=b_3=-b$, then
\begin{align}
\exp(B:Q-\omega(B))=&~\int_{\BS}\exp\big(b(-m_1^2+\frac13+q_1)\big)\ud\mm\nonumber\\
\ge &~ 4\pi e^{b(\frac13+q_1)} \to +\infty.
\end{align}
If $q_3>\frac23$, taking $b_3=2b\to+\infty, b_1=b_2=-b$, then we can also
obtain that $\exp(B:Q-\omega(B))$ is unbounded, which implies $X \subseteq\QP$. Therefore, $X =\QP$.

Therefore, for any $Q\in\QP$, there exists $B\in\mathbb{Q}$ such that
$$B:Q-\omega(B)=\sup_{B_1\in\mathbb{Q}}\big(B_1:Q-\omega(B_1)\big).$$
Thus
\begin{align}\label{relation:QB}
Q=(\nabla_B\omega)(B)=\frac{\int_{S^2}(\mm\mm-\frac13\II)\exp(B:\mm\mm)\ud\mm}
{\int_{S^2}\exp(B:\mm\mm)\ud\mm}.
\end{align}
We let $B_Q=B$, then the existence of $B_Q$ is proved. Since $\omega(B)$ is convex, we can deduce that
$(\nabla_{B}\omega)(B_1)\neq (\nabla_{B}\omega)(B_2)$ for $B_1\neq B_2$, which implies the uniqueness.
\end{proof}
The map from $\QP$ to $\mathbb{Q}$ which satisfies (\ref{relation:QB}) is a diffeomorphism, and so is its inverse.
We denote them by $B=B(Q): \QP\to\mathbb{Q}$ and $Q=Q(B): \mathbb{Q}\to\QP$ respectively.
For $\Lambda, \delta>0$, we introduce compact subsets of $\mathbb{Q}$ as
\begin{align}
\mathbb{Q}_{\Lambda}&~ =\{Q\in\mathbb{Q}: \text{all the eigenvalues of $Q$ belong to }[-\Lambda, \Lambda]\},\\
\QPd&~=\{Q\in\mathbb{Q}: \text{all the eigenvalues of $Q$ belong to }[-\frac13+\delta, \frac23-\delta]\}.
\end{align}
The next proposition tells us that $B(Q)$ maps a compact subset of $\QP$ to a compact subset of $\mathbb{Q}$.
\begin{proposition}\label{prop:closed}
For any $\delta>0$, there is a positive constant
$\Lambda=\Lambda(\delta)$ such that, for all $Q\in \QPd$, $B_Q\in\mathbb{Q}_{\Lambda}$.
\end{proposition}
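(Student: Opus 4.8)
The plan is to argue by contradiction using compactness, exploiting the diffeomorphism $Q = Q(B)$ established above. Suppose the statement fails for some fixed $\delta > 0$. Then there is a sequence $Q_n \in \QPd$ such that $B_{Q_n}$ has an eigenvalue of absolute value tending to $\infty$; say $|B_{Q_n}| \to \infty$. Since $\QPd$ is compact, after passing to a subsequence we may assume $Q_n \to \bar Q \in \QPd \subset \QP$. The goal is to show that this forces a contradiction with the defining relation \eqref{relation:QB}.

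First I would write $B_n := B_{Q_n}$ in diagonal form with eigenvalues $b_1^{(n)} \le b_2^{(n)} \le b_3^{(n)}$ (tracelessness gives $\sum_i b_i^{(n)} = 0$), and set $r_n := |B_n| \to \infty$, $\hat B_n := B_n / r_n$. Passing to a further subsequence, $\hat B_n \to \hat B$ for some $\hat B \in \mathbb{Q}$ with $|\hat B| = 1$; in particular $\hat B$ is traceless and nonzero, so its largest eigenvalue $\hat b_3$ is strictly positive and its smallest $\hat b_1$ is strictly negative. The key step is then a Laplace-type (concentration) analysis of the integral $\int_{\BS} \exp(B_n : \mm\mm)\, \ud\mm$: as $r_n \to \infty$, the Bingham measure $f_{Q_n}\, \ud\mm = Z_{Q_n}^{-1}\exp(B_n:\mm\mm)\,\ud\mm$ concentrates on the set where $\mm$ maximizes $\hat B : \mm\mm$, i.e.\ on the top eigenspace of $\hat B$. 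Consequently $\int_\BS (\mm\mm - \tfrac13\II) f_{Q_n}\, \ud\mm$ — which equals $Q_n$ by \eqref{relation:QB} — converges to a matrix whose spectrum is supported in $\{-\tfrac13, \tfrac23\}$: if the top eigenspace of $\hat B$ is one-dimensional the limit has eigenvalue $\tfrac23$ with the other two equal to $-\tfrac13$; if it is two-dimensional the limit has two eigenvalues $\tfrac16$ and one equal to $-\tfrac13$; in every case at least one eigenvalue of the limit lies outside $(-\tfrac13, \tfrac23)$, in fact on its boundary. But $Q_n \to \bar Q$ with $\bar Q \in \QPd$, whose eigenvalues lie in $[-\tfrac13 + \delta, \tfrac23 - \delta]$, a contradiction. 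Hence $|B_Q|$ is bounded uniformly on $\QPd$, say by $\Lambda_0(\delta)$, and since the eigenvalues of a symmetric matrix are controlled by its norm we get all eigenvalues of $B_Q$ in $[-\Lambda, \Lambda]$ for $\Lambda = \Lambda(\delta)$ depending only on $\delta$.

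The main obstacle is making the concentration argument precise, i.e.\ quantifying how the normalized measure $f_{Q_n}\,\ud\mm$ localizes onto the top eigenspace of $\hat B_n$. The cleanest route is to split $\BS$ into a small spherical cap around that eigenspace and its complement, bound the exponential $\exp(B_n:\mm\mm)$ above and below on each piece using the gap between $b_3^{(n)}$ and $b_2^{(n)}$ (which one must show grows linearly in $r_n$, using $|\hat B| = 1$ and $\hat b_3 > \hat b_2$, possibly after noting the degenerate case $\hat b_3 = \hat b_2$ separately), and then estimate the ratio. An alternative, avoiding explicit cap estimates, is to use the variational characterization: since $B_n : Q_n - \omega(B_n) = \omega^*(Q_n)$ and $\omega^*$ is finite and continuous on the compact set $\QPd$ (hence bounded there by some $C(\delta)$), one has $\omega(B_n) = B_n : Q_n - \omega^*(Q_n) \ge B_n : Q_n - C(\delta)$; combining with the upper bound $\omega(B) \le b_3^{(n)} + \ln(4\pi)$ (from $B:\mm\mm \le b_3^{(n)}|\mm|^2 = b_3^{(n)}$ on $\BS$) and the lower eigenvalue estimate from the Claim in the proof of Proposition \ref{bingham_exi-unqi}, one extracts that the eigenvalues $q_i^{(n)}$ of $Q_n$ must approach $\partial(-\tfrac13,\tfrac23)$ as $r_n \to \infty$ — again contradicting $Q_n \in \QPd$. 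Either way the heart of the matter is the same growth-versus-boundary trade-off; the rest is routine.
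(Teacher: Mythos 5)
Your argument goes by contradiction via compactness and a Laplace-type concentration, which is a genuinely different route from the paper's, and it stops short of executing the key estimate. The paper's proof is direct and quantitative: it diagonalizes with $b_1 \ge b_2 \ge b_3$, introduces cutoff sets $U,V\subset\BS$ depending only on $\delta$, bounds the Bingham moment $q_3 + \tfrac13 = \int_{\BS} m_3^2 f_Q\,\ud\mm$ above by $\tfrac\delta2 + \tfrac{\mathrm{meas}(V)}{\mathrm{meas}(U)}\,e^{-(b_1-b_3)\delta/4}$, and reads off an explicit $\Lambda(\delta) = \tfrac4\delta\ln\big(\tfrac{2\,\mathrm{meas}(V)}{\delta\,\mathrm{meas}(U)}\big)$ directly from $q_3 + \tfrac13 \ge \delta$, with no normalization $\hat B_n = B_n/|B_n|$, no subsequences, and no case analysis on the limiting direction. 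In your version the same cap estimate is the technical heart, but it is left as ``routine'' behind a contradiction wrapper — and the very issue you flag (the top gap $b_3^{(n)} - b_2^{(n)}$ need not grow with $r_n$) is exactly what makes a naive Laplace argument not entirely routine to write out; one really has to run a pigeonhole on the differences or, as the paper does, pick a quantity (a single moment $q_i+\tfrac13$) that responds to $\max_i b_i - \min_i b_i$ directly. Your variational alternative, as sketched, also does not close: from $B_n:Q_n \le b_3^{(n)} + C(\delta)$ with $B_n,Q_n$ commuting, the term $b_1^{(n)}(q_1^{(n)}+\tfrac13)\to-\infty$ can swallow the bound. A clean variational proof does exist, though, and is arguably the nicest route: since $B(Q)$ attains the sup defining $\omega^*(Q)$, one has for all $Q' \in \QP$ the gradient inequality $\omega^*(Q') \ge \omega^*(Q) + B(Q):(Q'-Q)$; taking $Q' = Q + \tfrac\delta2\,\tfrac{B(Q)}{|B(Q)|}$ (which lies in $\QP_{phy,\delta/2}$, since a perturbation of norm $\tfrac\delta2$ moves each eigenvalue by at most $\tfrac\delta2$) yields $\tfrac\delta2|B(Q)| \le \sup_{\QP_{phy,\delta/2}}\omega^* - \inf_{\QPd}\omega^* < \infty$, using only Proposition~\ref{bingham_exi-unqi} and continuity of the finite convex function $\omega^*$ on the open set $\QP$. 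In short: your outline is sound and the two alternatives you identify can be made rigorous, but the proposal as written omits the central estimate, and the paper's direct cap estimate delivers the explicit constant with less machinery.
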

\begin{proof}
We only have to consider the case $Q$ and $B$ are both diagonal. Assume $Q=\text{diag}\{q_1,q_2,q_3\}$
and $B=\text{diag}\{b_1,b_2,b_3\}$ with $b_1\ge b_2\ge b_3$. Let
\begin{align}
U=\Big\{\mm: m_3^2<\frac{\delta}{8},~m_2^2<\frac{\delta}{4}\Big\},\quad V=\Big\{\mm: m_3^2>\frac{\delta}{2}\Big\}.
\end{align}
Then $U\cap V=\varnothing$, and
\begin{align}
\int_U {e}^{(b_2-b_1)m_2^2+(b_3-b_1)(m_3^3-\frac\delta4)}\ud\mm &~\ge \int_U {e}^{(b_2-b_1)\frac{\delta}{8}
-(b_3-b_1)\frac{\delta}{8}}\ud\mm\ge\text{meas}(U),\\
\int_V {e}^{(b_2-b_1)m_2^2+(b_3-b_1)(m_3^3-\frac\delta4)}\ud\mm &~\le \int_V {e}^{(b_3-b_1)\frac{\delta}{4}}\ud\mm={e}^{(b_3-b_1)\frac{\delta}{4}}\text{meas}(V).
\end{align}
Therefore, we have
\begin{align*}
q_3+\frac13&~=\frac1{\int_\BS{e}^{b_1m_1^2+b_2m_2^2+b_3m_3^3}\ud\mm}\Big({\int_\BS m_3^2{e}^{b_1m_1^2+b_2m_2^2+b_3m_3^3}\ud\mm}\Big)\\
&~=\frac1{\int_\BS{e}^{(b_2-b_1)m_2^2+(b_3-b_1)(m_3^3-\frac\delta4)}\ud\mm}\Big({\int_{\BS\backslash V}+\int_V m_3^2{e}^{(b_2-b_1)m_2^2+(b_3-b_1)(m_3^3-\frac\delta4)}\ud\mm}\Big)\\
&~\le \frac{\delta}{2}+\frac1{\int_\BS{e}^{(b_2-b_1)m_2^2+(b_3-b_1)(m_3^3-\frac\delta4)}\ud\mm}
\Big(\int_V{e}^{(b_2-b_1)m_2^2+(b_3-b_1)(m_3^3-\frac\delta4)}\ud\mm\Big)\\
&~\le \frac{\delta}{2}+\frac{\text{meas}(V)}{\text{meas}(U)}{e}^{(b_3-b_1)\frac{\delta}{4}},
\end{align*}
which implies
\begin{align}
b_1-b_3\le \frac{4}{\delta}\ln\Big(\frac{2~\text{meas}(V)}{\delta~\text{meas}(U)}\Big)\triangleq \Lambda(\delta).
\end{align}
This concludes the proof of the proposition.
\end{proof}
\begin{proposition}\label{prop:positive}
The Jacobian matrix $\nabla_BQ(B)$ is positive definite for any $B\in\mathbb{Q}$. Consequently, $B(Q)$ is a smooth map
from $\QP$ to $\mathbb{Q}$.
\end{proposition}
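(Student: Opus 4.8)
The plan is to observe that, by \eqref{relation:QB}, the map $Q(\cdot)$ is exactly the gradient of the log-partition function $\omega$, so that its Jacobian is the Hessian $\nabla_B^2\omega$, and then to show this Hessian is positive definite on the five-dimensional space $\mathbb{Q}$ by identifying it with a covariance matrix.

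First I would note that $Z(B)=\int_\BS\ue^{\mm\mm:B}\,\ud\mm$ is smooth (indeed real-analytic) and strictly positive in $B$: one may differentiate under the integral sign since the integrand is smooth in $B$ and $\BS$ is compact, giving locally uniform bounds on all $B$-derivatives. Hence $\omega=\ln Z$ is smooth and, by \eqref{relation:QB}, $Q(B)=(\nabla_B\omega)(B)$, so $\nabla_B Q(B)=\nabla_B^2\omega(B)$. Computing the second derivative, for any $A\in\mathbb{Q}$ one gets
\begin{align*}
A_{ij}A_{kl}\,\partial_{B_{ij}}\partial_{B_{kl}}\omega(B)
=\int_\BS (A:\mm\mm)^2 f_B\,\ud\mm-\Big(\int_\BS (A:\mm\mm)\,f_B\,\ud\mm\Big)^2,
\end{align*}
with $f_B=\ue^{\mm\mm:B}/Z(B)$ the Bingham distribution; i.e.\ this is the variance of $\mm\mapsto A:\mm\mm$ under $f_B$, hence $\ge 0$.

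Next I would show the variance is strictly positive for $A\neq 0$. It can vanish only if $\mm\mapsto A:\mm\mm$ is constant $f_B$-a.e.; since $f_B$ is continuous and strictly positive on $\BS$, this forces $A:\mm\mm\equiv c$ on all of $\BS$. Diagonalizing $A=\mathrm{diag}(a_1,a_2,a_3)$ and evaluating at $\mm=\ee_1,\ee_2,\ee_3$ gives $a_1=a_2=a_3=c$, and the traceless constraint $a_1+a_2+a_3=0$ then forces $c=0$, i.e.\ $A=0$. Thus $\nabla_B Q(B)=\nabla_B^2\omega(B)$ is positive definite on $\mathbb{Q}$ for every $B$. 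For the ``consequently'' part, $Q(\cdot):\mathbb{Q}\to\QP$ is smooth with everywhere-invertible Jacobian, hence a local diffeomorphism; combined with the global bijectivity already established in Proposition \ref{bingham_exi-unqi}, it is a global diffeomorphism, so the inverse $B=B(Q):\QP\to\mathbb{Q}$ is smooth (this also upgrades the convexity of $\omega$ used there to strict convexity).

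I do not expect a real obstacle here. The one point requiring care is that the Hessian of $\omega$ on the full space of symmetric matrices is only positive \emph{semi}definite --- it degenerates along $\II$, because $\mm\mm:\II=1$ --- so it is essential to restrict to the traceless subspace $\mathbb{Q}$, which is precisely where the axis-evaluation argument uses $a_1+a_2+a_3=0$. Everything else is the standard fact that the Hessian of a log-partition function is a covariance matrix, plus an elementary linear-algebra computation.
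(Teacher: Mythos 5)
Your proof is correct and takes essentially the same route as the paper: both identify $\nabla_B Q(B)$ with the covariance matrix of the random variable $\mm\mapsto E:\mm\mm$ under the Bingham density $f_B$, hence nonnegative. You additionally supply the strict-positivity step that the paper's proof only asserts (vanishing variance forces $E:\mm\mm$ constant on $\BS$, and then $E\in\mathbb{R}\II\cap\mathbb{Q}=\{0\}$), and you usefully point out that on the full space of symmetric matrices the Hessian degenerates along $\II$, which clarifies why the restriction to the traceless subspace $\mathbb{Q}$ is essential.
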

\begin{proof}
It is straightforward to calculate that for any non-zero $E\in\mathbb{Q}$, it holds
\begin{align}
&\langle\nabla_BQ(B)E, E\rangle\nonumber\\
&=\frac{\int_\BS(\mm\mm:E)^2\exp(B:\mm\mm)\ud\mm}{\int_\BS\exp(B:\mm\mm)\ud\mm}
-\frac{\big(\int_\BS(\mm\mm:E)\exp(B:\mm\mm)\ud\mm\big)^2}
{\big(\int_\BS\exp(B:\mm\mm)\ud\mm\big)^2}\nonumber\\
&=\frac{\int_\BS\int_{\BS}\Big[(\mm\mm:E)\exp(B:\mm\mm)-(\mm'\mm':E)\exp(B:\mm'\mm')\Big]^2\ud\mm\ud\mm'}
{\big(\int_\BS\exp(B:\mm\mm)\ud\mm\big)^2}>0.\nonumber
\end{align}
Thus, the Jacobian $\nabla_BQ(B)$ is positive definite.
Together with the fact that $Q(B)$ is a smooth function of $B$, we know the inverse $B(Q)$ is also smooth.
\end{proof}
We give some estimates related to the Bingham map.
\begin{lemma}\label{prop:compo}
For any $\delta>0$, $k\in N^*$ and constant matrix $Q^*\in\QP$, there is a positive constant
$C=C({\delta},Q^*)$  such that if $Q(\xx)\in\QPd$, then
\begin{align*}
& \|B(Q)-B(Q^*)\|_{H^k}\leq C \|Q-Q^*\|_{H^k}.
\end{align*}
\end{lemma}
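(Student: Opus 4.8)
The plan is to reduce the $H^k$ bound on the composition $B(Q)-B(Q^\ast)$ to the smoothness of the Bingham map $Q\mapsto B(Q)$ on compact subsets of $\QP$, combined with the Moser-type calculus (tame) estimates for compositions of smooth functions with Sobolev functions. First I would invoke Proposition \ref{prop:closed}: since $Q(\xx)\in\QPd$ for all $\xx$, there is $\Lambda=\Lambda(\delta)$ with $B(Q(\xx))\in\mathbb{Q}_\Lambda$, so the values of $Q$ stay in the fixed compact set $\QPd\subset\QP$ and the values of $B(Q)$ stay bounded. On $\QPd$ the map $\Phi:=B(\cdot)$ is $C^\infty$ by Proposition \ref{prop:positive}, hence all derivatives $D^j\Phi$ are bounded on $\QPd$ by constants depending only on $\delta$ (and, trivially, on the ambient dimension $5$).

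Next I would write $B(Q)-B(Q^\ast)=\Phi(Q)-\Phi(Q^\ast)$ and treat it as a smooth function of $Q-Q^\ast$ vanishing at $Q-Q^\ast=0$. Concretely, set $g(\xx)=Q(\xx)-Q^\ast$, so $\|g\|_{H^k}<\infty$ (and $g$ is bounded in $L^\infty$ since its values lie in the compact set $\QPd-Q^\ast$), and let $\Psi(g):=\Phi(Q^\ast+g)-\Phi(Q^\ast)$, a smooth function with $\Psi(0)=0$. Then the standard composition estimate in Sobolev spaces — for $F\in C^\infty$ with $F(0)=0$ and $g\in H^k(\mathbb{R}^3)\cap L^\infty$ one has $\|F(g)\|_{H^k}\le C\big(\|g\|_{L^\infty}\big)\|g\|_{H^k}$, with $C$ depending on $\sup_{|y|\le\|g\|_{L^\infty}}|D^jF(y)|$ for $1\le j\le k$ — applies directly. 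Since $\|g\|_{L^\infty}$ is bounded by a constant depending only on $\delta$ and $|Q^\ast|$, and the relevant derivatives of $\Psi$ are bounded on the fixed compact set $\QPd-Q^\ast$, we get $\|B(Q)-B(Q^\ast)\|_{H^k}=\|\Psi(g)\|_{H^k}\le C(\delta,Q^\ast)\,\|g\|_{H^k}=C(\delta,Q^\ast)\,\|Q-Q^\ast\|_{H^k}$, which is the claim.

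The only point requiring a little care — and the place I would spend the most words — is justifying the composition estimate in the form with $F(0)=0$, because the naive Moser estimate $\|F(g)\|_{H^k}\le C(\|g\|_{L^\infty})(1+\|g\|_{H^k})$ would leave an additive constant. The standard fix is the integral representation $\Psi(g)=\int_0^1 D\Psi(tg)\,dt\cdot g$, i.e. writing $\Psi(g)=G(g)\!:\!g$ where $G(g)=\int_0^1 D\Psi(tg)\,dt$ is again a smooth (matrix-valued) function of $g$; then apply the product estimate $\|uv\|_{H^k}\le C(\|u\|_{L^\infty}\|v\|_{H^k}+\|u\|_{H^k}\|v\|_{L^\infty})$ together with the Moser estimate for $G(g)$. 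Since $G(g)$ is a bounded smooth function of $g$ with $g$ ranging in a compact set, $\|G(g)\|_{L^\infty}\le C(\delta,Q^\ast)$ and $\|G(g)\|_{H^k}\le C(\delta,Q^\ast)\|g\|_{H^k}$ by the same Moser estimate (now the additive constant is harmless since it is multiplied by $\|g\|_{L^\infty}$, again bounded), which closes the argument. I expect no genuine obstacle here beyond bookkeeping; the essential inputs — compactness from Proposition \ref{prop:closed}, smoothness from Proposition \ref{prop:positive}, and the tame composition/product estimates — are all standard.
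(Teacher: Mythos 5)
Your proof is correct and follows essentially the same path as the paper: smoothness of the Bingham map $B(\cdot)$ from Proposition \ref{prop:positive}, the change of variables $g=Q-Q^*$ so that the composed function vanishes at $0$, and the Sobolev composition estimate, with Proposition \ref{prop:closed} supplying the needed compactness of the range. Note, however, that the composition bound you re-derive in your final paragraph via the integral representation (in the form with $F(0)=0$ and no additive constant) is already supplied verbatim as Lemma \ref{lem:composition} in the Appendix, so that digression, while not wrong, is unnecessary.
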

The above lemma is a direct consequence of Proposition \ref{prop:positive} and Lemma \ref{lem:composition} by using change of variables.
\begin{lemma}\label{prop:diff}
For any $\delta>0$, there is a positive constant
$C_{\delta}$ depending on $\delta$ such that if $Q_1, Q_2\in\QPd$
\begin{align*}
& |B(Q_1)-B(Q_2)|\leq C_{\delta}|Q_1-Q_2|.
\end{align*}
Thus
\begin{align*}
& |\partial_i B(Q)|\leq C_{\delta}|\partial_i Q|.
\end{align*}
Moreover, for $ k\in N^*$, there exists a constant $C=C(\delta,\|Q_1-Q^*\|_{H^k},\|Q_2-Q^*\|_{H^k})$ such that
\begin{align*}
& \|B(Q_1)-B(Q_2)\|_{H^k}\leq C(\delta,\|Q_1-Q^*\|_{H^k},\|Q_2-Q^*\|_{H^k})\|Q_1-Q_2\|_{H^k}.
\end{align*}
\end{lemma}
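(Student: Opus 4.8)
The plan is to deduce Lemma \ref{prop:diff} from the smoothness of the Bingham map established in Propositions \ref{prop:closed} and \ref{prop:positive}. First I would record the pointwise Lipschitz bound. By Proposition \ref{prop:closed}, whenever $Q \in \QPd$ the image $B(Q)$ lies in the compact set $\mathbb{Q}_{\Lambda(\delta)}$; by Proposition \ref{prop:positive} the map $Q \mapsto B(Q)$ is smooth on $\QP$, hence its differential $\nabla_Q B$ is continuous and therefore bounded on the compact set $\QPd$, say $\sup_{Q\in\QPd}\|\nabla_Q B(Q)\| \le C_\delta$. Since $\QPd$ is convex (it is defined by linear eigenvalue inequalities, or at least one can connect $Q_1,Q_2\in\QPd$ by the segment $Q_t=(1-t)Q_1+tQ_2$, which stays in $\QPd$ because the eigenvalue bounds are preserved under convex combinations), the mean value inequality gives
\begin{align*}
|B(Q_1)-B(Q_2)| = \Big|\int_0^1 \nabla_Q B(Q_t)(Q_1-Q_2)\,\ud t\Big| \le C_\delta |Q_1-Q_2|,
\end{align*}
which is the first assertion. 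Applying this with $Q_1 = Q(\xx+h\ee_i)$, $Q_2 = Q(\xx)$, dividing by $|h|$ and letting $h\to 0$ yields $|\partial_i B(Q)| \le C_\delta |\partial_i Q|$ pointwise (using the chain rule $\partial_i B(Q) = \nabla_Q B(Q)\,\partial_i Q$ and $\|\nabla_Q B(Q)\|\le C_\delta$).

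For the $H^k$ bound I would write $B(Q_1) - B(Q_2) = \int_0^1 \nabla_Q B\big((1-t)Q_1 + tQ_2\big)\,\ud t \,:\, (Q_1-Q_2)$ and estimate this in $H^k$. Set $G(t,\xx) = \nabla_Q B\big((1-t)Q_1(\xx) + tQ_2(\xx)\big)$; since $\nabla_Q B$ is a smooth function on $\QP$ and $(1-t)Q_1 + tQ_2 \in \QPd$ for all $t\in[0,1]$, Lemma \ref{lem:composition} (composition of a smooth function with an $H^k$ map, via change of variables, as already invoked for Lemma \ref{prop:compo}) gives
\begin{align*}
\|G(t,\cdot) - \nabla_Q B(Q^*)\|_{H^k} \le C\big(\delta, \|Q_1 - Q^*\|_{H^k}, \|Q_2-Q^*\|_{H^k}\big)
\end{align*}
uniformly in $t$. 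Then, using the algebra/Moser estimate $\|fg\|_{H^k} \lesssim \|f\|_{L^\infty}\|g\|_{H^k} + \|f\|_{H^k}\|g\|_{L^\infty}$ together with $\|G(t,\cdot)\|_{L^\infty}\le C_\delta$ (pointwise boundedness of $\nabla_Q B$ on $\QPd$) and $\|Q_1-Q_2\|_{L^\infty} \lesssim \|Q_1-Q_2\|_{H^k}$ (Sobolev embedding, $k\ge 1$ in $\mathbb{R}^3$ — here one needs $k\ge 2$, or one absorbs the low-regularity factor into the $H^k$ norm as is standard), I obtain
\begin{align*}
\|B(Q_1)-B(Q_2)\|_{H^k} \le C\big(\delta, \|Q_1-Q^*\|_{H^k}, \|Q_2-Q^*\|_{H^k}\big)\,\|Q_1-Q_2\|_{H^k},
\end{align*}
which is the last assertion. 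Strictly speaking one should also include the constant matrix $\nabla_Q B(Q^*)$ contribution, which contributes a clean term $\|\nabla_Q B(Q^*)\|\,\|Q_1-Q_2\|_{H^k}$.

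The main obstacle, such as it is, is bookkeeping rather than conceptual: one must be careful that the segment $(1-t)Q_1 + tQ_2$ genuinely stays inside $\QPd$ (so that all the smoothness and compactness statements apply with the same $\delta$), and one must apply the composition lemma to the vector/matrix-valued smooth map $\nabla_Q B$ uniformly in the parameter $t$. There is also a minor subtlety that the composition estimate of Lemma \ref{lem:composition} requires the $H^k$ map to take values in a fixed compact neighborhood on which the outer function is smooth — this is exactly guaranteed by $Q_i - Q^* \in H^k$ together with $Q_i \in \QPd$ — and that constants be tracked to depend only on $\delta$ and the stated norms. None of these require new ideas beyond what is used in Lemma \ref{prop:compo}.
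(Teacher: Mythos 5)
Your proposal is correct and follows essentially the same route as the paper: the pointwise Lipschitz and derivative bounds come from the smoothness of $B(Q)$ (Proposition \ref{prop:positive}) together with the compactness and convexity of $\QPd$, and the $H^k$ bound comes from the integral representation $B(Q_1)-B(Q_2)=\int_0^1\nabla_Q B(Q_t)\,\ud t:(Q_1-Q_2)$ combined with composition and product estimates. The paper simply cites Lemma \ref{lem:difference} (which packages exactly this integral-representation argument, including the splitting around $\nabla_Q B(Q^*)$ and the use of Lemma \ref{lem:composition}) whereas you rederive it inline; note that the slight awkwardness you flag for $k=1$ is present in the paper's Lemma \ref{lem:difference} as well, via the exponent $k'=\max\{k,2\}$, so the two arguments have the same regularity demands.
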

\begin{proof} The first assertion is a direct consequence of Proposition \ref{prop:positive}.
The second one can be induced by Proposition \ref{prop:positive} and Lemma \ref{lem:difference}.
\end{proof}

\begin{remark}\label{rmk:M4}
Since $M^{(4)}_Q$ is a smooth function of $B_Q=B(Q)$, it shares the same estimates with $B(Q)$.
\end{remark}

Now we give some properties for the operator $\CM_{Q}: \mathbb{R}^{3\times3}\to\mathbb{R}^{3\times3}$
\begin{align*}
\mathcal{M}_Q(A)=&\frac13 A+Q\cdot A-A:M^{(4)}_Q.
\end{align*}
Note that $\CM_{Q}$ is defined not only for the symmetric matrix, and $\CM_{Q}(A)$ is not necessarily symmetric even if $A$ is symmetric.
The following Lemma \ref{CM-lemma} gives some basic properties of $\CM_{Q}$, which proof can be found in \cite{WZZ3}.
\begin{lemma}\label{CM-lemma}
 $(i)$ For any $Q\in\QP$, it holds that
 \[
  \CM_{Q}(B_Q)=\frac32Q.
 \]
$(ii)$ $\CM_Q$ is self-adjoint on $\mathbb{R}^{3\times3}$, i.e., $\CM_Q(A):B=\CM_Q(B):A$ for $A,B\in\mathbb{R}^{3\times3}$;\\
$(iii)$ For any $Q\in\QP$ and $A\in\mathbb{R}^{3\times3}$, the operator $\CM_Q(A)$ is positive, i.e.,
\[
 \CM_{Q}(A):A\geq0.
\]
\end{lemma}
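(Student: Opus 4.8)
The plan is to rewrite the operator $\CM_Q$ as a second moment of the Bingham distribution $f_Q=Z_Q^{-1}\exp(B_Q:\mm\mm)$, after which all three assertions become nearly immediate. Write $\langle h\rangle$ for $\int_{\BS}h(\mm)f_Q\,\ud\mm$, so that $\langle\mm\mm\rangle=Q+\frac13\II$ and $M^{(4)}_Q=\langle\mm\mm\mm\mm\rangle$. First I would note the two elementary identities
\[
\Big(\tfrac13 A+Q\cdot A\Big)_{ij}=\langle m_im_k\rangle A_{kj}=\langle m_i(A^T\mm)_j\rangle,\qquad
\big(A:M^{(4)}_Q\big)_{ij}=\langle m_im_j(\mm\mm:A)\rangle,
\]
which combine to give the representation
\[
\CM_Q(A)_{ij}=\big\langle m_i\big[(A^T\mm)_j-m_j(\mm\mm:A)\big]\big\rangle=\big\langle m_i\,\Pi_{\mm}(A^T\mm)_j\big\rangle,
\]
where $\Pi_{\mm}=\II-\mm\mm$ is the orthogonal projection onto $\mm^{\perp}$ and $\mm\mm:A=\mm\cdot(A^T\mm)$ (recall $|\mm|=1$). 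Contracting against $B$ and using $m_iB_{ij}=(B^T\mm)_j$ then gives $\CM_Q(A):B=\big\langle (A^T\mm)\cdot(B^T\mm)-(\mm\mm:A)(\mm\mm:B)\big\rangle$, which is symmetric in $A$ and $B$; this is (ii). Taking $B=A$ yields $\CM_Q(A):A=\big\langle |A^T\mm|^2-(\mm\cdot A^T\mm)^2\big\rangle=\big\langle|\Pi_{\mm}(A^T\mm)|^2\big\rangle\ge0$ by the Cauchy--Schwarz inequality (since $|\mm|=1$); this is (iii).

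For (i) I would use the exponential form of $f_Q$ and integrate by parts on $\BS$. Put $g(\mm)=\mm\mm:B_Q=\mm^TB_Q\mm$; its ambient gradient is $\nabla g=2B_Q\mm$, hence its surface gradient is $\nabla_{\BS}g=2\big(B_Q\mm-g\mm\big)=2\Pi_{\mm}(B_Q\mm)$. Plugging $A=B_Q$ into the representation above and using $\Pi_{\mm}(B_Q\mm)=\tfrac12\nabla_{\BS}g$, we get
\[
\CM_Q(B_Q)_{ij}=\big\langle m_i\,\Pi_{\mm}(B_Q\mm)_j\big\rangle=\frac1{2Z_Q}\int_{\BS}m_i\,(\nabla_{\BS}\ue^{g})_j\,\ud\mm.
\]
Now I would invoke the divergence theorem on the unit sphere in the form $\int_{\BS}(\nabla_{\BS}\phi)_j\,\ud\mm=2\int_{\BS}\phi\, m_j\,\ud\mm$ (the factor $2$ being the mean curvature of $\BS$), applied to $\phi=m_i\ue^{g}$, together with $(\nabla_{\BS}m_i)_j=\delta_{ij}-m_im_j$. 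This gives
\[
\int_{\BS}m_i\,(\nabla_{\BS}\ue^{g})_j\,\ud\mm=\int_{\BS}\big(2m_im_j-(\delta_{ij}-m_im_j)\big)\ue^{g}\,\ud\mm=Z_Q\big(3\langle m_im_j\rangle-\delta_{ij}\big)=3Z_Q\,Q_{ij},
\]
using $\langle m_im_j\rangle=Q_{ij}+\frac13\delta_{ij}$; hence $\CM_Q(B_Q)=\frac32Q$, which is (i).

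Everything in the first step is formal once the moment notation is set up, so the only point I expect to require care is the spherical integration by parts in (i): the mean-curvature term of $\BS$ is precisely what turns the naive coefficient $1$ into $3$, and hence produces the factor $\frac32$; omitting it would give the wrong answer. If one prefers to avoid surface calculus, (i) can instead be obtained by splitting $m_im_j$ into its trace part and the degree-two spherical harmonic $m_im_j-\frac13\delta_{ij}$ and using $\Delta_{\BS}g=-6g$ and $|\nabla_{\BS}g|^2=4|B_Q\mm|^2-4g^2$, or by a direct computation in a frame where $B_Q$ is diagonal. I do not anticipate any genuine difficulty beyond this bookkeeping.
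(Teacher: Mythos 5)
Your proof is correct. Note that the paper itself does not supply a proof of this lemma; it simply cites \cite{WZZ3}, so there is no in-paper argument to compare against. Your approach is a clean, self-contained one: the representation $\CM_Q(A)_{ij}=\langle m_i\,\Pi_{\mm}(A^T\mm)_j\rangle$ with $\Pi_{\mm}=\II-\mm\mm$ immediately yields (ii) and (iii), and your derivation of (i) via the spherical divergence theorem is correct. I verified the key steps: the identity $\nabla_{\BS}g=2\Pi_\mm(B_Q\mm)$ for $g=\mm^TB_Q\mm$, the mean-curvature formula $\int_{\BS}(\nabla_{\BS}\phi)_j\,\ud\mm=2\int_{\BS}\phi\,m_j\,\ud\mm$ on the unit sphere, the surface gradient $(\nabla_{\BS}m_i)_j=\delta_{ij}-m_im_j$, and the resulting cancellation $Z_Q(3\langle m_im_j\rangle-\delta_{ij})=3Z_QQ_{ij}$. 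The only point warranting emphasis is the one you already flagged: the factor $2$ coming from the mean curvature of $\BS$ is what converts the projection term into the coefficient $3$, and hence the final $\tfrac32$; omitting it is a common source of error. You might also note in (ii) that since $\Pi_\mm$ is an orthogonal projection, the bilinear form can equivalently be written as $\CM_Q(A):B=\langle\Pi_\mm(A^T\mm)\cdot\Pi_\mm(B^T\mm)\rangle$, which makes both the symmetry and the non-negativity of (iii) visible at a glance without the separate Cauchy--Schwarz appeal.
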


\begin{lemma}
For any $\delta>0$, there is a positive constant
$C_{\delta}$ depending on $\delta$ such that if $Q(\xx)\in\QPd, A\in\mathbb{R}^{3\times3}$,  it holds for any multiple index $a$,
\begin{align}\label{esti:M}
%& \|\partial_i\CM_{Q}(A)-\CM_{Q}(\partial_i A)\|_{L^2}\le C_\delta\|Q\|_{H^{1+a}}\|A\|_{H^\beta},\\
& \|\partial^a\CM_{Q}(A)-\CM_{Q}(\partial^a A)\|_{L^2}\le C_\delta(\|\nabla Q\|_{L^\infty}\|A\|_{H^{|a|-1}}+\|\nabla Q\|_{H^{|a|-1}}\|A\|_{L^\infty}).
\end{align}
Moreover, if $|a|\ge 2$, we have
\begin{align}\label{esti:M1}
& \|\partial^a\CM_{Q}(A)-\CM_{Q}(\partial^a A)\|_{L^2}\le C_\delta\|\nabla Q\|_{H^{|a|}}\|A\|_{H^{|a|-1}}.
\end{align}
\end{lemma}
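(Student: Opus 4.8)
The plan is to exploit the explicit form $\CM_Q(A)=\frac13A+Q\cdot A-A:M^{(4)}_Q$. The constant tensor $\frac13A$ commutes with $\partial^a$, and $M^{(4)}_Q$ depends on $\xx$ only through $Q$, so
\beno
\partial^a\CM_Q(A)-\CM_Q(\partial^aA)=\big(\partial^a(Q\cdot A)-Q\cdot\partial^aA\big)-\big(\partial^a(A:M^{(4)}_Q)-(\partial^aA):M^{(4)}_Q\big).
\eeno
Thus \eqref{esti:M}--\eqref{esti:M1} reduce to the classical Kato--Ponce/Moser commutator inequality
\beno
\|\partial^a(f\cdot A)-f\cdot\partial^aA\|_{L^2}\le C\big(\|\nabla f\|_{L^\infty}\|A\|_{H^{|a|-1}}+\|\nabla f\|_{H^{|a|-1}}\|A\|_{L^\infty}\big),
\eeno
valid for any matrix-valued $f=f(\xx)$ and any fixed bilinear contraction, applied once with $f=Q$ and once with $f=M^{(4)}_Q$.

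To prove the commutator inequality I would expand by Leibniz, $\partial^a(f\cdot A)-f\cdot\partial^aA=\sum_{0<b\le a}\binom ab\,\partial^bf\cdot\partial^{a-b}A$, treat the top-order term $b=a$ by $\|\partial^af\|_{L^2}\|A\|_{L^\infty}=\|\nabla f\|_{H^{|a|-1}}\|A\|_{L^\infty}$, and for the interior terms $1\le|b|\le|a|-1$ write $\partial^bf=\partial^{b-e_k}(\partial_kf)$ (with $b_k\ge1$) and combine Hölder with the Gagliardo--Nirenberg interpolation inequalities for $\partial^{b-e_k}(\partial_kf)$ and $\partial^{a-b}A$ (conjugate exponents summing to $\frac12$, total derivative order $|a|-1$), which yields exactly the right-hand side. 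Taking $f=Q$ gives the first contribution at once. For $f=M^{(4)}_Q$ I use Remark~\ref{rmk:M4}, Proposition~\ref{prop:closed} and Proposition~\ref{prop:positive}: $M^{(4)}_Q=M^{(4)}(B(Q))$ is $C^\infty$ in $Q$ on the compact set $\QPd$ (its image under $B$ lies in $\mathbb{Q}_{\Lambda(\delta)}$), so $M^{(4)}$ and its $Q$-derivatives are bounded there by constants depending on $\delta$. Hence $\nabla M^{(4)}_Q=(D_QM^{(4)})(Q):\nabla Q$ satisfies $\|\nabla M^{(4)}_Q\|_{L^\infty}\le C_\delta\|\nabla Q\|_{L^\infty}$, and expanding $\partial^bM^{(4)}_Q$ by the Faà di Bruno formula into a finite sum of terms $(D_Q^jM^{(4)})(Q)(\partial^{c_1}Q,\dots,\partial^{c_j}Q)$ with $\sum|c_i|=|b|$, $|c_i|\ge1$ and bounded coefficients, the Moser product estimates give $\|\nabla M^{(4)}_Q\|_{H^{|a|-1}}\le C_\delta\,\mathcal P(\|\nabla Q\|_{L^\infty})\,\|\nabla Q\|_{H^{|a|-1}}$ for a fixed polynomial $\mathcal P$ (the same mechanism as in Lemma~\ref{prop:diff}; when $|a|\ge3$, and throughout \eqref{esti:M1}, the factor $\mathcal P(\|\nabla Q\|_{L^\infty})$ is itself absorbed via $H^2(\BR)\hookrightarrow L^\infty$). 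Feeding these two bounds into the commutator inequality with $f=M^{(4)}_Q$ gives the second contribution and hence \eqref{esti:M}.

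For \eqref{esti:M1} with $|a|\ge2$ I would return to \eqref{esti:M} and absorb the $L^\infty$ norms using $\|\nabla Q\|_{L^\infty}\lesssim\|\nabla Q\|_{H^2}\le\|\nabla Q\|_{H^{|a|}}$, the only exception being the top-order endpoint terms $\partial^aQ\cdot A$ and the corresponding piece of $\partial^aM^{(4)}_Q\cdot A$, which at $|a|=2$ cannot use $\|A\|_{L^\infty}$; these I re-estimate directly by the Hölder split $\|\partial^aQ\cdot A\|_{L^2}\le\|\partial^aQ\|_{L^3}\|A\|_{L^6}\lesssim\|\nabla Q\|_{H^{|a|}}\|A\|_{H^{|a|-1}}$, using $\dot H^{1/2}\hookrightarrow L^3$ and $H^1\hookrightarrow L^6$ in $\BR$. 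The main obstacle is the bookkeeping for the $M^{(4)}_Q$ coefficient: one must organize the Faà di Bruno expansion of its derivatives and choose the Gagliardo--Nirenberg exponents so that every resulting product of derivatives of $Q$ (and of $A$) is controlled solely by $\|\nabla Q\|_{L^\infty}$, $\|\nabla Q\|_{H^{|a|-1}}$, $\|A\|_{L^\infty}$, $\|A\|_{H^{|a|-1}}$ (respectively, for \eqref{esti:M1}, by $\|\nabla Q\|_{H^{|a|}}$ and $\|A\|_{H^{|a|-1}}$), so that the constant has the claimed dependence on $\delta$; everything else is a routine application of Leibniz, Hölder and interpolation.
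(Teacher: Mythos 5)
Your decomposition and overall architecture match the paper's: split $\CM_Q(A)=\frac13A+Q\cdot A-A:M^{(4)}_Q$, apply the Kato--Ponce commutator estimate (Lemma~\ref{lem:commutator}) to the two nontrivial pieces, and transfer derivative bounds from $M^{(4)}_Q$ to $Q$ through the Bingham map using Propositions~\ref{prop:closed}, \ref{prop:positive}, Lemma~\ref{prop:diff} and Remark~\ref{rmk:M4}. The paper does exactly this and then stops, citing those lemmas; you spell out the Leibniz/Fa\`a di Bruno bookkeeping, which is legitimate extra detail.

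There is, however, a genuine slip in your chain-rule estimate. In the Fa\`a di Bruno expansion $\partial^{a}M^{(4)}_Q=\sum_j (D^j_Q M^{(4)})(Q)\big(\partial^{c_1}Q,\dots,\partial^{c_j}Q\big)$ with $\sum|c_i|=|a|$, $|c_i|\ge1$, you propose to control the product $\prod_i\partial^{c_i}Q$ by Gagliardo--Nirenberg interpolation \emph{anchored at $\|\nabla Q\|_{L^\infty}$}, which yields a factor $\mathcal P(\|\nabla Q\|_{L^\infty})$. That factor is not controlled by $\delta$ alone, and your proposed patch --- absorbing it via $H^2\hookrightarrow L^\infty$ --- changes the dependence of the constant (one would end up with an extra power of $\|\nabla Q\|_{H^2}$, which does not fit the stated form $C_\delta\|\nabla Q\|_{L^\infty}\|A\|_{H^{|a|-1}}+C_\delta\|\nabla Q\|_{H^{|a|-1}}\|A\|_{L^\infty}$, nor $C_\delta\|\nabla Q\|_{H^{|a|}}\|A\|_{H^{|a|-1}}$, with a purely $\delta$-dependent $C_\delta$). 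The correct move is to interpolate each factor $\partial^{c_i}Q$ between $\|Q-Q^*\|_{L^\infty}$ and $\|\nabla^{|a|}Q\|_{L^2}$: by Gagliardo--Nirenberg in $\BR$, $\|\partial^{c_i}Q\|_{L^{2|a|/|c_i|}}\le C\|Q-Q^*\|_{L^\infty}^{1-|c_i|/|a|}\|\nabla^{|a|}Q\|_{L^2}^{|c_i|/|a|}$, and H\"older with $\sum_i\frac{|c_i|}{2|a|}=\frac12$ gives $\big\|\prod_i\partial^{c_i}Q\big\|_{L^2}\le C\|Q-Q^*\|_{L^\infty}^{j-1}\|\nabla^{|a|}Q\|_{L^2}$. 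Since $Q\in\QPd$ forces $\|Q-Q^*\|_{L^\infty}\le C_\delta$ and (by Proposition~\ref{prop:closed}) $\|D^j_QM^{(4)}\|_{L^\infty}\le C_\delta$, this yields $\|\nabla M^{(4)}_Q\|_{H^{|a|-1}}\le C_\delta\|\nabla Q\|_{H^{|a|-1}}$ with $C_\delta$ depending only on $\delta$, which is what \eqref{esti:M} requires; the same replacement fixes the $|a|=2$ case and \eqref{esti:M1} without any Sobolev-embedding workaround.
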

\begin{proof}
With Lemma \ref{lem:commutator}, Lemma \ref{prop:diff} and Remark \ref{rmk:M4}, direct computation shows that
\begin{align*}
\|\partial^a(A:M^{(4)}_Q)-\partial^{a}A:M^{(4)}_Q\|_{L^2}
&~\le C(\|\nabla M^{(4)}_{Q}\|_{L^\infty}\|A\|_{H^{|a|-1}}+\|\nabla M^{(4)}_{Q}\|_{H^{|a|-1}}
\|A\|_{L^\infty})\nonumber\\ \nonumber
&~\le C_\delta(\|\nabla{B}\|_{L^\infty}\|A\|_{H^{|a|-1}}+\|\nabla{B}\|_{H^{|a|-1}}\|A\|_{L^\infty})\\
&~\le C_\delta(\|\nabla Q\|_{L^\infty}\|A\|_{H^{|a|-1}}+\|\nabla Q\|_{H^{|a|-1}}\|A\|_{L^\infty}).
\end{align*}
(\ref{esti:M1}) can be deduced by the same argument with Lemma \ref{lem:commutator}.
\end{proof}

\begin{lemma}\label{prop:diff-CM}
For any $\delta>0$ and $ k\in N^*$, there exist constants $C_1=C_1(\delta)$ and
$C_2=C_2(\delta,\|Q_1-Q^*\|_{H^k},\|Q_2-Q^*\|_{H^k})$ such that
\begin{align*}
 \|\CM_{Q_1}(A)-\CM_{Q_2}(A)\|_{H^k}\leq&~ C_1\|A\|_{H^k}\|Q_1-Q_2\|_{L^\infty}
+C_2\|A\|_{L^\infty}\|Q_1-Q_2\|_{H^k}.
% \|\CM_{Q_1}(A)-\CM_{Q_2}(A)\|_{H^k}\leq&~ C_1(\delta)\|A\|_{H^{k+2}}\|Q_1-Q_2\|_{H^k}.
\end{align*}
If $0\le k\le 2$, there exist constant $C=C(\delta,\|Q_1-Q^*\|_{H^2},\|Q_2-Q^*\|_{H^2})$ such that
\begin{align*}
\|\CM_{Q_1}(A)-\CM_{Q_2}(A)\|_{H^k}\leq&~ C \|A\|_{H^{2}}\|Q_1-Q_2\|_{H^k},\\
\|\CM_{Q_1}(A)-\CM_{Q_2}(A)\|_{H^k}\leq&~ C \|A\|_{H^{k}}\|Q_1-Q_2\|_{H^2}.
\end{align*}
\end{lemma}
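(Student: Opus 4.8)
The plan is to start from the explicit formula $\CM_Q(A)=\frac13 A+Q\cdot A-A:M^{(4)}_Q$ and to exploit the cancellation of the $\frac13 A$ term, which gives
\begin{align*}
\CM_{Q_1}(A)-\CM_{Q_2}(A)=(Q_1-Q_2)\cdot A-A:\big(M^{(4)}_{Q_1}-M^{(4)}_{Q_2}\big).
\end{align*}
Thus the difference is a sum of two pointwise products, each built from $A$ and a ``difference of $Q$-data'' factor, and the whole lemma reduces to two ingredients: a Moser-type product estimate in $H^k$, and bounds on $M^{(4)}_{Q_1}-M^{(4)}_{Q_2}$ in terms of $Q_1-Q_2$.

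For the second ingredient I would invoke Remark~\ref{rmk:M4}: since $M^{(4)}_Q$ is a smooth function of $B_Q=B(Q)$ and, by Proposition~\ref{prop:closed}, $B$ maps $\QPd$ into the compact set $\mathbb{Q}_{\Lambda(\delta)}$ on which $M^{(4)}$ and all its derivatives are uniformly bounded, the map $Q\mapsto M^{(4)}_Q$ inherits exactly the estimates of the Bingham map stated in Lemma~\ref{prop:diff}. This yields, for $Q_1,Q_2\in\QPd$,
\begin{align*}
\|M^{(4)}_{Q_1}-M^{(4)}_{Q_2}\|_{L^\infty}&\le C_\delta\|Q_1-Q_2\|_{L^\infty},\\
\|M^{(4)}_{Q_1}-M^{(4)}_{Q_2}\|_{H^k}&\le C\big(\delta,\|Q_1-Q^*\|_{H^k},\|Q_2-Q^*\|_{H^k}\big)\|Q_1-Q_2\|_{H^k},
\end{align*}
and likewise $\|M^{(4)}_{Q_1}-M^{(4)}_{Q_2}\|_{L^2}\le C_\delta\|Q_1-Q_2\|_{L^2}$ from the pointwise Lipschitz bound.

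Applying the product inequality $\|fg\|_{H^k}\le C\big(\|f\|_{L^\infty}\|g\|_{H^k}+\|f\|_{H^k}\|g\|_{L^\infty}\big)$ to both $(Q_1-Q_2)\cdot A$ and $A:(M^{(4)}_{Q_1}-M^{(4)}_{Q_2})$ and substituting the bounds above: the contributions proportional to $\|A\|_{H^k}$ carry only $\|Q_1-Q_2\|_{L^\infty}$ together with a constant descending from the $L^\infty$-Lipschitz bounds of $B$ and $M^{(4)}$ — which depend on $\delta$ alone by Proposition~\ref{prop:closed} and Lemma~\ref{prop:diff} — giving $C_1=C_1(\delta)$; the contributions proportional to $\|A\|_{L^\infty}$ carry $\|Q_1-Q_2\|_{H^k}$ together with a constant depending on $\delta$ and on $\|Q_i-Q^*\|_{H^k}$, giving $C_2$. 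This proves the first inequality. For the range $0\le k\le 2$ I would replace the Moser estimate by the sharper three-dimensional product rule $\|fg\|_{H^k}\le C\|f\|_{H^2}\|g\|_{H^k}$, valid for $0\le k\le 2$ since $H^2(\BR)\hookrightarrow L^\infty$ makes $H^2$ a multiplier algebra on $H^k$ in this range: placing the $H^2$-norm on $A$ and using the $H^k$-bound for $M^{(4)}_{Q_1}-M^{(4)}_{Q_2}$ gives $\|\CM_{Q_1}(A)-\CM_{Q_2}(A)\|_{H^k}\le C\|A\|_{H^2}\|Q_1-Q_2\|_{H^k}$, while placing the $H^2$-norm on $Q_1-Q_2$ (and on $M^{(4)}_{Q_1}-M^{(4)}_{Q_2}$, via its $H^2$-Lipschitz bound) and keeping $\|A\|_{H^k}$ gives the remaining inequality.

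There is no serious analytic obstacle here; the only points needing care are the bookkeeping of which constant depends on $\delta$ alone versus on the $H^k$ norms of $Q_i-Q^*$ — precisely the dichotomy already built into Lemma~\ref{prop:diff} — and the justification that $Q\mapsto M^{(4)}_Q$ transfers the Bingham-map estimates, i.e. that $M^{(4)}$ is smooth with derivatives bounded on $\mathbb{Q}_{\Lambda}$, so that the composition lemmas (Lemma~\ref{lem:composition}, Lemma~\ref{lem:difference}) apply after the change of variables $Q\mapsto B(Q)$; both are furnished by Proposition~\ref{prop:closed}, Proposition~\ref{prop:positive} and Remark~\ref{rmk:M4}.
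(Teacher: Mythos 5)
Your proposal is correct and takes essentially the same route as the paper: both start from the cancellation of the $\tfrac13 A$ term to reduce to $(Q_1-Q_2)\cdot A - A:(M^{(4)}_{Q_1}-M^{(4)}_{Q_2})$, apply the product estimates of Lemma~\ref{lem:product}, and control the $M^{(4)}$ difference via Lemma~\ref{prop:diff} together with Remark~\ref{rmk:M4}. You spell out a few steps (the pointwise versus $H^k$ Lipschitz dichotomy, and the $0\le k\le 2$ cases) that the paper leaves implicit, but the argument is the same.
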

\begin{proof}
From Lemma \ref{lem:product}, we have that
\begin{align*}
 \|\CM_{Q_1}(A)-\CM_{Q_2}(A)\|_{H^k}\leq& C(\|Q_1-Q_2\|_{L^\infty}\|A\|_{H^k}+\|Q_1-Q_2\|_{H^k}\|A\|_{L^\infty})\\
 &+C(\|M^{(4)}_{Q_1}-M^{(4)}_{Q_2}\|_{L^\infty}\|A\|_{H^k}+\|M^{(4)}_{Q_1}-M^{(4)}_{Q_2}\|_{H^k}\|A\|_{L^\infty}).
\end{align*}
Then the conclusion can be deduced from Lemma \ref{prop:diff} and Remark \ref{rmk:M4}.
\end{proof}

\subsection{The energy functional and critical points}
The bulk part of free energy density functional takes the following form
\begin{align*}
{f}_{bulk}(Q)\eqdefa -\ln Z_Q +B_Q:Q-\frac12\alpha |Q|^2.
\end{align*}
A direct calculation yields that
\begin{align*}
 \frac{\partial{f}_{bulk}(Q)}{\partial Q}=0~~\Rightarrow~~ B_Q-\alpha Q=0.
\end{align*}
We say that a tensor $Q_0$ is a critical point of the bulk free energy density functional ${f}_{bulk}(Q)$
if $Q_0$ satisfies $B_{Q_0}-\alpha Q_0=0$. The critical points are completely classified in \cite{LZZ,FS}.
\begin{proposition}\label{prop:critical}
Let $\eta$ be a solution of the equation
\begin{eqnarray}\label{criti1}
 \frac{3e^{\eta}}{\int^1_0e^{\eta x^2}dx}=3+2\eta+\frac{\eta^2}{\alpha}.
\end{eqnarray}
Then there holds
\begin{eqnarray}\label{criti2}
B_Q-\alpha Q=0~~\Longleftrightarrow~~ B_Q=\eta (\nn\nn-\frac13\II), ~\nn\in \mathbb{S}^2,
\end{eqnarray}
and there exists a critical number $\alpha^{\ast}>0$ such that

$(i)$ when $\alpha<\alpha^{\ast}, \eta=0$ is the only solution of $(\ref{criti1})$;

$(ii)$ when $\alpha=\alpha^{\ast}$, besides $\eta=0$ there is another solution $\eta=\eta^{\ast}$ of $(\ref{criti1})$;

$(iii)$ when $\alpha>\alpha^{\ast}$, besides $\eta=0$ there are other two solutions
$\eta_1>\eta^{\ast}>\eta_2$ of $(\ref{criti1})$.

\end{proposition}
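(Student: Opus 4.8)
The plan is to collapse the matrix equation $B_Q-\alpha Q=0$ to a scalar equation in one real variable and then run a bifurcation analysis on it. First I would record that the Gibbs average $B\mapsto Q(B)=(\nabla_B\omega)(B)$, with $\omega(B)=\ln\int_\BS e^{\mm\mm:B}\ud\mm$, is equivariant under $O(3)$, so $B_Q$ and $Q$ share an eigenbasis; after simultaneous diagonalization, writing $B_Q=\mathrm{diag}(b_1,b_2,b_3)$ and $Q=\mathrm{diag}(q_1,q_2,q_3)$, the equation becomes the self-consistency system $b_i=\alpha q_i(b)$ on $\{b_1+b_2+b_3=0\}$, which is the Euler--Lagrange system of the $S_3$-symmetric functional $\omega(b)-\tfrac1{2\alpha}|b|^2$. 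For the implication ``$\Rightarrow$'' in (\ref{criti2}) I would invoke that every such critical point is uniaxial, $B_Q=\eta(\nn\nn-\tfrac13\II)$ for some $\eta\in\mathbb{R}$ and $\nn\in\BS$; this is precisely the classification of \cite{LZZ,FS}, whose mechanism is the monotonicity $\mathrm{sgn}(q_i-q_j)=\mathrm{sgn}(b_i-b_j)$ — obtained by symmetrizing the integral defining $q_i-q_j$ under the transposition $m_i\leftrightarrow m_j$ — together with strict convexity of $\omega$ (Proposition~\ref{prop:positive}), which rule out solutions with three distinct eigenvalues. For ``$\Leftarrow$'' I would substitute the uniaxial ansatz directly: with $\nn=e_3$ one has $B_Q:\mm\mm=\eta m_3^2-\tfrac\eta3$, so, reducing the integral over $\BS$ to $[0,1]$ by $x=m_3$ and using $\int_0^1(1+2\eta x^2)e^{\eta x^2}\ud x=e^\eta$, one gets $Q=\tfrac32\big(\mu(\eta)-\tfrac13\big)(\nn\nn-\tfrac13\II)$, where $Z(\eta)=\int_0^1 e^{\eta x^2}\ud x$ and $\mu(\eta)=Z(\eta)^{-1}\int_0^1 x^2e^{\eta x^2}\ud x$; then $B_Q=\alpha Q$ reads $\eta=\tfrac{3\alpha}{2}\big(\mu(\eta)-\tfrac13\big)$, which after clearing denominators and using $e^\eta=Z(\eta)\big(1+2\eta\mu(\eta)\big)$ is the scalar equation (\ref{criti1}). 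In particular $\eta=0$ always solves (\ref{criti1}), giving $Q=0$, and any root $\eta$ produces, for every $\nn\in\BS$, the critical point $Q=\tfrac\eta\alpha(\nn\nn-\tfrac13\II)$; this settles (\ref{criti2}).

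For parts (i)--(iii) I would rewrite (\ref{criti1}), for $\eta\neq0$, as $\Psi(\eta)=\tfrac1\alpha$ with $\Psi(\eta)\eqdefa 6\big(\mu(\eta)-\tfrac13\big)/\eta$, extended continuously by $\Psi(0)\eqdefa 6\mu'(0)=\tfrac8{15}$. The elementary inputs are: $\mu'(\eta)=\mathrm{Var}_{\nu_\eta}(x^2)>0$ for the probability measure $\nu_\eta\propto e^{\eta x^2}\ud x$ on $[0,1]$, so $\mu$ is strictly increasing from $\mu(-\infty)=0$ through $\mu(0)=\tfrac13$ to $\mu(+\infty)=1$; hence $\mu(\eta)-\tfrac13$ has the sign of $\eta$, so $\Psi>0$ everywhere; and a Laplace/Watson estimate as $\eta\to\pm\infty$ gives $\Psi(\eta)\to0^+$. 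Since $\Psi'(\eta)=\tfrac{6}{\eta^2}\big(\eta\mu'(\eta)-\mu(\eta)+\tfrac13\big)$, a critical point of $\Psi$ is a value of $\eta$ at which the tangent to the graph of $\mu$ passes through $(0,\tfrac13)$; setting $N(\eta)\eqdefa\eta\mu'(\eta)-\mu(\eta)+\tfrac13$, so that $N(0)=0$ and $N'(\eta)=\eta\mu''(\eta)$, I would show $N>0$ on $(-\infty,0)$ and $N$ vanishes exactly once on $(0,\infty)$, hence that $\Psi$ is strictly increasing on $(-\infty,\eta^{\ast})$ and strictly decreasing on $(\eta^{\ast},\infty)$ for a single $\eta^{\ast}>0$. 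With $\alpha^{\ast}\eqdefa 1/\Psi(\eta^{\ast})$, the horizontal level $\tfrac1\alpha$ meets the graph of $\Psi$ in no point if $\alpha<\alpha^{\ast}$, in exactly $\eta^{\ast}$ if $\alpha=\alpha^{\ast}$, and in exactly two points $\eta_1>\eta^{\ast}>\eta_2$ if $\alpha>\alpha^{\ast}$; adjoining the root $\eta=0$ yields (i)--(iii).

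The hard part is the unimodality of $\Psi$ — equivalently the sign pattern of $N$ just quoted — which comes down to the single fact that $\mu''(\eta)$, the third central moment of $x^2$ under $\nu_\eta$, changes sign exactly once: it is $>0$ near $\eta=0$ and as $\eta\to-\infty$, $<0$ as $\eta\to+\infty$, and the change occurs at some $\eta_0>0$. Establishing this single sign change calls for a variation-diminishing (total positivity) argument for the exponential family $e^{\eta x^2}$ and is the genuine technical content of the classification in \cite{LZZ,FS}; I would either quote it from there or reprove it using the Riccati-type identity $\mu'=\tfrac{1-3\mu}{2\eta}+\mu(1-\mu)$ satisfied by $\mu$.
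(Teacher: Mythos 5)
The paper does not actually prove this proposition; it only cites \cite{LZZ,FS}, so there is no ``paper's proof'' to compare against, and your reconstruction of the argument (uniaxial reduction by symmetry, substitution of the uniaxial ansatz to get a scalar fixed-point equation in $\eta$, then a one-variable bifurcation analysis of $\Psi(\eta)=6(\mu(\eta)-\tfrac13)/\eta$) is the route those references take. Two things need tightening.

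First, your claim that the uniaxial reduction ``is the scalar equation (\ref{criti1})'' does not check out. You correctly obtain $\eta=\tfrac{3\alpha}{2}(\mu-\tfrac13)$ from $B_Q=\alpha Q$ and $Q=S_2(\nn\nn-\tfrac13\II)$ with $S_2=\tfrac32(\mu-\tfrac13)$, and the identity $e^\eta=Z(\eta)(1+2\eta\mu(\eta))$ is right. But then
\begin{align*}
\frac{3e^\eta}{Z(\eta)}=3\big(1+2\eta\mu\big)
=3+2\eta+\frac{4\eta^2}{\alpha},
\end{align*}
not $3+2\eta+\tfrac{\eta^2}{\alpha}$ as printed; the two differ by a factor of $4$ in the last term. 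You should flag this discrepancy rather than assert the identification. In fact the version you derived is the one consistent with the paper's own relation (\ref{eqn:alpha}), $\alpha=A_0/(A_2-A_4)$: differentiating the parts identity $A_0+2\eta A_2=2e^\eta$ gives $A_2-A_4=(3A_2-A_0)/(2\eta)$, so (\ref{eqn:alpha}) is exactly $\eta=\tfrac{3\alpha}{2}(\mu-\tfrac13)$, and (\ref{criti1}) as printed carries a typographical slip. Second, the parenthetical mechanism you offer for uniaxiality is too weak as written: the sign-monotonicity $\mathrm{sgn}(q_i-q_j)=\mathrm{sgn}(b_i-b_j)$ together with convexity of $\omega$ do not by themselves exclude a fixed point with three distinct $b_i$ (monotonicity only gives $q_1>q_2>q_3$, which is compatible with $b_i=\alpha q_i$); what \cite{FS,LZZ} actually use is a finer strict-monotonicity property of the difference map in the off-diagonal direction. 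Since you cite those works for this step there is no logical gap, but your sketch should not be read as a proof of it. The rest of the bifurcation analysis is correctly set up ($\Psi(0)=6\mu'(0)=8/15$, $\Psi>0$, $\Psi(\pm\infty)=0^+$, and the Riccati identity $\mu'=\tfrac{1-3\mu}{2\eta}+\mu(1-\mu)$ all check out), with the unimodality of $\Psi$, equivalently the single sign change of $\mu''$, correctly identified as the remaining technical point.
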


\begin{figure}[htbp]
\centering
\includegraphics[height=50mm,width=60mm]{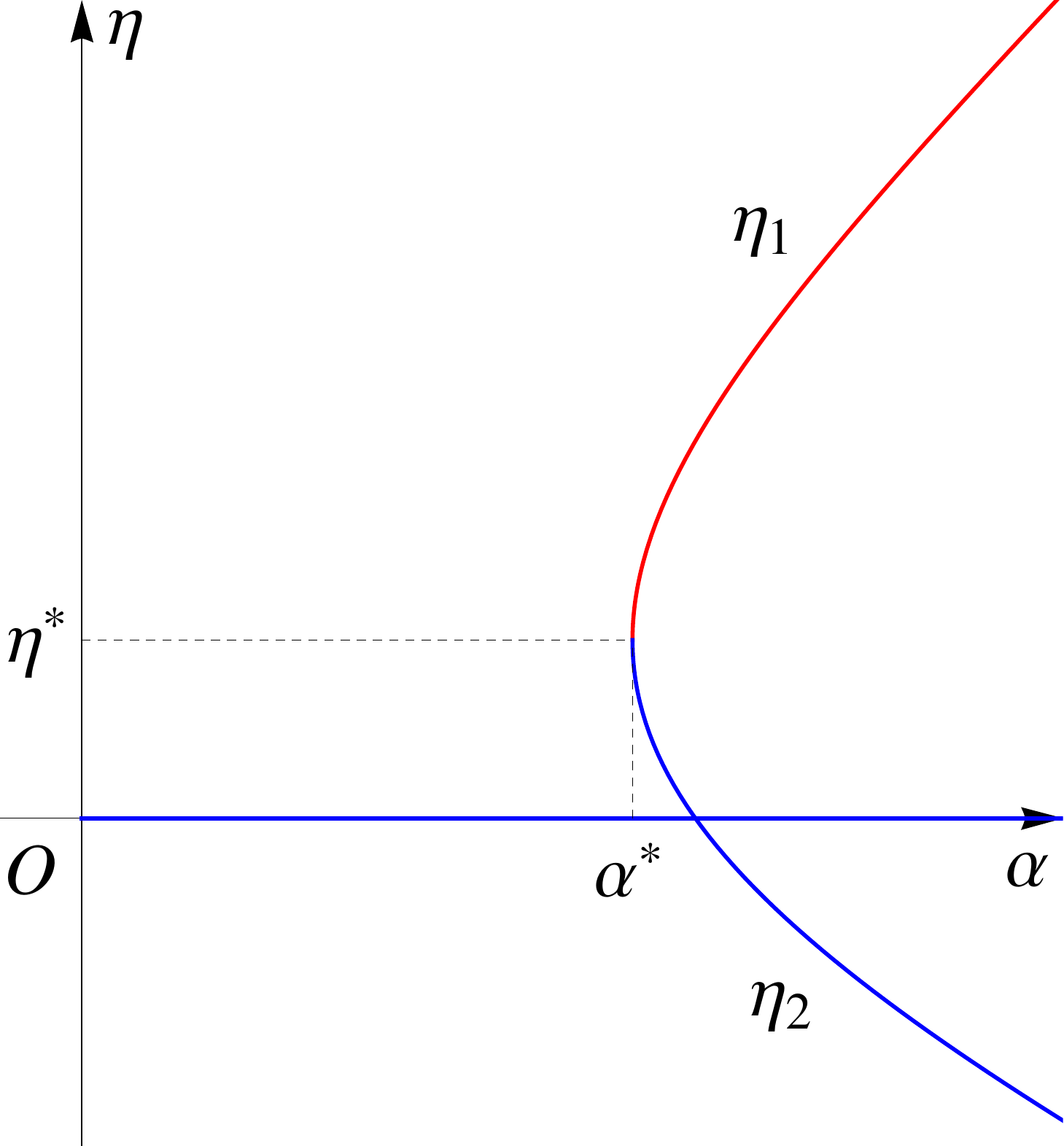}
\caption{The $\alpha-\eta$ curve of the critical point.} \label{fig:graph}
\end{figure}

In the sequel, we always choose $\alpha>\alpha^*$, and $\eta=\eta_1(\alpha)$
corresponding the stable equilibrium solution. We also introduce some important constants
used in this paper. All of them only depend on the parameter $\alpha$.

We define
\begin{align}
A_k=\int_{-1}^1x^ke^{\eta x^2}\ud x,\quad S_k=\frac{\int_{-1}^1P_k(x)e^{\eta x^2}\ud x}{\int_{-1}^1e^{\eta x^2}\ud x},
\end{align}
where $P_k(x)$ is the $k$-th order Legendre polynomial. Particularly,
\begin{align*}
P_2(x)=\frac12(3x^2-1),\quad P_4(x)=\frac{1}{8}(35x^4-30x^2+3).
\end{align*}
Then we have
\begin{align}\label{eqn:S}
S_2=\frac{3A_2-A_0}{2A_0},~~~S_4=\frac{35A_4-30A_2+3A_0}{8A_0}.
\end{align}
An important fact induced by Proposition \ref{prop:critical} is that
\begin{eqnarray}
B_Q-\alpha Q=0~~\Longleftrightarrow~~ Q=S_2 (\nn\nn-\frac13\II), ~\nn\in \mathbb{S}^2.
\end{eqnarray}
The relation
\begin{align}\label{eqn:alpha}
\alpha=\frac{A_0}{A_2-A_4},
\end{align}
and the inequalities
\begin{align}\label{AA024}
 3A^2_2+2A_0A_2-5A_0A_4>0,\quad 6A_2-5A_4-A_0>0,
\end{align}
will play important roles in Section 4. Their proofs can be found in \cite{WZZ1}, and we omit them here.

\section{Existence and uniqueness of the local strong solution for the dynamical $Q$-tensor systems}
This section is devoted to the proof for the existence of the local strong solution of the system (\ref{eqnewQ1})
-(\ref{eqnewQ3}). For $s\ge2$, we define the space:
\begin{align}\nonumber
X(\delta, T, C_0)=&\Big\{(Q,\vv): Q\in\mathbb{Q}_{phy,\delta/2},~\|Q-Q^{\ast}\|_{H^{s+1}}+\|\CL(Q)\|_{L_t^2(H_x^s)}\\
&\qquad\qquad\qquad\qquad+\|\vv\|_{H^s}+\|\nabla\vv\|_{L_t^2(H_x^s)}
\le C_0, ~\text{a.e. }t\in[0,T]\Big\}.
\end{align}
If $(Q,\vv)\in X$, then by Sobolev imbedding, we have
\begin{align*}
\|Q\|_{L^\infty}+\|\nabla Q\|_{L^\infty}+\|\vv\|_{L^\infty}\le C(C_0).
\end{align*}
The proof of Theorem \ref{thm:main1} is based on iterative argument and a closed energy estimate.

\subsection{Linearized system and iteration scheme}

First of all, we take
$$(Q^{(0)}(t,\xx), \vv^{(0)}(t,\xx))=(Q_I(\xx),\vv_I(\xx))\in X(\delta, T, C_0).$$
Assuming that $(Q^{(n)}, \vv^{(n)})\in X(\delta, T, C_0)$ has been constructed,
we construct $(Q^{(n+1)}, \vv^{(n+1)})$ by solving the following linearized system:
\begin{align}
 &\frac{\partial Q^{(n+1)}}{\partial t}+\vv^{(n)}\cdot \nabla Q^{(n+1)}=-\frac{2}{De}\Big(\mathcal{M}_{Q^{(n)}}
       (B_{Q^{(n)}}-\alpha Q^{(n)}+\ve\CL (Q^{(n+1)}))\nonumber\\
      &\qquad+\mathcal{M}^{T}_{Q^{(n)}}(B_{Q^{(n)}}-\alpha Q^{(n)}+\ve\CL (Q^{(n+1)}))\Big)
       +\mathcal{M}_{Q^{(n)}}(\nabla \vv^{(n+1)})+\mathcal{M}^{T}_{Q^{(n)}}(\nabla \vv^{(n+1)}),\label{iter-Q}\\
&\frac{\partial \vv^{(n+1)}}{\partial t}+\vv^{(n)}\cdot\nabla \vv^{(n+1)}=-\nabla p^{(n+1)}+\frac{\gamma}{Re}\Delta \vv^{(n+1)}+
        \frac{1-\gamma}{2Re}\nabla\cdot(\DD^{(n+1)}:M^{(4)}_{Q^{(n)}})\nonumber\\
    &\qquad+\frac{1-\gamma}{DeRe}\nabla\cdot\Big(2\mathcal{M}_{Q^{(n)}}
        (B_{Q^{(n)}}-\alpha Q^{(n)}+\ve\CL (Q^{(n+1)}))
    +\ve\sigma^d(Q^{(n)},Q^{(n+1)})\Big),\label{iter-v}\\
&\nabla\cdot \vv^{(n+1)}=~0,\label{iter-incom}
\end{align}
with initial data:
\begin{align}
(Q^{(n+1)}(0,\xx), \vv^{(n+1)}(0,\xx))=(Q_I(\xx),\vv_I(\xx)).
\end{align}
%and boundary condition at infinity:
%\begin{align}\label{iter-bc}
%\lim_{|\xx|\to\infty}|Q^{(n+1)}(t,\xx)-\bQ|+|\vv^{(n+1)}(\xx)| = 0.
%\end{align}
The existence of $(Q^{(n+1)}, \vv^{(n+1)})$ is ensured by the classical parabolic theory, see \cite{BCD} for example.
Now we prove that $(Q^{(n+1)}, \vv^{(n+1)})\in X$, for a suitably chosen $T>0$.

Define the energy functional
\begin{align*}
E_s(\vv,Q)=&\int_{\mathbb{R}^3}\Big(|Q-Q^{\ast}|^2
  +\frac{(1-\gamma)\ve}{2DeRe}(L_1|\nabla Q|^2+2L_2|Q_{ij,j}|^2)+\frac12|\vv|^2\\
&  \qquad+\frac{(1-\gamma)\ve}{2DeRe}\big(L_1|\nabla^{s+1} Q|^2+2L_2|\nabla^{s}Q_{ij,j}|^2\big)
+\frac12|\nabla^s \vv|^2\Big)\ud\xx,\\
 F_s(\vv,Q)=&\int_{\mathbb{R}^3}\Big(|\CL (Q)|^2
 +|\nabla^s\CL (Q)|^2+|\nabla\vv|^2+|\nabla^{s+1}\vv|^2\Big)\ud\xx.
\end{align*}
Obviously, we have
$$E_s\sim \|Q-Q^{\ast}\|_{L^2}^2+\|\nabla Q\|_{H^s}^2+\|\vv\|_{H^s}^2,
\quad F_s\sim \|\nabla Q\|_{H^{s+1}}^2+\|\nabla\vv\|_{H^s}^2.$$
Let $E_s^{(n)}=E_s(Q^{(n)}, \vv^{(n)})$.
We will prove the following closed energy estimates:
\begin{align}
 &\frac12\frac{d}{dt}E_s^{(n+1)}+\nu F_s^{(n+1)}\leq C(\delta, C_0,\nu)(1+E^{(n+1)}_s),
\end{align}
for some small $\nu>0$. The proof is split into three steps.

{\bf Step 1. $L^2$ energy estimate for $Q^{(n+1)}-Q^{\ast}$}

From Lemma \ref{prop:diff}, we have
\begin{align*}
\|B_{Q^{(n)}}-\alpha Q^{(n)}\|_{L^2}
=\|B_{Q^{(n)}}-\alpha Q^{(n)}-B_{Q^{\ast}}+\alpha Q^{\ast}\|_{L^2}\le C_{\delta}\|Q^{(n)}-Q^{\ast}\|_{L^2}\le C(\delta,C_0).
\end{align*}
Therefore, by making $L^2$ inner product to (\ref{iter-Q}) with $Q^{(n+1)}-Q^{\ast}$, we get
\begin{align}\label{Q-estimate}
&\frac12\frac{d}{dt}\|Q^{(n+1)}-Q^{\ast}\|^2_{L^2}=\big\langle\partial_t Q^{(n+1)},Q^{(n+1)}-Q^{\ast}\big\rangle\nonumber\\
&=-\frac{4\ve}{De}\big\langle\mathcal{M}_{Q^{(n)}}(\CL(Q^{(n+1)})),Q^{(n+1)}-Q^{\ast}\big\rangle
   +\frac{4}{De}\big\langle\mathcal{M}_{Q^{(n)}}(-B_{Q^{(n)}}+\alpha Q^{(n)}),Q^{(n+1)}-Q^{\ast}\big\rangle\nonumber\\
 &\quad+2\big\langle\mathcal{M}_{Q^{(n)}}(\nabla \vv^{(n+1)}),Q^{(n+1)}-Q^{\ast}\big\rangle\nonumber\\
&\leq\Big(C_{\delta}\|\CL (Q^{(n+1)})\|_{L^2}
+C(\delta,C_0)+C_{\delta}\|\nabla \vv^{(n+1)}\|_{L^2}\Big)\|Q^{(n+1)}-Q^{\ast}\|_{L^2}\nonumber\\
&\leq  C(\delta,C_0)\big((E_s^{(n+1)})^{1/2}+E_s^{(n+1)}\big).
\end{align}

{\bf Step 2. $L^2$ estimates for $(\nabla Q^{(n+1)}, \vv^{(n+1)})$}

In this step and the next step, a key point is that we will use the self-adjointness of $\CM_{Q^{{n}}}$.
By making $L^2$ inner product to (\ref{iter-Q}) with $\CL(Q^{(n+1)})$, we get
\begin{align}\label{nablaQ-estimate}
&\frac12\frac{d}{dt}\int_\BR\Big(L_1|\nabla Q|^2+2L_2|Q_{ij,j}|^2\Big)\ud\xx
=\big\langle\partial_t Q^{(n+1)},\CL (Q^{(n+1)})\big\rangle\nonumber\\
=&-\big\langle\vv^{(n)}\cdot \nabla Q^{(n+1)},\CL (Q^{(n+1)})\big\rangle
   -\frac{4\ve}{De} \big\langle\mathcal{M}_{Q^{(n)}}(\CL(Q^{(n+1)})),\CL (Q^{(n+1)})\big\rangle\nonumber\\
 &+\frac{4}{De}\big\langle\mathcal{M}_{Q^{(n)}}(-B_{Q^{(n)}}+\alpha Q^{(n)}),\CL (Q^{(n+1)})\big\rangle
+2\big\langle\mathcal{M}_{Q^{(n)}}(\nabla \vv^{(n+1)}),\CL (Q^{(n+1)})\big\rangle\nonumber\\
\leq&\|\vv^{(n)}\|_{L^{\infty}}\|\nabla Q^{(n+1)}\|_{L^2}\|\CL (Q^{(n+1)})\|_{L^2}
 -\nu\|\CL (Q^{(n+1)})\|^2_{L^2}\nonumber\\
&+C(\delta,C_0)\|\CL (Q^{(n+1)})\|_{L^2}
 +2\big\langle\mathcal{M}_{Q^{(n)}}(\nabla \vv^{(n+1)}),\CL (Q^{(n+1)})\big\rangle\nonumber\\
\leq& C(\delta, C_0,\nu)(1+E_s^{(n+1)})-\nu\|\CL (Q^{(n+1)})\|^2_{L^2}
  +2\big\langle\mathcal{M}_{Q^{(n)}}(\nabla \vv^{(n+1)}),\CL (Q^{(n+1)})\big\rangle.
\end{align}
From (\ref{iter-v}), we have
\begin{align}\label{v-estimate}
&\frac12\frac{d}{dt}\|\vv^{(n+1)}\|^2_{L^2}=\big\langle\partial_t \vv^{(n+1)},\vv^{(n+1)}\big\rangle\nonumber\\
=&\frac{\gamma}{Re}\big\langle\Delta \vv^{(n+1)},\vv^{(n+1)}\big\rangle
    +\frac{1-\gamma}{2Re}\big\langle\nabla\cdot(\DD^{(n+1)}:M^{(4)}_{Q^{(n)}}),\vv^{(n+1)}\big\rangle\nonumber\\
 &  +\frac{2(1-\gamma)}{DeRe}\big\langle \mathcal{M}_{Q^{(n)}}(-B_{Q^{(n)}}+\alpha Q^{(n)}),\nabla\vv^{(n+1)}\big\rangle\nonumber\\
 & -\frac{2(1-\gamma)\varepsilon}{DeRe}\big\langle\mathcal{M}_{Q^{(n)}}
        (\CL (Q^{(n+1)})),\nabla \vv^{(n+1)}\big\rangle
   -\frac{(1-\gamma)\varepsilon}{DeRe}\big\langle\sigma^d(Q^{(n)}, Q^{(n+1)}),\nabla\vv^{(n+1)}\big\rangle\nonumber\\
\leq&-\frac{\gamma}{Re}\|\nabla \vv^{(n+1)}\|^2_{L^2}
   -\frac{1-\gamma}{2Re}\big\langle\DD^{(n+1)}:M^{(4)}_{Q^{(n)}},\DD^{(n+1)}\big\rangle
   +C_\delta\|\nabla\vv^{(n+1)}\|_{L^2}\nonumber\\
   &-\frac{2(1-\gamma)\varepsilon}{DeRe}\big\langle\mathcal{M}_{Q^{(n)}}
        (\nabla \vv^{(n+1)}),\CL (Q^{(n+1)})\big\rangle
  +C \|\nabla Q^{(n)}\|_{H^2}\|\nabla Q^{(n+1)}\|_{L^2}\|\nabla\vv^{(n+1)}\|_{L^2}\nonumber\\
\leq&-\frac{\gamma}{2Re}\|\nabla \vv^{(n+1)}\|^2_{L^2}+C_\delta\|\nabla\vv^{(n+1)}\|_{L^2}
-\frac{2(1-\gamma)\varepsilon}{DeRe}\big\langle\mathcal{M}_{Q^{(n)}}
        (\nabla \vv^{(n+1)}),\CL (Q^{(n+1)})\big\rangle\nonumber\\
&  +C(C_0)\|\nabla Q^{(n+1)}\|_{L^2}^2.
\end{align}
 Thus,  we obtain from (\ref{nablaQ-estimate})-(\ref{v-estimate}) that
\begin{align}\label{low-estimateQv}
 \frac12\frac{d}{dt}\Big(\|\vv^{(n+1)}\|^2_{L^2}
 +\frac{(1-\gamma)\varepsilon}{DeRe}\int_\BR(L_1|\nabla Q|^2+2L_2|Q_{ij,j}|^2)\ud\xx\Big)&\nonumber\\
 +\frac{\gamma}{2Re}\|\nabla \vv^{(n+1)}\|^2_{L^2}
 +\nu\|\CL (Q^{(n+1)})\|^2_{L^2} \leq~ &C(\delta,C_0,\nu)\big(1+E_s^{(n+1)}\big).
\end{align}

{\bf Step 3. $L^2$ estimates for $(\nabla^{s+1} Q^{(n+1)}, \nabla^s\vv^{(n+1)})$}

We now turn to the estimate of the higher order derivative for $Q^{(n+1)}$,
\begin{align*}
&\frac12\frac{d}{dt}\int_\BR\Big(L_1|\nabla^{s+1} Q|^2+2L_2|\nabla^s Q_{ij,j}|^2\Big)\ud\xx
=\big\langle\nabla^s\partial_t Q^{(n+1)},\nabla^s\CL (Q^{(n+1)})\big\rangle\nonumber\\
&=\underbrace{-\big\langle\nabla^s(\vv^{(n)}\cdot\nabla Q^{(n+1)}),\nabla^s\CL (Q^{(n+1)})\big\rangle}_{I}
  +\frac{4\ve}{De}\underbrace{\big\langle-\nabla^s\mathcal{M}_{Q^{(n)}}(\CL (Q^{(n+1)})),\nabla^s\CL (Q^{(n+1)})\big\rangle}_{II}\nonumber\\
  &\quad+\frac{4}{De}\underbrace{\big\langle\nabla^s\mathcal{M}_{Q^{(n)}}(-B_{Q^{(n)}}+\alpha Q^{(n)}),\nabla^s\CL (Q^{(n+1)})\big\rangle}_{III}
   +\underbrace{2\big\langle\nabla^s\mathcal{M}_{Q^{(n)}}(\nabla\vv^{(n+1)}), \nabla^s\CL (Q^{(n+1)})\big\rangle}_{IV}.\nonumber
\end{align*}
These terms can be estimated as following:
\begin{align*}
I&~\leq C\|\vv^{(n)}\|_{H^s}\|\nabla Q^{(n+1)}\|_{H^s}\|\nabla^s\CL (Q^{(n+1)})\|_{L^2}\le C(\delta, C_0)(E_s^{(n+1)}F_s^{(n+1)})^{1/2},\nonumber\\
II
&~\leq-\big\langle\mathcal{M}_{Q^{(n)}}(\nabla^s\CL (Q^{(n+1)})),\nabla^s\CL (Q^{(n+1)})\big\rangle
+\big\langle [\nabla^s,\mathcal{M}_{Q^{(n)}} ]\CL (Q^{(n+1)}), \nabla^s\CL (Q^{(n+1)})\big\rangle\\
&~\leq -\nu\|\nabla^s\CL (Q^{(n+1)})\|^2_{L^2}
+C(\delta)\|{Q^{(n)}}-Q^*\|_{H^s}
\|\CL (Q^{(n+1)})\|_{H^{s-1}}\|\nabla^s\CL (Q^{(n+1)})\|_{L^2}\nonumber\\
&~\le  -\nu\|\nabla^s\CL (Q^{(n+1)})\|^2_{L^2}
+C(\delta, C_0)(E_s^{(n+1)}F_s^{(n+1)})^{1/2},\\
III &~\le C_\delta\|Q^{(n)}-Q^*\|_{H^{s+1}}\|\nabla^{s-1}\CL (Q^{(n+1)})\|_{L^2}
\le C(\delta, C_0)(E_s^{(n+1)})^{1/2},\\
IV&~=2\big\langle\mathcal{M}_{Q^{(n)}}(\nabla^{s+1}\vv^{(n+1)}), \nabla^s\CL (Q^{(n+1)})\big\rangle
+2\big\langle [\nabla^s,\mathcal{M}_{Q^{(n)}} ]\nabla \vv^{(n+1)}, \nabla^s\CL (Q^{(n+1)})\big\rangle
\nonumber\\
&~\le 2\big\langle\mathcal{M}_{Q^{(n)}}(\nabla^{s+1}\vv^{(n+1)}), \nabla^s\CL (Q^{(n+1)})\big\rangle+C(\delta)\|{Q^{(n)}}\|_{H^s}
\|\nabla \vv^{(n+1)}\|_{H^{s-1}}\|\nabla^s\CL (Q^{(n+1)})\|_{L^2}\nonumber\\
&~\le 2\big\langle\mathcal{M}_{Q^{(n)}}(\nabla^{s+1}\vv^{(n+1)}), \nabla^s\CL (Q^{(n+1)})\big\rangle
+C(\delta, C_0)(E_s^{(n+1)}F_s^{(n+1)})^{1/2}.
\end{align*}
Thus we get
\begin{align}\nonumber
&\frac12\frac{d}{dt}\int_\BR\Big(L_1|\nabla^{s+1} Q|^2+2L_2|\nabla^s Q_{ij,j}|^2\Big)\ud\xx\\
&\le2
\big\langle\mathcal{M}_{Q^{(n)}}(\nabla^{s+1}\vv^{(n+1)}), \nabla^s\CL (Q^{(n+1)})\big\rangle -\nu\|\nabla^s\CL (Q^{(n+1)})\|^2_{L^2}\nonumber\\
&~+C(\delta, C_0)(E_s^{(n+1)})^{1/2}\Big(1+(F_s^{(n+1)})^{1/2}\Big).\label{high-estimateQ}
\end{align}

For the estimate of the higher order derivative for $\vv^{(n+1)}$, we have
\begin{align*}
&\frac12\frac{d}{dt}\frac{DeRe}{1-\gamma}\|\nabla^s\vv^{(n+1)}\|^2_{L^2}
=\frac{DeRe}{1-\gamma}\big\langle\partial_t\nabla^s\vv^{(n+1)},\nabla^s\vv^{(n+1)}\big\rangle\nonumber\\
=&-\frac{DeRe}{1-\gamma}\big\langle\nabla^s(\vv^{(n)}\cdot\nabla\vv^{(n+1)}),\nabla^s\vv^{(n+1)}\big\rangle
+\frac{\gamma De}{1-\gamma}\big\langle\nabla^s\Delta \vv^{(n+1)},\nabla^s\vv^{(n+1)}\big\rangle\nonumber\\
&-\frac{De}{2}\big\langle\nabla^s(\DD^{(n+1)}:M^{(4)}_{Q^{(n)}}),\nabla^{s+1}\vv^{(n+1)}\big\rangle
-{2}\big\langle\nabla^s\mathcal{M}_{Q^{(n)}}
(B_{Q^{(n)}}-\alpha Q^{(n)}),\nabla^{s+1}\vv^{(n+1)}\big\rangle\nonumber\\
& -2\ve\big\langle\nabla^s\mathcal{M}_{Q^{(n)}}(\CL (Q^{(n+1)})),\nabla^{s+1}\vv^{(n+1)}\big\rangle
-\ve\big\langle\nabla^s\big(\sigma^d(Q^{(n)}, Q^{(n+1)})\big),\nabla^{s+1}\vv^{(n+1)}\big\rangle\nonumber\\
\triangleq&~ I+II+III+IV+V+VI.
\end{align*}
Estimating them term by term, we obtain
\begin{align*}
I\le& ~C\big(\|\vv^{(n)}\|_{L^\infty}\|\nabla\vv^{(n+1)}\|_{H^{s}}
+\|\vv^{(n)}\|_{H^s}\|\nabla\vv^{(n+1)}\|_{L^\infty}\big)\|\nabla^s\vv^{(n+1)}\|_{L^2}\\
\le & ~C_\delta\Big(E_s^{(n+1)}(E_s^{(n+1)}+F_s^{(n+1)})\Big)^{1/2},\\
III =&~-\frac{De}{2}\big\langle(\nabla^s\DD^{(n+1)}:M^{(4)}_{Q^{(n)}}),\nabla^{s}\DD^{(n+1)}\big\rangle
+\frac{De}{2}\big\langle[\nabla^s,M^{(4)}_{Q^{(n)}}:]\DD^{(n+1)}, \nabla^{s}\DD^{(n+1)}\big\rangle\\
\le&~-\frac{De}{2}\big\langle(\nabla^s\DD^{(n+1)}:M^{(4)}_{Q^{(n)}}),\nabla^{s}\DD^{(n+1)}\big\rangle
+C(\delta,C_0)\|\nabla Q^{(n)}\|_{H^s}\|\DD^{(n+1)}\|_{H^{s-1}}\|\nabla^{s+1}\vv^{(n+1)}\|_{L^2}\\
\le&~-\frac{De}{2}\big\langle(\nabla^s\DD^{(n+1)}:M^{(4)}_{Q^{(n)}}),\nabla^{s}\DD^{(n+1)}\big\rangle
+C(\delta, C_0)\big(E_s^{(n+1)}F_s^{(n+1)}\big)^{1/2},
\end{align*}
\begin{align*}
IV\le&~ C(\delta)\|\nabla Q^{(n)}\|_{H^s}\|\nabla^{s}\vv^{(n+1)}\|_{L^2}
\le ~C(\delta, C_0)(E_s^{(n+1)})^{1/2},\\
V=&~-2\ve\big\langle\mathcal{M}_{Q^{(n)}}(\nabla^s\CL (Q^{(n+1)})),\nabla^{s+1}\vv^{(n+1)}\big\rangle+2
\ve\big\langle [\nabla^s,\mathcal{M}_{Q^{(n)}}]\CL (Q^{(n+1)}), \nabla^{s+1} \vv^{(n+1)}\big\rangle\\
\le&~-2\ve\big\langle\mathcal{M}_{Q^{(n)}}(\nabla^s\CL (Q^{(n+1)})),\nabla^{s+1}\vv^{(n+1)}\big\rangle+C(\delta)\|{Q^{(n)}}-Q^*\|_{H^s}
\|\CL (Q^{(n+1)})\|_{H^{s-1}}\|\nabla^{s+1}\vv^{(n+1)}\|_{L^2}\nonumber\\
\le&~-2\ve\big\langle\mathcal{M}_{Q^{(n)}}(\nabla^s\CL (Q^{(n+1)})),\nabla^{s+1}\vv^{(n+1)}\big\rangle
+C(\delta, C_0)(E_s^{(n+1)}F_s^{(n+1)})^{1/2},\nonumber\\
VI\le& ~C\|\nabla Q^{(n)}\|_{H^s}\|\nabla Q^{(n+1)}\|_{H^s}\|\nabla\vv^{(n+1)}\|_{H^s}\le C(C_0)(E_s^{(n+1)}F_s^{(n+1)})^{1/2}.
\end{align*}
Thus we get
\begin{align}\label{high-estimatev}
&\frac12\frac{d}{dt}\frac{DeRe}{1-\gamma}\|\nabla^s\vv^{(n+1)}\|^2_{L^2}\nonumber\\
\le &~
-\frac{\gamma De}{1-\gamma}\|\nabla^{s+1}\vv^{(n+1)})\|_{L^2}^2
-2\ve\big\langle\mathcal{M}_{Q^{(n)}}(\nabla^s\CL (Q^{(n+1)})),\nabla^{s+1}\vv^{(n+1)}\big\rangle\nonumber\\
&~+C(\delta, C_0)(E_s^{(n+1)})^{1/2}\Big(1+(F_s^{(n+1)})^{1/2}\Big).
\end{align}

Combining (\ref{Q-estimate}), (\ref{low-estimateQv}), (\ref{high-estimateQ}) and (\ref{high-estimatev}), we know that it holds
\begin{align}\label{apriori-estimate}
 &\frac12\frac{d}{dt}E_s^{(n+1)}+\nu F_s^{(n+1)}\leq  C({\delta},C_0)\big(1+E_s^{(n+1)}\big),
\end{align}
for $\nu>0$ small enough. By Gronwall's inequality, we get
\begin{eqnarray}
E_s^{(n+1)}(t)\leq \big(1+E_s^{(n+1)}(0)\big)e^{C(\delta,C_0)t}-1=\big(1+E_s(Q_I,\vv_I)\big)e^{C(\delta,C_0)t}-1,
\end{eqnarray}
for any $t\in [0,T]$. Then if we take $T_0>0$ such that $C(\delta, C_0)T_0\le \ln(1+C_0)-\ln(1+E_s(Q_I,\vv_I)),$ then
$\sup\limits_{0\le t\le T_0}E_s^{(n+1)}(t)\leq C_0.$  In addition,
\begin{align*}
&\|\int_0^t\partial_tQ^{(n+1)}(t,\xx)dt\|_{L^\infty}\le\int_0^t\|\partial_tQ^{(n+1)}(t,\xx)\|_{H^2}dt\\
&\le C(\delta, C_0)\int_0^t\Big(\|\CL (Q^{(n+1)})\|_{H^2}+\|\nabla\vv^{(n+1)}\|_{H^2}+\|Q^{(n+1)}-Q^*\|_{H^3}+1\Big)dt
\le C(\delta, C_0)t.
\end{align*}
Thus, together with the assumption $Q_I\in\QPd$, it yields that $Q^{(n+1)}\in \mathbb{Q}_{phy,\delta/2}$ for
$t\in[0,T_0]$, if we choose $T_0$ sufficiently small. Then we obtain $(Q^{(n+1)}, \vv^{(n+1)})\in X(\delta, T, C_0)$ for $T\le T_0$.

\subsection{Convergence of the sequence}

In this subsection, we are going to show that the approximate solution sequence
$\{(\vv^{(\ell)},Q^{(\ell)})\}_{\ell\in\mathbb{N}}$ is a Cauchy sequence.

We set
\begin{align*}
 &\delta_{Q}^{\ell+1}=Q^{(\ell+1)}-Q^{(\ell)},~~\delta_{B}^{\ell}=B_{Q^{(\ell)}}-B_{Q^{(\ell-1)}},
 ~~\delta_{M^{(4)}}^{\ell}=M^{(4)}_{Q^{(\ell)}}-M^{(4)}_{Q^{(\ell-1)}},\\
 &\delta_{\vv}^{\ell+1}=\vv^{(\ell+1)}-\vv^{(\ell)},~~\delta_{\DD}^{\ell+1}=\DD^{(\ell+1)}-\DD^{(\ell)},
 ~~\delta_{p}^{\ell+1}=p^{(\ell+1)}-p^{(\ell)}.
\end{align*}
By taking the difference between the equations for $(\vv^{(\ell+1)},Q^{(\ell+1)})$ and
$(\vv^{(\ell)},Q^{(\ell)})$, we find that
\begin{align}
 \frac{\partial \delta_{Q}^{\ell+1}}{\partial t}+\vv^{(\ell)}\cdot \nabla \delta_{Q}^{\ell+1}
        =&\frac{2\ve}{De}\Big(\mathcal{M}_{Q^{(\ell)}}
       (\CL (\delta_{Q}^{\ell+1}))+\mathcal{M}^{T}_{Q^{(\ell)}}(\CL (\delta_{Q}^{\ell+1}))\Big)\nonumber\\
       &+\mathcal{M}_{Q^{(\ell)}}(\nabla \delta_{\vv}^{\ell+1})
       +\mathcal{M}^{T}_{Q^{(\ell)}}(\nabla \delta_{\vv}^{\ell+1})+\delta F^{\ell}_1,\label{cauchyeqnewQ1}\\
\frac{\partial \delta_{\vv}^{\ell+1}}{\partial t}+\vv^{(\ell)}\cdot\nabla \delta_{\vv}^{\ell+1}
           =&-\nabla \delta_{p}^{\ell+1}+\frac{\gamma}{Re}\Delta \delta_{\vv}^{\ell+1}+
        \frac{1-\gamma}{2Re}\nabla\cdot(\delta_{\DD}^{\ell+1}:M^{(4)}_{Q^{(\ell)}})+\nabla\cdot\delta F^{\ell}_2\nonumber\\
    &-\frac{(1-\gamma)\varepsilon}{DeRe}\nabla\cdot\Big(2\mathcal{M}_{Q^{(\ell)}}
        (\CL(\delta_{Q}^{\ell+1}))+\sigma^d( Q^{(\ell)}, \delta_{Q}^{\ell+1}\big)\Big)
     ,\label{cauchyeqnewQ2}\\
\nabla\cdot \delta_{\vv}^{\ell+1}=&~0,\label{cauchyeqnewQ3}
\end{align}
where
\begin{align*}
\delta F^{\ell}_1=&\frac{2\ve}{De}\Big(\delta_{Q}^{\ell}\cdot\CL (Q^{(\ell)})
        +\CL (Q^{(\ell)})\cdot\delta_{Q}^{\ell}+2\CL(Q^{(\ell)}):\delta_{M^{(4)}}^{\ell}\Big)\\
       &+\delta_{Q}^{\ell}\cdot\nabla\vv^{(\ell)}+(\nabla\vv^{(\ell)})^T\cdot\delta_{Q}^{\ell}
       -2\DD^{(\ell)}:\delta_{M^{(4)}}^{\ell}-\delta_{\vv}^{\ell}\cdot\nabla Q^{(\ell)}\\
       &+\frac{2}{De}\Big(\mathcal{M}_{Q^{(\ell)}}(-{B}_{Q^{(\ell)}}+\alpha {Q}^{(\ell)})+\mathcal{M}_{Q^{(\ell)}}^T(-{B}_{Q^{(\ell)}}+\alpha {Q}^{(\ell)})\\
       &-\mathcal{M}_{Q^{(\ell-1)}}(-{B}_{Q^{(\ell-1)}}+\alpha {Q}^{(\ell-1)})-\mathcal{M}_{Q^{(\ell-1)}}^T(-{B}_{Q^{(\ell-1)}}+\alpha {Q}^{(\ell-1)})\Big),\\
\delta F^{\ell}_2=&\frac{1-\gamma}{2Re}\DD^{(\ell)}:\delta_{M^{(4)}}^{\ell}
 -\frac{1-\gamma}{DeRe}\Big(2\mathcal{M}_{Q^{(\ell)}}(-{B}_{Q^{(\ell)}}+\alpha {Q}^{(\ell)})
 -2\mathcal{M}_{Q^{(\ell-1)}}(-{B}_{Q^{(\ell-1)}}+\alpha {Q}^{(\ell-1)})\\
 &+\varepsilon\big(2\delta_{Q}^{\ell}\cdot\CL (Q^{(\ell)})-2\CL (Q^{(\ell)}):\delta_{M^{(4)}}^{\ell}
 +\sigma^d(\delta_{Q}^{\ell}, Q^{(\ell)})\big)\Big)-\delta_{\vv}^{\ell}\otimes\vv^{(\ell)}.
\end{align*}
From Proposition \ref{prop:diff-CM}, we have
\begin{align*}
 \|\delta F^{\ell}_1\|_{L^2}\leq &~C(\delta,C_0)\big(\|\delta_{Q}^{\ell}\|_{H^{1}}+\|\delta_{\vv}^{\ell}\|_{L^2}\big),\\
\|\delta F^{\ell}_2\|_{L^2}\leq& C(\delta,C_0)\big(\|\delta_{Q}^{\ell}\|_{H^1}+\|\delta_{\vv}^{\ell}\|_{L^2}\big).
\end{align*}

Similar to the proof of (\ref{low-estimateQv}), we can deduce that there
exist $\nu>0$ small enough and $C(\delta, C_0,\nu)>0$, such that
\begin{align}\label{low-estimate-diff}
 &\frac12\frac{d}{dt}\Big(\|\delta_\vv^{\ell+1}\|^2_{L^2}+\|\delta_Q^{\ell+1}\|^2_{L^2}
+\frac{(1-\gamma)\ve}{DeRe}\int_\BR(L_1|\nabla \delta_Q^{\ell+1}|^2+2L_2|(\delta_Q^{\ell+1})_{ij,j}|^2)\ud\xx\Big)\nonumber\\
 &+\frac{\gamma}{2Re}\|\nabla \delta_\vv^{\ell+1}\|^2_{L^2}
 +\nu\|\CL(\delta_Q^{\ell+1})\|^2_{L^2}\nonumber\\
 \leq&C(\delta,C_0,\nu)\big(\|\delta_\vv^{\ell+1}\|^2_{L^2}+\|\delta_Q^{\ell+1}\|^2_{H^1}+\|\delta_\vv^{\ell}\|^2_{L^2}+\|\delta_Q^{\ell}\|^2_{H^1}\big).
\end{align}
We denote
\[
 \widetilde{E}^{(\ell)}_0(t)=\|\delta_\vv^{\ell}\|^2_{L^2}+\|\delta_Q^{\ell}\|^2_{L^2}
 +\frac{(1-\gamma)\ve}{DeRe}\int_\BR(L_1|\nabla \delta_Q^{\ell+1}|^2+2L_2|(\delta_Q^{\ell+1})_{ij,j}|^2)\ud\xx.
\]
Then (\ref{low-estimate-diff}) implies
\begin{align*}
 \frac{d}{dt}\widetilde{E}^{(\ell+1)}_{0}(t)
 \leq  C\Big(\widetilde{E}^{(\ell)}_{0}(t)+\widetilde{E}^{(\ell+1)}_{0}(t)\Big).
\end{align*}
Thus, we get
\begin{align*}
 \widetilde{E}^{(\ell+1)}_{0}(t)\leq C\int^t_0e^{C(t-\tau)}\widetilde{E}^{(\ell)}_{0}(\tau)d\tau
 \leq C\int^T_0e^{C(T-\tau)}d\tau \sup_{t\in(0,T]}\widetilde{E}^{(\ell)}_{0}(t).
\end{align*}
Taking $T<T_0$ small enough such that $C\int^T_0e^{C(T-\tau)}d\tau\leq\frac12$, we obtain
\begin{align*}
 \sup_{t\in(0,T]}\widetilde{E}^{(\ell+1)}_{0}(t)\leq\frac12 \sup_{t\in(0,T]}\widetilde{E}^{(\ell)}_{0}(t),
\end{align*}
Therefore, there exist $Q-Q^{\ast}\in C([0,T], H^1)$ and $\vv\in C([0,T], L^2)$, such that
\begin{align}
Q^{(n)}-Q^{\ast}\to Q-Q^{\ast} \in C([0,T], H^1),\quad \vv^{(n)}\to \vv \in C([0,T], L^2).
\end{align}
By the uniform bounds and interpolation, we have for any $s'\in(0,s)$,
\begin{align}
Q^{(n)}-Q^{\ast}\to Q-Q^{\ast} \in C([0,T], H^{s'+1}),\quad \vv^{(n)}\to \vv \in C([0,T], H^{s'}).
\end{align}
Thus we $(Q,\vv)$ is a classical solution of (\ref{eqnewQ1})-(\ref{eqnewQ3}).
The uniqueness of $(Q,\vv)$ is guaranteed by the same energy estimate as we have done to the prove the convergence of
$\{(Q^{(n)},\vv^{(n)})\}$. Moreover, by the standard regularity argument for parabolic system, we have that
\begin{align}
 Q-Q^{\ast} \in C([0,T], H^{s+1}),\quad \vv \in C([0,T], H^{s})\cap L^2([0,T], H^{s+1}).
\end{align}
We omit the details here. This completes the proof of Theorem \ref{thm:main1}.

\section{Some linearized operators}
In this section, we study some important linearized operators which will
be used in deriving the Ericksen-Leslie system from the molecule-based $Q$-tensor system (\ref{eqnewQ1})-(\ref{eqnewQ3}).

For a given $\bar{Q}=Q(\bar{B})$, the linearized operator of $Q(B)$ is defined as:
\begin{align*}
\mathcal{Q}_{\bar{Q}}(B)&~:=\frac{d}{dt}\big(Q(\bar{B}+tB)-Q(\bar{B})\big)\\
&~= M^{(4)}_{\bar{Q}}:B-\big(\frac13\II+\bar{Q}\big)(\bar{Q}:B),\quad\textrm{for } B\in\mathbb{Q}.
\end{align*}
We can also introduce the linearized operator of $B(Q)$ around $\bar{Q}$,
which is actually $\mathcal{Q}_{\bar{Q}}^{-1}$, since
$Q(B)$ and $B(Q)$ are inverse functions of each other.

The following proposition shows that $\mathcal{Q}_{\bar{Q}}$ is a self-adjoint and positive operator.
\begin{proposition}\label{prop:L-Q}
For $\bar{Q}\in\QPd$ and $B_1,B_2\in\mathbb{Q}$, we have $\mathcal{Q}_{\bar{Q}}(B_1):B_2=\mathcal{Q}_{\bar{Q}}(B_2):B_1$.
Moreover, if $B_1\neq 0$, then $\mathcal{Q}_{\bar{Q}}(B_1):B_1\ge 0.$
\end{proposition}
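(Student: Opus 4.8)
The plan is to obtain both assertions directly from the integral representation of $\CQ_{\bQ}$, exactly in the spirit of the proof of Proposition \ref{prop:positive}. First I would write $f_{\bQ}=Z_{\bQ}^{-1}\exp(\bar B:\mm\mm)$ for the Bingham distribution associated to $\bQ=Q(\bar B)$, so that $\bQ+\frac13\II=\int_{\BS}\mm\mm\,f_{\bQ}\ud\mm$ and $M^{(4)}_{\bQ}=\int_{\BS}\mm\mm\mm\mm\,f_{\bQ}\ud\mm$. Differentiating the defining relation $Q=(\nabla_B\omega)(B)$ once more shows that $\CQ_{\bQ}=\nabla_BQ(\bar B)$ is precisely the Hessian of $\omega$ at $\bar B$ restricted to $\mathbb{Q}$; hence for $B_1,B_2\in\mathbb{Q}$ one has the representation
\begin{align*}
\CQ_{\bQ}(B_1):B_2=\int_{\BS}(\mm\mm:B_1)(\mm\mm:B_2)f_{\bQ}\ud\mm-\Big(\int_{\BS}(\mm\mm:B_1)f_{\bQ}\ud\mm\Big)\Big(\int_{\BS}(\mm\mm:B_2)f_{\bQ}\ud\mm\Big),
\end{align*}
i.e.\ $\CQ_{\bQ}(B_1):B_2=\mathrm{Cov}_{f_{\bQ}}\big(\mm\mm:B_1,\ \mm\mm:B_2\big)$. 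I would verify this identity by a short computation: expanding $M^{(4)}_{\bQ}:B_1-(\frac13\II+\bQ)(\bQ:B_1)$ and pairing with $B_2$ reproduces exactly the two terms above, using $\bQ:B_i=\int_{\BS}(\mm\mm-\frac13\II):B_i\,f_{\bQ}\ud\mm=\int_{\BS}\mm\mm:B_i\,f_{\bQ}\ud\mm$ since $B_i$ is traceless.

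Granting the covariance representation, both claims are immediate. Symmetry $\CQ_{\bQ}(B_1):B_2=\CQ_{\bQ}(B_2):B_1$ is manifest because the bilinear form $\mathrm{Cov}_{f_{\bQ}}(\cdot,\cdot)$ is symmetric in its two arguments. For positivity, take $B_1=B_2=B$: then $\CQ_{\bQ}(B):B=\mathrm{Var}_{f_{\bQ}}(\mm\mm:B)\ge 0$, and it equals zero only if $\mm\mm:B$ is $f_{\bQ}$-a.e.\ constant on $\BS$; since $f_{\bQ}>0$ everywhere, this forces $\mm\mm:B$ to be constant on all of $\BS$, which (e.g.\ by evaluating at antipodal pairs of coordinate-type vectors, or by noting $\int_{\BS}\mm\mm\ud\mm=\frac{4\pi}{3}\II$ and $B$ traceless) is only possible for $B=0$. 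Alternatively, to make the non-degeneracy fully transparent I would use the symmetrized double-integral form, writing
\begin{align*}
\CQ_{\bQ}(B):B=\frac{1}{2Z_{\bQ}^2}\int_{\BS}\int_{\BS}\big(\mm\mm:B-\mm'\mm':B\big)^2\,e^{\bar B:\mm\mm}e^{\bar B:\mm'\mm'}\ud\mm\ud\mm'\ge0,
\end{align*}
exactly as in Proposition \ref{prop:positive}; this is $>0$ unless $\mm\mm:B\equiv\mm'\mm':B$ for all $\mm,\mm'$, i.e.\ $B=0$.

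I do not anticipate a genuine obstacle here; the only thing requiring care is the algebraic verification that the operator $\CQ_{\bQ}(B)=M^{(4)}_{\bQ}:B-(\frac13\II+\bQ)(\bQ:B)$ coincides with the covariance form, in particular keeping track of the index contractions in $M^{(4)}_{\bQ}:B$ and using tracelessness of $B$ to convert $\bQ:B$ into $\int_{\BS}(\mm\mm:B)f_{\bQ}\ud\mm$. The role of the hypothesis $\bQ\in\QPd$ is only to guarantee, via Proposition \ref{bingham_exi-unqi} and Proposition \ref{prop:closed}, that $\bar B=B(\bQ)$ is well defined and finite so that $f_{\bQ}$ is a genuine smooth positive probability density; once that is in hand the argument is purely the elementary fact that a variance is nonnegative and vanishes only for an a.e.-constant random variable. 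Note that the statement as written says ``if $B_1\neq0$ then $\CQ_{\bQ}(B_1):B_1\ge0$'', which is the nonnegativity claim; the stronger strict inequality $\CQ_{\bQ}(B_1):B_1>0$ for $B_1\neq0$ also follows from the above and is what is actually used later, so I would record that as well.
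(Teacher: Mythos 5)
Your proof is correct and coincides with the paper's: the paper also derives the covariance representation $\mathcal{Q}_{\bar Q}(B_1):B_2=\int_{\BS}(\mm\mm:B_1)(\mm\mm:B_2)f_{\bar Q}\ud\mm-(\bar Q:B_1)(\bar Q:B_2)$ and reads off symmetry and nonnegativity from it. Your additional remark on strict positivity (via the symmetrized double-integral form, as in Proposition~\ref{prop:positive}) is a correct observation beyond what the stated proposition claims, but the core argument is the same.
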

\begin{proof}
By the definition of $M^{(4)}_{\bar{Q}}$ and the fact that $\int_\BS f_{\bar{Q}}\ud\mm=1$, it is direct to check
\begin{align*}
\mathcal{Q}_{\bar{Q}}(B_1):B_2 &~=\int_\BS(\mm\mm:B_1)(\mm\mm:B_2)f_{\bar{Q}}\ud\mm-(\bar{Q}:B_1)(\bar{Q}:B_2)=\mathcal{Q}_{\bar{Q}}(B_2):B_1,\\
\mathcal{Q}_{\bar{Q}}(B_1):B_1 &~=\int_\BS(\mm\mm:B_1)^2f_{\bar{Q}}\ud\mm-\Big(\int_\BS(\mm\mm:B_1)f_{\bar{Q}}\ud\mm\Big)^2\ge0,
\end{align*}
which concludes the proof.
\end{proof}
We are particularly interested in the linearized operators around the equilibrium tensor
$Q_0=S_2(\nn\nn-\frac13\II)$, where $S_2$ are introduced in Section 2.
We denote $\mathcal{Q}_{Q_0}(B)$ by $\mathcal{Q}_\nn(B)$ for $B_0=\eta(\nn\nn-\frac13\II)$.
For use of convenience, we calculate $\mathcal{Q}_\nn$ explicitly.

For the equilibrium tensor $Q_0$, the distribution function $f_{Q_0}$ and the order parameter tensor $M^{(4)}_{Q_0}$
can be written as
\begin{align}
f_{Q_0}=&\frac{e^{\eta(\mm\cdot\nn)^2}}{\int_{\mathbb{S}^2}e^{\eta(\mm\cdot\nn)^2}d\mm},\label{f_Q0n}\\
M^{(4)}_{Q_0,ijkl}=&S_4n_in_jn_kn_l+\frac{S_2-S_4}{7}(n_in_j\delta_{kl}+n_in_k\delta_{jl}
   +n_in_l\delta_{jk}+n_jn_k\delta_{il}\nonumber\\
   &+n_jn_l\delta_{ik}+n_kn_l\delta_{ij})+(\frac{S_4}{35}-\frac{2S_2}{21}+\frac{1}{15})
   (\delta_{ij}\delta_{kl}+\delta_{ik}\delta_{jl}+\delta_{il}\delta_{jk}).\label{MQ04_n}
\end{align}
Substituting (\ref{f_Q0n}) and (\ref{MQ04_n}) to the linear operator
\[
 \mathcal{Q}_{\nn}(Q)=M^{(4)}_{Q_0}:Q-\frac13\II(Q_0:Q)-Q_0(Q_0:Q),
\]
then we have
\begin{eqnarray}\label{def:L_n}
\mathcal{Q}_{\nn}(Q)=\xi_1(\nn\nn-\frac13\II)(\nn\nn:Q)+\xi_2\big(\nn\nn\cdot Q+Q\cdot\nn\nn
-\frac23\II(\nn\nn:Q)\big)+\xi_3Q,
\end{eqnarray}
where
\begin{align}\label{def:xi}
 \xi_1=S_4-S_2^2,~~\xi_2=\frac{2(S_2-S_4)}{7},~~\xi_3=2(\frac{S_4}{35}-\frac{2S_2}{21}+\frac{1}{15}).
\end{align}

To calculate $\mathcal{Q}^{-1}_{\nn}$ explicitly, we may assume that
\begin{eqnarray}\label{def:inverseL_n}
\mathcal{Q}^{-1}_{\nn}(Q)=\psi_1(\nn\nn-\frac13\II)(\nn\nn:Q)+\psi_2(\nn\nn\cdot Q+Q\cdot\nn\nn
-\frac23\II\nn\nn:Q)+\psi_3Q,
\end{eqnarray}
where $\psi_i(1\le i\le 3)$ are constants. Then we have
\begin{eqnarray*}
\mathcal{Q}^{-1}_{\nn}(\mathcal{Q}_{\nn}(Q))&=&\psi_1(\nn\nn-\frac13\II)(\nn\nn:\mathcal{Q}_{\nn}(Q))
+\psi_2\Big(\nn\nn\cdot \mathcal{Q}_{\nn}(Q)+\mathcal{Q}_{\nn}(Q)\cdot\nn\nn\\
&~&-\frac23\II\nn\nn:\mathcal{Q}_{\nn}(Q)\Big)+\psi_3\mathcal{Q}_{\nn}(Q)\\
&=&\psi_1(\frac23\xi_1+\frac43\xi_2+\xi_3)(\nn\nn-\frac13\II)(\nn\nn:Q)
+\psi_2(\frac43\xi_1+\frac23\xi_2)(\nn\nn-\frac13\II)(\nn\nn:Q)\\
&~&+\psi_2(\xi_2+\xi_3)(\nn\nn\cdot Q+Q\cdot\nn\nn-\frac23\II\nn\nn:Q)+\psi_3\xi_1(\nn\nn-\frac13\II)(\nn\nn:Q)\\
&~&+\psi_3\xi_2(\nn\nn\cdot Q+Q\cdot\nn\nn-\frac23\II\nn\nn:Q)+\psi_3\xi_3Q\\
&=&\Big(\psi_1(\frac23\xi_1+\frac43\xi_2+\xi_3)+\psi_2(\frac43\xi_1+\frac23\xi_2)+\psi_3\xi_1\Big)
   (\nn\nn-\frac13\II)(\nn\nn:Q)\\
&~&+\Big(\psi_2(\xi_2+\xi_3)+\psi_3\xi_2\Big)(\nn\nn\cdot Q+Q\cdot\nn\nn-\frac23\II\nn\nn:Q)+\psi_3\xi_3Q.
\end{eqnarray*}
Therefore, the coefficients $\psi_i(1\le i\le 3)$ satisfy
\begin{eqnarray}
&&\psi_1(\frac23\xi_1+\frac43\xi_2+\xi_3)+\psi_2(\frac43\xi_1+\frac23\xi_2)+\psi_3\xi_1=0,\nonumber\\
&&\psi_2(\xi_2+\xi_3)+\psi_3\xi_2=0,~~
\psi_3\xi_3=1.\label{psi23}
\end{eqnarray}
By (\ref{def:xi}) and the definitions of $S_2$ and $S_4$(see (\ref{eqn:S})), we get that
\begin{align*}
&\xi_2=\frac{2(S_2-S_4)}{7}=\frac{6A_2-5A_4-A_0}{4A_0}>0,~~~\\
&\xi_3=2\Big(\frac{S_4}{35}-\frac{2S_2}{21}+\frac{1}{15}\Big)=\frac{A_4-2A_2+A_0}{4A_0}>0,\\
&\frac23\xi_1+\frac43\xi_2+\xi_3=\frac{3(A_0A_4-A_2^2 )}{2 A_0^2}>0.
\end{align*}
Thus, the coefficients $\psi_1,\psi_2,\psi_3$ can be uniquely determined.

Another important linear operator is the linearized operator $\mathcal{H}_{\nn}(Q)$ of $B(Q)-\alpha Q$ around $Q_0$, which is given by
\begin{eqnarray}\label{linearH_n}
\mathcal{H}_{\nn}(Q)=\mathcal{Q}^{-1}_{\nn}(Q)-\alpha Q,
\end{eqnarray}
plays an important role in next sections.

First, we introduce a two-dimensional subspace of $\mathbb{Q}$ as
\begin{align*}
\mathbb{Q}^{in}_{\nn}=\big\{\nn\nn^{\perp}+\nn^{\perp}\nn:~\nn^{\perp}\in\mathbb{V}_{\nn}\big\},
\end{align*}
where $\mathbb{V}_{\nn}:=\{\nn\in\mathbb{R}^3|\nn^{\perp}\cdot\nn=0\},$ and let $\mathbb{Q}^{out}_{\nn}$
be the orthogonal complement of $\mathbb{Q}^{in}_{\nn}$ in $\mathbb{Q}$.
The following proposition gives a characterization on the kernel space and non-negativity of $\CH_{\nn}$:
\begin{proposition}\label{prop:H_n}
$(i)$ $\mathcal{H}_{\nn} \mathbb{Q}^{in}_{\nn}=0,$ i.e. $\mathcal{H}_{\nn}(Q)\in \mathbb{Q}^{out}_{\nn}$.\\
$(ii)$ There exists a positive constant $c_0$ such that for any $Q\in \mathbb{Q}^{out}_{\nn}$,
\begin{align*}
\langle\mathcal{H}_{\nn}(Q), Q\rangle\geq c_0|Q|^2.
\end{align*}
\end{proposition}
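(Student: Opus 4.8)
The plan is to work from the explicit formulas \eqref{def:L_n} and \eqref{def:inverseL_n} for $\mathcal{Q}_{\nn}$ and $\mathcal{Q}^{-1}_{\nn}$, together with the algebraic identity \eqref{eqn:alpha} and the inequalities \eqref{AA024}. I would fix an orthonormal frame $\{\nn,\pp,\qq\}$ of $\mathbb{R}^3$, so that $\{\pp,\qq\}$ is an orthonormal basis of $\mathbb{V}_{\nn}$ and $\mathbb{Q}^{in}_{\nn}=\mathrm{span}\{\nn\pp+\pp\nn,\ \nn\qq+\qq\nn\}$. A direct check then shows that
\[
G_1:=\nn\nn-\tfrac13\II,\qquad G_2:=\pp\pp-\qq\qq,\qquad G_3:=\pp\qq+\qq\pp
\]
are mutually orthogonal, lie in $\mathbb{Q}^{out}_{\nn}$, and (being three nonzero orthogonal elements of the $3$-dimensional space $\mathbb{Q}^{out}_{\nn}$) form an orthogonal basis of it; this reduces everything to computing how $\mathcal{H}_{\nn}$ acts on $\mathbb{Q}^{in}_{\nn}$ and on the basis $\{G_1,G_2,G_3\}$.

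For part (i), I would first show $\mathcal{H}_{\nn}$ annihilates $\mathbb{Q}^{in}_{\nn}$. For $Q=\nn\nn^{\perp}+\nn^{\perp}\nn$ with $\nn^{\perp}\in\mathbb{V}_{\nn}$, a short computation gives $\nn\nn:Q=0$ and $\nn\nn\cdot Q+Q\cdot\nn\nn=Q$, so by \eqref{def:L_n} and \eqref{def:inverseL_n} both $\mathcal{Q}_{\nn}$ and $\mathcal{Q}^{-1}_{\nn}$ restrict to $\mathbb{Q}^{in}_{\nn}$ as scalar multiplication, by $\xi_2+\xi_3$ and $(\xi_2+\xi_3)^{-1}$ respectively. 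Using \eqref{def:xi} and \eqref{eqn:alpha} one has $\xi_2+\xi_3=\tfrac{A_2-A_4}{A_0}=\tfrac1{\alpha}$, hence $\mathcal{H}_{\nn}(Q)=\mathcal{Q}^{-1}_{\nn}(Q)-\alpha Q=0$. (Conceptually this is forced: $\mathcal{H}_{\nn}$ is the linearization of $Q\mapsto B_Q-\alpha Q$ at the critical point $Q_0=S_2(\nn\nn-\tfrac13\II)$, so it must vanish along the tangent directions of the critical manifold $\{S_2(\nn\nn-\tfrac13\II):\nn\in\BS\}$, which span $\mathbb{Q}^{in}_{\nn}$.) Since $\mathcal{Q}_{\nn}$ is self-adjoint by Proposition \ref{prop:L-Q}, so are $\mathcal{Q}^{-1}_{\nn}$ and $\mathcal{H}_{\nn}=\mathcal{Q}^{-1}_{\nn}-\alpha\,\mathrm{Id}$; therefore for any $Q\in\mathbb{Q}$ and any $P\in\mathbb{Q}^{in}_{\nn}$ one has $\langle\mathcal{H}_{\nn}(Q),P\rangle=\langle Q,\mathcal{H}_{\nn}(P)\rangle=0$, i.e. $\mathcal{H}_{\nn}(Q)\in\mathbb{Q}^{out}_{\nn}$, which gives (i).

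For part (ii), the plan is to diagonalize $\mathcal{H}_{\nn}$ explicitly on $\mathbb{Q}^{out}_{\nn}$. Using \eqref{def:L_n} and $\pp,\qq\perp\nn$, a direct computation yields $\mathcal{Q}_{\nn}(G_1)=\mu\,G_1$ with $\mu:=\tfrac23\xi_1+\tfrac43\xi_2+\xi_3$, and $\mathcal{Q}_{\nn}(G_2)=\xi_3G_2$, $\mathcal{Q}_{\nn}(G_3)=\xi_3G_3$; hence $\mathcal{H}_{\nn}$ is diagonal in the orthogonal basis $\{G_1,G_2,G_3\}$, with eigenvalue $\tfrac1{\mu}-\alpha$ on $G_1$ and $\tfrac1{\xi_3}-\alpha$ on $G_2,G_3$. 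It then remains to show both eigenvalues are strictly positive. Substituting the expressions for $\xi_i$ and $\mu$ in terms of the $A_k$ (via \eqref{def:xi} and \eqref{eqn:S}) and using \eqref{eqn:alpha}, I expect to obtain
\[
\frac1{\xi_3}-\alpha=\frac{6A_2-5A_4-A_0}{4\,\xi_3\,(A_2-A_4)},\qquad
\frac1{\mu}-\alpha=\frac{3A_2^2+2A_0A_2-5A_0A_4}{2\,\mu\,A_0\,(A_2-A_4)},
\]
and since $\xi_3,\mu,A_2-A_4>0$ while the two numerators are positive by \eqref{AA024}, both eigenvalues are positive. Taking $c_0:=\min\{\tfrac1{\mu}-\alpha,\ \tfrac1{\xi_3}-\alpha\}>0$, a constant depending only on $\alpha$ (not on $\nn$), and expanding any $Q\in\mathbb{Q}^{out}_{\nn}$ in the orthogonal basis $\{G_1,G_2,G_3\}$, we get $\langle\mathcal{H}_{\nn}(Q),Q\rangle\geq c_0|Q|^2$.

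The main obstacle is the bookkeeping in part (ii): carrying out the eigenvalue computation correctly and reducing the positivity of the two eigenvalues to exactly the two inequalities in \eqref{AA024} (whose proofs are those quoted from \cite{WZZ1}). Part (i), the self-adjointness argument, and the identification of the orthogonal basis of $\mathbb{Q}^{out}_{\nn}$ are all routine.
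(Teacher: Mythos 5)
Your argument is correct, and for part (ii) it takes a genuinely cleaner route than the paper's. For part (i) the paper expands $\mathcal{H}_{\nn}(Q)$ in the two building blocks $(\nn\nn-\frac13\II)(\nn\nn:Q)$ and $-Q+\nn\nn\cdot Q+Q\cdot\nn\nn-\frac23\II(\nn\nn:Q)$ and notes both lie in $\mathbb{Q}^{out}_{\nn}$ (which simultaneously gives the kernel and the range); you instead compute $\mathcal{Q}_{\nn}$, and hence $\mathcal{H}_{\nn}$, on $\mathbb{Q}^{in}_{\nn}$ directly and recover $\mathcal{H}_{\nn}(\mathbb{Q})\subseteq\mathbb{Q}^{out}_{\nn}$ from self-adjointness --- a mild repackaging of the same facts. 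The real difference is in part (ii). The paper changes variables to $B=\mathcal{Q}_{\nn}^{-1}(Q)\in\mathbb{Q}^{out}_{\nn}$, expands $\mathcal{H}_{\nn}(\mathcal{Q}_{\nn}(B)):\mathcal{Q}_{\nn}(B)$ as a combination $\beta_1|\nn\nn:B|^2+\beta_2|\nn\cdot B|^2+\beta_3|B|^2$, exploits the identity $\beta_2+2\beta_3=0$ to drop a term on $\mathbb{Q}^{out}_{\nn}$, and then does a sign case analysis plus the elementary bound $|B|^2\ge\frac32|B:\nn\nn|^2$ before finally checking $\beta_1-\frac12\beta_3>0$. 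You bypass this by producing the explicit orthogonal eigenbasis $\{G_1,G_2,G_3\}$ of $\mathbb{Q}^{out}_{\nn}$ for $\mathcal{Q}_{\nn}$, reading off the eigenvalues $\mu=\frac23\xi_1+\frac43\xi_2+\xi_3$ (on $G_1$) and $\xi_3$ (on $G_2,G_3$), so that $\mathcal{H}_{\nn}$ is diagonal with eigenvalues $\frac1\mu-\alpha$ and $\frac1{\xi_3}-\alpha$; the positivity then reduces directly to the two inequalities in (\ref{AA024}), with no case splitting and no back-transfer through $\|\mathcal{Q}_{\nn}\|$. Both approaches invoke the same underlying algebraic facts about $A_0,A_2,A_4$, but your diagonalization is more transparent and immediately identifies the sharp constant $c_0=\min\{\frac1\mu-\alpha,\ \frac1{\xi_3}-\alpha\}$. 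Two small points to state explicitly when writing it up: you are using $\mu>0$ (noted just above the proposition via $\mu=\frac{3(A_0A_4-A_2^2)}{2A_0^2}$) and $A_2-A_4>0$ (equivalently $\alpha>0$), both of which the paper also uses; and the scalar by which $\mathcal{Q}_\nn^{-1}$ acts on $\mathbb{Q}^{in}_{\nn}$ is $(\xi_2+\xi_3)^{-1}$ simply because $\mathcal{Q}_\nn$ acts by $\xi_2+\xi_3$ there --- you do not need to recompute $\psi_2+\psi_3$ from (\ref{psi23}).
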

\begin{proof}
$(i)$ From (\ref{def:inverseL_n}) and (\ref{linearH_n}), the linearized operator $\CH_{\nn}$ can be written as
\begin{eqnarray*}
\mathcal{H}_{\nn}(Q)=\psi_1(\nn\nn-\frac13\II)(\nn\nn:Q)+\psi_2(\nn\nn\cdot Q+Q\cdot\nn\nn
-\frac23\II\nn\nn:Q)+(\psi_3-\alpha)Q.
\end{eqnarray*}
where $\psi_1,\psi_2,\psi_3$ are given by (\ref{psi23}).
By (\ref{def:xi}) and definitions of $S_2$ and $S_4$, we have
\begin{align}\label{eqn:xi-alpha}
\xi_2+\xi_3=\frac{A_2-A_4}{A_0}=\frac{1}{\alpha}.
\end{align}
Together with (\ref{psi23}), we know
$\psi_2+\psi_3=(\xi_2+\xi_3)^{-1}=\alpha$.
Thus, we get
\begin{eqnarray}\label{eq:Hn}
\mathcal{H}_{\nn}(Q)=\psi_1(\nn\nn-\frac13\II)(\nn\nn:Q)
+\psi_2\big(-Q+\nn\nn\cdot Q+Q\cdot\nn\nn-\frac23\II(\nn\nn:Q)\big).
\end{eqnarray}
This yields the assertions in $(i)$ by observing
\begin{align}\label{Qout-decomp}
(\nn\nn-\frac13\II)(\nn\nn:Q)\in\mathbb{Q}^{out}_{\nn},~~~
-Q+\nn\nn\cdot Q+Q\cdot\nn\nn-\frac23\II(\nn\nn:Q)\in\mathbb{Q}^{out}_{\nn}.
\end{align}

$(ii)$ From the assertion in $(i)$ and (\ref{Qout-decomp}), we have
\begin{align*}
\mathcal{Q}_{\nn}^{-1}(Q)=\alpha Q+ \mathcal{H}_{\nn}(Q).
\end{align*}
Thus $\mathcal{Q}_{\nn}^{-1}(Q)\in\QO$ if and only if $Q\in\QO$.
Together with the fact
that $\mathcal{Q}_{\nn}$ is a bounded operator, we only need to prove that
\begin{align*}
\mathcal{H}_{\nn}(\mathcal{Q}_{\nn}(B)):\mathcal{Q}_{\nn}(B)
\geq c_0|B|^2,
\end{align*}
for some positive constant $c_0$ and any $B\in\QO$. From (\ref{def:L_n}), we have
\begin{align*}
 \langle B,\mathcal{Q}_{\nn}(B)\rangle=&\xi_1|\nn\nn:B|^2+2\xi_2|\nn\cdot B|^2
 +\xi_3|B|^2,
\end{align*}
and
\begin{align*}
\langle\mathcal{Q}_{\nn}(B),\mathcal{Q}_{\nn}(B)\rangle
=&\big(\frac23(\xi_1+2\xi_2)^2-2\xi_2^2+2\xi_2\xi_3\big)|\nn\nn:B|^2
  +(2\xi_2^2+4\xi_2\xi_3)|\nn\cdot B|^2+\xi_3^2|B|^2.
\end{align*}
Therefore we get
\begin{eqnarray*}
\mathcal{H}_{\nn}(\mathcal{Q}_{\nn}(B)):\mathcal{Q}_{\nn}(B)=
\big( B-\alpha\mathcal{Q}_{\nn}(B)\big):\mathcal{Q}_{\nn}(B)
=\beta_1|\nn\nn:B|^2+\beta_2|\nn\cdot B|^2+\beta_3|B|^2,
\end{eqnarray*}
where the coefficients are given by
\begin{align*}
&\beta_1=\xi_1-\alpha\big(\frac23(\xi_1+2\xi_2)^2-2\xi_2^2+2\xi_2\xi_3\big),\\
&\beta_2=2\xi_2-\alpha(2\xi_2^2+4\xi_2\xi_3),\quad \beta_3=\xi_3-\alpha\xi_3^2.
\end{align*}
From (\ref{eqn:xi-alpha}), we have
\begin{align*}
\beta_2+2\beta_3=2(\xi_2+\xi_3)\big(1-\alpha(\xi_2+\xi_3)\big)=0,
\end{align*}
which implies
\begin{align*}
\mathcal{H}_{\nn}(\mathcal{Q}_{\nn}(B)):\mathcal{Q}_{\nn}(B)
=\beta_1|\nn\nn:B|^2+\beta_3(|B|^2-2|B\cdot\nn|^2)
=(\beta_1-2\beta_3)|\nn\nn:B|^2+\beta_3|B|^2,
\end{align*}
for $B\in\QO$. If $\beta_1>2\beta_3$, the assertion is apparently true.
If $\beta_1<2\beta_3$, it is direct to check that for traceless matrix $B$
\begin{align*}
|B|^2\ge \frac32|B:\nn\nn|^2.
\end{align*}
Therefore
\begin{align*}
&\mathcal{H}_{\nn}(\mathcal{Q}_{\nn}(B)):\mathcal{Q}_{\nn}(B)
=(\beta_1-2\beta_3)|\nn\nn:B|^2
+\beta_3|B|^2\\
&\ge\frac{2}{3}(\beta_1-2\beta_3)|B|^2+\beta_3|B|^2
=\frac13(2\beta_1-\beta_3)|B|^2.
\end{align*}
Some further tedious calculations give that
\begin{align*}
\beta_1-\frac{\beta_3}{2}=&\xi_1-\frac12\xi_3-\alpha\big(\frac23(\xi_1+2\xi_2)^2-2\xi_2^2+2\xi_2\xi_3-\frac12\xi_3^2\big)\\
=&-\frac{8A_0A_2+9A_2^2-17A_0A_4}{4 A_0^2}-\frac{-27 A_2^4+ 54 A_0 A_2^2 A_4+ A_0^2(16 A_2^2-32A_2A_4-11A_4^2)}{8 A_0^3(A_2-A_4)}\\
=&\frac{9 (-3A_2^4-2A_0A_2^2 (A_2-4A_4)+A_0^2 (2A_2-5A_4)A_4)}{8A_0^3 (A_2-A_4)}\\
=&\frac{9(A_0A_4-A_2^2)(3A_2^2+2A_0A_2-5A_0A_4)}{8A_0^3(A_2-A_4)}>0.
\end{align*}
This concludes the proof.
\end{proof}

We denote by $\mathcal{P}^{in}$ the projection operator from $\mathbb{Q}$ to
$\mathbb{Q}^{in}_{\nn}$ and by $\mathcal{P}^{out}$ the projection operator from $\mathbb{Q}$ to
$\mathbb{Q}^{out}_{\nn}$. By direct computation we have
\[
 |Q-(\nn\nn^{\perp}+\nn^{\perp}\nn)|^2
 =|Q|^2-2|Q\cdot\nn|^2+2|Q:\nn\nn|^2+|\nn^{\perp}-(\II-\nn\nn)\cdot Q\cdot\nn|^2.
\]
Therefore, there holds
\begin{align}
 \mathcal{P}^{in}(Q)=&\nn[(\II-\nn\nn)\cdot Q\cdot\nn]+[(\II-\nn\nn)\cdot Q\cdot\nn]\nn\nonumber\\
 =&(\nn\nn\cdot Q+Q\cdot\nn\nn)-2(Q:\nn\nn)\nn\nn,\label{projection_in1}\\
 |\mathcal{P}^{in}(Q)|^2=&2|Q\cdot\nn|^2-2|Q:\nn\nn|^2,\label{projection_in2}
\end{align}
and
\begin{align}
 \mathcal{P}^{out}(Q)=Q- \mathcal{P}^{in}(Q)
 =Q-(\nn\nn\cdot Q+Q\cdot\nn\nn)+2(Q:\nn\nn)\nn\nn.\label{projection_out1}
\end{align}

Another two linear operators will be frequently used in the later are
$\CJ_{\bar{Q}}:\mathbb{R}^{3\times3}\mapsto\mathbb{Q}$ and
$\mathcal{U}_{\bar{Q}}:\mathbb{Q}\mapsto\mathbb{Q}$, which are defined as
\begin{align*}
&\mathcal{J}_{\bar{Q}}(A):=\frac12\big(\CM_{\bar{Q}}(A)+\CM^T_{\bar{Q}}(A)\big)=\frac16(A+A^T)+\frac12(A^T\cdot \bar{Q}+\bar{Q}\cdot A)-A:M^{(4)}_{\bar{Q}},\quad \text{for }A\in\mathbb{R}^{3\times3};\\
&\mathcal{U}_{\bar{Q}}(B):=M^{(6)}_{\bar{Q}}:B-(\bar{Q}:B)M^{(4)}_{\bar{Q}},\quad \text{for }B\in\mathbb{Q}.
\end{align*}
When $\bar{Q}=S_2(\nn\nn-\frac13\II)$, and $A\in \QO$, then by (\ref{projection_out1}) we infer that
\begin{align*}
\bar{Q}\cdot A=S_2(\nn\nn-\frac13\II)\cdot\mathcal{P}^{out}A= S_2(A:\nn\nn)\nn\nn-\frac13S_2\mathcal{P}^{out}A,
\end{align*}
which is symmetric. Thus, $\bar{Q}\cdot A=A^T\cdot\bar{Q}$, and then we have
\begin{align}\label{eq:M-J}
\CM_{\bar{Q}}(A)=\CM^T_{\bar{Q}}(A)=\CJ_{\bar{Q}}(A).
\end{align}
A direct consequence of (\ref{eq:M-J}) and Proposition \ref{prop:H_n} is that, for $\bar{Q}=S_2(\nn\nn-\frac13\II)$,
\begin{align}\label{eq:M-J1}
\CM_{\bar{Q}}(\CH_{\bar{Q}}(Q))=\CJ_{\bar{Q}}(\CH_{\bar{Q}}(Q)),\quad \text{for any} ~Q\in\mathbb{Q}.
\end{align}
We denote $\CJ_{\bar{Q}}$ by $\CJ_\nn$ for simplicity when $\bar{Q}=S_2(\nn\nn-\frac13\II)$.
It should be noticed that $\CJ_\nn$ is not self-adjoint operator on $\mathbb{R}^{3\times3}$ but is self-adjoint on
the space $\mathbb{Q}$.

Direct computation gives that
\begin{align*}
&\CQ_{\nn}(\nn\nn^{\perp}+\nn^{\perp}\nn)=\Big(\frac{2(S_2-S_4)}{7}
+2(\frac{S_4}{35}-\frac{2S_2}{21}+\frac{1}{15})\Big)(\nn\nn^{\perp}+\nn^{\perp}\nn)\in \mathbb{Q}^{in}_{\nn},\\
&\CJ_{\nn}(\nn\nn^{\perp}+\nn^{\perp}\nn)=\Big(\frac13+\frac16S_2-\frac{2(S_2-S_4)}{7}
-2(\frac{S_4}{35}-\frac{2S_2}{21}+\frac{1}{15})\Big)(\nn\nn^{\perp}+\nn^{\perp}\nn)\in \mathbb{Q}^{in}_{\nn},
\end{align*}
which imply
\begin{align*}
\mathcal{Q}_\nn\mathbb{Q}^{in}_{\nn}\subseteq\mathbb{Q}^{in}_{\nn},\qquad\mathcal{J}_\nn\mathbb{Q}^{in}_{\nn}\subseteq\mathbb{Q}^{in}_{\nn}.
\end{align*}
As $\mathcal{Q}_\nn$ and $\mathcal{J}_\nn$ are self-adjoint on $\mathbb{Q}$, we also have
\begin{align*}
\mathcal{Q}_\nn\mathbb{Q}^{out}_{\nn}\subseteq\mathbb{Q}^{out}_{\nn},\qquad\mathcal{J}_\nn\mathbb{Q}^{out}_{\nn}\subseteq\mathbb{Q}^{out}_{\nn}.
\end{align*}
In summary, we get
\begin{align}\label{comm:LJ}
[\mathcal{Q}_\nn,~\mathcal{P}^{in}]=[\mathcal{Q}_\nn,~\mathcal{P}^{out}]=0,\qquad [\CJ_\nn,~\mathcal{P}^{in}]=[\CJ_\nn,~\mathcal{P}^{out}]=0.
\end{align}

\section{Rigorous derivation from the $Q$-tensor theory to the Ericksen-Leslie theory}
In this section, by making the Hilbert expansion for the solution
of the molecule-based $Q$-tensor systems (\ref{eqnewQ1})-(\ref{eqnewQ3}), we present a
rigorous derivation from the molecule-based $Q$-tensor theory to the Ericksen-Leslie theory.

\subsection{The Hilbert expansion}
Let $(Q^{\ve},\vv^{\ve})$ be a solution of the system (\ref{eqepsiQ1})-(\ref{eqepsiQ3}).
We perform the following so-called Hilbert expansion:
\begin{align}
&Q^{\ve}=\sum^3_{k=0}\ve^kQ_k+\ve^3 Q_{R}\eqdefa\widetilde{Q}+\ve^3 Q_{R},\label{Qvar}\\
&\vv^{\ve}=\sum^2_{k=0}\ve^k\vv_k+\ve^3 \vv_{R}\eqdefa\tilde{\vv}+\ve^3 \vv_{R},\label{vvar}
\end{align}
where $Q_i(0\le i\le 3),\vv_j(0\le j\le2)$ are independent of $\ve$ and will be determined in what follows. $(Q_R,\vv_R)$
are called the remainder term which depend upon $\ve$.

Substituting the above expansion to (\ref{eqepsiQ1})-(\ref{eqepsiQ3}), and expanding all the terms with respect to $\ve$, we can get several systems of equations to solve $(Q_i, \vv_i)(0\le i\le 2)$ and $Q_3$ by collecting all the terms of the same order with respect to $\ve$. In \cite{WZZ1} and \cite{WZZ4}, the expanding can be performed directly as it involves only polynomials of variables. In contrast, the dependence of $B$ and $\CM_B$ on $Q$ is much more complicated here.

First, we make the following formal expansion for $Z_{Q^\ve}$ and $B_{Q^\ve}$:
\begin{align}
{B}_{Q^\ve}=B_0+\ve B_1+\ve^2 B_2+\ve^3 B_3+\ve^3 B_R+\ve^4 \mathfrak{R}_B,\\
Z_{Q^\ve}=Z_0+\ve Z_1+\ve^2 Z_2+\ve^3 Z_3+\ve^3 Z_R+\ve^4 \mathfrak{R}_Z.
\end{align}
Here $B_i, Z_i(0\le i\le3)$ depend on $Q_i(0\le i\le 3)$ only, and $B_R, Z_R$ depends on $Q_R$ and $Q_i(0\le i\le 3)$ .
Moreover, $B_i, Z_i(0\le i\le3)$ are independent of $\ve$ and $B_R, Z_R$ are the linear funtions of $Q_R$. All the terms with higher order of $\ve$
are put in $\ve^4 \mathfrak{R}_B$ and $\ve^4\mathfrak{R}_Z$.
To perform the Hilbert expansion, we have to write $B_i, Z_i$ and $B_R, Z_R$ in terms of $Q_i, Q_R$ explicitly.

By viewing $Z_{Q^\ve}$ as a function of $B^\ve$, we have:
\begin{align}\label{widetilde_Z}
Z_{{Q}^\ve}=&\int_{\mathbb{S}^2}\exp\Big(\mm\mm:
(\sum^3_{k=0}\ve^kB_k+\ve^3 B_R+\ve^4 \mathfrak{R}_{B})\Big)\ud\mm\nonumber\\
=&Z_{Q_0}\Big(1+\ve Q_0:B_1+\ve^2(Q_0:B_2+\widehat{Z}_1)
 +\ve^3(Q_0:B_3+2\widehat{Z}_2\nonumber\\
&\qquad +\widehat{Z}_3)+\ve^3Q_0:B_R\Big)+\ve^4\mathfrak{R}_Z,
\end{align}
where
\begin{align*}
 \widehat{Z}_1=&\frac{1}{2Z_{Q_0}}\int_{\mathbb{S}^2}(\mm\mm:B_1)^2\exp(\mm\mm:B_0)\ud\mm,\\
 \widehat{Z}_2=&\frac{1}{2Z_{Q_0}}\int_{\mathbb{S}^2}(\mm\mm:B_1)(\mm\mm:B_2)\exp(\mm\mm:B_0)\ud\mm,\\
 \widehat{Z}_3=&\frac{1}{6Z_{Q_0}}\int_{\mathbb{S}^2}(\mm\mm:B_1)^3\exp(\mm\mm:B_0)\ud\mm.
\end{align*}
By the expression of $Q^\ve$, we have
\begin{align}\label{widetilde_Q}
{Q}^\ve=&\frac{1}{ Z_{{Q}^\ve}}\int_{\mathbb{S}^2}(\mm\mm-\frac13\II)\exp\Big(\mm\mm:
(\sum^3_{k=0}\ve^kB_k+\ve^3 B_R+\ve^4 \mathfrak{R}_{B})\Big)\ud\mm\nonumber\\
=&\Big(Q_0+\ve(M^{(4)}_{Q_0}-\frac13\II Q_0):B_1+\ve^2\big[(M^{(4)}_{Q_0}
  -\frac13\II Q_0):B_2+\widehat{Q}_1\big]\nonumber\\
&\quad+\ve^3\big((M^{(4)}_{Q_0}-\frac13\II Q_0):B_3+2\widehat{Q}_2+\widehat{Q}_3\big)
+\ve^3\big((M^{(4)}_{Q_0}-\frac13\II Q_0):B_R\big)+O(\ve^4)\Big)\nonumber\\
 &\cdot\Big({1+\ve Q_0:B_1+\ve^2(Q_0:B_2+\widehat{Z}_1)
  +\ve^3(Q_0:B_3+2\widehat{Z}_2+\widehat{Z}_3)+\ve^3Q_0:B_R+O(\ve^4)}\Big)^{-1}\nonumber\\
=&Q_0+\ve\Big((M^{(4)}_{Q_0}-\frac13\II Q_0):B_1-(Q_0:B_1)Q_0\Big)
  +\ve^2\Big((M^{(4)}_{Q_0}-\frac13\II Q_0):B_2+\widehat{Q}_1\nonumber\\
 &-Q_0(Q_0:B_2+\widehat{Z}_1)-\{(M^{(4)}_{Q_0}-\frac13\II Q_0):B_1\}\big(Q_0:B_1\big)+Q_0(Q_0:B_1)^2\Big)\nonumber\\
 &+\ve^3\Big((M^{(4)}_{Q_0}-\frac13\II Q_0):B_3+2\widehat{Q}_2+\widehat{Q}_3-Q_0(Q_0:B_3+2\widehat{Z}_2+\widehat{Z}_3)
 \nonumber\\
 &-(Q_0:B_2+\widehat{Z}_1)(M^{(4)}_{Q_0}-\frac13\II Q_0):B_1-\{(M^{(4)}_{Q_0}-\frac13\II Q_0):B_2+\widehat{Q}_1\}(Q_0:B_1)
 \nonumber\\
 &+2Q_0(Q_0:B_1)(Q_0:B_2+\widehat{Z}_1)+(Q_0:B_1)^2(M^{(4)}_{Q_0}-\frac13\II Q_0):B_1-Q_0(Q_0:B_1)^3\Big)\nonumber\\
 &+\ve^3\Big((M^{(4)}_{Q_0}-\frac13\II Q_0):B_R-Q_0(Q_0:B_R)\Big)+O(\ve^4),
\end{align}
where% $F^{\ve}$ is a function depending on $(\ve,Q_0,Q_1,Q_2,Q_3)$, and
\begin{align*}
\widehat{Q}_1=&\frac1{2Z_{Q_0}}\int_{\mathbb{S}^2}(\mm\mm-\frac13\II)(\mm\mm:B_1)^2\exp(\mm\mm:B_0)\ud\mm,\\
 \widehat{Q}_2=&\frac1{2Z_{Q_0}}\int_{\mathbb{S}^2}(\mm\mm-\frac13\II)(\mm\mm:B_1)(\mm\mm:B_2)\exp(\mm\mm:B_0)\ud\mm,\\
\widehat{Q}_3=&\frac1{6Z_{Q_0}}\int_{\mathbb{S}^2}(\mm\mm-\frac13\II)(\mm\mm:B_1)^3\exp(\mm\mm:B_0)\ud\mm.
\end{align*}
Noting the definition of linear operator $\mathcal{Q}_{Q_0}$ and
from (\ref{widetilde_Q}) we can duduce that
\begin{align}
 Q_1=&~\mathcal{Q}_{Q_0}(B_1),\label{Q1}\\
 Q_2=&~\mathcal{Q}_{Q_0}(B_2)-(Q_0:B_1)\mathcal{Q}_{Q_0}(B_1)+\widehat{Q}_1-\widehat{Z}_1Q_0,\label{Q2}\\
 Q_3=&~\mathcal{Q}_{Q_0}(B_3)-(Q_0:B_1)\mathcal{Q}_{Q_0}(B_2)
    +\big((Q_0:B_1)^2-Q_0:B_2-\widehat{Z}_1\big)\mathcal{Q}_{Q_0}(B_1)\nonumber\\
    &~+(Q_0:B_1)(\widehat{Z}_1Q_0-\widehat{Q}_1)-(2\widehat{Z}_2+\widehat{Z}_3)Q_0+2\widehat{Q}_2+\widehat{Q}_3, \label{Q3}\\
Q_R=&~\mathcal{Q}_{Q_0}(B_R).
\end{align}
Thanks to the invertibility of $\mathcal{Q}_{Q_0}$, we know that $B_i$ can be explicitly given by $Q_j(0\le j\le i)$, and
$B_R$ is linearly depend on $Q_R$.

Similarly, we next make the expansion for $M^{(4)}_{{Q}^\ve}$:
\begin{align}
 M^{(4)}_{{Q}^\ve}= M^{(4)}_{0}+\ve M^{(4)}_{1}+\ve^2 M^{(4)}_{2}+\ve^3 M^{(4)}_{3}+\ve^3 M^{(4)}_{R}+\ve^4\mathfrak{R}_{M^{(4)}}.
\end{align}
Then we have
\begin{align}\label{M4tildeQ}
 M^{(4)}_{{Q}^\ve}=~&\frac{1}{ Z_{{Q}^\ve}}\int_{\mathbb{S}^2}
   \mm\mm\mm\mm\exp\Big(\mm\mm:(\sum^3_{k=0}\ve^kB_k+\ve^3 B_R+\ve^4 \mathfrak{R}_{B})\Big)\ud\mm\nonumber\\
=~&\Big(M^{(4)}_{Q_0}+\ve M^{(6)}_{Q_0}:B_1
        +\ve^2(M^{(6)}_{Q_0}:B_2+\widehat{M}_1^{(4)})
        +\ve^3(M^{(6)}_{Q_0}:B_3+2\widehat{M}_2^{(4)}+\widehat{M}_3^{(4)})\nonumber\\
&+\ve^3(M^{(6)}_{Q_0}:B_R)+O(\ve^4)\Big)\nonumber\\
&\cdot\!\Big({1+\ve Q_0:B_1+\ve^2(Q_0:B_2+\widehat{Z}_1)
   +\ve^3(Q_0:B_3+2\widehat{Z}_2+\widehat{Z}_3)+\ve^3Q_0:B_R+O(\ve^4)}\Big)^{-1}\nonumber\\
=~&M^{(4)}_{Q_0}+\ve\Big(M^{(6)}_{Q_0}:B_1-(Q_0:B_1)M^{(4)}_{Q_0}\Big)\nonumber\\
~&+\ve^2\Big(M^{(6)}_{Q_0}:\big(B_2-B_1(Q_0:B_1)\big)-M^{(4)}_{Q_0}\big(Q_0:B_2-(Q_0:B_1)^2+\widehat{Z}_1\big)
    +\widehat{M}_1^{(4)}\Big)\nonumber\\
~&+\ve^3\Big(M^{(6)}_{Q_0}:\big[B_3+B_2(Q_0:B_1)+B_1\big((Q_0:B_1)^2-(Q_0:B_2+\widehat{Z}_1)\big)\big]
\nonumber\\
&-M^{(4)}_{Q_0}\big[Q_0:B_3+2\widehat{Z}_2+\widehat{Z}_3-2(Q_0:B_1)(Q_0:B_2+\widehat{Z}_1)+(Q_0:B_1)^3\big]\nonumber\\
    &+2\widehat{M}_2^{(4)}+\widehat{M}_3^{(4)}-\widehat{M}_1(Q_0:B_1)\Big) +\ve^3\Big(M^{(6)}_{Q_0}:B_R
   -M^{(4)}_{Q_0}(Q_0:B_R)\Big)\nonumber\\
   & +O(\ve^4).
\end{align}
Here $\widehat{Z}_i$ are defined after the expansion of $Z_{Q^\ve}$, and $\widehat{M}^{(4)}_i$ are defined as
\begin{align*}
\widehat{M}_1^{(4)}=&\frac1{2Z_{Q_0}}\int_{\mathbb{S}^2}\mm\mm\mm\mm(\mm\mm:B_1)^2\exp(\mm\mm:B_0)\ud\mm,\\
\widehat{M}_2^{(4)}=&\frac1{2Z_{Q_0}}\int_{\mathbb{S}^2}\mm\mm\mm\mm(\mm\mm:B_1)(\mm\mm:B_2)\exp(\mm\mm:B_0)\ud\mm,\\
\widehat{M}_3^{(4)}=&\frac1{6Z_{Q_0}}\int_{\mathbb{S}^2}\mm\mm\mm\mm(\mm\mm:B_1)^3\exp(\mm\mm:B_0)\ud\mm.
\end{align*}
Noting the definition of linear operator $\mathcal{U}_{Q_0}$ and from (\ref{M4tildeQ}) we get
\begin{align*}
 M_1^{(4)}=&\mathcal{U}_{Q_0}(B_1),\\
 M_2^{(4)}=&\mathcal{U}_{Q_0}(B_2)-(Q_0:B_1)\mathcal{U}_{Q_0}(B_1)+\widehat{M}_1^{(4)}-\widehat{Z}_1M^{(4)}_{Q_0},\\
 M_3^{(4)}=&\mathcal{U}_{Q_0}(B_3)-(Q_0:B_1)\mathcal{U}_{Q_0}(B_2)
    +\big((Q_0:B_1)^2-Q_0:B_2-\widehat{Z}_1\big)\mathcal{U}_{Q_0}(B_1)\nonumber\\
    &+(Q_0:B_1)(\widehat{Z}_1M^{(4)}_{Q_0}-\widehat{M}_1^{(4)})-(2\widehat{Z}_2+\widehat{Z}_3)M^{(4)}_{Q_0}
    +2\widehat{M}_2^{(4)}+\widehat{M}_3^{(4)},\\
 M_R^{(4)}=&~\mathcal{U}_{Q_0}(B_R).
\end{align*}

Now, we can write down the expansion of the original system (\ref{eqepsiQ1})-(\ref{eqepsiQ3})
and collect the terms with same order of $\ve$. Specifically, we have\\
$\bullet$~{\bf The $O(\ve^{-1})$ system}\\
\begin{eqnarray}\label{expan:Q0-1}
 \mathcal{M}_{Q_0}(B_0-\alpha Q_0)=0,
\end{eqnarray}
$\bullet$~{\bf The zero-order term in $\ve$}\\
\begin{align}
 \frac{\partial Q_0}{\partial t}+\vv_0\cdot\nabla Q_0=&-4\CJ_{Q_0}\Big(\CH_{{Q_0}}(Q_1)+\CL (Q_0)\Big)
     +2\CJ_{Q_0}(\kappa^T_0),\label{expan-newQ-Q0}\\
\frac{\partial \vv_0}{\partial t}+\vv_0\cdot\nabla \vv_0=&
        -\nabla p_0 +\frac{\gamma}{Re}\Delta\vv_0+\frac{1-\gamma}{2Re}
       \nabla\cdot(\DD_0:M^{(4)}_{Q_0})\nonumber\\
     &+\frac{1-\gamma}{Re}\nabla\cdot\Big(
     2\CJ_{{Q_0}}(\CH_{{Q_0}}(Q_1))
            +2\mathcal{M}_{Q_0}(\CL (Q_0))
            +\sigma^d(Q_0,Q_0)\Big),\label{expan-newQ-v0}\\
 \nabla\cdot \vv_0=&~0.\label{expan-newQ-0}
\end{align}
$\bullet$~{\bf The first-order term in $\ve$}\\
\begin{align}
 \frac{\partial Q_1}{\partial t}+\vv_0\cdot\nabla Q_1=&-\vv_1\cdot\nabla Q_0
     -4\CJ_{{Q_0}}\Big(\CH_{{Q_0}}(Q_2)+\CL (Q_1)\Big)+2\CJ_{Q_0}(\kappa^T_1)+\FF_1,\label{expan-newQ-Q1}\\
\frac{\partial \vv_1}{\partial t}+\vv_0\cdot\nabla \vv_1=&
   -\vv_1\cdot\nabla \vv_0
   -\nabla p_1 +\frac{\gamma}{Re}\Delta\vv_1+\frac{1-\gamma}{2Re}
       \nabla\cdot \Big(\DD_0:M_1^{(4)}+\DD_1:M^{(4)}_{Q_0}\Big)\nonumber\\
  &+\frac{1-\gamma}{Re}\nabla\cdot\Big(2\CM_{{Q_0}}(\CH_{{Q_0}}(Q_2))+2\CM_{Q_0}(\CL (Q_1))-\GG_1\nonumber\\
  & +\sigma^d(Q_0,Q_1)+\sigma^d(Q_1,Q_0)\Big),\label{expan-newQ-v1}\\
 \nabla\cdot \vv_1=&~0.\label{expan-newQ-1}
\end{align}
$\bullet$~{\bf The second-order term in $\ve$}\\
\begin{align}
 \frac{\partial Q_2}{\partial t}+\vv_0\cdot\nabla Q_2=&-\vv_1\cdot\nabla Q_1-\vv_2\cdot\nabla Q_0
       -4\CJ_{{Q_0}}\Big(\CH_{{Q_0}}(Q_3)+\CL (Q_2)\Big)\nonumber\\
      &+\CJ_{Q_0}(\kappa^T_2)+\FF_2,\label{expan-newQ-Q2}\\
\frac{\partial \vv_2}{\partial t}+\vv_0\cdot\nabla \vv_2=&
  -\vv_1\cdot\nabla \vv_1-\vv_2\cdot\nabla \vv_0
 -\nabla p_2 +\frac{\gamma}{Re}\Delta\vv_2\nonumber\\
 &+\frac{1-\gamma}{2Re}
       \nabla\cdot\Big(\DD_0:M_2^{(4)}+\DD_1:M^{(4)}_1+\DD_2:M^{(4)}_{Q_0}\Big)\nonumber\\
  &+\frac{1-\gamma}{Re}\nabla\cdot\Big(2\CM_{{Q_0}}(\CH_{{Q_0}}(Q_3))+2\CM_{Q_0}(\CL (Q_2))-\GG_2\nonumber\\
 &+\sigma^d(Q_0,Q_2)
 +\sigma^d(Q_1,Q_1)+\sigma^d(Q_2,Q_0)\Big),\label{expan-newQ-v2}\\
 \nabla\cdot \vv_2=&~0.\label{expan-newQ-2}
\end{align}
Here, $\FF_1,\FF_2,\GG_1,\GG_2$ are defined as following:
\begin{align*}
\FF_1=&\overline{\FF}_1+\widetilde{\FF}_1,\\
\overline{\FF}_1=&4\alpha\Big(Q_1\cdot Q_1-(Q_1+(Q_0:B_1)Q_0):M_1^{(4)}
-Q_0:(\widehat{M}_1^{(4)}-\widehat{Z}_1M^{(4)}_{Q_0})\Big),\\
\widetilde{\FF}_1=&Q_1\cdot(-2\CL (Q_0)+\kappa_0^T)
+(-2\CL (Q_0)+\kappa_0)\cdot Q_1
-2(-2\CL (Q_0)+\DD_0):M_1^{(4)},\\
\GG_1=&\frac12\overline{\FF}_1-2(Q_1\cdot\CL (Q_0)-\CL (Q_0):M_1^{(4)}),
\end{align*}
and
\begin{align*}
\FF_2=&Q_2\cdot(2\alpha Q_1-2\CL(Q_0)+\kappa_0^T)
+(2\alpha Q_1-2\CL(Q_0)+\kappa_0)\cdot Q_2\\
&+Q_1\cdot(2\alpha Q_2-2\CL (Q_1)+\kappa_1^T)
+(2\alpha Q_2-2\CL (Q_1)+\kappa_1)\cdot Q_1\\
&-2(2\alpha Q_1-2\CL(Q_0)+\DD_0):M_2^{(4)}
-2(2\alpha Q_2-2\CL(Q_1)+\DD_1):M_1^{(4)}\\
&-4\alpha Q_0:\Big(-(Q_0:B_1)M_2^{(4)}
    +\big((Q_0:B_1)^2-Q_0:B_2-\widehat{Z}_1\big)M_1^{(4)}\nonumber\\
&+(Q_0:B_1)(\widehat{Z}_1M^{(4)}_{Q_0}-\widehat{M}_1^{(4)})-(2\widehat{Z}_2+\widehat{Z}_3)M^{(4)}_{Q_0}
    +2\widehat{M}_2^{(4)}+\widehat{M}_3^{(4)}\Big),\\
\GG_2=&Q_2\cdot(2\alpha Q_1-2\CL(Q_0))+Q_1\cdot(2\alpha Q_2-2\CL(Q_1))\\
&-(2\alpha Q_1-2\CL(Q_0)):M_2^{(4)}
-(2\alpha Q_2-2\CL(Q_1)):M_1^{(4)}\\
&-2\alpha Q_0:\Big(-(Q_0:B_1)M_2^{(4)}
    +\big((Q_0:B_1)^2-Q_0:B_2-\widehat{Z}_1\big)M_1^{(4)}\nonumber\\
&+(Q_0:B_1)(\widehat{Z}_1M^{(4)}_{Q_0}-\widehat{M}_1^{(4)})-(2\widehat{Z}_2+\widehat{Z}_3)M^{(4)}_{Q_0}
    +2\widehat{M}_2^{(4)}+\widehat{M}_3^{(4)}\Big).
\end{align*}

The equation of $O(\ve^{-1})$ (\ref{expan:Q0-1}) is equivalent to $B_0-\alpha Q_0=0$. Thanks to Proposition \ref{prop:critical},
$Q_0$ takes the form
\begin{align}\label{eq:Q0}
Q_0(t,\xx)=S_2\big(\nn(t,\xx)\nn(t,\xx)-\frac{1}{3}\II\big),
\end{align}
for some $\nn(t,\xx)\in \BS$.

The evolution of $\nn(t,\xx)$ is determined by the $O(1)$ system (\ref{expan-newQ-Q0})-(\ref{expan-newQ-0}).
At first glance, this system is not closed since it involves $Q_1$ which is unknown.
However, if we project (\ref{expan-newQ-Q0}) into the subspace $\QI=\mathrm{Ker}~\CH_\nn$,
then $Q_1$ is vanished in (\ref{expan-newQ-Q0}) by Proposition \ref{prop:H_n}. In addition, if we
project (\ref{expan-newQ-Q0}) into the subspace $\QO=(\mathrm{Ker}~\CH_\nn)^\bot$, then
we can solve $\CH_\nn (Q_1)$ in terms of $(Q_0,\vv_0)$. Thus $Q_1$ can also be eliminated in (\ref{expan-newQ-v0}).
Actually, the following proposition shows that the system (\ref{expan-newQ-Q0})-(\ref{expan-newQ-0})
implies $(\nn, \vv_0)$ satisfies the Ericksen-Leslie system with coefficients depending on the
molecule parameters. One can see the detailed proof in \cite{WZZ3}.
\begin{proposition}\label{prop:EL}
If $(\vv_0,Q_0)$ is a strong solution of the system (\ref{expan-newQ-Q0})--(\ref{expan-newQ-0}), then $(\nn, \vv_0)$ is necessary a
solution of the Ericksen-Leslie system (\ref{eq:EL-v})--(\ref{eq:EL-n}),
where the coefficients are given by (\ref{leslie1-intro})-(\ref{OF-LD-relation-intro}).
\end{proposition}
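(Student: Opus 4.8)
The plan is to use the orthogonal splitting $\mathbb{Q}=\mathbb{Q}^{in}_{\nn}\oplus\mathbb{Q}^{out}_{\nn}$, the kernel structure of $\CH_{\nn}$ from Proposition \ref{prop:H_n}, and the invariance relations (\ref{comm:LJ}) to resolve the $Q_1$ dependence in the $O(1)$ system (\ref{expan-newQ-Q0})--(\ref{expan-newQ-0}). By (\ref{eq:Q0}) we already know $Q_0=S_2(\nn\nn-\frac13\II)$, so $\CJ_{Q_0}=\CJ_{\nn}$ and, by (\ref{eq:M-J}) and (\ref{eq:M-J1}), $\CM_{Q_0}(\CH_{Q_0}(Q))=\CJ_{Q_0}(\CH_{Q_0}(Q))$ for all $Q\in\mathbb{Q}$. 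The key observation is that $Q_1$ enters both (\ref{expan-newQ-Q0}) and (\ref{expan-newQ-v0}) only through the single quantity $\CJ_{Q_0}(\CH_{Q_0}(Q_1))$, and $\CH_{Q_0}(Q_1)\in\mathbb{Q}^{out}_{\nn}$ by Proposition \ref{prop:H_n}(i); hence projecting onto $\mathbb{Q}^{in}_{\nn}$ will kill it, while projecting onto $\mathbb{Q}^{out}_{\nn}$ lets us solve for it.

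First I would apply $\CP^{in}$ to (\ref{expan-newQ-Q0}). Since $[\CJ_{\nn},\CP^{in}]=0$ by (\ref{comm:LJ}) and $\CP^{in}\CH_{Q_0}(Q_1)=0$, the $Q_1$ term drops out and one is left with $\CP^{in}(\partial_tQ_0+\vv_0\cdot\nabla Q_0)=-4\CJ_{Q_0}\CP^{in}(\CL(Q_0))+2\CJ_{Q_0}\CP^{in}(\kappa_0^T)$. Writing out $\partial_tQ_0=S_2(\nn_t\nn+\nn\nn_t)$, $\vv_0\cdot\nabla Q_0=S_2((\vv_0\cdot\nabla\nn)\nn+\cdots)$, using the formula (\ref{projection_in1}) for $\CP^{in}$, and evaluating $\CL(Q_0)$ on the uniaxial ansatz, one recognizes the three contributions as (multiples of) $\NN$, $\DD\cdot\nn$ and the Oseen--Frank molecular field $\hh$ for the energy (\ref{energy-OF}) with constants (\ref{OF-LD-relation-intro}), all restricted to $\nn^{\perp}$; this requires the algebraic identities (\ref{eqn:S}), (\ref{eqn:alpha}), (\ref{eqn:xi-alpha}) to turn the moment integrals into expressions in $S_2,S_4,\alpha$. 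Collecting them, the ``in'' part of (\ref{expan-newQ-Q0}) is exactly equivalent to the director equation (\ref{eq:EL-n}) with $\gamma_1,\gamma_2$ as in (\ref{leslie2-intro}).

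Next I would apply $\CP^{out}$ to (\ref{expan-newQ-Q0}). Here $\CP^{out}\CH_{Q_0}(Q_1)=\CH_{Q_0}(Q_1)$, and since $\CJ_{\nn}$ restricted to $\mathbb{Q}^{out}_{\nn}$ is invertible (it is positive there, by Proposition \ref{prop:H_n} together with (\ref{eq:M-J1})), the resulting identity can be solved for $\CJ_{Q_0}(\CH_{Q_0}(Q_1))$ in terms of $(\nn,\partial_t\nn,\vv_0,\nabla\vv_0)$ alone. Substituting this into (\ref{expan-newQ-v0}) eliminates $Q_1$ and produces a closed momentum equation for $(\nn,\vv_0)$. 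It then remains to check that the divergence terms there --- $\frac{1-\gamma}{2Re}\DD_0{:}M^{(4)}_{Q_0}$, $2\CJ_{Q_0}(\CH_{Q_0}(Q_1))$, $2\CM_{Q_0}(\CL(Q_0))$ and $\sigma^d(Q_0,Q_0)$ --- recombine, modulo a gradient absorbed into $\nabla p_0$, into the Leslie stress (\ref{eq:Leslie stress}) plus the Ericksen stress (\ref{eq:Ericksen}): once the expression for $\CH_{Q_0}(Q_1)$ from the ``out'' equation is inserted, the orientation-induced contributions yield the terms with coefficients $\alpha_1,\dots,\alpha_6$ as in (\ref{leslie1-intro}), while $\sigma^d(Q_0,Q_0)$ reduces to the Oseen--Frank elastic stress (\ref{eq:Ericksen}) for (\ref{energy-OF}) with the constants (\ref{OF-LD-relation-intro}). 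One also verifies the Parodi relation (\ref{Leslie relation}) and (\ref{Leslie-coeff}) for these coefficients, as must hold since (\ref{expan-newQ-Q0})--(\ref{expan-newQ-0}) inherits the energy law (\ref{newQenergdissip}).

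The main obstacle is this last coefficient-matching step, which is lengthy rather than conceptual. It requires expanding $M^{(4)}_{Q_0}$ and $M^{(6)}_{Q_0}$ on the uniaxial ansatz via (\ref{MQ04_n}) and the analogous formula for $M^{(6)}$, expressing $\xi_i,\psi_i$ (see (\ref{def:xi}), (\ref{psi23})) through $S_2,S_4,\alpha$, and carefully tracking symmetric versus antisymmetric parts so that the antisymmetric piece reproduces precisely $\alpha_2\nn\NN+\alpha_3\NN\nn$; the inequalities (\ref{AA024}) and the relation (\ref{eqn:alpha}) are exactly what make the coefficients come out in the form (\ref{leslie1-intro})--(\ref{OF-LD-relation-intro}). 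Since this computation is carried out in full in \cite{WZZ3}, I would record only the structure of the argument above and refer there for the remaining algebra.
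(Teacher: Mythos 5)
Your proposal takes exactly the approach the paper itself sketches just before the statement: project (\ref{expan-newQ-Q0}) onto $\mathbb{Q}^{in}_{\nn}=\mathrm{Ker}\,\CH_{\nn}$ to kill $Q_1$ and obtain the director equation, project onto $\mathbb{Q}^{out}_{\nn}$ to solve for $\CH_{\nn}(Q_1)$ and substitute into (\ref{expan-newQ-v0}) to get the Leslie and Ericksen stresses, then defer the coefficient matching to \cite{WZZ3}. The paper likewise gives only this outline and cites \cite{WZZ3} for the detailed computation, so your proposal matches the intended proof.
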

In the next subsections, we will show how to solve $Q_i(1\le i\le 3)$ and $\vv_j(1\le j\le 2)$ from
(\ref{expan-newQ-Q1}-\ref{expan-newQ-2}). The whole procedure is very similar to the one used in \cite{WZZ1} and \cite{WZZ4}.

\subsection{Existence of the Hilbert expansion}
Assume that $(\vv_0,\nn)$ is a solution of the systems (\ref{eq:EL-v})-(\ref{eq:EL-n}) on $[0,T]$ such that
\[
 \vv_0\in C([0,T];H^k),~~\nabla \nn\in C([0,T];H^k)
\]
for $k\geq20$. Since $Q_0=S_2(\nn(t,\xx)\nn(t,\xx)-\frac13\II)$, we have
$Q_0\in C([0,T],H^{k+1})$.

Let $Q_1=Q^{\top}_1+Q^{\perp}_1$ with $Q^{\top}_1\in\mathbb{Q}^{in}_{\nn}$
and $Q^{\perp}_1\in\mathbb{Q}^{out}_{\nn}$. Notice that we can solve
$Q^{\perp}_1$ by the equation (\ref{expan-newQ-Q0}) and have $Q^{\perp}_1\in C([0,T];H^{k-1})$.
In order to solve $(\vv_1,Q^{\top}_1)$, we need to derive a closed system for $(\vv_1,Q^{\top}_1)$
from (\ref{expan-newQ-Q1})-(\ref{expan-newQ-1}). We will also show that this system is {\it linear}
and {\it have a closed energy estimate}, which implies the solution $(\vv_1,Q^{\top}_1)$ will not
blow up in $[0, T]$.

In what follows, we denote by $L(Q^{\top}_1,\vv_1)$ the terms which only depend on $(Q^{\top}_1, \vv_1)$
(not their derivatives) linearly with the coefficients belonging to $C([0,T];H^{k-1})$. We also use
 $R\in C([0,T];H^{k-3})$ to denote the terms depending only on $\nn,\vv_0$ and $\QQ_1^\bot$.

\begin{lemma}\label{lem:s5-1}
It holds that
\begin{eqnarray*}
&~&\CP^{out}\big(\frac{\partial Q_1}{\partial t}+\vv_0\cdot\nabla Q_1\big)=L(Q^{\top}_1)+R,\\
&~&\CP^{in}\big(\frac{\partial Q_1}{\partial t}+\vv_0\cdot\nabla Q_1\big)
  =\frac{\partial Q^{\top}_1}{\partial t}+\vv_0\cdot\nabla Q^{\top}_1+L(Q^{\top}_1)+R.
\end{eqnarray*}
\end{lemma}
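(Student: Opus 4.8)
The plan is to exploit the splitting $Q_1=Q^{\top}_1+Q^{\perp}_1$ together with the fact that the $\QO$-component $Q^{\perp}_1$ is not a genuine dynamical unknown: as recalled just before the lemma, $Q^{\perp}_1$ is solved \emph{algebraically} from the $O(1)$ equation (\ref{expan-newQ-Q0}) by projecting it onto $\QO$, using that $\CJ_{Q_0}$ commutes with $\CP^{out}$ (see (\ref{comm:LJ})) and that $\CH_{Q_0}=\CH_\nn$ is invertible on $\QO$ (Proposition \ref{prop:H_n}). This represents $Q^{\perp}_1$ as
\[
Q^{\perp}_1=-\CH_\nn^{-1}\CP^{out}\Big(\tfrac14\CJ_{Q_0}^{-1}\big(\partial_tQ_0+\vv_0\cdot\nabla Q_0-2\CJ_{Q_0}(\kappa^T_0)\big)+\CL(Q_0)\Big),
\]
a smooth function of $\nn,\nabla\nn,\nabla^2\nn,\vv_0,\nabla\vv_0$ once one uses the Ericksen-Leslie equation (\ref{eq:EL-n}) to express $\partial_t\nn$ (hence $\partial_tQ_0$, since $Q_0=S_2(\nn\nn-\frac13\II)$) through $\nabla^2\nn$ and $\nabla\vv_0$; this is exactly why $Q^{\perp}_1\in C([0,T];H^{k-1})$. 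Differentiating this formula once more in $t$ produces $\partial_t^2Q_0$ (thus $\nabla^2\partial_t\nn\in C([0,T];H^{k-3})$) and $\partial_t\vv_0$ (controlled in $C([0,T];H^{k-2})$ by the momentum equation (\ref{eq:EL-v})), while $\vv_0\cdot\nabla Q^{\perp}_1\in C([0,T];H^{k-2})$; hence
\[
\partial_tQ^{\perp}_1+\vv_0\cdot\nabla Q^{\perp}_1\in C([0,T];H^{k-3}),
\]
and it depends only on $\nn,\vv_0,Q^{\perp}_1$, i.e. it is an $R$-term. Applying $\CP^{out}$ or $\CP^{in}$ (smooth functions of $\nn$) preserves both the regularity and the dependence, so the $Q^{\perp}_1$-contribution to each of the two asserted identities is of type $R$.

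It remains to handle $\partial_tQ^{\top}_1+\vv_0\cdot\nabla Q^{\top}_1$. The crucial observation is that $\CP^{out}$ is a pointwise (in $(t,\xx)$) operator built from $\nn$ via (\ref{projection_out1}), so although $\CP^{out}Q^{\top}_1\equiv0$ by definition of $\mathbb{Q}^{in}_\nn$, the derivatives of $Q^{\top}_1$ need not lie in $\mathbb{Q}^{in}_\nn$ because the subspace moves with $\nn$. Differentiating the identity $\CP^{out}Q^{\top}_1\equiv0$ in $t$ and along $\vv_0\cdot\nabla$ gives
\[
\CP^{out}\big(\partial_tQ^{\top}_1\big)=-(\partial_t\CP^{out})Q^{\top}_1,\qquad
\CP^{out}\big(\vv_0\cdot\nabla Q^{\top}_1\big)=-(\vv_0\cdot\nabla\CP^{out})Q^{\top}_1,
\]
where $\partial_t\CP^{out}$ and $\nabla\CP^{out}$ are smooth functions of $\nn$ that are linear in $\partial_t\nn$, resp. $\nabla\nn$; using again $\partial_t\nn\in C([0,T];H^{k-1})$ and $\nabla\nn\in C([0,T];H^{k})$, the coefficients of both right-hand sides lie in $C([0,T];H^{k-1})$. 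Since neither term carries a derivative on $Q^{\top}_1$, both are exactly of the form $L(Q^{\top}_1)$. Combining with the $R$-term from $Q^{\perp}_1$ yields the first identity
\[
\CP^{out}\big(\partial_tQ_1+\vv_0\cdot\nabla Q_1\big)=L(Q^{\top}_1)+R.
\]

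The second identity then follows from $\CP^{in}=\Id-\CP^{out}$: writing $\partial_tQ^{\top}_1+\vv_0\cdot\nabla Q^{\top}_1=\CP^{in}(\partial_tQ^{\top}_1+\vv_0\cdot\nabla Q^{\top}_1)+\CP^{out}(\partial_tQ^{\top}_1+\vv_0\cdot\nabla Q^{\top}_1)$ and using the previous paragraph to identify the $\CP^{out}$-piece as $L(Q^{\top}_1)$, we get $\CP^{in}(\partial_tQ^{\top}_1+\vv_0\cdot\nabla Q^{\top}_1)=\partial_tQ^{\top}_1+\vv_0\cdot\nabla Q^{\top}_1+L(Q^{\top}_1)$; adding the $R$-term from $Q^{\perp}_1$ gives the claim. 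The only real work — and the step to watch — is the regularity bookkeeping: one must verify through the Ericksen-Leslie system that $\partial_t\nn$ costs two spatial derivatives, so that the commutator coefficients $\partial_t\CP^{out},\nabla\CP^{out}$ land in $H^{k-1}$ and the material-derivative of $Q^{\perp}_1$ lands in $H^{k-3}$, precisely matching the conventions fixed before the lemma. Everything else is the pointwise linear algebra of $\CP^{in},\CP^{out}$ on the moving orthogonal frame $\{\mathbb{Q}^{in}_\nn,\mathbb{Q}^{out}_\nn\}$.
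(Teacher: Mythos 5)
Your proposal is correct, and since the paper itself only cites an external reference for this lemma, your argument is in effect a self-contained reconstruction of it. The key mechanism you isolate — differentiating the pointwise identity $\CP^{out}Q_1^{\top}\equiv 0$ along $\partial_t$ and $\vv_0\cdot\nabla$ to produce the commutator terms $-(\partial_t\CP^{out})Q_1^{\top}$ and $-(\vv_0\cdot\nabla\CP^{out})Q_1^{\top}$, which are zeroth order in $Q_1^{\top}$ with coefficients in $C([0,T];H^{k-1})$ built from $\partial_t\nn$ and $\nabla\nn$, hence of type $L(Q_1^{\top})$ — is exactly what makes both identities hold, and the regularity accounting (trading one time derivative of $Q_1^{\perp}$ through the Ericksen--Leslie system for two spatial derivatives so that the material derivative of $Q_1^{\perp}$ lands in $H^{k-3}$ and becomes an $R$-term) is carried out correctly.
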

\begin{proof}
The proof can be found in \cite{WZZ3}.
\end{proof}

For any $Q\in \mathbb{Q}$, we set
\begin{align*}
\overline{\FF}_1(Q)=4\alpha\Big(Q^2-\Big(Q+Q_0(\CQ^{-1}_{\nn}(Q):Q_0)\Big):\CU_{\nn}\big(\CQ^{-1}_{\nn}(Q)\big)
-Q_0:\big(\widehat{M}_1(Q)-\widehat{Z}_1(Q)M^{(4)}_{Q_0}\big)\Big),
\end{align*}
where $\widehat{M}_1(Q)$ and $\widehat{Z}_1(Q)$ are nonlinear functions with respect to $Q$,
\begin{eqnarray*}
&~&\widehat{M}_1(Q)=\frac{1}{2Z_{Q_0}}\int_{\mathbb{S}^2}\mm\mm\mm\mm\big(\mm\mm:\CQ^{-1}_{\nn}(Q)\big)^2\exp(\mm\mm:B_0)\ud\mm,\\
&~&\widehat{Z}_1(Q)=\frac{1}{2Z_{Q_0}}\int_{\mathbb{S}^2}\big(\mm\mm:\CQ^{-1}_{\nn}(Q)\big)^2\exp(\mm\mm:B_0)\ud\mm.
\end{eqnarray*}
Therefore, note that $Q_1=Q^{\top}_1+Q^{\perp}_1$, we have
\begin{eqnarray*}
 \overline{\FF}_1=\overline{\FF}_1(Q_1)=\overline{\FF}_1(Q^{\top}_1)+L(Q^{\top}_1),
\end{eqnarray*}
where the definition of $L(\cdot)$ is as the above. The next lemma tells us that when we take the projection
$\CP^{in}$ on $\overline{\FF}_1$, the terms which are nonlinear with respect to $Q^{\top}_1$ will vanish.
\begin{lemma}\label{lem:s5-2}
$\overline{\FF}_1(Q^{\top}_1)\in \mathbb{Q}^{out}_{\nn},$ that is, $\CP^{in}\overline{\FF}_1=L(Q^{\top}_1)$.
 \end{lemma}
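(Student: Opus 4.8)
The plan is to reduce the assertion to the statement that $\nn$ is an eigenvector of the symmetric matrix $\overline{\FF}_1(Q^\top_1)$, and then to exploit the axial symmetry of $f_{Q_0}$ about $\nn$. Recall from (\ref{projection_in1}) that for a symmetric matrix $X$ one has $\CP^{in}(X)=\nn\otimes\aaa+\aaa\otimes\nn$ with $\aaa=X\cdot\nn-(\nn\cdot X\cdot\nn)\nn$, and since $\aaa\cdot\nn=0$ the vanishing $\nn\otimes\aaa+\aaa\otimes\nn=0$ forces $\aaa=0$. Thus $\CP^{in}\overline{\FF}_1(Q^\top_1)=0$ is equivalent to $\overline{\FF}_1(Q^\top_1)\cdot\nn\parallel\nn$. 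Granting this, the stated identity $\CP^{in}\overline{\FF}_1=L(Q^\top_1)$ follows from the decomposition $\overline{\FF}_1=\overline{\FF}_1(Q^\top_1)+L(Q^\top_1)$ recorded just before the lemma, together with the fact that $\CP^{in}$ maps the class $L(\cdot)$ into itself.

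Next I would use that $Q:=Q^\top_1\in\QI$, say $Q=\nn\nn^\perp+\nn^\perp\nn$ with $\nn^\perp\cdot\nn=0$. By the computations in Section 4, $\QI$ is an eigenspace of $\CQ_\nn$ with eigenvalue $\xi_2+\xi_3=1/\alpha$, so $\CQ^{-1}_\nn(Q)=\alpha Q$; moreover $Q_0:Q=0$, $\nn\cdot Q\cdot\nn=0$ and $Q\cdot\nn=\nn^\perp$. Hence the term $Q_0(\CQ^{-1}_\nn(Q):Q_0)$ drops out, $\CU_\nn(\CQ^{-1}_\nn(Q))=\alpha\,M^{(6)}_{Q_0}:Q$, and, writing $f_{Q_0}=e^{\mm\mm:B_0}/Z_{Q_0}$ and using $\mm\mm:\CQ^{-1}_\nn(Q)=\alpha(\mm\mm:Q)$, the whole of $\overline{\FF}_1$ collapses to
\begin{align*}
\tfrac{1}{4\alpha}\overline{\FF}_1(Q)=Q^2-\alpha\,Q:\big(M^{(6)}_{Q_0}:Q\big)-Q_0:\big(\widehat{M}_1(Q)-\widehat{Z}_1(Q)M^{(4)}_{Q_0}\big),
\end{align*}
where $\widehat{M}_1(Q)=\tfrac{\alpha^2}{2}\int_\BS\mm\mm\mm\mm(\mm\mm:Q)^2 f_{Q_0}\ud\mm$ and $\widehat{Z}_1(Q)=\tfrac{\alpha^2}{2}\int_\BS(\mm\mm:Q)^2 f_{Q_0}\ud\mm$.

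I would then dot each piece with $\nn$. The first term is immediate: $Q^2\cdot\nn=Q\cdot(Q\cdot\nn)=Q\cdot\nn^\perp=|\nn^\perp|^2\nn$, which is parallel to $\nn$. For the moment-type terms the key observation is that $\mm\mm:Q=\mm\cdot(\nn\nn^\perp+\nn^\perp\nn)\cdot\mm=2(\mm\cdot\nn)(\mm\cdot\nn^\perp)$, so $(\mm\mm:Q)^2$ depends on $\mm$ only through $\mm\cdot\nn$ and $(\mm\cdot\nn^\perp)^2$; the same holds for $\mm\mm:Q_0=S_2\big((\mm\cdot\nn)^2-\tfrac13\big)$ and for $f_{Q_0}$, which depends on $\mm\cdot\nn$ alone (cf.\ (\ref{f_Q0n})). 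Carrying out the fourth- and sixth-order contractions and dotting with $\nn$, every surviving term becomes an integral of the form $\int_\BS g(\mm\cdot\nn)\,(\mm\cdot\nn^\perp)^2\,\mm\,f_{Q_0}\ud\mm$ with $g$ a polynomial (the exponent on $(\mm\cdot\nn^\perp)^2$ being $0$ or $1$). Reflecting $\mm$ across the plane $\mathrm{span}\{\nn,\nn^\perp\}$ shows such an integral has no component orthogonal to that plane, and reflecting across the plane through $\nn$ perpendicular to $\nn^\perp$ shows it has no $\nn^\perp$-component; hence it is a scalar multiple of $\nn$. Combining with the $Q^2\cdot\nn$ term, $\overline{\FF}_1(Q^\top_1)\cdot\nn\parallel\nn$, which is what we wanted.

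The main obstacle is organizational rather than conceptual: one must carry out the moment contractions carefully (keeping the conventions $(M:Q)$, $A:M^{(4)}$, $M^{(6)}:B$ straight) and verify that after contracting with $\nn$ each term genuinely reduces to an axisymmetric vector integral, so that the reflection symmetries of $f_{Q_0}$ can be invoked to annihilate all contributions transverse to $\nn$. The one point that needs a little care is recognizing that $\mm\mm:Q^\top_1$ is bilinear in $(\mm\cdot\nn,\mm\cdot\nn^\perp)$ and hence enters only through its square, which is exactly what makes the transverse components odd in $\nn^\perp$ and therefore vanish under the even weight $f_{Q_0}$.
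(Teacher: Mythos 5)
Your argument is correct and takes essentially the same route as the paper's: both reduce the lemma to showing $\overline{\FF}_1(Q^\top_1)$ has no $\mathbb{Q}^{in}_{\nn}$-component (equivalently $\overline{\FF}_1(Q^\top_1)\cdot\nn\parallel\nn$, or $\overline{\FF}_1(Q^\top_1):(\nn\pp+\pp\nn)=0$ for $\pp\perp\nn$), use $\CQ^{-1}_{\nn}(Q_1^\top)=\alpha Q_1^\top$ and $Q_0:Q_1^\top=0$ to simplify, and then kill each moment integral by the axial symmetry of $f_{Q_0}$ about $\nn$. The only cosmetic difference is that you invoke reflection invariance in $\nn^\perp$ and transverse directions, while the paper chooses coordinates $\nn=(0,0,1)^T$ and observes that the $\varphi$-integrals of the resulting odd trigonometric polynomials vanish.
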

\begin{proof} Let $Q_1^\top=\nn\tilde\nn+\tilde\nn\nn$ where $\tilde\nn\bot\nn$. It suffices to prove that
$\overline{\FF}_1(Q^{\top}_1):(\nn\pp+\pp\nn)=0$ for any $\pp\bot\nn$.

Due to the definition of $\CQ^{-1}_{\nn}$ we know $\CQ^{-1}_{\nn}(Q_1^\top)=(\psi_2+\psi_3)Q_1^\top$,
where $\psi_2, \psi_3$ are coefficients defined in (\ref{psi23}). Thus we have
\begin{align*}
Q_0:\CQ^{-1}_{\nn}(Q_1^\top)=S_2(\psi_2+\psi_3)(\nn\nn-\frac13\II):(\nn\tilde\nn+\tilde\nn\nn)=0.
\end{align*}
Direct calculation yields that
\begin{align*}
&\Big(Q_1^\top:\CU_{\nn}(\CQ^{-1}_{\nn}(Q_1^\top))\Big):(\nn\pp+\pp\nn)\\
~~&=\frac{8(\psi_2+\psi_3)}{Z_{Q_0}}\int_{\BS}
(\mm\cdot\nn)^3(\mm\cdot\tilde\nn)^2(\mm\cdot\pp)\exp\big(\eta(\mm\cdot\nn)^2\big)\ud\mm,\\
&\big(Q_0:\widehat{M}^{(4)}_1(Q_1^\top)\big):(\nn\pp+\pp\nn)\\
~~&=\frac{4S_2(\psi_2+\psi_3)^2}{Z_{Q_0}}\int_{\BS}\big((\mm\cdot\nn)^2-\frac13\big)
(\mm\cdot\nn)^3(\mm\cdot\tilde\nn)^2(\mm\cdot\pp)\exp\big(\eta(\mm\cdot\nn)^2\big)\ud\mm,\\
&\widehat{Z}_1(Q_1^\top)\big(Q_0:M^{(4)}_{Q_0}\big):(\nn\pp+\pp\nn)\\
~~&=\frac{2S_2\widehat{Z}_1(Q_1^\top)}{Z_{Q_0}}\int_{\BS}\big((\mm\cdot\nn)^2-\frac13\big)
(\mm\cdot\nn)(\mm\cdot\pp)\exp\big(\eta(\mm\cdot\nn)^2\big)\ud\mm.
\end{align*}
By the coordinate invariance, we may assume $\nn=(0,0,1)^T$ and $\tilde\nn=(a_1, b_1, 0)^T$, $\pp=(a_2, b_2, 0)^T$.
Let $\mm=(\sin\theta\cos\varphi,\sin\theta\sin\varphi,\cos\theta)^T$, then
\begin{align*}
&\Big(Q_1^\top:\CU_{\nn}(\CQ^{-1}_{\nn}(Q_1^\top))\Big):(\nn\pp+\pp\nn)\\
&=\frac{8(\psi_2+\psi_3)}{Z_{Q_0}}\int_0^{2\pi}\int_0^\pi\cos^3\theta\sin^4\theta(a_1\cos\varphi+b_1\sin\varphi)^2
(a_2\cos\varphi+b_2\sin\varphi)e^{\eta\cos^2\theta} d\theta d\varphi\\
&=0.
\end{align*}
Similarly, we have
\begin{align*}
\big(Q_0:\widehat{M}^{(4)}_1(Q_1^\top)\big):(\nn\pp+\pp\nn),
\qquad \widehat{Z}_1(Q_1^\top)\big(Q_0:M^{(4)}_{Q_0}\big):(\nn\pp+\pp\nn)=0.
\end{align*}
This completes the proof of Lemma \ref{lem:s5-2}.
\end{proof}

We are now in a position to derive the systems of $(\vv_1,Q^{\top})$. We denote
\begin{eqnarray*}
 &~&\A_1=\CP^{in}\big(\CJ_{\nn}(\CL (Q^{\top}_1))\big),~~\A_2=\CP^{out}\big(\CJ_{\nn}(\CL (Q^{\top}_1))\big),\\
 &~&\B_1=\CP^{in}\big(\CJ_{Q_0}(\nabla\vv_1)\big),~~\B_2=\CP^{out}\big(\CJ_{Q_0}(\nabla\vv_1)\big).
\end{eqnarray*}
Taking the projection $\CP^{in}$ on both sides of (\ref{expan-newQ-Q1}), note that $\CH_{\nn}(Q_2)\in\mathbb{Q}^{out}_{\nn}$
and $\CJ_{\nn}(\CL (Q_1))=\CJ_{\nn}(\CL (Q^{\top}_1))+R$, from Lemma \ref{lem:s5-1} and Lemma \ref{lem:s5-2} we get
\begin{eqnarray}\label{hilbert-exq1}
\frac{\partial Q^{\top}_1}{\partial t}+\vv_0\cdot\nabla Q^{\top}_1=-4\A_1+2\B_1
 +L(Q^{\top}_1)+L(\vv_1)+R.
\end{eqnarray}
Here we have absorbing $\CP^{in}\big(\vv_1\cdot\nabla Q_0\big)$ into $L(\vv_1)$.
Taking the projection $\CP^{out}$ on both sides of (\ref{expan-newQ-Q1}), we have
\begin{eqnarray*}
-4\CJ_{\nn}\big(\CH_{\nn}(Q_2)\big)-4\A_2+2\B_2+\overline{\FF}_1(Q^{\top}_1)+L(Q^{\top}_1)+L(\vv_1)+R=0,
\end{eqnarray*}
which implies that
\begin{align}\label{Q2B1W}
-2\CJ_{\nn}(\CH_{\nn}(Q_2))+\GG_1=2\A_2-\B_2+L(Q^{\top}_1)+L(\vv_1)+R.
\end{align}
Substituting (\ref{Q2B1W}) to (\ref{expan-newQ-v1}) and together with (\ref{hilbert-exq1}),
we obtain the following closed system for $(\vv_1,Q^{\top}_1)$
\begin{align}
\frac{\partial Q^{\top}_1}{\partial t}+\vv_0\cdot\nabla Q^{\top}_1=&~-4\A_1+\B_1
 +L(Q^{\top}_1)+L(\vv_1)+R,\label{hilbert-ex1}\\
\frac{\partial \vv_1}{\partial t}+\vv_0\cdot\nabla \vv_1~=&
   -\nabla p_1 +\frac{\gamma}{Re}\Delta\vv_1+\frac{1-\gamma}{2Re}\nabla\cdot \big(\DD_1:M^{(4)}_{Q_0}\big)\nonumber\\
  &-\frac{1-\gamma}{Re}\Big(\nabla\cdot\big(2\A_2
  -\B_2-2\CM_{Q_0}(\CL (Q^{\top}_1))+L(Q^{\top}_1)+R\big)\nonumber\\
  &-\nabla\cdot\big(\sigma^d(Q_0,Q^{\top}_1)+\sigma^d(Q^{\top}_1,Q_0)\big)\Big)+L(\vv_1),
  \label{hilbert-ex2}\\
 \nabla\cdot \vv_1=&~0.\label{hilbert-ex3}
\end{align}
Apparently, (\ref{hilbert-ex1})-(\ref{hilbert-ex3}) is a linear system of $(\vv_1,Q^{\top}_1)$.
To prove its solvability, we give a priori estimate for the energy
\[
 E_k\eqdefa\sum^{k-4}_{|\ell|=0}\Big(\langle\partial^{\ell}\vv_1,\partial^{\ell}\vv_1\rangle
 +\frac{1-\gamma}{2Re}\big\langle\partial^{\ell}Q^{\top}_1,\CL(\partial^{\ell}Q^{\top}_1)\big\rangle\Big)
 +\big\langle Q^{\top}_1,Q^{\top}_1\big\rangle.
\]
We will prove that there exists a positive constant $C$ such that
\begin{eqnarray}\label{338}
 \frac{d}{dt}E_k\leq C(E_k+\|R(t)\|_{H^{k-3}}),
\end{eqnarray}
which ensure that the systems (3.34)-(3.36) have a unique solution $(\vv_1,Q^{\top}_1)$ on $[0,T]$
satisfying
\begin{eqnarray}
 \vv_1\in C([0,T];H^{k-4}),~~Q^{\top}_1\in C([0,T];H^{k-3}).
\end{eqnarray}

Without loss of generality, we only prove (\ref{338}) in the case of $\ell=0$ and the proof is similar for the general case.
When $\ell=0$, the corresponding energy is given by
\[
 E_1=\langle\vv_1,\vv_1\rangle+\langle Q^{\top}_1,Q^{\top}_1\rangle
 +\frac{1-\gamma}{2Re}\big\langle Q^{\top}_1,\CL (Q^{\top}_1)\big\rangle.
\]
First, we get by (\ref{hilbert-ex1}) that
\begin{align}
 \frac12\frac{d}{dt}\big\langle Q^{\top}_1,Q^{\top}_1\big\rangle
 &=\big\langle-4\CJ_{\nn}(\CL (Q^{\top}_1))+\CJ_{\nn}(\nabla\vv_1),Q^{\top}_1\big\rangle
 +\big\langle L(\vv_1)+L(Q^{\top}_1)+G,Q^{\top}_1\big\rangle\nonumber\\
 &\leq\delta\|\nabla\vv_1\|^2_{L^2}+C_{\delta}\|Q^{\top}_1\|^2_{H^1}+C(\|\vv_1\|^2_{L^2}+\|R\|^2_{L^2}).\label{q1q1}
\end{align}
Meanwhile, we can obtain from (\ref{hilbert-ex1}) and (\ref{hilbert-ex2}) that
\begin{align}
 &\frac12\frac{d}{dt}\Big(\frac{2Re}{1-\gamma}\langle\vv_1,\vv_1\rangle+2\langle Q^{\top}_1,\CL (Q^{\top}_1)\rangle\Big)
 =\frac{2Re}{1-\gamma}\langle\partial_t\vv_1,\vv_1\rangle
 +2\big\langle\partial_tQ^{\top}_1,\CL (Q^{\top}_1)\big\rangle\nonumber\\
 =&-\frac{2\gamma}{1-\gamma}\|\nabla\vv_1\|^2_{L^2}-\big\langle\DD_1:M^{(4)}_{Q_0},\DD_1\big\rangle
 +\underbrace{2\big\langle2\A_2-\B_2
 -2\CM_{Q_0}(\CL (Q^{\top}_1)),\nabla\vv_1\big\rangle}_{I_1}\nonumber\\
 &+\underbrace{2\big\langle L(\vv_1)+L(Q^{\top}_1)+R-\frac{Re}{1-\gamma}\big(\sigma^d (Q_0,Q^{\top}_1)
 +\sigma^d (Q^{\top}_1,Q_0)\big), \nabla\vv_1\big\rangle}_{I_2}\nonumber\\
 &\underbrace{-2\big\langle\vv_0\cdot\nabla Q^{\top}_1,\CL (Q^{\top}_1)\big\rangle}_{I_3}
 +\underbrace{2\big\langle-4\A_1+ \B_1,\CL (Q^{\top}_1)\big\rangle}_{I_4}
 +\underbrace{2\big\langle L(\vv_1)+L(Q^{\top}_1)+R,\CL (Q^{\top}_1)\big\rangle}_{I_5}.\nonumber
\end{align}
For $I_2, I_3$ and $I_5$, we have
\begin{align}
I_3&=-2\int_{\mathbb{R}^3}\Big(
L_1\partial_{j}\vv_{0i}\partial_{i}Q^{\top}_{1i'j'}\partial_{j}Q^{\top}_{1i'j'}
+2L_2(\partial_{l}\vv_{0j}\partial_{j}Q^{\top}_{1kl}\partial_{m}Q^{\top}_{1km}
+\partial_{k}\vv_{0j}\partial_{j}Q^{\top}_{1kl}\partial_{m}Q^{\top}_{1lm})\Big)\ud\xx\nonumber\\
&\leq C\|Q^{\top}_1\|^2_{H^1},\\
I_2+I_5&\leq \delta\|\nabla\vv_1\|^2_{L^2}+C_{\delta}(\|\vv_1\|^2_{L^2}+\|Q^{\top}_1\|^2_{H^1}+\|R\|^2_{H^1}).
\end{align}
Now we turn to estimate $I_1+I_4$. Recalling the fact that for any
$Q\in \mathbb{Q}^{in}_{\nn} ~(\mathbb{Q}^{out}_{\nn})$,
$\CJ_{\nn}(Q)$ and $\CM_{Q_0}(Q)$ belong to $\mathbb{Q}^{in}_{\nn}~(\mathbb{Q}^{out}_{\nn})$, we have:
\begin{align}
&\Big\langle\CP^{out}\big(\CJ_\nn(\nabla\vv_1)\big),\nabla\vv_1\Big\rangle
=\Big\langle\CP^{out}\big(\CJ_\nn(\nabla\vv_1)\big),\DD_1\Big\rangle=\Big\langle\CM_\nn(\nabla\vv_1),\CP^{out}(\DD_1)\Big\rangle\nonumber\\
&=\Big\langle\nabla\vv_1,\CM_\nn\big(\CP^{out}(\DD_1)\big)\Big\rangle=\Big\langle\nabla\vv_1,\CJ_\nn\big(\CP^{out}(\DD_1)\big)\Big\rangle
=\Big\langle\CP^{out}(\DD_1), \CJ_\nn\big(\CP^{out}(\DD_1)\big)\Big\rangle\ge 0,\label{esti:Qv1-1}
\end{align}
where we have repeatedly used the symmetry of $\CJ_\nn(\cdot)$ and the self-adjointness
of $\CM_\nn$ (note that $\CM_\nn(\cdot)$ is not symmetric and $\CJ_\nn$ is not self-adjoint).
Similarly, it holds that
\begin{align}\label{esti:Qv1-2}
\Big\langle\CJ_{\nn}(\CL (Q^{\top}_1)),\CP^{in}\big(\CL (Q^{\top}_1)\big)\Big\rangle\ge0.
\end{align}
On the other hand, thanks to (\ref{eq:M-J}) and (\ref{comm:LJ}), we have
\begin{align}
&\Big\langle\CP^{out}\big(\CJ_{\nn}(\CL (Q^{\top}_1))\big), \nabla\vv_1\Big\rangle
+\Big\langle\CP^{in}\big(\CJ_{\nn}(\nabla\vv_1)\big), \CL (Q^{\top}_1)\Big\rangle\nonumber\\
&=\Big\langle\CJ_{\nn}\big(\CP^{out}(\CL (Q^{\top}_1))\big), \nabla\vv_1\Big\rangle
+\Big\langle\CJ_{\nn}(\nabla\vv_1), \CP^{in}\CL (Q^{\top}_1)\Big\rangle\nonumber\\
&=\Big\langle\CM_{\nn}\big(\CP^{out}(\CL (Q^{\top}_1))\big), \nabla\vv_1\Big\rangle
+\Big\langle\CM_{\nn}(\nabla\vv_1), \CP^{in}\CL (Q^{\top}_1)\Big\rangle\nonumber\\
&=\Big\langle\CM_{Q_0}(\CL (Q^{\top}_1)),\nabla\vv_1\Big\rangle. \label{esti:Qv1-3}
\end{align}
Combining (\ref{esti:Qv1-1})-(\ref{esti:Qv1-3}), we get
\begin{align}
I_1+I_4 \le 0.
\end{align}
Therefore, we obtain the following energy inequality
\[
\frac{d}{dt}E_1\leq C(E_1+\|R\|_{H^1}^2),
\]
which indicates the existence of $(\vv_1,Q_1)$.

Again, we write $Q_2=Q^{\top}_2+Q^{\perp}_2$ with $Q^{\top}_2\in\mathbb{Q}^{in}_{\nn}$
and $Q^{\perp}_2\in\mathbb{Q}^{out}_{\nn}$. By (\ref{Q2B1W}) we can solve $Q^{\perp}_2$ as
\begin{align}\label{solveQ2}
 Q^{\perp}_2=\frac12\CH^{-1}_{\nn}\CJ^{-1}_{\nn}\big(-2\A_2+\frac12\B_2-\GG_1-L(\vv_1)-L(Q^{\top}_1)-R\big)
 \in C([0,T];H^{k-5}).
\end{align}
Then, $(\vv_2,Q^{\top}_2)$ can be solved in a similar way as $(\vv_1,Q^{\top}_1)$.
$Q_3$ can be solved similarly as in (\ref{solveQ2})(unique up to a term in $\QI$). We omit the details
and leave them to the interest readers.

To summarize, we have proved:
\begin{proposition}\label{prop:Hilbert}
Let $(\vv_0, \nn)$ be a solution of (\ref{eq:EL-v})-(\ref{eq:EL-n}) on $[0,T]$ and satisfy
\beno
\vv_0\in C([0,T];H^{k}), \quad \nabla\nn\in C([0,T];H^{k})\quad \textrm{for} \quad k\ge 20.
\eeno
There exists the solution $(\vv_i, Q_i)(i=0,1,2)$ and $Q_3\in \mathbb{Q}^{out}_{\nn}$ of the system
(\ref{expan-newQ-Q1})-(\ref{expan-newQ-2}) satisfying
\beno
\vv_i\in  C([0,T];H^{k-4i}), \quad Q_i\in C([0,T];H^{k+1-4i})(i=0,1,2),\quad Q_3\in C([0,T];H^{k-11}).
\eeno
\end{proposition}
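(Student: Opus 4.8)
The plan is to build the expansion terms $(\vv_i,Q_i)$ recursively in $i$, using at each order the splitting $\mathbb{Q}=\mathbb{Q}^{in}_{\nn}\oplus\mathbb{Q}^{out}_{\nn}$ together with the fact, from Proposition \ref{prop:H_n}, that $\CH_{\nn}$ annihilates $\mathbb{Q}^{in}_{\nn}$ and is invertible on $\mathbb{Q}^{out}_{\nn}$. The $O(\ve^{-1})$ relation (\ref{expan:Q0-1}) is equivalent to $B_0-\alpha Q_0=0$, so by Proposition \ref{prop:critical} we are forced to take $Q_0=S_2(\nn\nn-\frac13\II)$, and since $\nabla\nn\in C([0,T];H^k)$ this yields $Q_0\in C([0,T];H^{k+1})$. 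By Proposition \ref{prop:EL} the $O(1)$ system (\ref{expan-newQ-Q0})--(\ref{expan-newQ-0}) is precisely the Ericksen-Leslie system for $(\nn,\vv_0)$, which holds by hypothesis; moreover, projecting (\ref{expan-newQ-Q0}) onto $\mathbb{Q}^{out}_{\nn}$ and inverting $\CH_{\nn}$ there, one solves $\CH_{\nn}(Q_1^\perp)$ algebraically in terms of $(Q_0,\vv_0)$ and obtains $Q_1^\perp\in C([0,T];H^{k-1})$.

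The main step is to produce $(\vv_1,Q_1^\top)$ with $Q_1^\top\in\mathbb{Q}^{in}_{\nn}$. First I would apply $\CP^{in}$ to (\ref{expan-newQ-Q1}): the term $\CH_{\nn}(Q_2)\in\mathbb{Q}^{out}_{\nn}$ drops out, Lemma \ref{lem:s5-1} handles the transport term, and Lemma \ref{lem:s5-2} shows that the genuinely quadratic-in-$Q_1^\top$ part of $\overline{\FF}_1$ lies in $\mathbb{Q}^{out}_{\nn}$, so (\ref{hilbert-exq1}) is a transport-type equation for $Q_1^\top$ whose forcing depends linearly on $(\vv_1,Q_1^\top)$ and on known lower-order data $R$. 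Applying $\CP^{out}$ to (\ref{expan-newQ-Q1}) gives (\ref{Q2B1W}), which expresses $\CJ_{\nn}(\CH_{\nn}(Q_2))$ — hence eventually $Q_2^\perp$ — in terms of $(\vv_1,Q_1^\top)$ and $R$; substituting this into (\ref{expan-newQ-v1}) closes the system into the \emph{linear} system (\ref{hilbert-ex1})--(\ref{hilbert-ex3}) for $(\vv_1,Q_1^\top)$. Existence then follows from an a priori bound $\frac{d}{dt}E_k\le C(E_k+\|R\|_{H^{k-3}}^2)$ for the weighted energy $E_k$; the delicate point is the cancellation $I_1+I_4\le 0$ of the cross terms, which rests on the self-adjointness of $\CM_{Q_0}$ on $\mathbb{R}^{3\times3}$, the symmetry of $\CJ_{\nn}$ on $\mathbb{Q}$, the commutation relations (\ref{comm:LJ}), and the positivity of $\CM_{Q_0}$ and of $\CL$ in the energy norm. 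Gronwall then gives $\vv_1\in C([0,T];H^{k-4})$ and $Q_1^\top\in C([0,T];H^{k-3})$, hence $Q_1\in C([0,T];H^{k-3})$.

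Finally I would iterate. With $(\vv_1,Q_1)$ in hand, (\ref{Q2B1W}) yields $Q_2^\perp=\frac12\CH_{\nn}^{-1}\CJ_{\nn}^{-1}(\cdots)\in C([0,T];H^{k-5})$; rerunning the $\CP^{in}/\CP^{out}$ argument on the $O(\ve^2)$ system (\ref{expan-newQ-Q2})--(\ref{expan-newQ-2}) produces a linear closed system for $(\vv_2,Q_2^\top)$ of the same structure, losing four further derivatives, and $Q_3$ is then obtained from the analogue of (\ref{solveQ2}) — determined only modulo a $\mathbb{Q}^{in}_{\nn}$-component, which we fix to zero so that $Q_3\in\mathbb{Q}^{out}_{\nn}$. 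Tracking the loss of four derivatives per order gives $\vv_i\in C([0,T];H^{k-4i})$, $Q_i\in C([0,T];H^{k+1-4i})$ for $i=0,1,2$, and $Q_3\in C([0,T];H^{k-11})$. The hard part throughout is the energy estimate for these linear systems — in particular the sign cancellations above and the verification, via Lemmas \ref{lem:s5-1}--\ref{lem:s5-2}, that no uncontrolled nonlinearity in $Q_1^\top$ (or $Q_2^\top$) survives the projections; isolating and estimating the many forcing terms $\FF_1,\FF_2,\GG_1,\GG_2$ is laborious but routine.
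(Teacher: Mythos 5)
Your proposal follows the paper's own argument step by step: solve $Q_0$ from the $O(\ve^{-1})$ relation via Proposition \ref{prop:critical}; split $Q_1=Q_1^\top+Q_1^\perp$ and read off $Q_1^\perp$ from the $O(1)$ equation after $\CP^{out}$; project the $O(\ve)$ equation with $\CP^{in}$ and $\CP^{out}$ (using Lemmas \ref{lem:s5-1}--\ref{lem:s5-2} to kill the nonlinearity in $Q_1^\top$), substitute \eqref{Q2B1W} into \eqref{expan-newQ-v1} to obtain the closed linear system \eqref{hilbert-ex1}--\eqref{hilbert-ex3}, and close a Gronwall estimate via the cancellation $I_1+I_4\le 0$ resting on the self-adjointness/symmetry/commutation properties of $\CM_{Q_0},\CJ_\nn$; then iterate and fix $Q_3$ modulo $\mathbb{Q}^{in}_{\nn}$ by taking it in $\mathbb{Q}^{out}_{\nn}$. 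This is the same proof as in the paper, with only cosmetic differences (e.g.\ your $\|R\|_{H^{k-3}}^2$ versus the paper's unsquared display \eqref{338}, which is a typo there since the detailed computation indeed produces the square).
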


\subsection{The system for the remainder}

In this subsection, we focus on the derivation of systems of the remainder and uniform estimates for the remainder.
Throughout this subsection, we assume that $\vv_i\in C([0,T];H^{k-4i})$ for $i=0,1,2$ and $Q_i\in C([0,T];H^{k+1-4i})$ for $i=0, 1, 2, 3$.
We denote by $C$ a constant depending on $\displaystyle\sum_{i=0}^2\sup_{t\in [0,T]}\|\vv_i(t)\|_{H^{k-4i}}$
and $\displaystyle\sum_{i=0}^3\sup_{t\in [0,T]}\|Q_i(t)\|_{H^{k+1-4i}}$, and independent of $\ve$.

Let
\begin{align*}
E&~=\|Q_R\|_{H^1}+\ve\|\Delta Q_R\|_{L^2}+\ve^2\|\nabla\Delta Q_R\|_{L^2}
+\|\vv_R\|_{L^2}+\ve\|\nabla\vv_R\|_{L^2}+\ve^2\|\Delta\vv_R\|_{L^2},\\
F&~=\ve\|\nabla\CL(Q_R)\|_{L^2}+\ve^2\|\Delta\CL(Q_R)\|_{L^2}+\ve^2\|\Delta\nabla\vv_R\|_{L^2}.
\end{align*}
By Sobolev embedding inequality, for $k=0,1,2$,  we have
\begin{align}
&\ve^k\|Q_R\|_{H^k}+\ve^{k}\|\vv_R\|_{H^k}\le E, \quad \ve\|Q_R\|_{L^\infty}+\ve^2\|\vv_R\|_{L^\infty}\le C E,\\
&\ve^{k+1}\|\CL(Q_R)\|_{H^k}+\ve^3\|\nabla\vv_R\|_{L^\infty}\le C(E+\ve F),\quad
\end{align}
for some constant $C$. To simplify the formulation, we introduce a notation $\mathfrak{R}$
to denote all the terms (called {\it good terms}) which can be controlled by
\begin{align}
\|\mathfrak{R}\|_{L^2} +\ve\|\nabla\mathfrak{R}\|_{L^2}+\ve^2\|\Delta\mathfrak{R}\|_{L^2}
\le C(\ve E)(1+E+\ve F)+\ve f(E),
\end{align}
where $C(\cdot)$ and $f(\cdot)$: $\mathbb{R}^+\cup\{0\}\mapsto\mathbb{R}^+\cup\{0\}$ are increasing functions.
They may depend on $\|Q_i\|$ and the parameters of the system, but are independent of $\ve$.
The main feature of the righthand side is that it is almost controlled by $C(1+E)$ when $\ve\to0$. Therefore, we can deduce a
closed energy estimate uniformly in $\ve$, see Proposition \ref{prop:energy}.
Since $\|Q_0-Q^*\|_{H^k}, \|Q_i\|_{H^k}(k\le 3, 1\le i\le 3)$ are all bounded by a constant independent on $\ve$, we have that
\begin{align*}
\|Q^\ve-Q^*\|_{H^k}\le C+\ve^3\|Q_R\|_{H^k} \le C(\ve E),\qquad \|\vv^\ve\|_{H^k}\le C(\ve E).
\end{align*}

We explain the motivation to introduce this definition. To control the remainder term, first we have to
write down the evolution equation for $Q_R$ and $\vv_R$. In other words, we have to calculate
$$\frac1{\ve^3}\Big(\frac{\partial}{\partial t}Q^\ve-\frac{\partial}{\partial t}Q_0-\ve \frac{\partial}{\partial t}Q_1
-\ve^2\frac{\partial}{\partial t}Q_2-\ve^3\frac{\partial}{\partial t}Q_3\Big).$$
The system for $(Q^\ve,\vv^\ve)$ can be written in the following abstract form:
\begin{align}
\frac{\partial}{\partial t}Q^\ve &=\frac1\ve \FF(Q^\ve)+\GG(Q^\ve, \vv^\ve),\\
\frac{\partial}{\partial t}\vv^\ve&= P_\text{div}\nabla\cdot\Big( \frac1\ve\HH(Q^\ve)+\JJ(Q^\ve, \vv^\ve)\Big),
\end{align}
where $P_\text{div}$ is projection operator which projects a
vector field to its solenoidal part, and
\begin{align*}
\FF(Q)&=-6Q+2\alpha\big(\mathcal{M}_{Q}(Q)+\mathcal{M}_{Q}^T(Q)\big),\quad \HH(Q)=\frac{1-\gamma}{Re}\Big(3Q-2\alpha\mathcal{M}_{Q}(Q)\Big),\\
\GG(Q,\vv)&=-2\big(\mathcal{M}_{Q}(\CL (Q))+\mathcal{M}_{Q}^T(\CL (Q))\big)
+\big(\mathcal{M}_{Q}(\nabla\vv)+\mathcal{M}^T_Q(\nabla\vv)\big)-\vv\cdot\nabla Q\\
&\triangleq\GG_1(Q)+\GG_2(Q,\vv)+\GG_3(Q,\vv),\\
\JJ(Q,\vv)&=-\frac{2(1-\gamma)}{Re}\mathcal{M}_{Q}(\CL (Q))
+\frac{1-\gamma}{Re}\sigma^d(Q,Q)+\frac{1-\gamma}{2Re}\DD:M^{(4)}_{Q}-\vv\otimes\vv+\frac{2\gamma}{Re}\DD\\
 &\triangleq\JJ_1(Q)+\JJ_2(Q)+\JJ_3(Q,\vv)+\JJ_4(\vv)+\frac{2\gamma}{Re}\DD.
\end{align*}
Then we have
\begin{align*}
\frac{\partial}{\partial t}Q_R=&\frac{1}{\ve^4}\Big(\FF(Q^\ve)-\FF(\widetilde{Q})\Big)
+\frac1{\ve^3}\Big(\GG(Q^\ve, \vv^\ve)-\GG(\widetilde Q, \tilde\vv)\Big)
+\frac{1}{\ve^3}\Big(\frac1\ve\FF(\widetilde{Q})+\GG(\widetilde Q, \tilde\vv)-\frac{\partial}{\partial t}\widetilde Q\Big),\\
\frac{\partial}{\partial t}\vv_R=&P_\text{div}\Big(\frac{1}{\ve^4}\big(\HH(Q^\ve)-\HH(\widetilde{Q})\big)
+\frac1{\ve^3}\big(\JJ(Q^\ve, \vv^\ve)-\JJ(\widetilde Q,\tilde\vv)\big)\Big)\\
&+\frac{1}{\ve^3}\Big(P_\text{div}\big(\frac1\ve\HH(\widetilde{Q})+\JJ(\widetilde Q, \tilde\vv)\big)-\frac{\partial}{\partial t}\tilde\vv\Big).
\end{align*}
By the choices of $Q_i(0\le i\le 3), \vv_j(0\le j\le 2)$, we know that
\begin{align}
\Big\|\frac{1}{\ve^3}\Big(\frac1\ve\FF(\widetilde{Q})+\GG(\widetilde Q, \tilde\vv)-\frac{\partial}{\partial t}\widetilde Q\Big)\Big\|_{H^2},\quad
\Big\|\frac1{\ve^3}\big(\JJ(Q^\ve, \vv^\ve)-\JJ(\widetilde Q,\tilde\vv)\big)\Big\|_{H^2}
\end{align}
are bounded by a constant uniformly in $\ve$, then they are good terms.
\begin{lemma}\label{lem:remGJ}For the difference terms arising from $\GG$ and $\JJ$, we have
\begin{align}
\GG_1(Q^\ve)-\GG_1(\widetilde Q)&=-2\ve^3\big(\mathcal{M}_{Q_0}(\CL (Q_R))+\mathcal{M}_{Q_0}^T(\CL (Q_R))\big)+\ve^3\mathfrak{R},\label{rem:G1}\\
\GG_2(Q^\ve,\vv^\ve)-\GG_1(\widetilde Q, \tilde\vv)&=\ve^3\big(\mathcal{M}_{Q_0}(\nabla\vv_R)
+\mathcal{M}_{Q_0}^T(\nabla\vv_R)\big)+\ve^3\mathfrak{R},\label{rem:G2}\\
\GG_3(Q^\ve,\vv^\ve)-\GG_3(\widetilde Q, \tilde\vv)&=\ve^3\mathfrak{R},\label{rem:G3}\\
\JJ_1(Q^\ve)-\JJ_1(\widetilde Q)&=-\ve^3\frac{2(1-\gamma)}{Re}\mathcal{M}_{Q_0}(\CL (Q_R))+\ve^3\mathfrak{R},\label{rem:J1}\\
\JJ_2(Q^\ve)-\JJ_2(\widetilde Q)&=\ve^3\mathfrak{R},\label{rem:J2}\\
\JJ_3(Q^\ve,\vv^\ve)-\JJ_3(\widetilde Q,\tilde\vv)
&=\ve^3\frac{1-\gamma}{2Re}\DD_R:M^{(4)}_{Q_0}+\ve^3\mathfrak{R},\label{rem:J3}\\
\JJ_4(\vv^\ve)-\JJ_4(\tilde\vv)&=\ve^3\mathfrak{R}.\label{rem:J4}
\end{align}
\end{lemma}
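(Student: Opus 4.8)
The plan is to insert the Hilbert expansion $Q^\ve=\widetilde Q+\ve^3Q_R$, $\vv^\ve=\tilde\vv+\ve^3\vv_R$ into each of $\GG_1,\GG_2,\GG_3,\JJ_1,\dots,\JJ_4$ and, using multilinearity together with a Taylor expansion of the Bingham-dependent quantities $M^{(4)}_{Q}$ (and, where they occur, $M^{(6)}_{Q}$, $B_Q$), to isolate the contribution of order $\ve^3$ that is \emph{linear} in $(Q_R,\vv_R)$ with all coefficients frozen at $Q_0$; this contribution is exactly the displayed main term in each identity, and the remaining pieces will all be shown to be of the advertised form $\ve^3\mathfrak R$. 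For the prototype $\Phi(Q)=\CM_Q(\CL(Q))$ appearing in $\GG_1$ and $\JJ_1$ I would write
\begin{align*}
\Phi(Q^\ve)-\Phi(\widetilde Q)=\big(\CM_{Q^\ve}-\CM_{\widetilde Q}\big)\big(\CL(Q^\ve)\big)+\ve^3\,\CM_{\widetilde Q}\big(\CL(Q_R)\big),
\end{align*}
then replace $\CM_{\widetilde Q}$ by $\CM_{Q_0}$ in the last term (using $\widetilde Q-Q_0=\ve Q_1+\ve^2Q_2+\ve^3Q_3$) and expand $\CL(Q^\ve)=\CL(\widetilde Q)+\ve^3\CL(Q_R)$ in the first. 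This produces three kinds of error terms: those carrying an extra power of $\ve$ because they involve the corrector $\widetilde Q-Q_0$; those quadratic or higher in $(Q_R,\vv_R)$, which come with $\ve^6$ or more; and the composition remainder $\big(\CM_{Q^\ve}-\CM_{\widetilde Q}\big)$ applied to the \emph{bounded} argument $\CL(\widetilde Q)$. The same scheme applies verbatim to $\GG_2$ and $\JJ_3$ (linear in $\nabla\vv$, resp.\ $\DD$, with Bingham coefficient $\CM_Q$, resp.\ $M^{(4)}_Q$) and, with no Bingham dependence at all, to the genuinely bilinear terms $\GG_3(Q,\vv)=-\vv\cdot\nabla Q$, $\JJ_2(Q)=\frac{1-\gamma}{Re}\sigma^d(Q,Q)$ and $\JJ_4(\vv)=-\vv\otimes\vv$, where no order-$\ve^3$ linear term survives after the coefficients are frozen at $Q_0$, so that the right-hand sides become purely $\ve^3\mathfrak R$.

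Next I would check that each error term is $\ve^3\mathfrak R$, i.e.\ that after division by $\ve^3$ its $L^2$, $\ve\|\nabla\cdot\|_{L^2}$ and $\ve^2\|\Delta\cdot\|_{L^2}$ norms obey the good-term bound $\le C(\ve E)(1+E+\ve F)+\ve f(E)$. The inputs are: the a priori bounds relating $E$ and $F$ to the weighted Sobolev norms of $Q_R$, $\vv_R$ and $\CL(Q_R)$ recorded just before the lemma; the uniform smoothness and Lipschitz estimates of Section~2 for the Bingham maps $Q\mapsto B_Q,M^{(4)}_Q,M^{(6)}_Q$ (Remark~\ref{rmk:M4}, Lemma~\ref{prop:diff}), in particular $\|M^{(4)}_{Q^\ve}-M^{(4)}_{\widetilde Q}\|_{H^k}\le C\|Q^\ve-\widetilde Q\|_{H^k}=C\ve^3\|Q_R\|_{H^k}$ for $k\le2$, valid uniformly in $\ve$ because $Q^\ve$ and $\widetilde Q$ remain in a fixed compact subset of $\QP$; the difference estimate for $\CM_Q$ (Lemma~\ref{prop:diff-CM}); and the Sobolev product and commutator inequalities (Lemmas~\ref{lem:product},~\ref{lem:commutator}). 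Thus the composition remainder $\CM_{Q^\ve}-\CM_{\widetilde Q}$ genuinely gains a factor $\ve^3$ in $L^2$; the crucial point is to pair it against the bounded smooth factor $\CL(\widetilde Q)$ (or $\tilde\DD$, $\tilde\vv$) in $L^\infty$ rather than against the $O(\ve^{-1})$ factor $\CL(Q_R)$, so that no negative power of $\ve$ appears, and — for the $\|\Delta\cdot\|_{L^2}$ bound — to use only the quantities $\|\nabla^jQ_R\|_{L^2}$ ($j\le3$) and $\|\CL(Q_R)\|_{H^k}$ ($k\le2$) that are controlled by $\ve^{-1}(E+F)$ with the appropriate weight. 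Every occurrence of the singular factors $\CL(Q_R)$ or $\nabla\vv_R$ must be seen to come with at least one extra power of $\ve$ (e.g.\ $\ve Q_1\cdot\CL(Q_R)$ has $L^2$-norm $\le\ve\|Q_1\|_{L^\infty}\|\CL(Q_R)\|_{L^2}\le C(E+\ve F)$), while quadratic-in-remainder terms such as $\ve^6Q_R\cdot\CL(Q_R)$ or $\ve^6\vv_R\otimes\vv_R$ are absorbed into the $\ve f(E)$ part.

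The main obstacle will be exactly the Bingham composition remainders: unlike the polynomial situations of \cite{WZZ1,WZZ4}, one cannot expand $M^{(4)}_{Q^\ve}$ as a polynomial in $Q^\ve$, so the gain of $\ve^3$ has to be squeezed out of the quantitative Lipschitz and $H^k$-composition estimates of Section~2; this in turn forces one to keep $Q^\ve$ and $\widetilde Q$ inside a compact subset of $\QP$ uniformly in $\ve$ (consistent with the uniform bound $\|Q^\ve-Q^*\|_{H^2}\le C$ built into the remainder bootstrap) and to control up to three derivatives of $Q_R$ with the critical $\ve$-weights. Getting every power of $\ve$ onto the correct side of the good-term inequality \emph{simultaneously} in the $L^2$, $\ve\|\nabla\cdot\|_{L^2}$ and $\ve^2\|\Delta\cdot\|_{L^2}$ norms is the only genuinely delicate point. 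The bilinear terms $\GG_3,\JJ_2,\JJ_4$ are routine; their sole subtlety is that the transport contribution $-\ve^3\vv_0\cdot\nabla Q_R$ and its analogues are declared part of $\mathfrak R$, which is legitimate since $\|\vv_0\cdot\nabla Q_R\|_{L^2}\le C\|\nabla Q_R\|_{L^2}\le CE$ with the higher norms bounded via $\ve\|\Delta Q_R\|_{L^2}\le E$ and $\ve^2\|\nabla\Delta Q_R\|_{L^2}\le E$, and these transport terms cause no harm in the energy identity of Proposition~\ref{prop:energy} because $\vv_0$ is divergence free.
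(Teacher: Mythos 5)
Your proposal is essentially the paper's own argument: the same decomposition of $\CM_{Q^\ve}(\CL(Q^\ve))-\CM_{\widetilde Q}(\CL(\widetilde Q))$ into $(\CM_{Q^\ve}-\CM_{\widetilde Q})(\CL(Q^\ve))+\ve^3\CM_{\widetilde Q}(\CL(Q_R))$ followed by freezing $\widetilde Q$ at $Q_0$, the same appeal to Lemma~\ref{prop:diff-CM} and Remark~\ref{rmk:M4} for the Lipschitz gain of $\ve^3$, the same identification that the factor $\CL(\widetilde Q)$ (not $\CL(Q_R)$) must absorb the composition remainder, and the same direct bilinear treatment of $\GG_3,\JJ_2,\JJ_4$. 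One small inaccuracy: you remark that the transport term $\vv_0\cdot\nabla Q_R$ ``causes no harm in the energy identity because $\vv_0$ is divergence free,'' but at this stage that term is simply absorbed into $\mathfrak R$ and bounded by its weighted norms in Proposition~\ref{prop:energy} without any integration by parts, so divergence-freeness plays no role there; this does not affect the validity of your argument.
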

\begin{proof}
First, by Lemma \ref{prop:diff-CM}, for $0\leq k\leq2$, we have
\begin{align*}
&\|\mathcal{M}_{Q^\ve}(\CL (Q^\ve))-\mathcal{M}_{\widetilde Q}(\CL (Q^\ve))\|_{H^k}\\
&\le \|\mathcal{M}_{Q^\ve}(\CL(\widetilde Q))-\mathcal{M}_{\widetilde Q}(\CL (\widetilde Q))\|_{H^k}
+\ve^3\|\mathcal{M}_{Q^\ve}(\CL (Q_R))-\mathcal{M}_{\widetilde Q}(\CL (Q_R))\|_{H^k}\\
&\le C\big(\|Q^\ve\|_{L^\infty}, \|\widetilde Q\|_{L^\infty}, \|Q^\ve-Q^*\|_{H^{k}},\|\widetilde Q-Q^*\|_{H^{k}}\big)\|\ve^3Q_R\|_{H^k}\|\CL (\widetilde Q)\|_{H^{k+2}}\\
&\quad+\ve^3 C\big(\|Q^\ve\|_{L^\infty}, \|\widetilde Q\|_{L^\infty}\big)\|\CL (Q_R)\|_{H^k}\|\ve^3Q_R\|_{L^\infty}\\
&\quad+\ve^3C\big(\|Q^\ve\|_{L^\infty}, \|\widetilde Q\|_{L^\infty},\|Q^\ve-Q^*\|_{H^{k}},\|\widetilde Q-Q^*\|_{H^{k}}\big)\|\CL (Q_R)\|_{L^\infty}\|\ve^3Q_R\|_{H^k},\\
&\ve^3\|\mathcal{M}_{\widetilde Q}(\CL (Q_R))-\mathcal{M}_{Q_0}(\CL (Q_R))\|_{H^k}\\
&\le \ve^3 C\big(\|Q_0\|_{L^\infty}, \|\widetilde Q\|_{L^\infty}, \|Q_0-Q^*\|_{H^{k+2}},\|\widetilde Q-Q^*\|_{H^{k+2}}\big)
\|\widetilde Q-Q_0\|_{H^{k+2}}\|\CL (Q_R)\|_{H^k}.
\end{align*}
Using $\|\widetilde Q-Q_0\|_{H^{k+2}}=\ve\|Q_1+\ve Q_2+\ve^2Q_3\|_{H^{k+2}}\le CE$,  we have
\begin{align*}
\ve^k\|\mathcal{M}_{Q^\ve}(\CL (Q^\ve))-\mathcal{M}_{\widetilde Q}(\CL (Q^\ve))\|_{H^k}
&\le C(\ve E)\ve^3 E+C(\ve E)\ve^4E(E+F),\\
\ve^{k+3}\|\mathcal{M}_{\widetilde Q}(\CL (Q_R))-\mathcal{M}_{Q_0}(\CL (Q_R))\|_{H^k}
&\le \ve^3 C(E+\ve F).
\end{align*}
Thus we obtain
\begin{align*}
\mathcal{M}_{Q^\ve}(\CL (Q^\ve))=\ve^3\mathcal{M}_{Q_0}(\CL (Q_R))+\ve^3\mathfrak{R}.
\end{align*}
This implies (\ref{rem:J1}). (\ref{rem:G1}) and (\ref{rem:G2}) can be proved in the same way.
Moreover, $M^{(4)}_{Q_1}-M^{(4)}_{Q_2}$ shares the same estimate with $\mathcal{M}_{Q_1}-\mathcal{M}_{Q_2}$, so (\ref{rem:J3}) is also true.

For (\ref{rem:J2}), we have
\begin{align*}
\ve^k\|\sigma^d (Q^\ve,Q^\ve)-\sigma^d(\widetilde Q,\widetilde Q)\|_{H^k}
&=\ve^{3+k}\|\sigma^d(Q^\ve,Q_R)+\sigma^d(Q_R,\widetilde Q)\|_{H^k}\\
&\le C \ve^3\|\ve^k\nabla Q_R\|_{H^k}(1+\|\ve^3 \nabla Q_R\|_{L^\infty})\\
&\le \ve^3C(1+\ve E)E.
\end{align*}
In the same way, (\ref{rem:G3}) and (\ref{rem:J4}) can be deduced.
\end{proof}

\begin{lemma}\label{lem:remHF}For the difference terms arising from $\HH$ and $\FF$, we have
\begin{align}\label{rem:H}
\HH(Q^\ve)-\HH(\widetilde Q)&=2\ve^3\frac{1-\gamma}{Re}\CM_{Q_0}\big(\CH_{Q_0}(Q_R)\big)+\ve^4\mathfrak{R},\\
\FF(Q^\ve)-\FF(\widetilde Q)& =2\ve^3\Big(\CM_{Q_0}\big(\CH_{Q_0}(Q_R)\big)+\CM^T_{Q_0}\big(\CH_{Q_0}(Q_R)\big)\Big)+\ve^4\mathfrak{R}.\label{rem:F}
\end{align}
\end{lemma}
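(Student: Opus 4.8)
The plan is to reduce both identities to a single estimate for $\CM_Q\big(G(Q)\big)$, where $G(Q):=B(Q)-\alpha Q$. Since Lemma~\ref{CM-lemma}$(i)$ gives $\CM_Q(B_Q)=\tfrac32Q$ and $\tfrac32Q$ is symmetric, also $\CM^T_Q(B_Q)=\tfrac32Q$, so
\[
\HH(Q)=\frac{2(1-\gamma)}{Re}\,\CM_Q\big(G(Q)\big),\qquad
\FF(Q)=-2\Big(\CM_Q\big(G(Q)\big)+\CM^T_Q\big(G(Q)\big)\Big).
\]
Hence (\ref{rem:H}) and (\ref{rem:F}) both follow once I show
\[
\CM_{Q^\ve}\big(G(Q^\ve)\big)-\CM_{\widetilde Q}\big(G(\widetilde Q)\big)=\ve^3\,\CM_{Q_0}\big(\CH_{Q_0}(Q_R)\big)+\ve^4\mathfrak R,
\]
the transpose version being obtained by applying $(\cdot)^T$, which is an $H^k$-isometry and preserves the class $\mathfrak R$. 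I would split the left side as $\CM_{\widetilde Q}\big(G(Q^\ve)-G(\widetilde Q)\big)+\big(\CM_{Q^\ve}-\CM_{\widetilde Q}\big)\big(G(Q^\ve)\big)=:\mathrm{I}+\mathrm{II}$.

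For $\mathrm{II}$: since $Q_0$ is a critical point, $G(Q_0)=0$ by Proposition~\ref{prop:critical}, so the smoothness of the Bingham map (Propositions~\ref{prop:positive}, \ref{prop:closed}, Lemmas~\ref{prop:compo}, \ref{prop:diff}) yields $\|G(Q^\ve)\|_{H^2}\le C\|Q^\ve-Q_0\|_{H^2}\le C(1+\ve E)\ve$, using $\|\widetilde Q-Q_0\|_{H^2}=\|\ve Q_1+\ve^2Q_2+\ve^3Q_3\|_{H^2}\le C\ve$ and $\ve^3\|Q_R\|_{H^2}\le\ve E$. I then feed this into Lemma~\ref{prop:diff-CM}, choosing the form that places the genuinely small factor $Q^\ve-\widetilde Q=\ve^3Q_R$ in the slot where it can absorb the excess regularity, and distribute the available powers of $\ve$ ($\|\ve^3Q_R\|_{L^2}\le\ve^3E$, $\|\ve^3Q_R\|_{H^1}\le\ve^2E$, $\|\ve^3Q_R\|_{L^\infty}\le C\ve^2E$); matched against the weights $1,\ve,\ve^2$ in the $\mathfrak R$-norm this gives $\mathrm{II}=\ve^4\mathfrak R$.

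For $\mathrm{I}$: I would Taylor expand $B$ at $\widetilde Q$, $B(Q^\ve)-B(\widetilde Q)=DB(\widetilde Q)[\ve^3Q_R]+R_2$, where the quadratic remainder $R_2$ is $\ve^4\mathfrak R$ (indeed better, being quadratic in $\ve^3Q_R$); then replace $DB(\widetilde Q)$ by $DB(Q_0)=\CQ_\nn^{-1}$, the error $\big(DB(\widetilde Q)-DB(Q_0)\big)[\ve^3Q_R]$ being $\ve^4\mathfrak R$ again by smoothness of $DB$, $\|\widetilde Q-Q_0\|_{H^2}\le C\ve$, and the compensating $\ve^3$ carried by $Q_R$. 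Since $G=B-\alpha\Id$ and $\CH_{Q_0}=\CQ_\nn^{-1}-\alpha\Id$ is an algebraic (derivative-free) operator, this gives $G(Q^\ve)-G(\widetilde Q)=\ve^3\CH_{Q_0}(Q_R)+\ve^4\mathfrak R$. Applying $\CM_{\widetilde Q}$, absorbing $\CM_{\widetilde Q}(\ve^4\mathfrak R)$ into $\ve^4\mathfrak R$ ($\CM_{\widetilde Q}$ being bounded on the relevant weighted spaces since $\widetilde Q$ is fixed and smooth), and finally replacing $\CM_{\widetilde Q}$ by $\CM_{Q_0}$ in $\ve^3\CM_{\widetilde Q}(\CH_{Q_0}(Q_R))$ — the error $\ve^3(\CM_{\widetilde Q}-\CM_{Q_0})(\CH_{Q_0}(Q_R))$ being $\ve^4\mathfrak R$ by Lemma~\ref{prop:diff-CM} with $\|\widetilde Q-Q_0\|_{H^2}\le C\ve$ placed in the small slot — yields $\mathrm{I}=\ve^3\CM_{Q_0}(\CH_{Q_0}(Q_R))+\ve^4\mathfrak R$, completing the reduction.

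The main obstacle is purely the bookkeeping of powers of $\ve$: every error term is only $O(\ve^2)$ in an unweighted Sobolev norm, and one must exploit simultaneously the precise weighted structure of the $\mathfrak R$-class ($\|\cdot\|_{L^2}+\ve\|\nabla\cdot\|_{L^2}+\ve^2\|\Delta\cdot\|_{L^2}$), the a priori bounds $\ve^k\|Q_R\|_{H^k}\le E$ for $k\le 2$ and $\ve\|\Delta Q_R\|_{L^2}\le E$, and the freedom in Lemma~\ref{prop:diff-CM} to assign whichever factor is truly $O(\ve)$ — namely $\widetilde Q-Q_0$ or $\ve^3Q_R$, never $Q_R$ alone — to the low-regularity slot. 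In addition one needs the a priori smallness $\ve^3\|Q_R\|_{L^\infty}\le C\ve^2E\ll1$ (valid for $\ve$ small and $E$ bounded by the bootstrap constant) so that $Q^\ve$ and $\widetilde Q$ remain in a fixed set $\QPd$ and all the Bingham-map and $\CM$-estimates apply with $\ve$-independent constants.
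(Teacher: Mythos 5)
Your proposal is correct and essentially reproduces the paper's own argument: both start from $\CM_Q(B_Q)=\tfrac32 Q$ to rewrite $\HH,\FF$ in terms of $\CM_Q(G(Q))$ with $G=B-\alpha\,\Id$, use the identical two-term split $\CM_{\widetilde Q}(G(Q^\ve)-G(\widetilde Q))+(\CM_{Q^\ve}-\CM_{\widetilde Q})(G(Q^\ve))$, Taylor-expand $B(\cdot)$ at $\widetilde Q$ for the first piece, and invoke Lemma~\ref{prop:diff-CM} together with $\|G(Q^\ve)\|_{H^2}\lesssim\ve(1+\ve E)$ for the second; the only cosmetic difference is that the paper replaces $\CQ_{\widetilde Q}^{-1}$ and $\CM_{\widetilde Q}$ by their $Q_0$-counterparts in a single step where you do it in two. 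One remark worth flagging: your (correct) identities $\HH=\tfrac{2(1-\gamma)}{Re}\CM_Q(G)$ and $\FF=-2\big(\CM_Q(G)+\CM_Q^T(G)\big)$, combined with your key estimate, actually yield $\FF(Q^\ve)-\FF(\widetilde Q)=-2\ve^3\big(\CM_{Q_0}(\CH_{Q_0}(Q_R))+\CM^T_{Q_0}(\CH_{Q_0}(Q_R))\big)+\ve^4\mathfrak R$, i.e.\ the coefficient in (\ref{rem:F}) should be $-2\ve^3$ rather than $+2\ve^3$; this matches the sign $-4\CJ_\nn$ in the resulting equation (\ref{eq:QR}), so (\ref{rem:F}) as printed has a sign typo and your computation gives the right answer.
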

\begin{proof}
First, we have
\begin{align*}
&3Q^\ve-2\alpha\mathcal{M}_{Q^\ve}(Q^\ve)-(3\widetilde Q-2\alpha\mathcal{M}_{\widetilde Q}(\widetilde Q))\\
&=2\mathcal{M}_{Q^\ve}(B_{Q^\ve}-\alpha Q^\ve)-2\mathcal{M}_{\widetilde Q}(B_{\widetilde Q}-\alpha\widetilde Q)\\
&=2\mathcal{M}_{Q^\ve}(B_{Q^\ve}-\alpha Q^\ve)-2\mathcal{M}_{\widetilde Q}(B_{Q^\ve}
-\alpha Q^\ve)+2\mathcal{M}_{\widetilde Q}(B_{Q^\ve}-\alpha Q^\ve)-2\mathcal{M}_{\widetilde Q}(B_{\widetilde Q}-\alpha\widetilde Q).
\end{align*}
Using Talylor expansion for $B_Q=B(Q)$, we get
\begin{align}
\|B_{Q^\ve}-B_{\widetilde Q}-\ve^3&\mathcal{Q}^{-1}_{\widetilde Q}(Q_R)\|_{H^k}
=\Big\|\ve^6\int_{0}^1s(Q_R:\nabla_Q)^2B_{(\widetilde Q+s\ve^3Q_R)}ds\Big\|_{H^k}\nonumber\\\label{eq:rem:M1}
&\le \ve^6 C(\ve^3\|Q_R\|_{H^2})\|Q_R\|_{H^k}\|Q_R\|_{H^2}\le\ve^5 C(\ve E)E \|Q_R\|_{H^k}.
\end{align}
Therefore, $B_{Q^\ve}-B_{\widetilde Q}-\ve^3\mathcal{Q}^{-1}_{\widetilde Q}(Q_R)=\ve^4\mathfrak{R},$ which implies
\begin{align*}
\mathcal{M}_{\widetilde Q}(B_{Q^\ve}-\alpha Q^\ve)-\mathcal{M}_{\widetilde Q}(B_{\widetilde Q}-\alpha\widetilde Q)
=&\ve^3\mathcal{M}_{\widetilde Q}(\mathcal{Q}^{-1}_{\widetilde Q}(Q_R)-\alpha Q_R)+\ve^4\mathfrak{R}\\
=&\ve^3\mathcal{M}_{Q_0}(\mathcal{Q}^{-1}_{Q_0}(Q_R)-\alpha Q_R)+\ve^4\mathfrak{R}.
\end{align*}
By Lemma \ref{prop:diff-CM}, we can obtain
\begin{align*}
&\|\mathcal{M}_{Q^\ve}(B_{Q^\ve}-\alpha Q^\ve)-\mathcal{M}_{\widetilde Q}(B_{Q^\ve}-\alpha Q^\ve)\|_{H^k}\\
&\le C(\|\ve^3Q_R\|_{H^2})\|\ve^3Q_R\|_{H^k}\|B_{Q^\ve}-B_{Q_0}-\alpha (Q^\ve-Q_0)\|_{H^2}\\
&\le C(\|\ve^3Q_R\|_{H^2})\ve^4\|Q_R\|_{H^k}(1+\|\ve^2Q_R\|_{H^2})\le \ve^4 C(\ve E)\|Q_R\|_{H^k}(1+\ve E).
\end{align*}
Thus
\begin{align}\label{eq:rem:M2}
\mathcal{M}_{Q^\ve}(B_{Q^\ve}-\alpha Q^\ve)-\mathcal{M}_{\widetilde Q}(B_{Q^\ve}-\alpha Q^\ve)\in\mathfrak{R}.
\end{align}
Combining (\ref{eq:rem:M1}) and (\ref{eq:rem:M2}) and recalling the
definition of $\CH_{Q_0}(Q_R):=\CQ^{-1}_{Q_0}(Q_R)-\alpha Q_R$, we get (\ref{rem:H}) and (\ref{rem:F}).
\end{proof}

Combining Lemma \ref{lem:remGJ} with Lemma \ref{lem:remHF}, we finally arrive at
\begin{align}
\frac{\partial Q_R}{\partial t}=&~-4\CJ_{\nn}
\Big(\frac{1}{\ve}\CH_{\nn}(Q_R)+\CL(Q_R)\Big) +2\CJ_{\nn}(\nabla\vv_R)+\mathfrak{R},\label{eq:QR}\\
\frac{\partial \vv_R}{\partial t}=&~-\nabla p_R +\frac{\gamma}{Re}\Delta\vv_R+\frac{1-\gamma}{2Re}
       \nabla\cdot(\DD_R:M^{(4)}_{\widetilde{Q}})\nonumber\\
       &~~+\frac{1-\gamma}{Re}\nabla\cdot\Big(2\CM_{Q_0}\Big(\frac1\ve\CH_{\nn}(Q_R)+\CL (Q_R)\Big)\Big)
         +\nabla\cdot\mathfrak{R} +\mathfrak{R},\label{eq:vR}\\
 \nabla\cdot \vv_R=&~0.\label{eq:imcomR}
\end{align}

\subsection{Uniform estimates for the remainder}

In order to obtain the uniform energy estimates, we introduce
\begin{align*}
\CH^{\ve}_{\nn}(Q_R)=\CH_{\nn}(Q_R)+\ve\CL (Q_R),
\end{align*}
and the following energy functional
\begin{align*}
\Ef(t)=&\frac12\int\Big(|\vv_R|^2+\CJ^{-1}_{\nn}(Q_R):Q_R+\frac{1-\gamma}{\ve Re}\CH^{\ve}_{\nn}(Q_R):Q_R\Big)
 +\ve^2\Big(|\nabla \vv_R|^2\\
 &+\frac{1-\gamma}{\ve Re}\CH^{\ve}_{\nn}(\nabla Q_R):\nabla Q_R\Big)
 +\ve^4\Big(|\Delta \vv_R|^2+\frac{1-\gamma}{\ve Re}\CH^{\ve}_{\nn}(\Delta Q_R):\Delta Q_R\Big)\ud\xx,\\
 \Ff(t)=&\int\Big(\frac{\gamma}{Re}|\nabla\vv_R|^2
    +\frac{4(1-\gamma)}{\ve^2Re}\CJ_{\nn}(\CH^{\ve}_{\nn}(Q_R)):\CH^{\ve}_{\nn}(Q_R)\Big)\\
 &+\ve^2\Big(\frac{\gamma}{Re}|\Delta\vv_R|^2
    +\frac{4(1-\gamma)}{\ve^2Re}\CJ_{\nn}(\CH^{\ve}_{\nn}(\nabla Q_R)):\CH^{\ve}_{\nn}(\nabla Q_R)\Big)\\
 &+\ve^4\Big(\frac{\gamma}{Re}|\nabla\Delta\vv_R|^2
    +\frac{4(1-\gamma)}{\ve^2Re}\CJ_{\nn}(\CH^{\ve}_{\nn}(\Delta Q_R)):\CH^{\ve}_{\nn}(\Delta Q_R)\Big)\ud\xx.
\end{align*}

\begin{lemma}\label{lem:energy}
There holds
\begin{align}
||Q_R||_{H^1}+||(\ve\nabla^2Q_R,\ve^2\nabla^3Q_R)||_{L^2}
  +||(\vv_R,\ve\nabla\vv_R,\ve^2\nabla^2\vv_R)||_{L^2}\leq &~C\Ef^{\frac12},\\
||(\frac{1}{\ve}\CH^{\ve}_{\nn}( Q_R),\nabla\CH^{\ve}_{\nn}(Q_R),
    \ve\Delta\CH^{\ve}_{\nn}(Q_R))||_{L^2}
    \leq&~ C(\Ef+\Ff)^{\frac12},\\
||\big(\ve\nabla\CL (Q_R),
    \ve^2\Delta\CL(Q_R)\big)||_{L^2}
    +||(\nabla\vv_R,\ve\nabla^2\vv_R,\ve^2\nabla^3\vv_R)||_{L^2}
    \leq&~ C(\Ef+\Ff)^{\frac12}.
\end{align}
\end{lemma}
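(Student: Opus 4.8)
The plan is to obtain all three inequalities by reading off, term by term, the coercive lower bounds built into $\Ef$ and $\Ff$, and then trading the ``differentiated--argument'' quantities $\CH^\ve_\nn(\nabla^\ell Q_R)$, $\CL(\nabla^\ell Q_R)$ for $\nabla^\ell\CH^\ve_\nn(Q_R)$, $\nabla^\ell\CL(Q_R)$ (and $\Delta\vv_R$ for $\nabla^2\vv_R$), the commutator errors being of strictly lower order so that they are absorbed into $C\Ef^{1/2}$.

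First I would assemble the coercivity toolkit. Since $\nn(t,\xx)\in\BS$ everywhere and the constants $\alpha,\psi_i,\xi_i$ depend only on the material parameters, the operators $\CJ_\nn,\CJ_\nn^{-1}:\mathbb{Q}\to\mathbb{Q}$ are uniformly (in $t,\xx$) self-adjoint and positive definite: by Lemma \ref{CM-lemma}$(iii)$ together with $(\ref{eq:M-J})$ and $(\ref{eq:M-J1})$ there are $c,C>0$ with $c|A|^2\le\CJ_\nn(A):A\le C|A|^2$, hence $c|A|^2\le\CJ_\nn^{-1}(A):A\le C|A|^2$, for every $A\in\mathbb{Q}$; by Proposition \ref{prop:H_n}, $\|\CH_\nn(A)\|\le C|A|$ and $\CH_\nn(A):A\ge c_0|\CP^{out}(A)|^2\ge0$; and under the standing assumptions $L_1>0$, $L_1+2L_2>0$ one has $\int\CL(A):A\,\ud\xx\ge c_0\int|\nabla A|^2\,\ud\xx$ (Lemma 2.2 of \cite{WZZ4}), which applied to $A=Q_R,\nabla Q_R,\Delta Q_R$ (using $\int|\nabla^2 f|^2=\int|\Delta f|^2$ and $\int|\nabla^3 f|^2=\int|\nabla\Delta f|^2$ for decaying $f$) controls $|\nabla Q_R|^2,|\nabla^2 Q_R|^2,|\nabla^3 Q_R|^2$. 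The key consequence is that for a decaying tensor field $A$,
\begin{align*}
\int\CH^\ve_\nn(A):A\,\ud\xx=\int\CH_\nn(A):A\,\ud\xx+\ve\int\CL(A):A\,\ud\xx\ge c_0\int|\CP^{out}(A)|^2\,\ud\xx+c_0\,\ve\int|\nabla A|^2\,\ud\xx.
\end{align*}

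Next I would read off the first inequality: every one of the six groups of $\Ef$ is nonnegative, so keeping one at a time, the $|\vv_R|^2$, $\ve^2|\nabla\vv_R|^2$, $\ve^4|\Delta\vv_R|^2$ groups give $\|\vv_R\|_{L^2},\ve\|\nabla\vv_R\|_{L^2},\ve^2\|\Delta\vv_R\|_{L^2}\le C\Ef^{1/2}$; the $\CJ_\nn^{-1}(Q_R):Q_R$ group gives $\|Q_R\|_{L^2}\le C\Ef^{1/2}$; and the displayed estimate, applied to the three $\CH^\ve_\nn$--groups with the $\ve$--weights $\ve^{-1},\ve,\ve^3$ they carry in $\Ef$, produces from their $\ve\CL$--parts the bounds $\|\nabla Q_R\|_{L^2}\le C\Ef^{1/2}$, $\ve\|\nabla^2 Q_R\|_{L^2}\le C\Ef^{1/2}$, $\ve^2\|\nabla^3 Q_R\|_{L^2}\le C\Ef^{1/2}$, which together are the first claimed line. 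For the remaining two lines I would first use $\Ff$: coercivity of $\CJ_\nn$ on its three $\CJ_\nn(\CH^\ve_\nn(\cdot)):\CH^\ve_\nn(\cdot)$--groups gives $\ve^{-1}\|\CH^\ve_\nn(Q_R)\|_{L^2},\ \|\CH^\ve_\nn(\nabla Q_R)\|_{L^2},\ \ve\|\CH^\ve_\nn(\Delta Q_R)\|_{L^2}\le C\Ff^{1/2}$, and its velocity groups give $\|\nabla\vv_R\|_{L^2},\ \ve\|\nabla^2\vv_R\|_{L^2},\ \ve^2\|\nabla^3\vv_R\|_{L^2}\le C\Ff^{1/2}$ (again via $\|\nabla^2 f\|_{L^2}=\|\Delta f\|_{L^2}$, $\|\nabla^3 f\|_{L^2}=\|\nabla\Delta f\|_{L^2}$). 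Since $\CL$ has constant coefficients, $[\nabla^\ell,\CH^\ve_\nn]=[\nabla^\ell,\CH_\nn]$, and because $\CH_\nn$ is an algebraic operator whose coefficients are smooth functions of $\nn$ with $\nabla\nn\in C([0,T];H^k)\hookrightarrow C([0,T];W^{2,\infty})$, $k\ge20$, we have $\|[\nabla,\CH_\nn]Q_R\|_{L^2}\le C\|Q_R\|_{L^2}\le C\Ef^{1/2}$ and $\|[\Delta,\CH_\nn]Q_R\|_{L^2}\le C(\|Q_R\|_{L^2}+\|\nabla Q_R\|_{L^2})\le C\Ef^{1/2}$; writing $\nabla\CH^\ve_\nn(Q_R)=\CH^\ve_\nn(\nabla Q_R)+[\nabla,\CH_\nn]Q_R$ and $\ve\Delta\CH^\ve_\nn(Q_R)=\ve\CH^\ve_\nn(\Delta Q_R)+\ve[\Delta,\CH_\nn]Q_R$ then gives the second line, and finally $\ve\CL(Q_R)=\CH^\ve_\nn(Q_R)-\CH_\nn(Q_R)$ together with the boundedness of $\CH_\nn$ and the above commutator bounds yields $\|\nabla\CH_\nn(Q_R)\|_{L^2}\le C\Ef^{1/2}$, $\ve\|\Delta\CH_\nn(Q_R)\|_{L^2}\le C\Ef^{1/2}$, hence $\ve\|\nabla\CL(Q_R)\|_{L^2}+\ve^2\|\Delta\CL(Q_R)\|_{L^2}\le C(\Ef+\Ff)^{1/2}$, the third line.

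I expect no deep obstacle; the one point requiring care is the bookkeeping of the powers of $\ve$ --- one must check that moving a derivative past $\CH_\nn$, or converting $\CL$ into $\CH^\ve_\nn$, never costs a power of $\ve$ in the wrong direction, so that every commutator remainder is genuinely of lower order and is swallowed by $C\Ef^{1/2}$. The single external ingredient, the $H^1$--coercivity of $\CL$, is precisely what makes the gradient norms of $Q_R$ visible inside $\Ef$ in the first place, and it is also what forces the higher--order pieces of $\Ef$ to be organized around $\nabla Q_R$ and $\Delta Q_R$ rather than around $\CP^{out}$--projections.
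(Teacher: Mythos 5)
Your proposal is correct and follows essentially the same route as the paper's: read off the coercive groups of $\Ef$ and $\Ff$ (using nonnegativity of $\CH_\nn$, $H^1$-coercivity of $\CL$, and positivity of $\CJ_\nn,\CJ_\nn^{-1}$), then pass between $\nabla^\ell\CH^\ve_\nn(Q_R)$ and $\CH^\ve_\nn(\nabla^\ell Q_R)$ via the two commutator bounds $\|[\partial_i,\CH^\ve_\nn]Q_R\|_{L^2}\le C\|Q_R\|_{L^2}$ and $\|[\Delta,\CH^\ve_\nn]Q_R\|_{L^2}\le C\|Q_R\|_{H^1}$, and finally convert $\ve\CL$ into $\CH^\ve_\nn-\CH_\nn$. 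The only substantive difference is a small detour in the third line: the paper bounds $\ve\partial_i\CL(Q_R)$ directly from the identity $\ve\CL(\partial_i Q_R)=\CH^\ve_\nn(\partial_i Q_R)-\CH_\nn(\partial_i Q_R)$ (so no additional commutator is needed there), whereas you go through $\nabla\CH^\ve_\nn(Q_R)$ and pay a second commutator --- both work with the same $\ve$-bookkeeping. One cosmetic caveat: citing Lemma \ref{CM-lemma}$(iii)$, $(\ref{eq:M-J})$ and $(\ref{eq:M-J1})$ yields only positive semi-definiteness of $\CJ_\nn$; the strict two-sided bound $c|A|^2\le\CJ_\nn(A):A\le C|A|^2$ needs the explicit (finite-dimensional, $\nn$-uniform) diagonalization of $\CJ_\nn$ and is tacitly assumed rather than re-derived in the paper as well, so this is a shared implicit step rather than a gap in your argument.
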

\begin{proof} The first estimate follows from the non-negativity of $\CH_\nn$.
The second estimate can be deduced from the strict positivity of $\CJ_\nn$ and the following estimates for communicators
\begin{align*}
\|\partial_i\CH^{\ve}_{\nn}(Q_R)-\CH^{\ve}_{\nn}(\partial_iQ_R)\|_{L^2}\le C\|Q_R\|_{L^2},\\
\|\Delta\CH^{\ve}_{\nn}(Q_R)-\CH^{\ve}_{\nn}(\Delta Q_R)\|_{L^2}\le C\|Q_R\|_{H^1}.
\end{align*}
For the last one, we have
\begin{align*}
\|\ve\partial_i\CL (Q_R)\|_{L^2}&~=\|\CH^{\ve}_{\nn}(\partial_i Q_R)-\CH_\nn(\partial_iQ_R)\|_{L^2}\\
&~ \le \|\CH^{\ve}_{\nn}(\partial_iQ_R)\|_{L^2}+{C}\|\partial_i Q_R\|_{L^2}\le C(\Ef+\Ff)^{\frac12},\\
\|\ve^2\Delta\CL (Q_R)\|_{L^2} & ~\le \|\ve\CH^{\ve}_{\nn}(\Delta Q_R)\|_{L^2}+{C}\|\ve\Delta Q_R\|_{L^2}\le C(\Ef+\Ff)^{\frac12}.
\end{align*}
The estimates for $\vv_R$ is straightforward to prove. The proof is completed.
\end{proof}
\begin{corollary}\label{corol:EF}
$E\le C\Ef^{1/2}$, $F\le C(\Ef+\Ff)^{1/2}$.
\end{corollary}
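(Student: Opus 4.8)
The plan is to obtain both inequalities directly from Lemma \ref{lem:energy} by matching each summand in the definitions of $E$ and $F$ with one of the quantities already controlled there. There is essentially no analytic content beyond Lemma \ref{lem:energy} itself; the corollary is just a repackaging, so the proof will be a short term-by-term comparison, with the only mild care needed being the bookkeeping of powers of $\ve$.

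First, for $E\le C\Ef^{1/2}$: the first estimate in Lemma \ref{lem:energy} already bounds $\|Q_R\|_{H^1}$, $\|\vv_R\|_{L^2}$ and $\ve\|\nabla\vv_R\|_{L^2}$ by $C\Ef^{1/2}$, and it also bounds $\ve\|\nabla^2 Q_R\|_{L^2}$, $\ve^2\|\nabla^3 Q_R\|_{L^2}$ and $\ve^2\|\nabla^2\vv_R\|_{L^2}$. Since $\Delta Q_R$, $\nabla\Delta Q_R$ and $\Delta\vv_R$ are contractions of the components of $\nabla^2 Q_R$, $\nabla^3 Q_R$ and $\nabla^2\vv_R$ respectively, one has the pointwise (hence $L^2$) bounds $|\Delta Q_R|\le C|\nabla^2 Q_R|$, $|\nabla\Delta Q_R|\le C|\nabla^3 Q_R|$, $|\Delta\vv_R|\le C|\nabla^2\vv_R|$. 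Adding the six resulting estimates gives $E\le C\Ef^{1/2}$.

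Second, for $F\le C(\Ef+\Ff)^{1/2}$: the third estimate in Lemma \ref{lem:energy} directly gives $\ve\|\nabla\CL(Q_R)\|_{L^2}+\ve^2\|\Delta\CL(Q_R)\|_{L^2}\le C(\Ef+\Ff)^{1/2}$, and the same estimate controls $\ve^2\|\nabla^3\vv_R\|_{L^2}$; since $\Delta\nabla\vv_R$ is again a contraction of $\nabla^3\vv_R$, we get $\ve^2\|\Delta\nabla\vv_R\|_{L^2}\le C\ve^2\|\nabla^3\vv_R\|_{L^2}\le C(\Ef+\Ff)^{1/2}$. Summing the three pieces yields the claimed bound for $F$.

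The only point that requires attention is that $\Ef$ and $\Ff$ carry the weights $\ve^2$ and $\ve^4$ on the higher-order contributions, so one must line up powers of $\ve$ correctly when invoking Lemma \ref{lem:energy}; but this matching has already been carried out in that lemma, whose proof rests on the non-negativity of $\CH_\nn$ (Proposition \ref{prop:H_n}), the strict positivity of $\CJ_\nn$, and the commutator estimates recorded there. I do not anticipate any genuine obstacle: the corollary is immediate once Lemma \ref{lem:energy} is in hand.
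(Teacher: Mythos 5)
Your proposal is correct and coincides with what the paper intends: the corollary is stated without proof precisely because it follows directly from Lemma \ref{lem:energy} by the term-by-term matching you describe, using that $\Delta Q_R$, $\nabla\Delta Q_R$, $\Delta\vv_R$, and $\Delta\nabla\vv_R$ are controlled pointwise by $\nabla^2 Q_R$, $\nabla^3 Q_R$, $\nabla^2\vv_R$, and $\nabla^3\vv_R$, respectively.
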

Now, we state the a priori estimate for the remainder $(Q_R,\vv_R)$.
\begin{proposition}\label{prop:energy}
There exist two functions $C$ and $f$ depending on $Q_i, \vv_j$ and the parameters of
the system (except $\ve$), such that if $(\vv_R,Q_R)$ be a strong solution of the
system (\ref{eq:QR})--(\ref{eq:vR}) on $[0,T]$, then for any $t\in [0,T]$,  it holds that
\begin{align}
\frac{d }{d t}\Ef(t)+\Ff(t)\le
 C(\ve \Ef)\big(1+\Ef\big)+\ve f(\Ef)+C(\ve \Ef)\ve\Ff.\non
\end{align}
\end{proposition}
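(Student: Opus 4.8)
The strategy is the standard energy method for a Hilbert-expansion remainder: test the $Q_R$-equation \eqref{eq:QR} against the natural dual quantity $\frac{1-\gamma}{\ve Re}\CH^{\ve}_{\nn}(Q_R)$ together with $\CJ^{-1}_{\nn}(Q_R)$, and test the $\vv_R$-equation \eqref{eq:vR} against $\vv_R$; then differentiate these once and twice in space and repeat, so as to build up exactly the functional $\Ef(t)$. The incompressibility \eqref{eq:imcomR} kills the pressure term $\nabla p_R$ after integration by parts, and the convection-type good terms hidden inside $\mathfrak R$ are controlled by Corollary \ref{corol:EF}. The two structural facts that make the scheme close are: first, $\CJ_{\nn}$ is self-adjoint on $\mathbb Q$ and $\CH_{\nn}$ is self-adjoint and non-negative on $\mathbb Q^{out}_{\nn}$ (Proposition \ref{prop:H_n}), so that $\langle \CH^{\ve}_{\nn}(Q_R),Q_R\rangle$ is a genuine energy comparable to $\|Q_R\|^2$ up to the $\ve\CL$-part which adds $c_0\ve\|\nabla Q_R\|^2$; second, the cross terms between $Q_R$ and $\vv_R$ cancel thanks to \eqref{eq:M-J}, \eqref{eq:M-J1} and \eqref{comm:LJ}, exactly as in the computation of $I_1+I_4\le 0$ in Section 5.2. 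Concretely, the term $2\CJ_{\nn}(\nabla\vv_R)$ in \eqref{eq:QR} paired with $\CH^{\ve}_{\nn}(Q_R)$ cancels against the term $2\nabla\cdot\CM_{Q_0}(\frac1\ve\CH_{\nn}(Q_R)+\CL(Q_R))$ in \eqref{eq:vR} paired with $\vv_R$, after integrating by parts and using that $\CM_{Q_0}=\CJ_{\nn}$ on $\mathbb Q^{out}_{\nn}$ and commutes with $\CP^{out}$.

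First I would record the "diagonal" dissipation terms. Testing \eqref{eq:QR} with $\frac{1-\gamma}{\ve Re}\CH^{\ve}_{\nn}(Q_R)$ produces $-\frac{4(1-\gamma)}{\ve^2 Re}\langle\CJ_{\nn}(\CH^{\ve}_{\nn}(Q_R)),\CH^{\ve}_{\nn}(Q_R)\rangle$, which is (half of) the leading piece of $-\Ff(t)$ by the strict positivity of $\CJ_{\nn}$ from Proposition \ref{prop:L-Q}; testing \eqref{eq:vR} with $\vv_R$ produces $-\frac{\gamma}{Re}\|\nabla\vv_R\|_{L^2}^2$ and $-\frac{1-\gamma}{2Re}\langle\DD_R:M^{(4)}_{\widetilde Q},\DD_R\rangle\le 0$ by Lemma \ref{CM-lemma}(iii) and Remark \ref{rmk:M4} (the latter is $\le 0$, hence harmless, and can be dropped). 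There is also a time-derivative-hitting-$\nn$ issue: since $\CH^{\ve}_{\nn}$, $\CJ_{\nn}$, $\CJ^{-1}_{\nn}$ all depend on $\nn(t,\xx)$, the identity $\frac{d}{dt}\langle\CH^{\ve}_{\nn}(Q_R),Q_R\rangle = 2\langle\CH^{\ve}_{\nn}(\partial_tQ_R),Q_R\rangle + \langle(\partial_t\CH^{\ve}_{\nn})(Q_R),Q_R\rangle$ generates an extra term $\langle(\partial_t\CH^{\ve}_{\nn})(Q_R),Q_R\rangle$ which is bounded by $C\|\partial_t\nn\|_{L^\infty}\|Q_R\|_{L^2}^2\le C\Ef$ since $\nn\in C([0,T];H^k)$ with $k\ge 20$ and the Ericksen-Leslie equation gives $\partial_t\nn$ a spatial bound. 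The same bookkeeping applies to the $\ve\CL$-part, which contributes $+c_0\ve\|\nabla Q_R\|^2$-type positivity and also a commutator $[\partial^\ell,\CL]$-error of lower order (it involves no $\nn$ at all, so it is clean).

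Next I would handle the first- and second-order derivative levels. Applying $\partial_i$ (resp. $\Delta$) to \eqref{eq:QR}--\eqref{eq:vR} and testing against $\ve^2$ (resp. $\ve^4$) times the corresponding dual quantities reproduces the $\ve^2(\cdots)$ and $\ve^4(\cdots)$ blocks of $\Ef$ and $\Ff$; the commutators $[\partial^\ell,\CJ_{\nn}]$, $[\partial^\ell,\CM_{Q_0}]$, $[\partial^\ell,M^{(4)}_{\widetilde Q}:]$ are estimated by the commutator bounds \eqref{esti:M}--\eqref{esti:M1} and Lemma \ref{prop:diff}, and they land inside $\mathfrak R$ with the prescribed weights $\|\mathfrak R\|_{L^2}+\ve\|\nabla\mathfrak R\|_{L^2}+\ve^2\|\Delta\mathfrak R\|_{L^2}\le C(\ve E)(1+E+\ve F)+\ve f(E)$. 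The good-term remainder $\mathfrak R$ itself, after Lemmas \ref{lem:remGJ}--\ref{lem:remHF}, is absorbed: by Corollary \ref{corol:EF}, $\langle\mathfrak R,\text{dual}\rangle\le C(\ve\Ef)(1+\Ef)+\ve f(\Ef)+C(\ve\Ef)\ve\Ff$, and any $\ve\Ff$-type term is absorbed into the left side for $\ve$ small. The convection term $\GG_3=-\vv\cdot\nabla Q$ needs slightly more care: its remainder part contributes a term morally of the form $\langle\vv_R\cdot\nabla Q_0,\CH^{\ve}_{\nn}(Q_R)\rangle$ which, after an integration by parts to move a derivative off $\CH^{\ve}_{\nn}(Q_R)$ onto $\nabla Q_0\in H^{k}$, is bounded by $C\|\vv_R\|_{L^2}\|Q_R\|_{H^1}\le C\Ef$; the genuinely quadratic piece $\vv_R\cdot\nabla Q_R$ carries an extra $\ve^3$ and is of type $C(\ve\Ef)\ve\Ff$. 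Collecting everything gives exactly the claimed inequality.

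The main obstacle, and the only genuinely non-routine point, is the cancellation of the $Q_R$--$\vv_R$ cross terms at every derivative level simultaneously with the correct $\ve$-powers: one must verify that $2\langle\CJ_{\nn}(\nabla\vv_R),\frac{1-\gamma}{\ve Re}\CH^{\ve}_{\nn}(Q_R)\rangle$ and $\frac{2(1-\gamma)}{Re}\langle\nabla\cdot\CM_{Q_0}(\frac1\ve\CH_{\nn}(Q_R)+\CL(Q_R)),\vv_R\rangle$ cancel up to terms of type $\mathfrak R$, which relies on $\CM_{Q_0}\big(\CH_{\nn}(Q_R)\big)=\CJ_{\nn}\big(\CH_{\nn}(Q_R)\big)$ from \eqref{eq:M-J1}, on $\CJ_{\nn}$ being self-adjoint on $\mathbb Q$, and on $\CH^{\ve}_{\nn}(Q_R)\in\mathbb Q^{out}_{\nn}$ so that the projections in \eqref{comm:LJ} let us freely move $\CP^{out}$ through. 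The subtlety is that $\CM_{Q_0}$ is applied in \eqref{eq:vR} but $\CH_{\nn}(Q_R)$ lives in $\mathbb Q^{out}_{\nn}$ only modulo an $\ve$-small error coming from the difference $M^{(4)}_{\widetilde Q}$ versus $M^{(4)}_{Q_0}$, and one has to check this error is consistent with the $\mathfrak R$-bookkeeping. Once this cancellation is in place, Gronwall closes the estimate; I would present the $\ell=0$ case in full and remark that $|\ell|=1,2$ are identical modulo the commutator estimates already quoted, exactly as was done for Theorem \ref{thm:main1}.
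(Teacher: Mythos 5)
Your overall strategy agrees with the paper's: test the remainder equations against $\CJ_\nn^{-1}(Q_R)$, $\frac{1-\gamma}{\ve Re}\CH_\nn^\ve(Q_R)$ and $\vv_R$ at the three derivative levels weighted by $1,\ve^2,\ve^4$, use the self-adjoint structure of $\CM_{Q_0}$ to cancel the singular $Q_R$--$\vv_R$ cross terms, push commutators and convection pieces into the $\mathfrak R$-bookkeeping via Corollary~\ref{corol:EF}, and close with Gronwall. One small inaccuracy is your mechanism for the cancellation: you assert $\CH_\nn^\ve(Q_R)\in\QO$, but $\CH_\nn^\ve(Q_R)=\CH_\nn(Q_R)+\ve\CL(Q_R)$, and $\CL(Q_R)$ is a constant-coefficient second-order differential operator applied to $Q_R$, which has no reason to lie in $\QO$. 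In fact you do not need the $\QI\oplus\QO$ decomposition here at all: since $\CM_{Q_0}$ is self-adjoint on $\mathbb{R}^{3\times3}$ (Lemma \ref{CM-lemma}(ii)) and $\CH_\nn^\ve(Q_R)$ is a symmetric matrix, one has directly $\langle\CM_{Q_0}(\CH_\nn^\ve(Q_R)),A\rangle=\langle\CH_\nn^\ve(Q_R),\CM_{Q_0}(A)\rangle=\langle\CH_\nn^\ve(Q_R),\CJ_\nn(A)\rangle$ for any $A\in\mathbb{R}^{3\times3}$, which is the identity (\ref{eq:cancel}) that the paper uses.

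The genuine gap is in your treatment of the time dependence of $\CH_\nn^\ve$. You claim the extra term $\langle(\partial_t\CH_\nn^\ve)(Q_R),Q_R\rangle$ is bounded by $C\|\partial_t\nn\|_{L^\infty}\|Q_R\|_{L^2}^2\le C\Ef$, but you have dropped the prefactor $\frac{1-\gamma}{\ve Re}$ sitting in front of $\langle\CH_\nn^\ve(Q_R),Q_R\rangle$ inside $\Ef$: the term that actually appears when differentiating $\Ef$ is of size $\frac1\ve\big|\langle\partial_t(\nn\nn)\cdot Q_R,Q_R\rangle\big|$. This is \emph{not} bounded by $C\Ef$. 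Indeed, $\Ef$ controls $\frac1\ve\langle\CH_\nn^\ve(Q_R),Q_R\rangle$, but $\CH_\nn$ vanishes identically on $\QI=\mathrm{Ker}\,\CH_\nn$, so $\Ef$ gives you no $\ve^{-1}$-weighted control on $\CP^{in}Q_R$, and a naive Cauchy--Schwarz estimate of $\frac1\ve\|Q_R\|_{L^2}^2$ diverges as $\ve\to0$. Closing this term requires the structural inequality of Lemma \ref{keylemma}: the quadratic form $\frac1\ve\langle\partial_t(\nn\nn)\cdot Q_R,Q_R\rangle$ is estimated by $\delta\|\frac1\ve\CH_\nn^\ve(Q_R)\|_{L^2}^2+C_\delta\big(\frac1\ve\langle\CH_\nn^\ve(Q_R),Q_R\rangle+\|Q_R\|_{L^2}^2\big)$, so the first piece is absorbed into $\Ff$ and the rest into $\Ef$. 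Without this lemma (or its substance) the energy inequality does not close, so this step must be supplied rather than asserted.
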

\begin{proof}
First, for $B\in \mathbb{Q}$, and $A\in\mathbb{R}^{3\times3}$, we have:
\begin{align}\label{eq:cancel}
\big\langle\CM_{Q_0}\big(\CH_{\nn}^\ve(B)\big),A\big\rangle
=\big\langle\CH_{\nn}^\ve(B),\CM_{Q_0}(A)\big\rangle
=\big\langle\CJ_\nn(A),\CH^{\ve}_{\nn}(B)\big\rangle.
\end{align}
This relation will be repeatedly used in the proof.

{\bf Step 1. $L^2$-estimate}\\
Using the equation (\ref{eq:QR}) and Lemma \ref{lem:energy}, we have
\begin{align}\label{estimate:Q0}
&\big\langle\frac{\partial Q_R}{\partial t},\CJ^{-1}_{\nn}(Q_R)\big\rangle+\frac{4}{\ve}
\big\langle\CH^{\ve}_{\nn}(Q_R),Q_R\big\rangle
 =\big\langle 2\nabla\vv_R,Q_R\big\rangle+\big\langle\mathfrak{R}, \CJ_\nn^{-1}(Q_R)\big\rangle\nonumber\\
&\qquad \le  C||Q_R||_{L^2}\big(\|\nabla\vv_R\|_{L^2}+\|\mathfrak{R}\|_{L^2}\big)
 \leq \delta_0 \Ff+C_{\delta_0}\Ef+C\|\mathfrak{R}\|_{L^2}^2.
\end{align}
We can also obtain
\begin{align*}
& \frac{Re}{1-\gamma}\big\langle\frac{\partial\vv_R}{\partial t},\vv_R\big\rangle+\big\langle\frac{\partial Q_R}{\partial t},
 \frac{1}{\ve}\CH^{\ve}_{\nn}(Q_R)\big\rangle\\
 =&-\frac{\gamma}{1\!-\!\gamma}\|\nabla\vv_R\|^2_{L^2}-\frac{1}{2}\big\langle\DD_R:M^{(4)}_{\widetilde{Q}},\DD_R\big\rangle
-\frac{2}{\ve}\big\langle\CM_{Q_0}(\CH_\nn^{\ve}(Q_R)),\nabla\vv_R\big\rangle
 + \frac{Re}{1-\gamma}\big\langle\nabla\cdot\mathfrak{R}+\mathfrak{R},\vv_R\big\rangle\\
 &-\frac{4}{\ve^2}\big\langle\CJ_{\nn}(\CH^{\ve}_{\nn}(Q_R)),\CH^{\ve}_{\nn}(Q_R)\big\rangle
 +\big\langle2\CJ_{\nn}(\nabla\vv_R),
  \frac{1}{\ve}\CH^{\ve}_{\nn}(Q_R)\big\rangle+\big\langle\mathfrak{R},\frac{1}{\ve}\CH^{\ve}_{\nn}(Q_R)\big\rangle.
\end{align*}
Using (\ref{eq:cancel}) with $(A,B)=(\nabla\vv_R, Q_R)$,
and the fact that $\big\langle\DD_R:M^{(4)}_{\widetilde{Q}},\DD_R\big\rangle\ge0$, we have
\begin{align}\label{estimate:H0}
& \frac{Re}{1-\gamma}\big\langle\frac{\partial\vv_R}{\partial t},\vv_R\big\rangle+\big\langle\frac{\partial Q_R}{\partial t},
\frac{1}{\ve}\CH^{\ve}_{\nn}(Q_R)\big\rangle+\frac{\gamma}{1\!-\!\gamma}\|\nabla\vv_R\|^2_{L^2}
+\frac{4}{\ve^2}\big\langle\CJ_{\nn}(\CH^{\ve}_{\nn}(Q_R)),\CH^{\ve}_{\nn}(Q_R)\big\rangle\nonumber\\
&\le\frac{Re}{1-\gamma}\big\langle\nabla\cdot\mathfrak{R}+\mathfrak{R},\vv_R\big\rangle
+\big\langle\mathfrak{R},\frac{1}{\ve}\CH^{\ve}_{\nn}(Q_R)\big\rangle \le \delta_0\Ff+C\Ef+C \|\mathfrak{R}\|_{L^2}^2.
\end{align}

{\bf Step 2. $H^1$-estimate}\\
Using (\ref{eq:QR})-(\ref{eq:imcomR}), we have
\begin{align*}
& \frac{Re}{1-\gamma}\big\langle\frac{\partial}{\partial t}\partial_i\vv_R,\partial_i\vv_R\big\rangle+\big\langle\frac{\partial}{\partial t}\partial_iQ_R,
 \frac{1}{\ve}\CH^{\ve}_{\nn}(\partial_iQ_R)\big\rangle\\
 =&-\frac{\gamma}{1\!-\!\gamma}\|\nabla\partial_i\vv_R\|^2_{L^2}-\frac{1}{2}\big\langle\partial_i\DD_R:M^{(4)}_{\widetilde{Q}},\partial_i\DD_R\big\rangle
-\frac{2}{\ve}\big\langle\CM_{Q_0}(\CH_\nn^{\ve}(\partial_iQ_R)),\nabla\partial_i\vv_R\big\rangle\\
&-\frac{1}{2}\big\langle\DD_R:\partial_iM^{(4)}_{\widetilde{Q}},\partial_i\DD_R\big\rangle
+\frac{2}{\ve}\big\langle[\CM_{Q_0}\CH_\nn^{\ve}, \partial_i]Q_R,\nabla\partial_i\vv_R\big\rangle
+\frac{Re}{1-\gamma}\big\langle\nabla\cdot\partial_i\mathfrak{R}+\partial_i\mathfrak{R},\partial_i\vv_R\big\rangle\\
&-\frac{4}{\ve^2}\big\langle\CJ_{\nn}(\CH^{\ve}_{\nn}(\partial_iQ_R)),\CH^{\ve}_{\nn}(\partial_iQ_R)\big\rangle
+\big\langle2\CJ_{\nn}(\nabla\partial_i\vv_R),\frac{1}{\ve}\CH^{\ve}_{\nn}(\partial_iQ_R)\big\rangle\\
&-\frac{4}{\ve^2}\big\langle[\CJ_{\nn}\CH^{\ve}_{\nn}, \partial_i]Q_R,\CH^{\ve}_{\nn}(\partial_iQ_R)\big\rangle
+\big\langle2[\CJ_{\nn},\partial_i]\nabla\vv_R,\frac{1}{\ve}\CH^{\ve}_{\nn}(\partial_iQ_R)\big\rangle
+\big\langle\partial_i\mathfrak{R},\frac{1}{\ve}\CH^{\ve}_{\nn}(\partial_iQ_R)\big\rangle.
\end{align*}
It is straightforward to obtain the estimates:
\begin{align*}
|\big\langle\DD_R:\partial_iM^{(4)}_{\widetilde{Q}},\partial_i\DD_R\big\rangle|
&\le C\|\DD_R\|_{L^2}\|\partial_i\DD_R\|_{L^2}\le \ve^{-2}(\delta_0\Ff+C\Ef),\\
\ve^{-1}\big\langle[\CM_{Q_0}\CH_\nn^{\ve}, \partial_i]Q_R,\nabla\partial_i\vv_R\big\rangle
&=\ve^{-1}\big\langle[\CM_{Q_0}\CH_\nn, \partial_i]Q_R
+\ve[\CM_{Q_0}\CL, \partial_i]Q_R, \nabla\partial_i\vv_R\big\rangle\\
&\le \ve^{-2}C\big(\|Q_R\|_{L^2}+\ve\|Q_R\|_{H^2}\big)\big\|\ve\nabla\partial_i\vv_R\big\|_{L^2}\\
&\le \ve^{-2}(\delta_0 \Ff+C\Ef),\\
-\ve^{-2}\big\langle[\CJ_{\nn}\CH^{\ve}_{\nn}, \partial_i]Q_R,\CH^{\ve}_{\nn}(\partial_iQ_R)\big\rangle
&\le \ve^{-2}C\big(\|Q_R\|_{L^2}+\ve\|Q_R\|_{H^2}\big)\big\|\CH^{\ve}_{\nn}(\partial_iQ_R)\big\|_{L^2}\\
&\le \ve^{-2}(\delta_0 \Ff+C\Ef),\\
\ve^{-1}\big\langle[\CJ_{\nn},\partial_i]\nabla\vv_R,\CH^{\ve}_{\nn}(\partial_iQ_R)\big\rangle
&\le \ve^{-1}\|\nabla\vv_R\|_{L^2}\|\CH^{\ve}_{\nn}(\partial_iQ_R)\|_{L^2}\le \ve^{-2}(\delta_0\Ff+C\Ef).
\end{align*}
Therefore, by the cancelation relation (\ref{eq:cancel}) with taking
$(A,B)=(\nabla\partial_i\vv_R, \partial_iQ_R)$,  we have
\begin{align}\label{estimate:H1}
& \frac{\ve^2Re}{1-\gamma}\big\langle\frac{\partial}{\partial t}\partial_i\vv_R,\partial_i\vv_R\big\rangle
+\ve\big\langle\frac{\partial}{\partial t}\partial_iQ_R, \CH^{\ve}_{\nn}(\partial_iQ_R)\big\rangle\nonumber\\
&+\frac{\ve^2\gamma}{1\!-\!\gamma}\|\nabla\partial_i\vv_R\|^2_{L^2}
+{4}\big\langle\CJ_{\nn}(\CH^{\ve}_{\nn}(\partial_iQ_R)),\CH^{\ve}_{\nn}(\partial_iQ_R)\big\rangle
\le \delta_0\Ff+C\Ef +C\|\ve\partial_i\mathfrak{R}\|_{L^2}^2.
\end{align}

{\bf Step 3. $H^2$-estimate}\\
Using (\ref{eq:QR})-(\ref{eq:imcomR}), we get
\begin{align*}
& \frac{Re}{1-\gamma}\big\langle\frac{\partial}{\partial t}\Delta\vv_R,\Delta\vv_R\big\rangle
+\big\langle\frac{\partial}{\partial t}\Delta Q_R,\frac{1}{\ve}\CH^{\ve}_{\nn}(\Delta Q_R)\big\rangle\\
=&-\frac{\gamma}{1\!-\!\gamma}\|\nabla\Delta\vv_R\|^2_{L^2}-\frac{1}{2}\big\langle\Delta\DD_R:M^{(4)}_{\widetilde{Q}},\Delta\DD_R\big\rangle
-\frac{2}{\ve}\big\langle\CM_{Q_0}(\CH_\nn^{\ve}(\Delta Q_R)),\nabla\Delta\vv_R\big\rangle\\
&+\frac{1}{2}\big\langle[\Delta, M^{(4)}_{\widetilde{Q}}:]\DD_R,\Delta\DD_R\big\rangle
+\frac{2}{\ve}\big\langle[\CM_{Q_0}\CH_\nn^{\ve}, \Delta]Q_R,\nabla\Delta\vv_R\big\rangle
+\frac{Re}{1-\gamma}\big\langle\nabla\cdot\Delta\mathfrak{R}+\Delta\mathfrak{R},\Delta\vv_R\big\rangle\\
&-\frac{4}{\ve^2}\big\langle\CJ_{\nn}(\CH^{\ve}_{\nn}(\Delta Q_R)),\CH^{\ve}_{\nn}(\Delta Q_R)\big\rangle
+\big\langle2\CJ_{\nn}(\nabla\Delta\vv_R),\frac{1}{\ve}\CH^{\ve}_{\nn}(\Delta Q_R)\big\rangle\\
&-\frac{4}{\ve^2}\big\langle[\CJ_{\nn}\CH^{\ve}_{\nn}, \Delta]Q_R,\CH^{\ve}_{\nn}(\partial_iQ_R)\big\rangle
+\big\langle2[\CJ_{\nn},\Delta]\nabla\vv_R,\frac{1}{\ve}\CH^{\ve}_{\nn}(\Delta Q_R)\big\rangle
+\big\langle\Delta\mathfrak{R},\frac{1}{\ve}\CH^{\ve}_{\nn}(\Delta Q_R)\big\rangle.
\end{align*}
Similar to Step 2, we can obtain
\begin{align*}
\big\langle[\Delta, M^{(4)}_{\widetilde{Q}}:]\DD_R,\Delta\DD_R\big\rangle
&\le C\ve^{-4}\|\ve^2\DD_R\|_{H^1}\|\ve^2\Delta \DD_R\|_{L^2}\le \ve^{-4}(\delta_0\Ff+C\Ef),\\
\ve^{-1}\big\langle[\CM_{Q_0}\CH_\nn^{\ve}, \Delta ]Q_R,\nabla\Delta \vv_R\big\rangle
&=\ve^{-1}\big\langle[\CM_{Q_0}\CH_\nn, \Delta ]Q_R
+\ve[\CM_{Q_0}\CL, \Delta ]Q_R, \nabla\Delta \vv_R\big\rangle\\
&\le \ve^{-4}C\big(\|\ve Q_R\|_{H^1}+\|\ve^2\CL (Q_R)\|_{H^1}\big)\big\|\ve^2\nabla\Delta \vv_R\big\|_{L^2}\\
&\le \ve^{-4}(\delta_0 \Ff+C\Ef),\\
-\ve^{-2}\big\langle[\CJ_{\nn}\CH^{\ve}_{\nn}, \Delta ]Q_R,\CH^{\ve}_{\nn}(\Delta Q_R)\big\rangle
&\le \ve^{-4}C\big(\|\ve Q_R\|_{H^1}+\|\ve^2\CL (Q_R)\|_{H^1}\big)\big\|\ve\CH^{\ve}_{\nn}(\Delta Q_R)\big\|_{L^2}\\
&\le \ve^{-4}(\delta_0 \Ff+C\Ef),\\
\ve^{-1}\big\langle[\CJ_{\nn},\Delta ]\nabla\vv_R,\CH^{\ve}_{\nn}(\Delta Q_R)\big\rangle
&\le \ve^{-4}\|\ve^2\nabla\vv_R\|_{H^1}\|\ve\CH^{\ve}_{\nn}(\Delta Q_R)\|_{L^2}\le \ve^{-4}(\delta_0\Ff+C\Ef).
\end{align*}
Using (\ref{eq:cancel}) again, we have
\begin{align}\label{estimate:H2}
& \frac{\ve^4Re}{1-\gamma}\big\langle\frac{\partial}{\partial t}\Delta \vv_R,\Delta \vv_R\big\rangle
+\ve^3\big\langle\frac{\partial}{\partial t}\Delta Q_R, \CH^{\ve}_{\nn}(\Delta Q_R)\big\rangle\nonumber\\
&+\frac{\ve^4\gamma}{1\!-\!\gamma}\|\nabla\Delta \vv_R\|^2_{L^2}
+{4}\ve^2\big\langle\CJ_{\nn}(\CH^{\ve}_{\nn}(\Delta Q_R)),\CH^{\ve}_{\nn}(\Delta Q_R)\big\rangle
\le \delta_0\Ff+C\Ef +C\|\ve^2\Delta \mathfrak{R}\|_{L^2}^2.
\end{align}

{\bf Step 4. The completion of energy estimate}\\
Recalling (\ref{eq:Hn}) we get
\begin{align*}
\CH^{\ve}_{\nn}(Q)=&\psi_1(\nn\nn-\frac13\II)(\nn\nn:Q)+\psi_2(-Q+\nn\nn\cdot Q+Q\cdot\nn\nn
-\frac23\II\nn\nn:Q)+\ve\CL (Q).
\end{align*}
With $Q_R:\II=\text{tr}~Q_R=0$, it yields
\begin{align*}
 \frac{1}{\ve}\frac{d}{dt}\big\langle Q_R,\CH^{\ve}_{\nn}(Q_R)\big\rangle=&\frac{2}{\ve}
 \Big\langle \frac{\partial}{\partial t}Q_R,\CH^{\ve}_{\nn}(Q_R)\Big\rangle+
 \frac{1}{\ve}\Big\langle Q_R,\psi_1(\nn\nn:Q_R)\partial_t(\nn\nn)
 +\psi_1(\partial_t(\nn\nn):Q_R)\nn\nn\\
 &+\psi_2\big(\partial_t(\nn\nn)\cdot Q_R+Q_R\cdot\partial_t(\nn\nn)\big)\Big\rangle\\
 =&\frac{2}{\ve}
 \langle \frac{\partial}{\partial t}Q_R,\CH^{\ve}_{\nn}(Q_R)\rangle
 +\frac{2}{\ve}\Big\langle Q_R,\psi_1(\partial_t(\nn\nn):Q_R)\nn\nn
 +\psi_2\partial_t(\nn\nn)\cdot Q_R\Big\rangle.
\end{align*}
Lemma \ref{keylemma} tell us that
\begin{align*}
&\frac{2}{\ve}\Big\langle Q_R,\psi_1(\partial_t(\nn\nn):Q_R)\nn\nn
 +\psi_2\partial_t(\nn\nn)\cdot Q_R\Big\rangle\\
&\leq\delta||\frac{1}{\ve}\CH^{\ve}_{\nn}(Q)||^2_{L^2}
 +C_{\delta}\big(\frac{1}{\ve}\langle\CH^{\ve}_{\nn}(Q),Q\rangle+||Q||^2_{L^2}\big).
\end{align*}
Thus we have
\begin{eqnarray*}
 \frac{1}{2\ve}\frac{d}{dt}\langle Q_R,\CH^{\ve}_{\nn}(Q_R)\rangle
 \leq\langle \frac{\partial}{\partial t}Q_R,\CH^{\ve}_{\nn}(Q_R)\rangle+\delta\Ff+C\Ef.
\end{eqnarray*}
Similarly, the following inequalities hold
\begin{eqnarray*}
 &&\frac{\ve}{2}\frac{d}{dt}\langle \partial_iQ_R,\CH^{\ve}_{\nn}(\partial_iQ_R)\rangle
 \leq\ve\langle \frac{\partial}{\partial t}\partial_iQ_R,\CH^{\ve}_{\nn}(\partial_iQ_R)\rangle+\delta\Ff+C\Ef,\\
 &&\frac{\ve^3}{2}\frac{d}{dt}\langle \Delta Q_R,\CH^{\ve}_{\nn}(\Delta Q_R)\rangle
 \leq\ve^3\langle \frac{\partial}{\partial t}\Delta Q_R,\CH^{\ve}_{\nn}(\Delta Q_R)\rangle+\delta\Ff+C\Ef.
\end{eqnarray*}
Together with (\ref{estimate:Q0})-(\ref{estimate:H2}), we arrive at
\begin{align*}
 \frac12\frac{d }{d t}\Ef(t)+\Ff(t)\le \delta\Ff+C_\delta\Ef+\|\mathfrak{R}\|_{L^2}^2
 +\|\ve\nabla\mathfrak{R}\|_{L^2}^2+\|\ve^2\Delta\mathfrak{R}\|_{L^2}^2.
\end{align*}
Recalling that $\mathfrak{R}$ denotes {\it good terms} with
\begin{align*}
\|\mathfrak{R}\|_{L^2} +\ve\|\nabla\mathfrak{R}\|_{L^2}+\ve^2\|\Delta\mathfrak{R}\|_{L^2}
\le C(\ve E)(1+E+\ve F)+\ve f(E),
\end{align*}
and Corollary (\ref{corol:EF}), we have
\begin{align*}
 \frac12\frac{d }{d t}\Ef(t)+\Ff(t)\le \delta\Ff+C_\delta\Ef+ C(\ve^2\Ef)(1+\Ef+\ve^2\Ff)+\ve^2f(\Ef).
\end{align*}

Taking $\delta$ enough small leads to the Proposition \ref{prop:energy}.
\end{proof}
\begin{lemma}\label{keylemma}
For any $\delta>0$, there exists a constant $C=C(\delta,||\nabla_{t,\xx}\nn||_{L^{\infty}},||\nabla\nn_{t}||_{L^{\infty}})$
such that for any $Q\in  \mathbb{R}^{3\times3}_{sym,0}$, it holds that
\begin{align*}
 \frac{1}{\ve}\langle\partial_t(\nn\nn)\cdot Q,Q\rangle
 \leq&\delta||\frac{1}{\ve}\CH^{\ve}_{\nn}(Q)||^2_{L^2}
 +C_{\delta}\big(\frac{1}{\ve}\langle\CH^{\ve}_{\nn}(Q),Q\rangle+||Q||^2_{L^2}\big),\\
 \frac{1}{\ve}\langle Q:\partial_t(\nn\nn),Q:\nn\nn\rangle
 \leq&\delta||\frac{1}{\ve}\CH^{\ve}_{\nn}(Q)||^2_{L^2}
 +C_{\delta}\big(\frac{1}{\ve}\langle\CH^{\ve}_{\nn}(Q),Q\rangle+||Q||^2_{L^2}\big).
\end{align*}
\end{lemma}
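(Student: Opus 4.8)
The plan is to reduce both inequalities to a single bilinear bound, and then to trade the factor $1/\ve$ against the operator $\CH^{\ve}_{\nn}$ in a way that avoids estimating $\CP^{out}Q$ directly: a plain Cauchy--Schwarz would only give a bound of order $\ve^{-1}\|Q\|_{L^{2}}\|\CP^{out}Q\|_{L^{2}}$, which is useless since $\|\CP^{out}Q\|_{L^{2}}$ is generically only of order $\sqrt{\ve}$ on the solutions one has in mind, so this would blow up like $\ve^{-1/2}$.

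First I would carry out the pointwise algebra. Since $|\nn|=1$ we have $\nn\cdot\nn_{t}=0$, hence $\partial_{t}(\nn\nn)=\nn_{t}\nn+\nn\nn_{t}\in\QI$. Using the explicit projections (\ref{projection_in1}) and (\ref{projection_out1}) — in particular $\CP^{out}Q\cdot\nn=(Q:\nn\nn)\nn$, $\CP^{in}Q\cdot\nn_{t}=\big((\CP^{in}Q\cdot\nn)\cdot\nn_{t}\big)\nn$, and $\nn\cdot\CP^{out}Q\cdot\nn_{t}=0$ — a direct computation gives
\begin{align*}
\langle\partial_{t}(\nn\nn)\cdot Q,Q\rangle&=2\big\langle\CP^{in}Q\cdot\nn,\ \CP^{out}Q\cdot\nn_{t}\big\rangle+2\big\langle(\CP^{in}Q\cdot\nn)\cdot\nn_{t},\ Q:\nn\nn\big\rangle,\\
\langle Q:\partial_{t}(\nn\nn),\ Q:\nn\nn\rangle&=2\big\langle(\CP^{in}Q\cdot\nn)\cdot\nn_{t},\ Q:\nn\nn\big\rangle,
\end{align*}
where the pure $\QI$ and pure $\QO$ blocks have cancelled. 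Since $Q:\nn\nn=\CP^{out}Q:(\nn\nn-\tfrac13\II)$ with $\nn\nn-\tfrac13\II\in\QO$, each right-hand side can be written as $\langle\CP^{out}Q,\CT[Q]\rangle$ with $\CT[Q]\in\QO$ linear in $Q$, whose coefficients are smooth bounded functions of $\nn$ and $\nn_{t}$; hence $\|\CT[Q]\|_{L^{2}}\le C\|\nn_{t}\|_{L^{\infty}}\|Q\|_{L^{2}}$ and $\|\CT[Q]\|_{H^{1}}\le C\|Q\|_{H^{1}}$ with $C=C(\|\nabla_{t,\xx}\nn\|_{L^{\infty}},\|\nabla\nn_{t}\|_{L^{\infty}})$.

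The core step is to transfer $\CP^{out}Q$ through $\CH_{\nn}$. By Proposition \ref{prop:H_n}, $\CH_{\nn}$ restricts to a self-adjoint, positive-definite, hence boundedly invertible operator on $\QO$; set $\CG:=(\CH_{\nn}|_{\QO})^{-1}\CT[Q]\in\QO$, so $\CG$ is linear in $Q$ with $\|\CG\|_{H^{j}}\le C\|Q\|_{H^{j}}$ for $j=0,1$. Using self-adjointness of $\CH_{\nn}$ on $\mathbb{Q}$, the fact that $\CH_{\nn}$ kills $\QI$ (so $\CH_{\nn}(\CP^{out}Q)=\CH_{\nn}(Q)$), and $\CH^{\ve}_{\nn}=\CH_{\nn}+\ve\CL$, I get
\begin{align*}
\frac1{\ve}\langle\CP^{out}Q,\CT[Q]\rangle=\frac1{\ve}\langle\CP^{out}Q,\CH_{\nn}(\CG)\rangle=\big\langle\tfrac1{\ve}\CH^{\ve}_{\nn}(Q),\CG\big\rangle-\langle\CL(Q),\CG\rangle.
\end{align*}
The first term is $\le\delta\|\tfrac1{\ve}\CH^{\ve}_{\nn}(Q)\|_{L^{2}}^{2}+C_{\delta}\|\CG\|_{L^{2}}^{2}\le\delta\|\tfrac1{\ve}\CH^{\ve}_{\nn}(Q)\|_{L^{2}}^{2}+C_{\delta}\|Q\|_{L^{2}}^{2}$ by Cauchy--Schwarz and Young. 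For the second term, since $\CL$ is second order I integrate by parts once onto the smooth factor $\CG$ to get $|\langle\CL(Q),\CG\rangle|\le C\|\nabla Q\|_{L^{2}}\|\nabla\CG\|_{L^{2}}\le C\|\nabla Q\|_{L^{2}}^{2}+C\|Q\|_{L^{2}}^{2}$. Finally, the inequality $\int\CL(Q):Q\,\ud\xx\ge c_{0}\int|\nabla Q|^{2}\,\ud\xx$ recalled in the introduction, together with $\langle\CH_{\nn}(Q),Q\rangle=\langle\CH_{\nn}(\CP^{out}Q),\CP^{out}Q\rangle\ge 0$, gives $\ve\langle\CL(Q),Q\rangle=\langle\CH^{\ve}_{\nn}(Q),Q\rangle-\langle\CH_{\nn}(Q),Q\rangle\le\langle\CH^{\ve}_{\nn}(Q),Q\rangle$, whence $\|\nabla Q\|_{L^{2}}^{2}\le C\,\tfrac1{\ve}\langle\CH^{\ve}_{\nn}(Q),Q\rangle$. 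Combining these yields both asserted bounds.

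The main obstacle is precisely the $1/\ve$ weight: one cannot afford to estimate $\|\CP^{out}Q\|_{L^{2}}$ on its own. The mechanism that makes the lemma work is the vanishing of the diagonal blocks in the first step, which makes the whole expression genuinely linear in $\CP^{out}Q$; this allows one to substitute $\CP^{out}Q=\CH_{\nn}(\CG)$ and push $\CH_{\nn}^{-1}$ onto the smooth factor, after which $\tfrac1{\ve}\CH_{\nn}(\CP^{out}Q)=\tfrac1{\ve}\CH^{\ve}_{\nn}(Q)-\CL(Q)$ splits the dangerous term into a piece absorbed by $\delta\|\tfrac1{\ve}\CH^{\ve}_{\nn}(Q)\|_{L^{2}}^{2}$ and a piece carrying $\CL$ that is harmless once integrated by parts against $\CG$. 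The only genuinely computational parts are verifying the cancellations in the first step and the regularity bounds $\|\CT[Q]\|_{H^{1}},\|\CG\|_{H^{1}}\le C\|Q\|_{H^{1}}$.
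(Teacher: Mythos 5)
Your proof is correct. The paper itself does not supply a proof (it defers to [WZZ4]), so your argument has to stand on its own, and it does. The opening identities are exact: writing $P=\CP^{in}Q$, $R=\CP^{out}Q$ and using $P\nn\perp\nn$, $R\nn=(Q:\nn\nn)\nn$, $\nn\cdot R\nn_t=0$ (all consequences of $\nn\cdot\nn_t=0$), one checks that the $PP$ and $RR$ blocks of $2\int(Q\nn)\cdot(Q\nn_t)\,\ud\xx$ indeed drop out, leaving precisely your two cross terms, each of the form $\langle\CP^{out}Q,\CT[Q]\rangle$ with $\CT[Q]\in\QO$ linear in $Q$. The reduction $\frac1\ve\langle\CP^{out}Q,\CT[Q]\rangle=\langle\frac1\ve\CH^\ve_\nn(Q),\CG\rangle-\langle\CL(Q),\CG\rangle$ with $\CG=(\CH_\nn|_{\QO})^{-1}\CT[Q]$ is legitimate: the pointwise inverse on $\QO$ is uniformly bounded and smooth in $\nn$ by Proposition \ref{prop:H_n}(ii), $\CH_\nn$ is self-adjoint and annihilates $\QI$, and $\CH_\nn=\CH^\ve_\nn-\ve\CL$. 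Absorbing the first piece by Young and handling the $\CL$-piece by one integration by parts plus the coercivity bounds $\langle\CH_\nn(Q),Q\rangle\ge0$ and $\langle\CL(Q),Q\rangle\ge c_0\|\nabla Q\|^2_{L^2}$ (hence $\|\nabla Q\|^2_{L^2}\le C\ve^{-1}\langle\CH^\ve_\nn(Q),Q\rangle$) closes the argument. This is the same mechanism used in the cited reference: the essential point is that $\partial_t(\nn\nn)\in\QI$ forces the bilinear form to be linear in $\CP^{out}Q$, which is exactly what allows the $1/\ve$ to be traded against $\CH^\ve_\nn$ rather than hitting $\|\CP^{out}Q\|_{L^2}$ directly.
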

The proof of Lemma \ref{keylemma} can be found in \cite{WZZ4}.

\subsection{Proof of Theorem \ref{thm:main2}}
Given the initial data $(\vv_0^\ve,Q_0^\ve)\in H^2\times H^3$, thanks to Theorem \ref{thm:main1}
there exists a maximal time $T_\ve>0$ and a unique solution $(\vv^\ve,Q^\ve)$ of the system (\ref{eqepsiQ1})-(\ref{eqepsiQ3}) such that
\beno
\vv^\ve\in C([0,T_\ve);H^2)\cap L^2(0,T_\ve;H^3),\quad Q^\ve\in C([0,T_\ve);H^3).
\eeno
Now we prove that $T_\ve\ge T$. Suppose it is not. By Proposition \ref{prop:Hilbert}, the solution has the expansion
\begin{align*}
&\vv^\ve=\vv_0+\ve\vv_1+\ve^2\vv_2+\ve^3\vv_R^\ve,\\
&Q^\ve=Q_0+\ve Q_1+\ve^2Q_2+\ve^3Q_3+\ve^3Q_R^\ve.
\end{align*}
For the remainder $(\vv_R^\ve,Q_R^\ve)$, we infer from Proposition \ref{prop:energy} that
\begin{align}
\frac{d }{d t}\Ef(t)+\Ff(t)\le
 C(\ve \Ef)\big(1+\Ef\big)+\ve f(\Ef)+C(\ve \Ef)\ve\Ff,\non
\end{align}
for any $t\in [0,T_\ve]$. Thanks to the assumptions of Theorem \ref{thm:main2}, we know that
$$\Ef(0)\le C_1\Big(\|\vv_{I,R}^\ve\|_{H^2}+\|Q_{I,R}^\ve\|_{H^3}+\ve^{-1}\|\CP^{out}(Q^\ve_{I,R})\|_{L^2}\Big)\le C_1 E_0.$$
Let $E_1=(2+C_1E_0){e}^{T}-2>\Ef(0)$, and
$$T_1=\sup\{t\in[0,T_\ve]: \Ef(t)\le E_1\}.$$
Thus, if we take $\ve_0$ small enough such that
$$C(\ve_0 E_1)\le 1,\quad\ve_0f(E_1)\le 1,\quad\ve_0\le 1/2,$$
then for $t\le T_1$, it holds that
\begin{align}
\frac{d}{dt}\Ef(t)\le 2+\Ef.
\end{align}
If $T_\ve<T$, Gronwall's inequality gives that for $t\le T_1$,
\beno
\Ef(t)\le {e}^{t}(2+C_1E_0)-2 < E_1,
\eeno
which implies $T_1=T_\ve$ and at time $T_\ve$, $(\vv_\ve, Q_\ve)\in H^2\times H^3$, which contradict with our assumption.
Thus $T\le T_\ve$, and $\Ef(t)\le E_1$ for $t\in[0,T]$. Then Theorem \ref{thm:main2} follows.

\section{Appendix}
\subsection{Some basic estimates in Sobolev spaces}

The following product estimates and commutator estimates are  well-known, see \cite{Triebel} for example, and frequently used in this paper.

\begin{lemma}\label{lem:product}
Let $s\ge 0$. Then for any multi-index $\al, \be, \gamma, \delta$, there holds
\begin{align*}
\|\partial^\alpha f\partial^\beta g\|_{H^s}\le &~ C\big(\|f\|_{L^\infty}\|g\|_{H^{s+|\al|+|\be|}}
+\|g\|_{L^\infty}\|f\|_{H^{s+|\al|+|\be|}}\big);\\
\|\partial^\alpha f\partial^\beta g\|_{H^s}\le &~ C\|f\|_{H^{s+|\al|+|\gamma|}}\|g\|_{H^{s+|\be|+|\delta|}},
\quad\text{ if } s+|\gamma|+|\delta|\ge 2.
\end{align*}
In particular, we have
\begin{align*}
\|fg\|_{H^s}\le &~ C\big(\|f\|_{L^\infty}\|g\|_{H^{s}}+\|g\|_{L^\infty}\|f\|_{H^{s}}\big); \\
\|fg\|_{H^s}\le &~ C\|f\|_{H^s}\|g\|_{H^{s}},\quad\text{ if } s\ge 2;\\
\|fg\|_{H^k}\le &~ C\min\{ \|f\|_{H^k}\|g\|_{H^{2}}, \|f\|_{H^2}\|g\|_{H^{k}}\},\quad\text{ if } 0\le k\le 2.
\end{align*}
\end{lemma}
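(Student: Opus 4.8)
The estimates collected in this lemma are the classical Moser--Kato--Ponce product inequalities on $\mathbb{R}^3$, and the plan is to deduce all of them from two standard tools: the Gagliardo--Nirenberg interpolation inequalities and the Sobolev embedding $H^m(\mathbb{R}^3)\hookrightarrow L^\infty(\mathbb{R}^3)$, valid for $m\ge 2$. One may alternatively just cite \cite{Triebel} (or any standard reference on Moser-type estimates); the sketch below indicates the self-contained route.

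The core step is the homogeneous bilinear Moser bound: for an integer $N\ge 0$ and multi-indices $a,b$ with $|a|+|b|\le N$,
\[
\|\partial^a f\,\partial^b g\|_{L^2}\le C\big(\|f\|_{L^\infty}\|g\|_{H^{N}}+\|g\|_{L^\infty}\|f\|_{H^{N}}\big).
\]
I would prove this by H\"older, $\|\partial^a f\,\partial^b g\|_{L^2}\le\|\partial^a f\|_{L^p}\|\partial^b g\|_{L^q}$ with $\tfrac1p+\tfrac1q=\tfrac12$, followed by the Gagliardo--Nirenberg inequalities $\|\partial^a f\|_{L^p}\le C\|f\|_{L^\infty}^{1-|a|/N}\|\nabla^N f\|_{L^2}^{|a|/N}$ (with $\tfrac1p=\tfrac{|a|}{2N}$) and the analogue for $g$; since $\tfrac{|a|}{N}+\tfrac{|b|}{N}\le 1$, Young's inequality converts the resulting product into the right-hand side, the low-order terms with $|a|+|b|<N$ carrying interpolation slack and being absorbed via $H^N\hookrightarrow L^\infty$ when $N\ge 2$ (the cases $N\le 1$ being treated directly). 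For non-integer $s$ the same conclusion follows from the fractional Leibniz (Kato--Ponce) rule, but only the integer case is needed in this paper.

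Granting this, the remaining assertions are bookkeeping. For the first displayed inequality, set $N:=s+|\alpha|+|\beta|$; the Leibniz rule applied to $\partial^\sigma(\partial^\alpha f\,\partial^\beta g)$ for $|\sigma|\le s$ produces a sum of terms $\partial^{a}f\,\partial^{b}g$ with $|a|+|b|\le N$, and summing the bilinear Moser bound over these terms gives $\|\partial^\alpha f\,\partial^\beta g\|_{H^s}\le C(\|f\|_{L^\infty}\|g\|_{H^N}+\|g\|_{L^\infty}\|f\|_{H^N})$. The second displayed inequality then follows by bounding, in whichever summand carries an $L^\infty$ factor, $\|f\|_{L^\infty}\le C\|f\|_{H^{s+|\alpha|+|\gamma|}}$ and $\|g\|_{L^\infty}\le C\|g\|_{H^{s+|\beta|+|\delta|}}$, using that the hypothesis $s+|\gamma|+|\delta|\ge 2$ (together with $|\alpha|,|\beta|\ge 0$) forces the relevant Sobolev index to be $\ge 2$ so the embedding applies, and bounding the remaining Sobolev norm by the allotted budget. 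Finally, the ``in particular'' statements are the specializations $\alpha=\beta=0$: the first is immediate; for $s\ge 2$ one further uses $\|f\|_{L^\infty},\|g\|_{L^\infty}\le C\|f\|_{H^s},C\|g\|_{H^s}$; and for $0\le k\le 2$ one applies the $(\gamma,\delta)$-version with $\alpha=\beta=0$ and $(|\gamma|,|\delta|)=(0,2-k)$ — so that $k+|\gamma|+|\delta|=2$ — together with its mirror image.

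I do not expect a genuine obstacle: the statement is classical. The only point calling for care is the matching of Gagliardo--Nirenberg exponents and the handling of the low-order Leibniz terms on an unbounded domain, where one must verify $\tfrac1p+\tfrac1q\le\tfrac12$ and pad with the embedding $H^N\hookrightarrow L^\infty$; in practice I would simply invoke the Moser estimate from \cite{Triebel} rather than reproduce this computation.
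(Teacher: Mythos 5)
The paper does not give a proof of this lemma at all: it simply labels these estimates ``well-known'' and points to \cite{Triebel}. Your final fallback (``simply invoke the Moser estimate from \cite{Triebel}'') therefore coincides with what the paper does, and your sketch of the first displayed inequality via H\"older + Gagliardo--Nirenberg + Young, with low-order Leibniz terms absorbed by $H^N\hookrightarrow L^\infty$, is the standard route and is essentially fine.

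However, your derivation of the \emph{second} displayed inequality from the first has a genuine gap. You claim that the hypothesis $s+|\gamma|+|\delta|\ge 2$ ``forces the relevant Sobolev index to be $\ge 2$,'' so that $\|f\|_{L^\infty}\le C\|f\|_{H^{s+|\alpha|+|\gamma|}}$ applies. That is not so: take $s=0$, $\alpha=\beta=\gamma=0$, $|\delta|=2$. Then $s+|\gamma|+|\delta|=2$ but $s+|\alpha|+|\gamma|=0$, and the embedding $H^0\hookrightarrow L^\infty$ is false. More generally, even when the embedding does hold, passing from the $L^\infty$ term $\|f\|_{L^\infty}\|g\|_{H^{s+|\alpha|+|\beta|}}$ to $\|f\|_{H^{s+|\alpha|+|\gamma|}}\|g\|_{H^{s+|\beta|+|\delta|}}$ also needs $s+|\alpha|+|\beta|\le s+|\beta|+|\delta|$, i.e.\ $|\alpha|\le|\delta|$, and the mirror term needs $|\beta|\le|\gamma|$; neither is assumed. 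The correct route is direct, not through the first inequality: after applying $\partial^\sigma$, $|\sigma|\le s$, and Leibniz, each term has the form $\partial^a f\,\partial^b g$ with $a=\alpha+\tau$, $b=\beta+\sigma-\tau$, and setting $p=s+|\alpha|+|\gamma|-|a|\ge 0$, $q=s+|\beta|+|\delta|-|b|\ge 0$ one computes $p+q=2s+|\gamma|+|\delta|-|\sigma|\ge s+|\gamma|+|\delta|\ge 2$, so that in $\mathbb{R}^3$ the product estimate $\|uv\|_{L^2}\le C\|u\|_{H^p}\|v\|_{H^q}$ holds (if $\max\{p,q\}\ge 2$ use the $L^\infty$ embedding; if $p=q=1$ use $H^1\hookrightarrow L^4$). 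This also cleanly delivers the last ``in particular'' estimate for $0\le k\le 2$, which your reduction would otherwise inherit the same defect for (e.g.\ $k=0$ requires exactly the missing $H^0\hookrightarrow L^\infty$). The rest of your bookkeeping, including the $s\ge 2$ specialization, is fine.
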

\begin{lemma}\label{lem:composition}
Let $s\ge 0$ and $F(\cdot)\in C^\infty(\mathbb{R}^d)$ with $F(0)=0$. Then
\beno
\|F(f)\|_{H^s}\le C(\|f\|_{L^\infty})\|f\|_{H^s}.
\eeno
\end{lemma}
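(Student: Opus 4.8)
The plan is to establish the bound first for non-negative integer $s$ via the Fa\`a di Bruno (higher-order chain-rule) formula together with the Gagliardo--Nirenberg interpolation inequalities, and then to upgrade it to fractional $s$ by an analogous term-by-term argument based on the Gagliardo seminorm, the fractional Leibniz rule and the fractional Gagliardo--Nirenberg inequalities. Throughout, $C(\cdot)$ will denote an increasing function produced by taking suprema of the finitely many relevant derivatives of $F$ over the closed ball of radius $\|f\|_{L^\infty}$; this is where the hypothesis $F\in C^\infty$ is used, and $F(0)=0$ is used to kill the zeroth-order term. The base case $s=0$ is immediate: by the mean value theorem and $F(0)=0$ one has the pointwise bound $|F(f(\xx))|\le\big(\sup_{|\yy|\le\|f\|_{L^\infty}}|\nabla F(\yy)|\big)\,|f(\xx)|$, hence $\|F(f)\|_{L^2}\le C(\|f\|_{L^\infty})\|f\|_{L^2}$.

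For an integer $s=k\ge1$ and a multi-index $\alpha$ with $|\alpha|=k$, the chain rule writes $\partial^\alpha\big(F(f)\big)$ as a finite linear combination of terms $F^{(j)}(f)\,\partial^{\beta_1}f\cdots\partial^{\beta_j}f$ with $1\le j\le k$, $|\beta_i|\ge1$, and $|\beta_1|+\cdots+|\beta_j|=k$. I would bound $\|F^{(j)}(f)\|_{L^\infty}\le C(\|f\|_{L^\infty})$ and apply Gagliardo--Nirenberg in the form $\|\partial^{\beta_i}f\|_{L^{p_i}}\le C\|f\|_{L^\infty}^{1-\theta_i}\|f\|_{\dot H^k}^{\theta_i}$ with $\theta_i=|\beta_i|/k$ and $1/p_i=|\beta_i|/(2k)$. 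Since $\sum_i 1/p_i=1/2$ and $\sum_i\theta_i=1$, H\"older's inequality gives $\|\partial^{\beta_1}f\cdots\partial^{\beta_j}f\|_{L^2}\le C\|f\|_{L^\infty}^{\,j-1}\|f\|_{\dot H^k}$; multiplying by the $L^\infty$ bound on $F^{(j)}(f)$, summing over the (finitely many) terms and adding the $L^2$ estimate from the base case yields $\|F(f)\|_{H^k}\le C(\|f\|_{L^\infty})\|f\|_{H^k}$.

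For non-integer $s$, write $s=k+\sigma$ with $k=\lfloor s\rfloor$ and $0<\sigma<1$, and use $\|g\|_{H^s}^2\sim\|g\|_{L^2}^2+\sum_{|\alpha|=k}\big(\|\partial^\alpha g\|_{L^2}^2+[\partial^\alpha g]_{\dot W^{\sigma,2}}^2\big)$, where $[\,\cdot\,]_{\dot W^{\sigma,2}}$ is the Gagliardo seminorm. The $L^2$ norms of the derivatives are already controlled by the integer estimate at order $k$, which is $\le C(\|f\|_{L^\infty})\|f\|_{H^s}$. For the seminorm terms, when $k=0$ the pointwise Lipschitz bound $|F(f(\xx))-F(f(\yy))|\le C(\|f\|_{L^\infty})|f(\xx)-f(\yy)|$ gives $[F(f)]_{\dot W^{\sigma,2}}\le C(\|f\|_{L^\infty})[f]_{\dot W^{\sigma,2}}$ directly; for $k\ge1$ I would expand $\partial^\alpha F(f)$ by Fa\`a di Bruno as above and bound the $\dot W^{\sigma,2}$ seminorm of each product $F^{(j)}(f)\,\partial^{\beta_1}f\cdots\partial^{\beta_j}f$ by iterating the fractional Leibniz inequality $[uv]_{\dot W^{\sigma,2}}\lesssim\|u\|_{L^\infty}[v]_{\dot W^{\sigma,2}}+\|v\|_{L^p}[u]_{\dot W^{\sigma,q}}$ with $1/p+1/q=1/2$, using the chain-rule bound $[F^{(j)}(f)]_{\dot W^{\sigma,q}}\lesssim C(\|f\|_{L^\infty})[f]_{\dot W^{\sigma,q}}$ for the composition factor, and then interpolating every factor between $L^\infty$ and $\dot H^{s}$ by the fractional Gagliardo--Nirenberg inequalities. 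Since the total smoothness $\sigma+|\beta_1|+\cdots+|\beta_j|$ equals $s$ and the H\"older exponents again sum to $1/2$, each product is bounded by $C(\|f\|_{L^\infty})\|f\|_{\dot H^s}$, which completes the estimate.

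The integer case is routine; the step I expect to require the most care is the fractional one, specifically choosing the H\"older exponents and the fractional Gagliardo--Nirenberg parameters in the iterated Leibniz estimate so that the smoothness budget $s=k+\sigma$ is distributed exactly and the ``remainder'' of every interpolation lands in $L^\infty$, so that only $C(\|f\|_{L^\infty})$ times a single power of $\|f\|_{\dot H^s}$ survives. An alternative that sidesteps this bookkeeping is to invoke the real-interpolation identity $H^s=(H^k,H^{k+1})_{\sigma,2}$ together with the integer estimates at orders $k$ and $k+1$ and the local Lipschitz dependence of $f\mapsto F(f)$ on $f$ in $L^\infty$; I would nonetheless prefer the direct seminorm argument as it is self-contained and matches the treatment in \cite{Triebel}.
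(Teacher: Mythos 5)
The paper does not prove this lemma; it refers to Triebel's monograph and treats it (together with the neighboring product and commutator estimates of Lemmas \ref{lem:product} and \ref{lem:commutator}) as standard, so your argument is a self-contained substitute rather than a competing route. Your integer case is the classical Moser composition estimate and is correct: Fa\`a di Bruno decomposes $\partial^\alpha(F(f))$ into terms $F^{(j)}(f)\,\partial^{\beta_1}f\cdots\partial^{\beta_j}f$, the factor $F^{(j)}(f)$ is controlled in $L^\infty$ by a supremum of finitely many derivatives of $F$ over the ball of radius $\|f\|_{L^\infty}$ (this is where $F\in C^\infty$ enters), the exponents $1/p_i=|\beta_i|/(2k)$ and $\theta_i=|\beta_i|/k$ are precisely those making H\"older ($\sum_i 1/p_i=1/2$) and the Gagliardo--Nirenberg homogeneities ($\sum_i\theta_i=1$) close with a single power of $\|f\|_{\dot H^k}$, and $F(0)=0$ handles the $L^2$ piece. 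For this paper you could in fact stop there: every invocation of the lemma (Lemma \ref{prop:compo} and the proof of Lemma \ref{lem:difference}) uses an integer Sobolev index, and the paper defines $H^k$ only for non-negative integer $k$. If you do carry out the fractional extension, one caution on your proposed shortcut: real interpolation between the integer-order bounds is not automatic because $f\mapsto F(f)$ is nonlinear, so one cannot interpolate those estimates as if they were operator bounds; one would need to interpolate a difference estimate of the type in Lemma \ref{lem:difference} instead. Your primary Gagliardo-seminorm route avoids this and does close, since with $s=k+\sigma$ the smoothness budget $\sigma+\sum_i|\beta_i|=s$ together with H\"older exponents summing to $1/2$ lets the iterated fractional Leibniz rule and fractional Gagliardo--Nirenberg against the $L^\infty$ endpoint return exactly one power of $\|f\|_{\dot H^s}$ per term; you have correctly flagged that exponent bookkeeping as the only step requiring care.
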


\begin{lemma}\label{lem:commutator}
Let $a$ be a multiple index. There holds
\beno
\|\big[\partial^a, g\big]f\|_{L^2}\le C\big(\|\nabla g\|_{L^\infty}\|f\|_{H^{|a|-1}}
+\|\nabla g\|_{H^{|a|-1}}\|f\|_{L^\infty}\big).
\eeno
Moreover, if $|a|\ge 2$, it holds
\begin{align*}
&\|\big[\partial^a, g\big]f\|_{L^2}\le C\|g\|_{H^{|a|+1}}\|f\|_{H^{|a|-1}},\\
&\|\big[\partial^{a+1}, g\big]f\|_{L^2}\le C\|g\|_{H^{|a|+1}}\|f\|_{H^{|a|}}.
\end{align*}
\end{lemma}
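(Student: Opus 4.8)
The plan is to reduce the statement to the Leibniz rule together with Gagliardo--Nirenberg interpolation and the Sobolev embeddings of $\mathbb{R}^3$. First I would record, for a multi-index $a$ with $|a|\ge1$, the algebraic identity coming from the Leibniz formula,
\[
 [\partial^a, g]f=\partial^a(gf)-g\,\partial^a f=\sum_{0\ne b\le a}\binom{a}{b}\,\partial^b g\,\partial^{a-b}f,
\]
which is a finite sum, so that it suffices to bound $\|\partial^b g\,\partial^{a-b}f\|_{L^2}$ for each $1\le|b|\le|a|$. For such a $b$ I would write $\partial^b g=\partial^{b'}(\partial_jg)$ with $|b'|=|b|-1$ for an appropriate direction $j$, and set $m:=|a|-1$, $k_1:=|b'|$, $k_2:=|a-b|$, noting $k_1+k_2=m$. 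Then I would apply H\"older's inequality with $\frac{k_1}{2m}+\frac{k_2}{2m}=\frac12$ followed by the Gagliardo--Nirenberg inequalities $\|\partial^{b'}h\|_{L^{2m/k_1}}\le C\|h\|_{L^\infty}^{1-k_1/m}\|h\|_{\dot H^{m}}^{k_1/m}$ (with $h=\partial_jg$) and $\|\partial^{a-b}f\|_{L^{2m/k_2}}\le C\|f\|_{L^\infty}^{1-k_2/m}\|f\|_{\dot H^{m}}^{k_2/m}$, the endpoints $k_1=0$ or $k_2=0$ being trivial. Since $1-k_1/m=k_2/m$ and $1-k_2/m=k_1/m$, the resulting product factors as $\big(\|\nabla g\|_{L^\infty}\|f\|_{H^{|a|-1}}\big)^{k_2/m}\big(\|\nabla g\|_{H^{|a|-1}}\|f\|_{L^\infty}\big)^{k_1/m}$, and Young's inequality bounds it by $\|\nabla g\|_{L^\infty}\|f\|_{H^{|a|-1}}+\|\nabla g\|_{H^{|a|-1}}\|f\|_{L^\infty}$; summing over the finitely many $b$ yields the first estimate.

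For the second estimate I would assume $|a|\ge2$ and distinguish two cases. If $|a|\ge3$, then on $\mathbb{R}^3$ one has $\|\nabla g\|_{L^\infty}\le C\|g\|_{H^3}\le C\|g\|_{H^{|a|+1}}$, $\|\nabla g\|_{H^{|a|-1}}\le\|g\|_{H^{|a|+1}}$ and $\|f\|_{L^\infty}\le C\|f\|_{H^2}\le C\|f\|_{H^{|a|-1}}$, so the first estimate immediately gives the claim. If $|a|=2$, the Leibniz sum contains only terms $\partial_jg\,\partial_kf$, controlled by $\|\nabla g\|_{L^\infty}\|\nabla f\|_{L^2}\le C\|g\|_{H^3}\|f\|_{H^1}$, together with the single borderline term $\partial^ag\cdot f$, which I would estimate by $\|\partial^a g\|_{L^3}\|f\|_{L^6}\le C\|g\|_{H^{5/2}}\|f\|_{H^1}\le C\|g\|_{H^3}\|f\|_{H^1}$ using $H^{1/2}\hookrightarrow L^3$ and $H^1\hookrightarrow L^6$. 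The third estimate I would deduce from the first estimate applied with the multi-index $a+1$ of length $|a|+1\ge3$, combined with $\|\nabla g\|_{L^\infty}\le C\|g\|_{H^{|a|+1}}$, $\|\nabla g\|_{H^{|a|}}\le\|g\|_{H^{|a|+1}}$ and $\|f\|_{L^\infty}\le C\|f\|_{H^2}\le C\|f\|_{H^{|a|}}$, all valid because $|a|\ge2$.

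I do not expect a genuine obstacle here: this is the classical Kato--Ponce/Moser commutator estimate, and the tools (Leibniz rule, Gagliardo--Nirenberg interpolation, and the embeddings $H^{1/2}\hookrightarrow L^3$, $H^1\hookrightarrow L^6$, $H^2\hookrightarrow L^\infty$ on $\mathbb{R}^3$) are all standard, consistent with the reference to \cite{Triebel}. The only point demanding a little care is the bookkeeping of the interpolation exponents --- in particular verifying that each split $b=b'+e_j$ gives $k_1+k_2=|a|-1$, which is exactly what forces the Gagliardo--Nirenberg exponents $k_1/m$ and $k_2/m$ to sum to $1$ --- and isolating the single top-order term $\partial^a g\cdot f$ (resp.\ $\partial^{a+1}g\cdot f$) in which all derivatives land on $g$, which is where the borderline Sobolev embedding on $\mathbb{R}^3$ must be invoked.
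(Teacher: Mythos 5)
The paper offers no proof of this lemma: it is recorded as a well-known commutator estimate with a citation to Triebel's book, so there is no internal argument to compare against. Your proof is correct and is exactly the standard Moser/Kato--Ponce-type argument one would expect behind such a citation: the Leibniz expansion into $\partial^b g\,\partial^{a-b}f$, H\"older with exponents $2m/k_1$, $2m/k_2$ for $m=|a|-1$, Gagliardo--Nirenberg interpolation against the endpoints $L^\infty$ and $\dot H^m$ (with the identity $k_1+k_2=m$ making the exponents balance), and Young's inequality; the endpoint cases $k_1=0$, $k_2=0$ and the reduction of the second and third inequalities via $H^2(\mathbb{R}^3)\hookrightarrow L^\infty$, $H^1\hookrightarrow L^6$, $H^{1/2}\hookrightarrow L^3$, with the case split at $|a|=2$, are all handled correctly.
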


\begin{lemma}\label{lem:difference}
Let $\Omega$ be a convex domain in $\mathbb{R}^d$ and $k\ge 0$ be an integer.
$F(\cdot)\in C^\infty(\Omega)$ and $k'=\max\{k,2\}$. Then
\beno
\|F(u)-F(v)\|_{H^k}\le C(\|u\|_{L^\infty},\|v\|_{L^\infty})(1+\|u\|_{H^{k'}}+\|v\|_{H^{k'}} )\|u-v\|_{H^k}.
\eeno
\end{lemma}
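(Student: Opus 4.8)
The plan is to reduce the difference $F(u)-F(v)$ to a product and then apply the composition estimate of Lemma~\ref{lem:composition} together with the product estimates of Lemma~\ref{lem:product}. First I would extend $F$ to an element of $C^\infty(\mathbb{R}^d)$; this changes nothing since $u$ and $v$ take values in $\Omega$. By the convexity of $\Omega$ the segment $(1-t)v(\xx)+tu(\xx)$ stays in $\Omega$ for every $t\in[0,1]$ and a.e.\ $\xx$, so the fundamental theorem of calculus gives
\begin{align*}
F(u)-F(v)=F'(0)(u-v)+H(u,v)(u-v),\qquad
H(u,v):=\int_0^1\Big(F'\big((1-t)v+tu\big)-F'(0)\Big)\,\ud t.
\end{align*}
Since the argument of $F'$ ranges over a fixed bounded subset of $\Omega$ controlled by $\|u\|_{L^\infty}+\|v\|_{L^\infty}$, we have $\|H(u,v)\|_{L^\infty}\le C(\|u\|_{L^\infty},\|v\|_{L^\infty})$, while the constant‑coefficient term contributes only $\|F'(0)(u-v)\|_{H^k}\le |F'(0)|\,\|u-v\|_{H^k}$, which is harmless.

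Next I would bound $H(u,v)$ in Sobolev norms. The map $z\mapsto F'(z)-F'(0)$ is smooth and vanishes at the origin, so Lemma~\ref{lem:composition}, applied to it composed with $(1-t)v+tu$, yields for every $s\ge0$ and uniformly in $t\in[0,1]$
\begin{align*}
\big\|F'\big((1-t)v+tu\big)-F'(0)\big\|_{H^s}\le C\big(\|u\|_{L^\infty},\|v\|_{L^\infty}\big)\big(\|u\|_{H^s}+\|v\|_{H^s}\big),
\end{align*}
because $\|(1-t)v+tu\|_{L^\infty}\le\|u\|_{L^\infty}+\|v\|_{L^\infty}$ and $\|(1-t)v+tu\|_{H^s}\le\|u\|_{H^s}+\|v\|_{H^s}$. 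Integrating in $t$ gives $\|H(u,v)\|_{H^s}\le C(\|u\|_{L^\infty},\|v\|_{L^\infty})(\|u\|_{H^s}+\|v\|_{H^s})$.

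It remains to estimate $\|H(u,v)(u-v)\|_{H^k}$, and here I would split according to whether $0\le k\le2$ or $k\ge2$. When $0\le k\le2$, so $k'=2$, the low‑regularity product bound $\|fg\|_{H^k}\le C\|f\|_{H^2}\|g\|_{H^k}$ from Lemma~\ref{lem:product} gives $\|H(u,v)(u-v)\|_{H^k}\le C\|H(u,v)\|_{H^2}\|u-v\|_{H^k}$; inserting the $H^s$‑bound above with $s=2$ produces the stated inequality. When $k\ge2$, so $k'=k$, the bound $\|fg\|_{H^k}\le C\|f\|_{H^k}\|g\|_{H^k}$ gives $\|H(u,v)(u-v)\|_{H^k}\le C\|H(u,v)\|_{H^k}\|u-v\|_{H^k}$, and again the $H^s$‑bound with $s=k$ finishes the job; adding the $F'(0)$‑term in each case yields exactly $\|F(u)-F(v)\|_{H^k}\le C(\|u\|_{L^\infty},\|v\|_{L^\infty})(1+\|u\|_{H^{k'}}+\|v\|_{H^{k'}})\|u-v\|_{H^k}$. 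The one point that needs care—and the reason the exponent $k'=\max\{k,2\}$ rather than $k$ appears on the right—is the low‑regularity range $k<2$: there the product estimate $\|fg\|_{H^k}\le C\|f\|_{H^k}\|g\|_{H^k}$ is unavailable (it requires $k\ge2$), so one is forced to control $H(u,v)$ in $H^2$, which accounts for the loss. Everything else is routine bookkeeping of constants depending only on $\|u\|_{L^\infty}$ and $\|v\|_{L^\infty}$.
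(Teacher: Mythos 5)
Your proof is correct and follows essentially the same strategy as the paper: reduce to $F'(0)=0$ (you instead separate that constant term off explicitly), write $F(u)-F(v)$ via the fundamental theorem of calculus as $(u-v)$ times an integral of $F'$, bound the integral factor in $H^s$ via Lemma~\ref{lem:composition} applied to $F'(\cdot)-F'(0)$, and close with the product estimates of Lemma~\ref{lem:product}. The only cosmetic difference is that you treat the cases $k=0,1$ uniformly by invoking the $\|fg\|_{H^k}\le C\|f\|_{H^2}\|g\|_{H^k}$ bound from Lemma~\ref{lem:product}, whereas the paper works those two low-order cases out by hand before appealing to the symmetric $L^\infty$--$H^k$ product bound for $k\ge2$; both routes yield the same estimate with the same $k'=\max\{k,2\}$ loss.
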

\begin{proof} We may assume that $F'(0)=0$, since if not, we can consider $G(u)=F(u)-u\cdot F'(0)$.
By the fact that
$$F(u)-F(v)=(u-v)\cdot\int_{0}^1F'(v+t(u-v))dt,$$
we have
\begin{align*}
\|F(u)-F(v)\|_{L^2}&~\le \|u-v\|_{L^2}\sup_{t\in[0,1]}\|F'(v+t(u-v))\|_{L^\infty}\\
&~ \le C(\|u\|_{L^\infty},\|v\|_{L^\infty})\|u-v\|_{L^2} ,\\
\|\nabla(F(u)-F(v))\|_{L^2}&~\le \|\nabla(u-v)\|_{L^2}\sup_{t\in[0,1]}\|F'(v+t(u-v))\|_{L^\infty}\\
&~\quad+\|u-v\|_{H^1}\sup_{t\in[0,1]}\|\nabla(F'(v+t(u-v)))\|_{H^1} \\
&~\le C(\|u\|_{L^\infty},\|v\|_{L^\infty})(\|u\|_{H^2}+\|v\|_{H^2})\|u-v\|_{H^1} ,\\
\end{align*}
and for $k\ge 2$,
\begin{align*}
\|F(u)-F(v)\|_{H^k}\le&~ C\Big(\|u-v\|_{L^\infty}\sup_{t\in[0,1]}\|F'(v+t(u-v))\|_{H^k}\\
&\qquad+\|u-v\|_{H^k}\sup_{t\in[0,1]}\|F'(v+t(u-v))\|_{L^\infty}\Big)\\
\le &~C(\|u\|_{L^\infty},\|v\|_{L^\infty})(1+\|u\|_{H^k}+\|v\|_{H^k})\|u-v\|_{H^k}.
\end{align*}
Here, we have used the following estimate which is induced by Lemma \ref{lem:composition}:
\begin{align*}
\|F'(v+t(u-v))\|_{H^k}&~\le C(\|v+t(u-v)\|_{L^\infty})\|v+t(u-v)\|_{H^k} \\
&~\le C(\|u\|_{L^\infty},\|v\|_{L^\infty})(\|u\|_{H^k}+\|v\|_{H^k}).
\end{align*}
This concludes the proof.
\end{proof}

\bigskip

\noindent {\bf Acknowledgments.} The authors would like to thank Professor Zhifei Zhang for his helpful discussion.
W. Wang is supported by China Postdoctoral Science Foundation under Grant 2013M540010 and 2014T70008.
P. Zhang is partly supported by NSF of China under Grant 21274005, 11421110001 and 11421101.


\begin{thebibliography}{10}

\bibitem{ADL1} {H. Abels, G. Dolzmann and Y. Liu}, {\em Well-posedness of a fully-coupled Navier-Stokes/$Q$-tensor system
with inhomogeneous boundary data}, SIAM J. Math. Anal., 46(2014), 3050-3077.

\bibitem{ADL2} {H. Abels, G. Dolzmann and Y. Liu}, {\em Strong solutions for the Beris-Edwards model for nematic
liquid crystals with homogeneous Dirichilet boundary conditions}, arXiv:1312.5988(2013).

\bibitem{BCD} {H. Bahouri, J-Y. Chemin and R. Danchin}, {\em Fourier Analysis and Nonlinear Partial Differential Equations.
Fundamental Principles of Mathematical Sciences}, vol. 343. Springer, Heidelberg, 2011.

 \bibitem{BM} { J. M. Ball and A. Majumdar}, {\em Nematic liquid crystals: from Maier-Saupe to a continuum
 theory}, Mol.Cryst.Liq.Cryst., 525(2010),1-11.

 \bibitem{BE} A. N. Beris and B. J. Edwards,{\it  Thermodynamics of flowing systems
with internal microstructure}, Oxford Engrg. Sci. Ser. 36, Oxford University Press, Oxford, New York, 1994.

\bibitem{DG} P. G. De Gennes, {\it The physics of liquid crystals}, Clarendon Press, Oxford, 1974.

\bibitem{EZ} W. E and P. Zhang, {\it A molecular kinetic theory of inhomogeneous liquid crystal flow
and the small Deborah number limit}, Methods and Applications of
Analysis, {13}(2006), 181-198.

\bibitem{E-61} J. Ericksen, {\it Conservation laws for liquid crystals},
Trans. Soc. Rheol. , {5}(1961), 22-34.

\bibitem{FS} I. Fatkullin and V. Slastikov, {\it Critical points of the Onsager functional on a sphere},
Nonlinearity, 18(2005), 2565-80.

\bibitem{Feng} J. Feng, C. V. Chaubal and L. G. Leal, {\it  Closure approximations for the Doi theory: Which
to use in simulating complex flows of liquid-crystalline polymers?},  Journal of Rheology,
42(1998), 1095-1109.

\bibitem{FLS} J. J. Feng, G. L. Leal and G. Sgalari, {\it A theory for flowing nematic polymers with
orientational distortion}, Journal of Rheology, 44(2000), 1085-1101.

\bibitem{WZZ3}J. Han, Y. Luo, W. Wang, P. Zhang and Z. Zhang {\it From microscopic theory to
macroscopic theory: systematic study on modeling for liquid crystals},  Arch. Ration. Mech. Anal., online, DOI: 10.1007/s00205-014-0792-3.

\bibitem{HD} J. Huang and S. Ding, {\it Global well-posedness for a coupled incompressible Navier-Stokes
and Q-tensor system}, arXiv:1405.1863.

\bibitem{HLW} J. Huang, F. H Lin and C. Wang, {\it Regularity and existence of global solutions to the Ericksen-Leslie system
in $\mathbb R^2$}, Commun. Math. Phys., 331(2014), 805-850.

\bibitem{KD} N. Kuzuu and M. Doi,
{\it Constitutive equation for nematic liquid crystals under weak
velocity gradient derived from a molecular kinetic equation,}
Journal of the Physical Society of Japan, 52(1983), 3486-3494.

\bibitem{Les} F. M. Leslie, {\it Some constitutive equations for liquid crystals}, Arch. Ration. Mech. Anal., 28
(1968), 265-283.

\bibitem{LL} F.-H. Lin and C. Liu, {\it Existence of solutions for the Ericksen-Leslie system},
Arch. Ration. Mech. Anal., 154(2000), 135-156.

\bibitem{LZZ} H. Liu, H. Zhang and P. Zhang, {\it Axial symmetry and classification of stationary solutions of
Doi-Onsager equation on the sphere with Maier-Saupe potential}, Comm. Math. Sci., 3(2005), 201-218.

\bibitem{MZ} A. Majumdar and  A. Zarnescu, {\it Landau-De Gennes theory of nematic liquid
crystals: the Oseen-Frank limit and beyond},  Arch. Ration. Mech. Anal., 196(2010), 227-280.

\bibitem{MN} N. J. Mottram and C. Newton, {\it Introduction to $Q$-tensor theory}. University of Strathclyde, Department of Mathematics, Research Report, 10(2004).

\bibitem{PZ1} M. Paicu and  A. Zarnescu, {\it Energy dissipation and regularity for a coupled Navier-Stokes and $Q$-tensor system},
 Arch. Ration. Mech. Anal., 203 (2012), 45--67.

\bibitem{PZ2} M. Paicu and  A. Zarnescu, {\it Global existence and regularity for the full coupled Navier-Stokes and $Q$-tensor system},
SIAM J. Math. Anal., 43 (2011), 2009--2049.

\bibitem{Parodi} O. Parodi, {\it Stress tensor for a nematic liquid crystal,}
Journal de Physique, 31 (1970), 581-584.


\bibitem{QS} T. Qian and P. Sheng, {\it Generalized hydrodynamic equations for nematic liquid crystals}, Phys. Rev. E, 58 (1998), 7475-7485.

\bibitem{Triebel} H. Triebel,
{\it Theory of function spaces}. Monographs in Mathematics,
 Birkh\"{a}user Verlag, Basel, Boston, 1983.


\bibitem{WW} M. Wang and W. Wang, {\it Global existence of weak solution for the 2-D Ericksen-Leslie system},
Calc. Var. Partial Differ. Equ., online, DOI: 10.1007/s00526-013-0700-y.

\bibitem{WZZ1} W. Wang, P. Zhang and Z. Zhang, {\it The small Deborah number limit of the Doi-Onsager equation  to the
Ericksen-Leslie equation}, accepted by Comm. Pure Appl. Math.

\bibitem{WZZ2} W. Wang, P. Zhang and Z. Zhang, {\it Well-posedness of the Ericksen-Leslie system},
Arch. Ration. Mech. Anal., 206(2012), 953-995.

\bibitem{WZZ4} {W. Wang, P. Zhang and Z. Zhang}, {\em Rigorous derivation from Landau-de Gennes theory
 to Ericksen-Leslie theory}, SIAM. Math. Anal., to appear.

\bibitem{WXL} H. Wu, X. Xu and C. Liu: {\it On the general Ericksen Leslie system: Parodi¡¯s relation, well-posedness and stability}. Arch. Ration. Mech. Anal., 208(2013), 59-107
    
\end{thebibliography}
\end{document}